\documentclass[12pt, reqno]{amsart}

\usepackage[english]{babel}
\usepackage{amsthm,verbatim} 
\usepackage{amssymb}
\usepackage{amsmath}  
\usepackage{hyperref}
\usepackage{amsfonts}
\usepackage{enumitem}
\usepackage[nocompress]{cite}
\usepackage{stmaryrd}
\usepackage{fancyhdr}
\usepackage{mathtools}
\usepackage{caption}
\usepackage{subcaption}
\usepackage{tikz-cd}
\usepackage{stackrel}
\usepackage{amsthm}
\usepackage{float}
\usepackage{mathtools, nccmath}
\bibliographystyle{ieeetr}

\DeclareFontFamily{U}{mathx}{\hyphenchar\font45}
\DeclareFontShape{U}{mathx}{m}{n}{
	<5> <6> <7> <8> <9> <10>
	<10.95> <12> <14.4> <17.28> <20.74> <24.88>
	mathx10
}{}
\DeclareSymbolFont{mathx}{U}{mathx}{m}{n}
\DeclareFontSubstitution{U}{mathx}{m}{n}
\DeclareMathAccent{\widebar}{0}{mathx}{"73}




\usepackage{graphicx,color}
\usepackage{epstopdf}
\epstopdfsetup{update}
\usepackage[left=1.1in, right=1.1in, top=1.1in, bottom=1.1in]{geometry}
\usepackage{mathrsfs}  
\usepackage{etoolbox}
\patchcmd{\endproof}
  {\Qed}
  {\hfill \Qed}

\newtheorem{theorem}{Theorem}[section]

\newtheorem{lemma}[theorem]{Lemma}
\newtheorem{proposition}[theorem]{Proposition}

\newtheorem{assumption}{Assumption}
\theoremstyle{definition}

\theoremstyle{remark}

\newenvironment{example}
{\pushQED{\qed}\examplex}
{\hfill\popQED\endexamplex}

\newenvironment{Note}
{\pushQED{\qed}\remarkx}
{\hfill\popQED\endnotex}

\newcommand{\RNum}[1]{\uppercase\expandafter{\romannumeral #1\relax}}

\newcommand{\C}{C}
\newcommand{\N}{\mathbb{N}}
\newcommand{\E}{\mathbb{E}}
\newcommand{\cL}{\mathcal{L}}

\newcommand{\cP}{\mathcal{P}}

\newcommand{\pa}{\partial}
\newcommand{\lv}{\left\vert}
\newcommand{\rv}{\right\vert}
\newcommand{\be}{\begin{equation}}
	\newcommand{\ee}{\end{equation}}

\newcommand{\drift}{g}
\newcommand{\diffusion}{A}

\newcommand{\cb}{\kappa}

\newcommand{\yfourder}{m_1}
\newcommand{\yfourdery}{m_2}
\newcommand{\ytwoderyonex}{m_3}
\newcommand{\ytwoderyoney}{m_4}
\newcommand{\ytwoderx}{m_5}
\newcommand{\ytwoderxy}{m_6}
\newcommand{\ytwodery}{m_7}
\newcommand{\ytwoderyy}{m_8}
\newcommand{\onederxg}{m_9}

\newcommand{\intPos}{\int_0^\infty}
\newcommand{\R}{\mathbb{R}}

\newcommand{\fastExp}[1]{\mathbb{E}_{}}

\newcommand{\eps}{^{\epsilon}}

\newcommand{\cPbar}{\bar \cP}
\newcommand{\Xbar}{\bar X}

\newcommand{\SGfastsx}[2]{{P}_{#1}^{#2}}
\newcommand{\SGfastsxn}[2]{P_{#1}^{#2}}

\newcommand{\Lfastx}[1]{\cL^{#1}}

\newcommand{\LfastxDeri}{\frac{\partial \cL^{x}}{\partial x_i}}
\newcommand{\LfastDeri}[1]{\frac{\partial \cL^{#1}}{\partial x_i}}
\newcommand{\LfastDerj}[1]{\frac{\partial \cL^{#1}}{\partial x_j}}
\newcommand{\LfastxDeriDerj}{\frac{\partial^2 \cL^{x}}{\partial {x_i x_j}}}

\newcommand{\slowDer}[1]{\partial_{x_{#1}}}
\newcommand{\slowDerDer}[2]{\partial_{x_{#1}x_{#2}}}
\newcommand{\slowDerDerDer}[3]{\partial_{x_{#1}x_{#2}x_{#3}}}
\newcommand{\slowDerDerDerDer}[4]{\partial_{x_{#1}x_{#2}x_{#3}x_{#4}}}

\newcommand{\pow}[2]{m^{#1}_{#2}}

\newcommand{\fastDer}[1]{\partial_{y_{#1}}}
\newcommand{\fastDerDer}[2]{\partial_{y_{#1}y_{#2}}}

\newcommand{\fastFixed}{Y^{x,y}}

\newcommand{\fastFixedI}{Y^{x,y}}

\newcommand{\funcSpace}[2]{\mathrm{Poly}_{#1,#2}}

\newcommand{\Vseminorm}[2]{\lvert #1 \rvert_{#2}}
\newcommand{\Vnorm}[2]{\lVert #1 \rVert_{#2}}

\begin{document}
	\title[Poisson Equations and Uniform in Time Averaging]{Poisson Equations with locally-Lipschitz coefficients and Uniform in Time Averaging for Stochastic Differential Equations via Strong Exponential Stability}

	\author{D. Crisan$^{(1)}$, P. Dobson$^{(2)}$, B. Goddard$^{(3)}$, M. Ottobre$^{(4)}$, I. Souttar$^{(5)}$}
	
	\address{(1) Department of Mathematics, Imperial College London, London, SW7 2AZ, UK. d.crisan at imperial.ac.uk}
\address{(2) Maxwell Institute for Mathematical Sciences and Mathematics Department, Heriot-Watt University, Edinburgh EH14 4AS, UK, p.dobson\_1 at hw.ac.uk}
\address{(3) School of Mathematics and Maxwell Institute for Mathematical Sciences, University of
Edinburgh, Edinburgh EH9 3FD, UK. bgoddard at ed.ac.uk}
\address{(4) Maxwell Institute for Mathematical Sciences and Mathematics Department, Heriot-Watt University, Edinburgh EH14 4AS, UK. m.ottobre at hw.ac.uk}

\address{(5) Warwick Mathematics Institute, University of Warwick, Coventry, CV4 7AL, UK. iain.souttar at warwick.ac.uk}

	\maketitle
\begin{abstract}
We study averaging for Stochastic Differential Equations (SDEs) and Poisson equations. We succeed in obtaining a  {\em uniform in time} (UiT) averaging result,  with a rate,  for fully coupled SDE models with super-linearly growing coefficients. This is the main result of this paper and it is,  to the best of our knowledge,  the first UiT multiscale result with a rate. More precisely, the main feature of our averaging theorem is that it holds uniformly in time; the technique of proof we use  gives, as a biproduct, a rate of convergence as well. Very few UiT averaging results exist in the literature, and they  almost exclusively apply to  multiscale systems of Ordinary Differential Equations.  Among these few,  none of those we are aware of comes with a rate of convergence.  
The UiT nature of this result (which is its main feature) and the fact that the main theorem comes with a rate of convergence as well, make it important as theoretical underpinning for a range of applications, such as applications to statistical methodology, molecular dynamics etc.  Key to obtaining  both our  UiT averaging  result and to enable dealing with the super-linear growth of the coefficients (of the slow-fast system and of the associated Poisson equation) is conquering   exponential decay in time of the space-derivatives of appropriate Markov semigroups. We refer to semigroups which enjoy this property as being {\em Strongly Exponentially Stable}. 

There are various approaches in the literature to proving averaging results. The analytic approach we take here requires studying a family of Poisson problems associated with the generator of the (fast component of the) SDE dynamics. The study of Poisson equations in non-compact state space is notoriously difficult, with current literature  mostly covering the case when the coefficients of the Partial Differential Equation (PDE) are either bounded or satisfy linear growth assumptions (with the latter case having  been achieved only recently). In this paper we  treat Poisson equations  on non-compact  state spaces for coefficients that can  grow super-linearly. In particular,  we demonstrate how Strong Exponential Stability can be employed not only to prove the UiT result for the slow-fast system  but also to overcome some of the technical hurdles in the analysis of Poisson problems.  Poisson equations are essential tools in both probability theory and PDE theory. Their vast range of applications includes the study of  the asymptotic behaviour of solutions of parabolic PDEs, the treatment of multi-scale and homogenization problems as well as the theoretical analysis of approximations of solutions of Stochastic Differential Equations (SDEs). So our result on Poisson equations is of independent interest as well.   
	\vspace{5pt}
    {\sc Keywords.} Averaging methods for Stochastic differential equations, Poisson equations, Uniform in time approximations, Derivative estimates for Markov Semigroups.
    
\vspace{5pt}
{\sc AMS Classification (MSC 2020).} 60J60, 
60H10, 35B30, 34K33, 34D20, 47D07, 65M75.
\end{abstract}

\tableofcontents

\section{Introduction}
	This paper is concerned with the problem of obtaining {\em uniform in time} averaging results, with a rate of convergence,  for fully coupled systems of Stochastic Differential Equations (SDEs). An important preliminary step to obtain such results is the study of so called `Poisson equations with a parameter'. The study of Poisson equations is also of independent interest as such equations play a pivotal role  both in PDE theory and in probability theory -- to obtain Functional Central Limit theorems \cite{landim,pavliotis2008multiscale}, in the study of  large deviations \cite{spiliopoulos2014fluctuation, morse2017moderate,dupuis2012large},  or in approximation theory, for example as a cornerstone in Stein's method \cite{barbour1990stein} --  so  we comment on these two topics, averaging and Poisson equations, separately, starting from the former.  
	
	{\bf Averaging.}  Consider the following slow-fast system	
	\begin{align}
		dX_{t}^{\epsilon,x,y} & =  b(X_{t}^{\epsilon,x,y}, Y_{t}^{\epsilon,x,y}) dt + \sqrt{2}\sigma(X_{t}^{\epsilon,x,y}, Y_{t}^{\epsilon,x,y}) \, dW_t \label{slowfullycoupled} \\ 
		dY_{t}^{\epsilon,x,y} & = \frac{1}{\epsilon}g(X_{t}^{\epsilon,x,y},Y_{t}^{\epsilon,x,y} ) dt + \sqrt{\frac{{2}}{{\epsilon}}} a(X_{t}^{\epsilon,x,y}, Y_{t}^{\epsilon,x,y})\, dB_t  \label{fastfully coupled} 
	\end{align}
	with initial datum $(X_0, Y_0)=(x,y)\in \mathbb{R}^n \times \mathbb{R}^d$. Here $0<\epsilon \ll 1$ is a small parameter,  $(X_{t}^{\epsilon,x,y}, Y_{t}^{\epsilon,x,y})$ takes values in $ \R^n \times\R^d$, $b: \R^n \times \R^d \rightarrow \R^n$, $\sigma: \R^n\times \R^d \rightarrow \R^{n\times n}$,  $a:\R^n \times \R^d \rightarrow \R^{d\times d}$, $g:\R^n\times\R^d \rightarrow \R^d$ and, finally, $W_t$ and $B_t$, respectively, are  $n-$dimensional and  $d-$dimensional standard Brownian motions, respectively, assumed to be independent of each other.
	
	The intuitive description of the  classical averaging paradigm proceeds as follows. First,  we consider the dynamics in \eqref{fastfully coupled} with $X_{t}^{\epsilon,x,y}=x$ fixed, i.e. we consider the SDE
	\begin{equation}\label{fasttfully coupled}
		dY_t^{\epsilon, x,y}  = \frac{1}{\epsilon}g(x,Y_t^{\epsilon, x,y}) dt + \frac{\sqrt{2}}{\sqrt{\epsilon}} a(x, Y_t^{\epsilon, x,y})\, dB_t \,, \quad Y_0^{\epsilon, x,y}=y\,.
	\end{equation}
	Assuming that for every $x$ the above evolution is ergodic,  with invariant measure $\mu^{x}(dy)$, under appropriate assumptions on the coefficients,  when  $\epsilon\rightarrow 0$ one has
	$$
	\E f(Y_t^{\epsilon, x,y}) \rightarrow \int_{\R^d} f(y)d\mu^{x}(y)\, ,
	$$
	for every $f \in C_b(\R^d)$ (throughout $C_b(\R^d)$ is the set of continuous and bounded real valued functions on $\R^d$), details in Section \ref{sec:sec3}. We emphasize that the invariant measure $\mu^{x}$ does depend on the parameter $x$.  Because $Y_t^{\epsilon, x,y}$ in \eqref{fastfully coupled} equilibrates much faster than $X_t^{\epsilon,x,y}$, one expects that, as $\epsilon \rightarrow 0$, the dynamics \eqref{slowfullycoupled} should be approximated, at least over finite time horizons $[0,T]$, by the so-called {\em averaged dynamics}, i.e. by the following $\R^n$-valued SDE:
	\begin{equation}\label{eqn:averagedfully coupled}
		d\bar{X}_{t}^{x} = \bar b(\bar{X}_{t}^{x}) dt +  \sqrt{2}\hat{\sigma}(\bar{X}_{t}^{x}) dW_t, \quad \bar X_0 = x,    
	\end{equation}
	where 
	\begin{equation}\label{avgDef}
		\widebar{b}(x):= \int_{\R^d} b(x,y) d\mu^{x}(y),
	\end{equation}
	and $\hat{\sigma}$ is a square root\footnote{Clearly $\hat{\sigma}$ is not univocally determined. This is compatible with the fact that the process associated to a generator is not unique and with the fact that we will study weak-type convergence, see \cite[Remark 11.2]{pavliotis2008multiscale} on this point.} of the matrix
	$\bar{\Sigma}(x):= \int_{\R^d} \Sigma(x, y) d\mu^{x}(y)
	:=  \int_{\R^d} \sigma(x, y)\sigma^T(x,y) d\mu^{x}(y)$, where $\sigma^T$ denotes the transpose of the matrix $\sigma$.

	Averaging (and, more generally, homogenization) results can be obtained in a number of ways but, to the best of our knowledge,  the existing techniques can be traced back to a variation of  either one of two main approaches, namely either  a {\em functional approach}, which is the one we adopt in this paper, or a   more  probabilistic one, as introduced by  Khas'minskii  \cite{khas1968averaging} in his seminal paper, see also  \cite{ethierKurtz, liu2020averaging}. We refer to the former as being a functional approach because it hinges on obtaining preliminary results on so-called Poisson equations with parameter, which we will come to later. 
	Irrespective of the approach taken, the convergence of the slow-fast system to the averaged dynamics (whether weak or strong), has ever only been proved to take place over finite time horizons.  That is, broadly speaking, one typically establishes results of the type
	\begin{equation}\label{resultwitht}
	\lv \mathbb E f(X_{t}^{\epsilon,x,y}) - \mathbb E f(\bar{X}_{t}^{x}) \rv \leq \epsilon \, C(t)\, 
	\end{equation}
	for every $f$ in a suitable class of functions, where $C=C(t)$ is a constant dependent  on time (and on $f$ as well as $x$ and $y$). \footnote{In \eqref{resultwitht} we are referring to weak convergence, but clearly the literature has plenty of pathwise or other types of results. The mode of convergence is not what we are trying to emphasize here, we take weak convergence only to fix ideas.} Because of the use of Gronwall's inequality (or similar arguments), this constant is an increasing function of time $t$ \cite{del2019backward}.  
	
	Averaging methods are extremely effective and routinely used in applications to engineering, biology, statistics, molecular dynamics to mention just a few application fields, see e.g. \cite{Carsten,may1979population, coti2020homogenization,gomes2019dynamics,engquist2005multiscale}, with no claim to completeness of references; yet,  a long-standing criticism of such
	techniques is the following: while one can typically only prove that the averaged dynamics is a good approximation of the 
	original slow-fast system for finite-time windows (with estimates that deteriorate in time), the averaged dynamics is often used in practice as an approximation of the long-time behaviour 
	of the slow-fast system. 
	The fact that the  slow-fast system should, under appropriate assumptions on the coefficients, converge to the averaged dynamics uniformly in time (i.e. that one should be able to prove that the constant $C$ in \eqref{resultwitht} is independent of time), has so far only been conjectured on the basis of numerical evidence, see e.g. Section \ref{sec:numerics} and \cite{pavliotis2008multiscale}, where the need in applications for multiscale results which hold uniformly in time (and on non-compact state-space) has been explicitly  advocated. 
	
	In this paper we make it possible to fill this theoretical gap and identify rather general assumptions on the coefficients of the SDEs, under which convergence  (as $\epsilon \rightarrow 0$) of the slow-fast system to the limiting dynamics is actually uniform in time, thereby providing the many works which implicitly use this fact with a firm theoretical basis. 
 We will show in future work (which is at the moment in progress) how leveraging on the theretical results that we produce in this paper can be powerful from an applications' perspective, for example to provide guarantees on statistical methodologies. 
	
	This UiT result (with a rate of convergence, in $\epsilon$)   is novel to this paper and to its inspiring work \cite{barr2020fast}. The difference between the present paper and \cite{barr2020fast} is in the nature of the system at hand, as in \cite{barr2020fast} the fast system is a (relatively straightforward) finite state space jump process and the slow system is an SDE with a specific structure, while here we treat general systems of SDEs (i.e. both the fast and slow processes are SDEs).
	More precisely, besides the fact that our result is uniform in time (UiT), this paper deals with slow-fast systems of SDEs which are fully coupled (i.e. all the coefficients can depend on both $X_{t}^{\epsilon,x,y}$ and $Y_{t}^{\epsilon,x,y}$), evolve in non-compact state space, and whose coefficients are allowed  super-linear growth; in particular, the drift coefficients of both  the slow and the fast system are locally Lipschitz. Furthermore, our convergence result comes with an explicit rate in  $\epsilon$ - feature which is particularly relevant for use in  applications. 

 To the best of our knowledge, there exist truly very few UiT averaging  theorems in the literature, even when searching beyond the case of slow-fast systems of SDEs and looking  in the multiscale literature for PDEs, SPDEs and ODEs.  The only other UiT results we are aware of are  those in \cite{ilyin1998global} (and references therein),  for deterministic systems;  for stochastic dynamics, aside from the work \cite{barr2020fast}, we are only aware of \cite{cheng2023second}, which deals with  Stochastic Partial Differential equations, and of \cite{stoltz2018longtime}, which is inspired by problems in molecular dynamics and treats specifically the case of Langevin dynamics on compact state space, hence the drifts of the SDEs are in gradient form and the diffusion coefficients are constant. However none of these works comes with an explicit convergence rate.   The work \cite{stoltz2018longtime} takes a 
very nice approach, producing a perturbative expansion in $\epsilon$ of the invariant measure of the slow-fast system, and therefore proving convergence of the invariant measure of the slow-fast system to the invariant measure of the averaged equation.
The analysis in \cite{stoltz2018longtime} is made possible by the fact that, due to the specific form of the Langevin dynamics, an explicit expression for the invariant measure of the slow-fast system is a priori known.  In our approach, there is no need to have such knowledge and indeed, while our assumptions do imply that the slow-fast system has an invariant measure, this fact is never used in the analysis.  So the class of SDEs we consider here is truly general.  
 
 \smallskip
	
	Results on multiscale methods for stochastic dynamics typically deal with the case in which the slow-fast system evolves in compact state space, see \cite{pavliotis2008multiscale,xuemei,GlynnMeyn, ethierKurtz} and references therein. The seminal papers \cite{pardoux2001,pardoux2003,Pardoux_2005} (and, later, \cite{rockner2020diffusion})  then paved the way for the non-compact setting, and produced results for slow-fast evolutions  of the type \eqref{slowfullycoupled}-\eqref{fastfully coupled} in $\R^n \times \R^d$, using the functional analytic approach,  as in this paper. The results of \cite{pardoux2001,pardoux2003,Pardoux_2005, rockner2020diffusion} refer to the case in which the coefficients of \eqref{slowfullycoupled}-\eqref{fastfully coupled} are bounded. The difficulty in going from compact  to non-compact  state space and then to unbounded coefficients comes from the fact that the associated Poisson equation inherits such features, i.e.   one needs to study a Poisson problem  which is posed on non-compact state space and associated with a differential operator with unbounded coefficients. This is a notoriously difficult problem, see e.g. \cite[Section 10.2 and 18.4]{pavliotis2008multiscale}. We will come back to this when we comment on our results on Poisson equations. For the time being we mention that  one way of bypassing  this issue is to adopt the diffusion approximation approach instead of the functional analytic one. This  has been done recently in \cite{liu2020averaging}.  Adopting such an approach has allowed the authors of \cite{liu2020averaging} to produce averaging results for the system \eqref{slowfullycoupled}-\eqref{fastfully coupled} when the coefficients are locally Lipschitz (and indeed our assumptions on the coefficients are analogous to those in \cite{liu2020averaging}), but the scheme of proof of \cite{liu2020averaging} does not give an explicit convergence rate in $\epsilon$ and it does not allow to consider the fully coupled regime (in \cite{liu2020averaging} the diffusion coefficient of the slow variable, which we here denote by $\sigma$,  does not depend on the fast variable); on the other hand in \cite{liu2020averaging} all the coefficients are allowed to be time-dependent, a case which we don't treat here. 
	
	The literature on multiscale methods is extremely vast so in the above we have  mentioned only the works which are most relevant to our discussion. Other related works (without any claim to completeness of references) are \cite{XU2018116,veretennikovAvging, liumsstochastic,stochAvgingXu,hairer2004periodic}, some of which do cover the case of linearly growing coefficients. We also flag up the very recent \cite{spiliopolus}, which  treats the case of  SDEs which are non-linear in the sense of McKean.

	From a technical standpoint, the idea that allows us to tackle both the problem of obtaining a UiT averaging result (with a rate) and to study Poisson equations with non globally Lipschitz coefficients hinges on using  exponentially fast decay (in time) of the derivatives of appropriate  Markov semigroups associated with the system \eqref{slowfullycoupled}-\eqref{fastfully coupled} and with \eqref{eqn:averagedfully coupled}, i.e. what we will refer to as {\em strong exponential stability}.  We will explain this further in the last part of this introduction.

	\medskip

	{\bf Poisson equations with parameter.}	The Poisson equations  to be studied in connection with the multiscale system \eqref{slowfullycoupled}-\eqref{fastfully coupled} are of the form 
	\begin{equation}
	    \label{poisprobgeneral}
	(\cL^x u)(x,y)= \phi(x,y)-\bar{\phi}(x) \, , \qquad {x \in \R^n,y\in \R^d, }
	\end{equation}
	where $\phi$ is a regular enough, polynomially growing given function (precise assumptions in Section \ref{sec:mainresults}), $\bar{\phi}:= \int_{\R^d} \phi(x,y) \mu^x(dy)$  (where $\mu^x$ is the measure defined in \eqref{starstar} below) and $\cL^x$ is a differential operator in the $y$ variable only, namely 
	\begin{equation}
	    \label{heart}
	(\cL^x u)(x,y) =( g (x,y) , \nabla_y u(x,y)) +  a(x,y) a(x,y)^T : \text{Hess}_y u(x,y) \,.
	\end{equation}
	Observe that $x \in \R^n$  appears in  \eqref{poisprobgeneral} and in \eqref{heart} purely as a parameter, so that \eqref{poisprobgeneral} can be regarded as a family of Poisson equations dependent on the parameter $x \in \R^n$. In the above and throughout, we use $(\cdot,\cdot )$ to denote the Euclidean scalar product, $\nabla_x$ and $\text{Hess}_x$ are used to denote the gradient and Hessian operators respectively. In Section \ref{subsec:heur}  we will explain how this Poisson problem emerges in the context of averaging. For the time being it suffices to say that the operator $\cL^x$ is the generator of the process $\fastFixed_t:= Y^{1,x,y}_t$ which solves the SDE
	\begin{equation}\label{introfastprocessepsilon=1}
		d\fastFixed_t  = g({x},\fastFixed_t) dt + \sqrt{2} a(x,\fastFixed_t)\, dB_t \,, 
	\end{equation}
	and is obtained from \eqref{fasttfully coupled} by setting $\epsilon=1$; clearly, for each $x$ fixed,  the asymptotic behaviour as $t \rightarrow \infty$ of \eqref{introfastprocessepsilon=1} is the same as the asymptotic behaviour as $\epsilon \rightarrow 0$ of \eqref{fasttfully coupled} and the invariant measure $\mu^x(dy)$ (of either processes)  is precisely the only probability measure such that  
	\begin{equation}
	    \label{starstar}
	\int_{\R^d}(\cL^x f)(x,y) \mu^x(dy)= 0 \,,
	\end{equation}
	for every $f$ in the domain of $\cL^x$. 
	Under our assumptions (see Section \ref{sec:notation}) such an invariant measure exists and, since we will take  $\cL^x$ to be  elliptic,  it is unique and it has a smooth density (for every $x \in \R^n$); with abuse of notation, we still denote the density by $\mu^x(y)$.

	As we have already mentioned, the study of Poisson problems in non-compact spaces is far from trivial  and was first tackled in the  seminal  papers \cite{pardoux2001,pardoux2003,Pardoux_2005}.  To be more precise, when using Poisson equations to obtain averaging results, one needs to establish, in turn:  well-posedness  of  the Poisson problem \eqref{poisprobgeneral} and a representation formula for the solution;  smoothness of the solution $u(x,y)$ in both the $x$ and $y$ variable; and, finally, one needs to quantify how the solution  $u(x,y)$ changes as the parameter $x\in \R^n$ varies.  Related to this problem is the issue of studying how the invariant measure $\mu^x(dy)$ of the frozen process varies as the parameter $x$ varies. {\footnote{For example,  in \cite{hairerPav} $x$ is the inverse temperature of the system so  tracing the dependence in $x$ of $\mu^x$ boils down to tracing the dependence on the inverse temperature.}}  Let us comment on these points in turn, starting from well-posedness of \eqref{poisprobgeneral}. The solution to \eqref{poisprobgeneral} is in general not unique, since constants belong to the kernel of $\cL^x$. However, once we restrict to the set of mean-zero functions, i.e. to the set of functions $f$ such that $\int f(x,y) \mu^x(dy)=0$, then the solution,  if it exists,  is unique (see Section \ref{sec:mainresults} and Section \ref{sec:psneqn}).  Moreover,  one can obtain a useful probabilistic representation formula for such a solution in terms of the process $Y_t^{x,y}$. Indeed,   when the equation is posed on a  bounded domain $D \subset \R^d$ the solution can be represented as 
	$$
	u(x,y) = - \int _0^{\tau} \left(\mathbb E [\phi(x, Y_t^{x,y})]-\bar{\phi}(x)\right)   dt, \quad \tau:= \inf\{t>0 : Y_t^{x,y} \notin D\}. 
	$$
	It is then reasonable to expect that when the equation is posed on $\R^d$ one should  have 
	\be\label{repformulaintro}
	u(x,y) = - \int _0^{\infty} \left( \mathbb E [\phi(x, Y_t^{x,y})]-\bar{\phi}(x)
	\right)   dt \,. 
	\ee
	The above representation formula has been proven in \cite{pardoux2001}, under the assumption that the coefficient $g$ is bounded and $\cL^x$ is uniformly elliptic. Recently, the authors of \cite{cattiaux2011central} have both streamlined the proof of this representation formula and dropped  the boundedness assumption on $g$ (as well as the ellipticity assumption on $\cL^x$), however \cite{cattiaux2011central} does not address the issue of the smoothness of the solution $u(x,y)$.  Regarding smoothness of the solution,  in view of the ellipticity in $y$ of $\cL^x$ the smoothness of $u(x,y)$ in the variable $y$ is straightforward. The difficulty is in obtaining smoothness in the $x$ variable as a priori there is no obvious reason why the solution should change continuously in $x$. Nonetheless it has been shown in \cite[Section 2]{pardoux2003} and then more systematically in \cite{rockner2020diffusion} that, under some assumptions on the coefficients, smoothness in $x$ is inherited from smoothness in $y$; this is done by using so called `transfer formulas', which allow one to express $x$ derivatives in terms of $y$ derivatives, see \cite{rockner2020diffusion} and Section 3. Moreover \cite{pardoux2001,pardoux2003,Pardoux_2005,rockner2020diffusion} obtain a formula for the derivatives in $x$ of $u(x,y)$, which quantifies the way in which the solution changes as $x$ changes. All such results, i.e. smoothness in $x$ and formulas for the $x$ derivatives of $u$, have been obtained so far under the assumptions that the coefficients of $\cL^x$ are bounded e.g \cite{rockner2020diffusion, pardoux2001,pardoux2003,Pardoux_2005}. In  \cite{pardoux2003} the author  observed that it should be possible to relax such an assumption  and the second contribution of this paper consists precisely in removing such an assumption and considering Poisson equations of type \eqref{poisprobgeneral} with unbounded, locally-Lipschitz coefficients. 
	
	\medskip
	
	As we have already pointed out, the technical tool which allows us to obtain UiT averaging results and to study Poisson equations with locally Lipschitz coefficients is the validity of certain Derivative Estimates (DE) for Markov Semigroups.  In particular,  denote by $P_t^x$ the semigroup associated to the process \eqref{introfastprocessepsilon=1} (acting, for each $x$ fixed, on $p \in C(\R^d)$)  and by $\bar{\mathcal P}_t$ the semigroup associated with the process \eqref{eqn:averagedfully coupled} (acting on $f \in C(\R^n)$) - precise definitions in the next section.  Central to our analysis will be estimates on the time-behaviour of the space derivatives of such semigroups, which can be informally  written as follows:\footnote{Precise statements in Section \ref{sec:mainresults}.}
	\be\label{derestgeneral}
	\lv\pa_x (\bar{\mathcal P}_t f)(x) \rv \leq M(x) e^{- Ct}, \quad \lv\pa_y (P_t^x p)(y) \rv \leq M(x,y) e^{- Ct} \,.
	\ee
	In the above, for simplicity, we only wrote first order derivatives (and we used a rather informal one-dimensional notation) but in reality we will need higher order derivatives. We will come back to this later. If a Markov semigroup enjoys the above property (i.e. the space derivatives of some order decay exponentially fast in time) then we say that {\em Strong Exponential Stability} (SES) holds for that semigroup.\footnote{In principle we should be more precise and say e.g. that SES of order $k$ holds if the derivatives up to order $k$ decay exponentially fast; we refrain from doing so, since precise requirements will be spelled out in theorems statements.}  The name is justified by the fact that, assuming the semigroup at hand admits  an invariant measure, the above estimates imply exponential convergence to such a measure (see Lemma \ref{lemma:IntAvgUB} and, for more thorough considerations, \cite{criscassdobsonmichela,crisanottobre}). The above estimates, which are related to those   studied in \cite{crisanottobre}, are different from the ones usually appearing in the literature in two respects: first, similarly to  \cite{crisanottobre, criscassdobsonmichela}, these are not smoothing-type  estimates, see Note \ref{note:smoothing}; second, and further to \cite{crisanottobre, criscassdobsonmichela, dragoni2010ergodicity},  the functions $f$ and $p$ for which they are obtained in this paper need not be bounded and indeed they will be taken to grow polynomially. As a consequence of the latter fact, the constants $M(x)$ and $M(x,y)$ grow polynomially as well, and we trace carefully such a growth. 
	
	We specify that, in contexts different from averaging, there are important works in the literature which deal with UiT results, e.g. \cite{flandoli,durmus2020elementary,del2019backward,mischler2013kac,uitEuler, Feng_2021, heydecker2019pathwise}  to mention just a few; we also flag up the related \cite{hairer2010simple}, where derivative estimates are used in a vein similar to the present paper.   Moreover, the first, second and fourth authors of this paper have been pushing a programme to show how SES is key in proving uniform in time results, in a variety of settings, including convergence of particle systems and numerical methods for SDEs, see e.g. \cite{barr2020fast, criscassdobsonmichela, angeli2023uniform}. An intuitive explanation of why SES is the key concept is given in Note \ref{Rem: der est for UiT}.

	\medskip
	
	The paper is structured as follows. In Section  \ref{sec:notation} we introduce the necessary notation and state our assumptions. In Section \ref{sec:mainresults} we state our main results on Poisson equations and then on averaging. We state all our main results in the non fully coupled regime first, i.e. in the case in which $a=a(x), \sigma=\sigma(x)$ (the drift coefficients $b$ and $g$ are still allowed to depend on both $x$ and $y$) and then in the fully coupled regime (i.e. in the case in which all the coefficients of the slow-fast system \eqref{slowfullycoupled}-\eqref{fastfully coupled} depend on both $x$ and $y$). 
	In Section \ref{sec:sec3} we gather results on ergodicity and SES of the semigroup  $P_t^x$. Such results will be needed to study the Poisson problem \eqref{poisprobgeneral}. In particular in Section \ref{sec:sec3} we prove exponentially fast decay to $\mu^x$ of the semigroup $P_t^x$, {\em with   rate of exponential decay which is  independent of $x$} and we produce DE of the form \eqref{derestgeneral} for  $y$ derivatives up to order four of $P_t^x$. The latter fact  will be the key instrument in tackling the issue of the smoothness in $x$ of the solution of  the Poisson equation, when the coefficients of $\cL^x$ grow super-linearly.   Section \ref{sec:psneqn} contains the proof of our main result on Poisson equations,  Theorem \ref{mainthmpois}. For readability, and because some of these results are of independent interest, the proof is split in various statements. In particular, Lemma \ref{lemma:probRep} is a well-posedness result for the Poisson problem,  and Proposition \ref{lemma:IntAvgDerRepresentation} is the key place where we use SES of $P_t^x$  to prove smoothness in the parameter $x$ of the solution of the Poisson equation. Several comments on Proposition \ref{lemma:IntAvgDerRepresentation} are contained in Note \ref{Rem:der est for Pois unbounded} (which also explains why we need four derivatives of the  semigroup $P_t^x$). Section  \ref{subsec:detailedproof} and Section \ref{sec:boundsonft1} are devoted to the proof of our UiT averaging result. More specifically, in Section \ref{subsec:detailedproof} we explain our strategy of proof, providing first some heuristics (Section \ref{subsec:heur}), then a strategy of proof for the non fully coupled case (Section \eqref{subsec:proofofmainthm}) and then explaining how such a strategy can be extended to the fully coupled case (Section \ref{subsec:fullycoupledsketch}).  The strategy explained in Section \ref{subsec:detailedproof} clarifies that the two main ingredients in our approach are SES of the averaged semigroup $\bar{\mathcal P}_t$ and the study of  Poisson equations, see Note \ref{Rem:poisson for averaging}.  Accordingly, in  Section \ref{sec:boundsonft1} we first obtain DEs for $\bar{\mathcal P}_t$ and then apply the results on Poisson equations stated in previous sections to the Poisson problem at hand. It is important to notice that the DEs for $\bar{\mathcal P}_t$ cannot be obtained in the same way as those for $P_t^x$. Indeed, while $\bar{\mathcal P}_t$ and $P_t^x$ are both Markov Semigroups, the coefficients of the SDE associated with $P_t^x$ are known explicitly, those for the SDE \eqref{eqn:averagedfully coupled} associated to $\bar{\mathcal P}_t$ are not, in the sense that  if $\mu^x$ is not known explicitly then the coefficient $\bar{b}$  (similarly for $\hat{\sigma}$) is only defined via \eqref{avgDef}, which is a highly non-linear expression in $x$, see Note \ref{rem:xavgderivs}. 
	Section \ref{sec:numerics} contains some numerical experiments. We complement our results with a number of examples, both throughout and in Section \ref{sec:numerics}. Note that when $b$ is in gradient form, our results can be applied to Langevin-type dynamics, which are ubiquitous in applications, see e.g. \cite{pavliotis2008multiscale} and references therein, and Section \ref{sec:mainresults}.
	
	Finally, with our scheme of proof passing from the non fully coupled to the fully coupled regime is actually simple and the real difficulty is to understand how to deal with the non fully coupled case first.  The only difference is that in the non fully coupled case one needs to control four $y$ derivatives of the `frozen semigroup' $P_t^x$ and two $x$ derivatives of the averaged semigroup $\bar{\mathcal P}_t$, in the fully coupled regime one needs  the same number of  $y$ derivatives of the frozen semigroup $P_t^x$ and but two additional $x$ derivatives of the `averaged semigroup' $\bar{\mathcal P}_t$, i.e. the fully coupled case is just more `computationally intensive', but conceptually it does not require any new ideas. 
	Hence in  Section \ref{sec:mainresults} we first state  all our results in the non fully coupled case and then in the fully coupled one and the whole  paper refers to the non fully coupled case, with the exception of Section \ref{subsec:fullycoupledresults} and Section \ref{subsec:fullycoupledsketch}, where the fully coupled regime is treated. 
	Because in many cases of interest in applications  the diffusion coefficients are constant, see \cite{hairerPav,coti2020homogenization,Carsten}  and references therein (hence falling in the non fully coupled case) in the non fully coupled regime we express all our theorems in terms of directly verifiable conditions  on the coefficients of \eqref{slowfullycoupled}-\eqref{fastfully coupled} (not just in terms of SES requirements). Similarly, the techniques of this paper can be used, without requiring any new ideas, to study the homogenization regime and the case in which $W_t$ and $B_t$ are not independent of each other. We don't do it here to contain the length of the paper. The extension that would require more care is the one to the hypoelliptic case. All such questions will be covered in future work.

	\section{Notation and Assumptions}\label{sec:notation} 
	From now on, unless otherwise stated, we only refer to the non fully coupled regime, i.e. we consider the following slow-fast system 
	\begin{align}
		dX_{t}^{\epsilon,x,y} & =  b(X_{t}^{\epsilon,x,y}, Y_{t}^{\epsilon,x,y}) dt + \sqrt{2}\sigma(X_{t}^{\epsilon,x,y}) \, dW_t \label{slow} \\ 
		dY_{t}^{\epsilon,x,y} & = \frac{1}{\epsilon}g(X_{t}^{\epsilon,x,y},Y_{t}^{\epsilon,x,y} ) dt + \sqrt{\frac{{2}}{{\epsilon}}} a(X_{t}^{\epsilon,x,y})\, dB_t  \,. \label{fast} 
	\end{align}
	The corresponding `frozen process' of interest is then 

		\begin{equation}\label{fastF}
			d\fastFixed_t  = g({x},\fastFixed_t) dt + \sqrt{2} a({x})\, dB_t \,,  \qquad \fastFixed_0 = y\,,
		\end{equation}
	and the limiting, averaged process is 
			\begin{equation}\label{eqn:averaged}
			d\bar{X}_{t}^{x} = \bar b(\bar{X}_{t}^{x}) dt +  \sqrt{2}\sigma(\bar{X}_{t}^{x}) dW_t, \quad \bar X_0 = x,    \end{equation}
		where $\bar{b}$ is defined as in \eqref{avgDef}. The fully coupled regime is treated separately in Section \ref{subsec:fullycoupledresults} and Section \ref{subsec:fullycoupledsketch}.
	
	
		\subsection{Notation and Assumptions}Let $\bar \cP_t$ be the semigroup associated with the process \eqref{eqn:averaged}, i.e.
	
	\begin{equation} \label{eqn:avgedSGdef}
		(\bar \cP_t f)(x):= \E\left[f(\bar{X}_{t}^{x})\right],  \qquad f \in C_b(\R^n)\,, \end{equation} 
	where $\E$ denotes expectation. It is well-known that under Assumptions \ref{ass:polGrowth}-\ref{DriftAssumpS}, which we will state below, such a semigroup is a classical solution to the PDE (see \cite[Theorem 1.6.2]{cerrai2001second})
	\begin{equation}\label{eq:barPDE}
		\begin{cases}\partial_t(\cPbar_t f)(x)=\widebar{\cL}\,  \cPbar_t f(x)\\
			(\cPbar_0 f)(x)=f(x),
		\end{cases}
	\end{equation}
	where $\bar \cL$ is the second order differential operator formally acting on smooth functions as 
	\begin{equation}\label{bar L}
		(\bar\cL f)(x) := (\bar b (x) , \nabla_x f(x)) + \sigma\sigma^T(x) : \text{Hess}_x f(x).
	\end{equation}
	In the above and throughout $M:N\coloneqq \text{Tr}\{M^TN\}=\sum_{i,j}M_{ij}N_{ij}$  denotes the Frobenius inner product between two matrices $M=(M_{ij})$ and $N=(N_{ij})$ and  $\text{Tr}$ denotes the trace. Moreover, coherently with \eqref{avgDef}, for any function $\psi: \mathbb{R}^n \times \mathbb{R}^d \rightarrow \mathbb{R}$ we let \begin{equation}\label{eqn:barNotation}
		\widebar{\psi}(x):= \int_{\R^d} \psi(x,y) d\mu^{x}(y)\,.
	\end{equation}
	We denote by $\cP\eps_t$ the semigroup associated with the slow-fast dynamics \eqref{slow}-\eqref{fast}, acting on functions $\psi \in C_b(\R^{n}\times \R^d)$, i.e.
	\begin{equation}
		(\cP\eps_t\psi)(x,y):= \E \left[\psi(X_{t}^{\epsilon,x,y}, Y_{t}^{\epsilon,x,y})\right] \,. \label{eqn:fullSGdef}
	\end{equation}
	The generator of this semigroup is the second order differential operator formally defined to act on smooth functions as
	\begin{align}\label{decompLepsilon}
		(\mathfrak{L}_\epsilon \psi)(x,y) =  (\cL_S\psi)(x,y)+ \frac{1}{\epsilon} (\cL^x\psi)(x,y)
	\end{align}
	where
	\begin{align}\label{LFast and LSlow}
			(\cL_S\psi)(x,y) &= (b (x,y) , \nabla_x \psi(x,y)) + \sigma(x) \sigma(x)^T : \text{Hess}_x \psi(x,y)\\
			(\cL^x \psi)(x,y) & = (g (x,y) , \nabla_y \psi(x,y)) + a(x)  a(x)^T : \text{Hess}_y \psi(x,y).  \label{eqn:gen}
	\end{align}
	We emphasise  that $\cL_S$ and $\cL^x$ are differential operators  in the $x$ and $y$ variables only,  respectively, and they correspond to the slow and fast part of the dynamics, respectively. We let
	\begin{equation}\label{eqn:bigDiffdef}
		\Sigma (x) \coloneqq \sigma\sigma^T (x), \quad  A(x) \coloneqq aa^T(x).
	\end{equation}
	We want to compare the dynamics $\Xbar_t$ with the dynamics $X_{t}^{\epsilon,x,y}$, but $X_{t}^{\epsilon,x,y}$ alone does not generate a semigroup. Hence we consider the semigroup $\cP_t^\epsilon$, which corresponds to the system $(X_t^\epsilon,Y_t^\epsilon)$, and restrict our attention to the case in which such a semigroup acts on functions $f\colon \R^n \rightarrow \R$ which depend on the variable $x$ only. Note that while $f$ depends only on the variable $x$, the function $(\cP_t\eps f)(x,y)$ depends on both variables. In other words, we restrict to considering initial value problems where the initial profile is a function independent of $y$: 
	\begin{equation}\label{eq:epsPDE}
		\begin{cases}\partial_t(\cP_t\eps f)(x,y)=\mathfrak{L}_\epsilon (\cP_t\eps f)(x,y)\\
			(\cP_0\eps f){(x,y)}=f(x) \,.
		\end{cases}
	\end{equation}
	

	We will use the following notation:
	
	\begin{itemize}
		\setlength\itemsep{0.5em}
		\item 
		We denote by $P^x_t$ the `frozen semigroup', i.e. the semigroup associated with the process $\fastFixed_t$ (defined in \eqref{fastF}):
		\be\label{infty}	
		(\SGfastsxn{t}{x} f)(y):= \E\left[f(\fastFixed_t)\right],  \qquad f \in C_b(\R^d)\,.
		\ee
		\item For every fixed $x \in \R^n$, the invariant measure of $\fastFixed_t$ is denoted $\mu^x$  (by ergodicity of $\fastFixed_t$, see Section \ref{sec:sec3} such a measure does not depend on $y$).  
		As customary, for any function $\psi : \R^d \rightarrow \R$ and any measure $\nu$ on $\R^d$, we write		\begin{equation*}
			\nu(\psi) \coloneqq \int_{\mathbb{R}^d}\psi(\tilde{y}) \nu(d\tilde{y}).
		\end{equation*}
		When we integrate against $\mu^x$, by \eqref{eqn:barNotation} we have  $$\mu^x(\psi)=\bar \psi(x) \,.$$
		We define the space $L^2(\mu^x)$ as the space of functions $h:\R^d\to\R$ such that $$\int_{\R^d}|h(y)|^2 \mu^x(dy) < \infty.$$
		\item The partial derivative with respect to the $i$-th coordinate of $x \in \R^n$ is denoted $\slowDer{i}$ and for higher derivatives we write $\slowDerDer{i}{j}$. We  will use the multi-index notation $\partial_x^{\gamma}= \partial_{x_1}^{\gamma_1}\ldots \partial_{x_n}^{\gamma_n}$ where $\gamma = (\gamma_1, \ldots, \gamma_n)$ is an index of length $n$ and $\gamma_i \in \mathbb{N}\cup \{0\}$. Similarly, we write $\partial_y^{\gamma}$ where  $\gamma = (\gamma_1, \ldots, \gamma_d)$, for partial derivatives in $y$ (if we differentiate with respect to $y$ then it is understood that $\gamma$ is of length $d$). We also let  $|\gamma|_{*} \coloneqq \sum_{i=1}^d \gamma_i$ (whether the sum is up to $n$ or $d$ will be clear from context).
		\item For a multivariable function $\phi : \mathbb{R}^n \times \mathbb{R}^d \rightarrow \mathbb{R}^n$ and $i\in\{1,\ldots,n\}$ we refer to the $i^{th}$ coordinate as $\phi_i(x,y)$, and similarly for $\phi : \mathbb{R}^n \times \mathbb{R}^d \rightarrow \mathbb{R}^n \times \mathbb{R}^d$ we refer to the $(i,j)^{th}$ element as $\phi_{ij}(x,y)$.
		\item $C^k(\R^n)$  ($C_b^k(\R^n)$, respectively),   will denote the set of continuous (continuous and bounded, respectively) real valued functions on $\R^n$ with continuous (continuous and bounded, respectively) derivatives up to and including order $k$. For $f \in C_b^k(\R^n)$ we define the norm $\lVert f \rVert_{C_b^k}$ as 
		\begin{equation*}
			\lVert f\rVert_{C_b^k} = \sum_{0\leq \lvert\gamma\rvert_*\leq k} \lVert \partial_x^\gamma f\rVert_\infty,
		\end{equation*}
		where $\lVert \cdot \rVert_\infty$ denotes the sup-norm. Analogously, the space $C^{k,\ell}(\R^n\times \R^d)$ 
		is the space of  functions $g:\R^n\times\R^d\to\R$ such that the derivatives $\partial_x^\gamma\partial_y^{\tilde{\gamma}}g$ exist and are  continuous for any $\gamma$ such that $\lvert\gamma\rvert_*\leq k$ and $\tilde{\gamma}$ such that  $\lvert\tilde{\gamma}\rvert_*\leq \ell$.
		\item For any $\phi\in C^{0,k}(\R^n\times\R^d)$ we define the following seminorm 
		\begin{equation}\label{eqn:Vseminormdef}
			\Vseminorm{\phi}{k,m,m'} = \sup_{1\leq \lvert\gamma\rvert_*\leq k}\sup_{x\in\R^n,y\in\R^d} \left\lvert \frac{\partial_y^\gamma \phi(x,y)}{1+\mathbb{I}_{m>0}\lvert x\rvert^m+\mathbb{I}_{m'>0}\lvert y\rvert^{m'}}\right\rvert \, ,
		\end{equation}
		where $\gamma$ varies over all multi-indices of length $d$ and $\mathbb{I}$ is the indicator function.\footnote{The reason we introduce the indicator function in the notation is so that, in the case $m=0$, $\Vseminorm{\phi}{k,0,m'}$ is the constant such that
			$\lvert\partial_y^\gamma \phi(x,y)\rvert\leq\Vseminorm{\phi}{k,m,0}(1+|y|^{m'})$, rather than the constant such that
			$\lvert\partial_y^\gamma \phi(x,y)\rvert\leq\Vseminorm{\phi}{k,0,m'}(2+|y|^{m'})$.} Note that $\Vseminorm{\phi}{k,m,m'}<\infty$ if and only if all $y$ derivatives of $\phi$ up to order $k$ are polynomially bounded with exponent at most $m$ in $x$ and $m'$ in $y$, however the function itself need not be bounded. 
		The next norm we define does include derivatives in $x$ and the function itself:  for $k\in \N\cup\{0\}$ even,  $m,m'\in \N\cup\{0\}$ and $\phi\in C^{k/2,k}(\R^n\times\R^d)$ let 
		\begin{equation*}
			\Vnorm{\phi}{k,m,m'} =  \sup_{ 0\leq 2\lvert\gamma \rvert_*+\lvert\tilde{\gamma}\rvert_*\leq k} \sup_{x\in\R^n,y\in\R^d}\left\lvert \frac{\partial_x^{\gamma}\partial_y^{\tilde{\gamma}} \phi(x,y)}{1+\mathbb{I}_{m>0}\lvert x\rvert^m+\mathbb{I}_{m'>0}\lvert y\rvert^{m'}}\right\rvert,
		\end{equation*}
		where $\gamma$ and $\tilde{\gamma}$ are indices of length $n$ and $d$ respectively.
		We say that $\phi \in \funcSpace{m}{m'}$ if $\phi \in \C^{2,4}(\R^n\times\R^d)$ and $\Vnorm{\phi}{4,m,m'}<\infty$. 
		We will often use the sets $\funcSpace{0}{0}$ and $\funcSpace{0}{m}$ so let us spell out what they contain. A function $\phi $ is in  $\funcSpace{0}{0}$ if and only if $\phi\in \C^{2,4}(\R^n\times\R^d)$ and all the derivatives $\pa_x^{\gamma}\pa_y^{\tilde{\gamma}} \phi$ such that $2|\gamma|_*+|\tilde\gamma|_* \leq 4$ are bounded; $\phi\in \funcSpace{0}{m}$ if and only if $\phi\in \C^{2,4}(\R^n\times\R^d)$  and all the derivatives $\pa_x^{\gamma}\pa_y^{\tilde{\gamma}} \phi$ such that $2|\gamma|_*+|\tilde\gamma|_* \leq 4$ are  bounded in $x$ and polynomially bounded in $y$ by $c(1+|y|^m)$. Motivation for the choice of the norm $\lVert\cdot\rVert_{k,m,m'}$ can be found in Note \ref{Rem:der est for Pois unbounded}. For a multivariable function $\phi \colon \R^n \times \R^d \rightarrow \R^{n_0}$,  $n_0 \geq 0$, we define $\Vnorm{\phi}{k,m,m'} := \max_{1\leq i\leq n_0}\Vnorm{\phi_i}{k,m,m'}$,  for every $k,m,m' \geq 0$.
		\item For any $0<\nu<1$,  $C^{4+\nu}(\R^d)$  is the set of four times differentiable functions whose $4^{th}$ order derivatives are locally $\nu$-H\"older continuous.
	\end{itemize}
	
	We now list our main assumptions and then comment on them in Note \ref{rem:commentonassumptions}.
	\begin{assumption}[Growth of coefficients]\label{ass:polGrowth} Recall $\Sigma$ and $A$ defined by \eqref{eqn:bigDiffdef}. 
		\begin{enumerate}[label=\textnormal{[C\arabic*]},ref={[C\arabic*]}]
			\item\label{ass:polGrowthDrift}
			There exist $\pow{b}{x},\pow{b}{y}>0$ such that $b_i \in \funcSpace{\pow{b}{x}}{\pow{b}{y}}$ for all $1 \leq i \leq n$.
			\item\label{ass:polGrowthDiff}
			$\Sigma_{ij} \in \funcSpace{0}{0}$ for all $1 \leq i,j \leq n$.
			\item 	\label{ass:polGrowthDriftFast}
			There exists $0<\nu<1$ such that for each $x \in \R^n$, $g_i(x, \cdot) \in C^{4+\nu}(\R^d)$, and there exists $\pow{g}{y}>0$ such that $g_i \in\funcSpace{0}{\pow{g}{y}}$ for all $1 \leq i \leq d$.
			\item 	\label{ass:polGrowthDiffFast} $A_{ij} \in  \funcSpace{0}{0}$ for all $1 \leq i,j \leq d$.
		\end{enumerate}
	\end{assumption}
	
	
	
	\begin{assumption}[Uniform ellipticity]\label{UniformEllip}
		There exist constants $\lambda_{-}$, and  $\lambda_{+}$ such that 
		\begin{equation}\label{eqn:unifEllip1}
			0 < \lambda_{-} \leq \left(\frac{A(x)\xi}{|\xi|}, \frac{\xi}{|\xi|}\right) \leq\lambda_{+}\, , \quad 
			\mbox{for every } x \in \R^n \mbox{ and }  \xi \in \mathbb{R}^d \setminus \{0\}.
		\end{equation}
	and
		\begin{equation}\label{eqn:unifEllip2}
			0 < \lambda_{-} \leq \left(\frac{\Sigma(x)\xi}{|\xi|}, \frac{\xi}{|\xi|}\right) \leq\lambda_{+}\, , \quad \mbox{for every } x \in \R^n \mbox{ and }  \xi \in \mathbb{R}^n \setminus \{0\}.
		\end{equation}
		
	\end{assumption} 
	\begin{assumption}[Lyapunov condition for frozen process]
		
		
		\label{DriftAssump}
		For every integer $k\geq 0$, there exist constants $r_k, C_k > 0$ (independent of $x \in \mathbb{R}^n$ and $y \in \R^d$) such that
		\begin{equation} \label{eq:driftassump}
			\left(g(x,y), y\right) + (k-1)a(x):a(x) \leq -r_k|y|^2 + C_k
		\end{equation}
		for every $x\in \mathbb{R}^n$ and $y \in \mathbb{R}^d$.
		
	\end{assumption}
	
	
	\begin{assumption}[Lyapunov condition for slow process]\label{DriftAssumpS}
		There exist constants $\tilde{r}, \tilde{C} > 0$ (independent of $x\in \R^n, y \in \mathbb{R}^d$), such that
		\begin{equation*} 
			\left(b(x,y), x\right) + (4\pow{b}{x}-1)\sigma(x)\colon \sigma(x) \leq -\tilde{r}|x|^2 + \tilde{C}
		\end{equation*}
		for every $x\in \mathbb{R}^n$ and $y \in \mathbb{R}^d$.

	\end{assumption}
	
	\begin{assumption}[Drift condition for the frozen process]\label{ass:SGcond}
		%
		There exist $\cb,\zeta_1,\zeta_2,\zeta_3>0$ independent of $x,y,\xi$ such that for any $x\in \R^n, y\in \R^d, \xi\in \R^d$ we have
		\begin{equation}\label{eq:driftcondition}
			\begin{aligned}
				&2 \sum_{i,j=1}^d \partial_{y_i}g_j(x,y) \xi_i\xi_j+\sum_{i,j=1}^d\zeta_1(\partial_{y_i}\partial_{y_j}g(x,y), \xi)^2 +\sum_{i,j,k=1}^d\zeta_2(\partial_{y_i,y_j,y_k}g(x,y), \xi)^2\\
				&+\sum_{i,j,k,\ell=1}^d\zeta_3(\partial_{y_i,y_j,y_k,y_\ell}g(x,y), \xi)^2 \leq -\cb \lvert\xi\rvert^2.
			\end{aligned}
		\end{equation}
	\end{assumption}
	If  Assumption \ref{ass:polGrowth} \ref{ass:polGrowthDrift}, \ref{ass:polGrowthDiff} holds, 
	then	there exist some  constants $\yfourder,\dots, \ytwoderyy \geq 0$  such that the following holds
	\begin{align}
		&|\partial^{\gamma}_yb_i(x,y)| \leq C\left(1+|x|^{\yfourder}+|y|^{\yfourdery}\right)\quad \text{ for all }|\gamma|_*=4, \nonumber \\
		&|\partial^\gamma_y\partial^{\tilde{\gamma}}_xb_i(x,y)| \leq C\left(1+|x|^{\ytwoderyonex}+|y|^{\ytwoderyoney}\right) \quad\text{ for all } |\gamma|_*=2,|\tilde{\gamma}|_*=1, \nonumber \\
		&|\partial^{\tilde{\gamma}}_xb_i(x,y)| \leq C\left(1+|x|^{\ytwoderx}+|y|^{\ytwoderxy}\right)\quad \text{ for all }|\tilde{\gamma}|_*=2,  \label{starstarAss1}\\
		&|\partial^\gamma_yb_i(x,y)| \leq \Vseminorm{b}{2,\ytwodery ,\ytwoderyy}\left(1+|x|^{\ytwodery}+|y|^{\ytwoderyy}\right)\quad \text{ for all }1 \leq |\gamma|_* \leq 2, \nonumber \\
		&|\partial^{\gamma}_x\Sigma_{ij}(x)| \leq K_\Sigma\quad\text{ for all } 1\leq |\tilde{\gamma} |_* \leq2, \nonumber 
	\end{align}
	for some $C,K_\Sigma\geq0$ and for  all $i,j=1,\ldots,d, $ $x\in\R^n,y\in\R^d$, where in the above $\gamma,\tilde{\gamma}$ are indices of length $d$ and $n$ respectively.
	Furthermore, by Assumption \ref{ass:polGrowth} \ref{ass:polGrowthDriftFast},\ref{ass:polGrowthDiffFast} there exist constants $\onederxg, K_A\geq 0$ such that for all $i,j=1\ldots d$,
	\begin{align*}
		&|\partial^{\tilde{\gamma}}_xg_i(x,y)| \leq K_g\left(1+|y|^{\onederxg}\right)\quad \text{ for all }|\tilde{\gamma} |_*= 1, \\
		&|\partial^{\tilde{\gamma}}_xA_{ij}(x)| \leq K_{A}\quad\text{ for all } 1 \leq |\tilde{\gamma} |_* \leq 2,
	\end{align*}
	where we set $K_g := \max_{1\leq |\tilde{\gamma}|\leq 2}\Vnorm{\partial^{\tilde{\gamma}}_{x} g}{0,0 ,\onederxg}.$
	
	\begin{assumption}[Drift condition for slow process]\label{ass:avgSGcond}
		Let Assumptions \ref{ass:polGrowth} to \ref{ass:SGcond} hold. 
		Assume that there exists $\zeta>0$ independent of $x,y,\xi$ such that for any $x, \xi\in \R^n, y \in \R^d$ we have
		\begin{align}\label{eq:avgdriftcondition}
			\begin{split}
				\sum_{i,j=1}^n \partial_{x_i}b_j(x,y) \xi_i\xi_j 
				\leq -  \Bigg(n\frac{2D_0}{\cb} \Vseminorm{b}{2,\ytwodery,\ytwoderyy} D_b(x)
				+\zeta(1+|x|^{\yfourder} +|x|^{  \ytwoderyonex} +|x|^{  \ytwoderx})+\frac{K_\Sigma^2n^3}{4\lambda_-}\Bigg)|\xi|^2
			\end{split}
		\end{align}
		where $\cb$ is given by \eqref{eq:driftcondition},
		$
		D_0 := \sqrt{3}\max\{(\zeta_1\lambda_-)^{-1/4} ,1\}d\left( d+\sqrt{\zeta_1\lambda_-}d^2\right)^{1/2}
		$
		and 
		\begin{align*} 
			D_b(x) &:= 
			\left(\left(K_g+K_{A}\right)\left(1+2\sqrt{\frac{C'_{2\ytwoderyy}}{r'_{2\ytwoderyy}}}\right)+K_g\left(\frac{C'_{\ytwoderyy+\onederxg}}{r'_{\ytwoderyy+\onederxg}}+\frac{C'_{\onederxg}}{r'_{\onederxg}}+ \sqrt{\frac{C'_{2\ytwoderyy}}{r'_{2\ytwoderyy}}}\frac{C'_{\onederxg}}{r'_{\onederxg}}\right)\right) \\
			&+\mathbb{I}_{\ytwodery> 0}\left( K_g+K_{A} + K_g\frac{C'_{\onederxg}}{r'_{\onederxg}} \right)|x|^{\ytwodery}.\end{align*}
		
		In the above, for $m>0$, $C_m'$ and $r_m'$ are defined in Lemma \ref{prop:momentBounds};  $C'_0 =0$ and $r'_0=1$.  
		
		
	\end{assumption}
		
	\begin{Note}\label{rem:commentonassumptions}
		Let us comment on each of the above assumptions in turn.
		\begin{enumerate}
			\item Assumption \ref{ass:polGrowth} requires that all the coefficients are $C^{2,4}$, the drift coefficients $b$ and $g$ have at most polynomial growth and the diffusion coefficients $\Sigma$ and $A$ are bounded. 
			\item Assumption \ref{UniformEllip} is a uniform ellipticity assumption and is used to ensure differentiablility of the semigroups $\cP_t^\epsilon, \SGfastsxn{t}{x}$ and, $\bar{\cP}_t$. More precisely, \eqref{eqn:unifEllip1} gives differentiability in $y$ of $\cP_t^\epsilon f$ and $P^x_t f$, while \eqref{eqn:unifEllip2} gives differentiability in $x$ of $\bar \cP_t f$ and $\cP_t^\epsilon f$.  
			
			\item Assumption \ref{DriftAssump} is relatively standard in the literature, see for example \cite{liu2020averaging}, and (since it is enforced for every $k$) it ensures  that all the moments of \eqref{fastF}  are  uniformly bounded in $t$. In principle we require for Assumption \ref{DriftAssump} to hold for every $k$. However,  since $a$ is bounded (as $A$ is bounded, by Assumption \ref{ass:polGrowth}),  if Assumption \ref{DriftAssump} holds with $k=1$ then it holds for every integer $k\geq 1$ (if it is true for $k=1$ then we can take $r_k = r_1$, $C_k = C_1 + (k-1)\lambda_+$).  
			When we also impose Assumption \ref{DriftAssumpS},  we have that the process \eqref{slow}-\eqref{fast} has sufficient number of moments uniformly bounded in  $t$ (and $\epsilon$), see Lemma \ref{prop:fullMombound}. 
			\item Assumption \ref{DriftAssump} and Assumption \ref{UniformEllip} combined,  give existence and uniqueness of the invariant measure $\mu^x$ of the semigroup $P_t^x$ (for each $x$ fixed) and exponential convergence to $\mu^x$ as well. Such ergodic properties of $P_t^x$ are known, when $x$ is fixed. What we will additionally need is to control the way in which the rate of exponential convergence depends on $x$, see comments after Proposition \ref{lemma:IntAvg} and Section \ref{sec:sec3}.

			\item Assumption \ref{ass:polGrowth}, Assumption \ref{DriftAssump} and Assumption \ref{DriftAssumpS} are sufficient to have pathwise well-posedness of the SDEs \eqref{slow}-\eqref{fast}, \eqref{eqn:averaged} and \eqref{fastF}, by \cite[Theorem 3.1.1]{LiuRockner}.\footnote{\cite[Theorem 3.1.1]{LiuRockner} has two main assumptions, a local weak monotonicity condition,  \cite[Equation (3.3)]{LiuRockner}, and weak coercivity condition,  \cite[Equation (3.4)]{LiuRockner}. The former follows  if all the coefficients are locally Lipschitz which in turn follows if they  are continuously differentiable, which is the case by   Assumption \ref{ass:polGrowth} for  $b,g,\sigma,a$. 
				We show in Proposition \ref{lemma:IntAvgDerRepresentation} that $\bar{b}$ is continuously differentiable and therefore local weak monotonicity holds for \eqref{eqn:averaged} as well. 
				The weak coercivity condition holds for the SDE \eqref{fastF} by Assumption \ref{DriftAssump}, and for the SDE \eqref{slow}-\eqref{fast} by  Assumption \ref{DriftAssumpS}.  Integrating Assumption \ref{DriftAssumpS} with respect to $\mu^x$, such a condition also holds for the SDE \eqref{eqn:averaged}. }
			
			\item Assumption \ref{ass:SGcond} is used to ensure SES (i.e. decay of the derivative) of the `frozen semigroup' $\SGfastsxn{t}{x}$, that is to show that $\eqref{derestgeneral}_2$ (or, more precisely,  \eqref{eqn:SGdecayConc}) holds. Therefore we can replace Assumption \ref{ass:SGcond} by assuming that the frozen semigroup is SES, see Section \ref{sec:mainresults} for more details.  
			Similarly, Assumption \ref{ass:avgSGcond} is used to ensure the decay of the derivative of the averaged semigroup $\bar{\cP}_{t}$ and may be replaced by assuming that $\bar{\cP}_t$ is SES. We emphasize that  Assumption \ref{ass:avgSGcond} is an assumption on the drift $b$ which will be used to derive properties of the averaged coefficient $\bar b$. 
			
			\item If the second, third and fourth order derivatives of $g$ are bounded then Assumption \ref{ass:SGcond} reduces to finding $\overline{\cb}>0$ such that
			\begin{equation}\label{eqn:driftCondSimpler}
				2 (\xi, \nabla_y g(x,y)\xi) \leq -\overline{\cb} \lvert \xi\rvert^2.
			\end{equation}
			Indeed \eqref{eqn:driftCondSimpler} implies that Assumption \ref{ass:SGcond} holds for some $\cb>0$ (by taking $\zeta_1,\zeta_2$ and $\zeta_3$ sufficiently small).

			\item Let us give two examples in which Assumption \ref{ass:avgSGcond} takes a simpler form. Firstly, if $b$ is of the form $b(x,y) = b_1(x) + b_2(y)$, then $\ytwodery$, as defined before Assumption \ref{ass:avgSGcond}, vanishes, implying that $D_b(x)$ is constant. If, further, $b_2$ has bounded (in $y$) first derivatives and hessian, then $D_b$ simplifies to		\begin{equation*}
				D_b(x) = 3\left(K_{g}+K_{A}\right)
			\end{equation*}
		\end{enumerate}
	\end{Note}
	An example satisfying all of the above assumptions is given by the following system:
	\begin{align}\label{eqn:slowEx}
		dX_{t}^{\epsilon,x,y} & =  (-\left(X_t^{\epsilon,x,y}\right)^3 - X_t^{\epsilon,x,y} +b_0( Y_{t}^{\epsilon,x,y}) )dt + \sqrt{2} \, dW_t \\ \label{eqn:fastEx}
		dY_{t}^{\epsilon,x,y} & = \frac{1}{\epsilon}(-\left(Y_t^{\epsilon,x,y}\right)^3 - Y_t^{\epsilon,x,y}+g_0(X_{t}^{\epsilon,x,y})) dt + \frac{1}{\sqrt{\epsilon}} \sqrt{2}\, dB_t
	\end{align}
	where $(X_{t}^{\epsilon,x,y}, Y_t^{\epsilon,x,y}) \in \R \times \R$ and $b_0,g_0\in C_b^\infty(\R)$. Both the slow and fast component are Langevin-type dynamics, we will make more remarks on the above system in Example  \ref{ex:nonLip}. 
	 Further examples are given in the next section and in Section \ref{sec:numerics}.
	\begin{Note}\label{rem:xavgderivs}
		One difficulty in obtaining derivative estimates for the semigroup $\bar{\cP}_tf$ given by \eqref{eqn:avgedSGdef}, is that the coefficients of the SDE \eqref{eqn:averaged} depend on the invariant measure of  equation \eqref{fastF}, which is itself a function of $x$. Indeed, recall that the coefficient $\overline{b}$ is defined by \eqref{avgDef} and depends on $x$ both through the function $b$ itself and through the measure $\mu^x$ so in general has a complicated dependence on $x$. With an explicit expression for $ \mu^x$ (which might or might not be available, depending on the specific application at hand, see e.g. \cite{hairerPav,pavliotis2022derivative}), and hence for $\bar b$, one could verify an assumption of the type \eqref{eq:driftcondition} (or, in the case of non-constant drift, use Theorem \ref{thm:lorenziderest} directly) to obtain the desired DEs. In the absence of such an expression, we use a different approach, see Section \ref{sec:SGder}. 
	\end{Note}
	

	
	\section{Main Results} \label{sec:mainresults}
	In this section we gather our main results. In particular, in Subsection \ref{subsec:nonfullycoupledresults} we gather our results regarding the non fully coupled regime, in Subsection \ref{subsec:fullycoupledresults} the results in the fully coupled case.  
	\subsection{The non fully coupled regime} \label{subsec:nonfullycoupledresults}
	We start by stating results on Poisson equations and then our UiT averaging result for SDEs.

	{\bf Poisson equations.} We consider equations of the form \eqref{poisprobgeneral}
	where $u:\R^n\times \R^d \rightarrow \R$ is the unknown,  while $\phi$ is a given function, assumed to be in the space  $\funcSpace{m}{m'}$ for some $m,m'\geq 0$ and the operator $\mathcal{L}^x$ is a second order differential operator of the form \eqref{eqn:gen} (not of the form \eqref{heart}, which is of interest only when studying the fully coupled regime). Let us clarify that, throughout the paper, with the exception of  Section \ref{subsec:fullycoupledresults} and Section \ref{subsec:fullycoupledsketch} where the fully coupled regime is treated or unless otherwise stated, when we refer to the Poisson equation \eqref{poisprobgeneral}, we mean \eqref{poisprobgeneral} with $\mathcal L^x$ as in \eqref{eqn:gen}. 
	 
  Because $x$ in the above appears only as a parameter,   it is sometimes useful to refer to a function $\phi(x,y)$ as $\phi^x(y)$ and from now on we will use these notations interchangeably;  moreover,  when we need to emphasise the dependence of the solution $u$ on $\phi$, we denote the solution to the Poisson equation \eqref{poisprobgeneral} as $u(x,y)=u^x_\phi(y)$. As we have already said, such a  solution is in general not unique, since constants belong to the kernel of $\cL^x$. However, we restrict to the set of mean-zero functions, i.e. to the set of functions $f$ such that $\mu^x(f)=0$, to ensure the solution,  if it exists, is unique . When we refer to  the solution to \eqref{poisprobgeneral}, we understand this to mean the mean-zero solution. 
	Moreover, in order for the Poisson problem \eqref{poisprobgeneral} to have a solution, it is necessary for  the RHS of \eqref{poisprobgeneral} to be a  mean-zero function,  as can be seen by integrating the LHS of  \eqref{poisprobgeneral} with respect to $\mu^x$;  this is satisfied in \eqref{poisprobgeneral} by the definition of $\bar \phi$.
	
	\begin{theorem}\label{mainthmpois}
		Let $\drift$ and $\diffusion=aa^T$ satisfy Assumption \ref{ass:polGrowth} \ref{ass:polGrowthDriftFast},\ref{ass:polGrowthDiffFast}, Assumption \ref{UniformEllip}, Assumption \ref{DriftAssump} and Assumption \ref{ass:SGcond}.  Let $\phi \in \funcSpace{\pow{}{x}}{\pow{}{y}}$, for some $\pow{}{x},\pow{}{y}\geq 0$. Then the function $\bar{\phi}$ is well defined, the solution $u_\phi$ of the Poisson equation \eqref{poisprobgeneral} exists, it is unique (in the class of mean-zero functions) and it is given by
		\begin{equation} \label{ft1rep}
		u^x_\phi(y) \coloneqq -\intPos \left(\SGfastsx{s}{x}\phi^x(y) - \bar{\phi}(x)\right) ds\,.
	\end{equation}
		 Moreover, 
		 
		$\bullet$ There exists some $C>0$ (which may depend on $\pow{}{x},\pow{}{y}$ but is independent of the choice of $\phi\in \funcSpace{\pow{}{x}}{\pow{}{y}}$) such that the solution $u_\phi^x$ satisfies the following bound
		\begin{equation}\label{eqn:IntAvg2}
			\left| u_\phi^x(y) \right| \leq C\Vnorm{\phi}{0,\pow{}{x},\pow{}{y}}(1+|x|^{\pow{}{x}}+|y|^{\pow{}{y}}) \quad \text{ for all } x\in \R^n, y\in \R^d.
		\end{equation}
		
		$\bullet$  
		 Both the solution $u_\phi$ and the function $\bar{\phi}$ are twice differentiable in $x$ and there exists $C>0$ such that 
		\begin{align}
			&\left| \slowDer{i}\bar{\phi} \right| \leq C\Vnorm{\phi}{2,\pow{}{x},\pow{}{y}}(1+|x|^{2\pow{}{x}}),\label{eq:intAvgDerInv} \\
			&\left| \slowDerDer{i}{j}\bar{\phi} \right| \leq C\Vnorm{\phi}{4,\pow{}{x},\pow{}{y}}(1+|x|^{4\pow{}{x}}),\label{intAvgDerDerInv}\\ 
			&\left| (\slowDer{i} u_{\phi}^x)(y) \right| \leq C\Vnorm{\phi}{2,\pow{}{x},\pow{}{y}}(1+ |y|^{M^{g}_{y}}+|x|^{2\pow{}{x}}),\label{eq:intAvgDer} \\
			&\left| \slowDerDer{i}{j} u_{\phi}^x(y) \right| \leq C\Vnorm{\phi}{4,\pow{}{x},\pow{}{y}}(1+ |y|^{M^{g}_{y} + \pow{\drift}{y}}+|x|^{4\pow{}{x}})\label{eqn:intAvgDerDer},
		\end{align}
		for all $x\in\R^n,y\in \R^d$,  $i,j\in \{1,\ldots,n\}$, where $M^{g}_{y}\coloneqq \max\{2\pow{\drift}{y},\pow{\drift}{y}+\pow{}{y}\}$.
	\end{theorem}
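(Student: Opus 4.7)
The plan is to assemble the theorem from four ingredients: (i) convergence of the integral in \eqref{ft1rep} so that $u^x_\phi$ is well-defined; (ii) verification that this $u^x_\phi$ solves the Poisson equation; (iii) uniqueness within the mean-zero class; and (iv) the pointwise bounds \eqref{eqn:IntAvg2}--\eqref{eqn:intAvgDerDer}. Broadly, (i)--(iii) should follow from exponential ergodicity of the frozen semigroup $P_t^x$ together with polynomial moment bounds for \eqref{fastF}, while (iv), and in particular the $x$-derivative estimates, is where the main technical difficulty lies.

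For (i), I would use that Assumptions \ref{UniformEllip}, \ref{DriftAssump} and \ref{ass:SGcond} yield exponential convergence of $P_s^x \phi^x(y)$ to $\bar\phi(x)$ with rate independent of $x$, and that Assumption \ref{DriftAssump} provides Lyapunov-type moment bounds for \eqref{fastF} uniformly in $s$. Concretely, one expects an estimate of the form $|P_s^x \phi^x(y) - \bar\phi(x)| \leq C \Vnorm{\phi}{0,m_x,m_y}(1+|x|^{m_x}+|y|^{m_y}) e^{-\lambda s}$ for some $\lambda>0$; integrating in $s$ simultaneously gives the well-definedness of $u_\phi^x$ and the bound \eqref{eqn:IntAvg2}. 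All the ingredients for this are developed in Section \ref{sec:sec3}, and Lemma \ref{lemma:probRep} is the formal well-posedness statement to invoke. For (ii), one applies $\mathcal{L}^x$ to \eqref{ft1rep}, exchanges it with the integral (justified by the ellipticity Assumption \ref{UniformEllip} together with exponential decay of the $y$-derivatives of $P_s^x \phi$), and then uses the Kolmogorov equation $\partial_s P_s^x \phi = \mathcal{L}^x P_s^x \phi$ in combination with $P_s^x \phi^x(y) \to \bar\phi(x)$ to telescope the integral to $\phi - \bar\phi$. Uniqueness in (iii) is then immediate: any two mean-zero solutions differ by an element of $\ker \mathcal{L}^x$; ergodicity forces this difference to be constant, and the mean-zero constraint forces it to vanish.

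The bulk of the work is (iv), the $x$-derivative estimates. Here I would differentiate the representation \eqref{ft1rep} under the integral,
\begin{equation*}
\partial_{x_i} u^x_\phi(y) = -\int_0^\infty \partial_{x_i}\bigl(P_s^x \phi^x(y) - \bar\phi(x)\bigr) ds,
\end{equation*}
and likewise for $\partial_{x_i x_j}^2$. Justifying this exchange of derivative and integral, and obtaining the stated polynomial-in-$(x,y)$ bounds, both require that $\partial_{x_i}(P_s^x \phi^x - \bar\phi)$ and $\partial^2_{x_i x_j}(P_s^x \phi^x - \bar\phi)$ decay exponentially in $s$ with polynomial constants. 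This is precisely what strong exponential stability of $P_t^x$ in the parameter $x$, combined with ``transfer formulas'' in the spirit of \cite{pardoux2003, rockner2020diffusion}, is designed to deliver. Each $x$-derivative of $g(x,\cdot)$ contributes a factor polynomial in $y$ of exponent at most $m^g_y$, and each $x$-derivative of $\phi$ produces growth of exponent at most $m_x$ in $x$ and $m_y$ in $y$; cascading these through one and two differentiations reproduces exactly the exponents $(2m_x, M^g_y)$ and $(4m_x, M^g_y + m^g_y)$ appearing in \eqref{eq:intAvgDer}--\eqref{eqn:intAvgDerDer}, while the dependence of the constants on $\Vnorm{\phi}{2,m_x,m_y}$ and $\Vnorm{\phi}{4,m_x,m_y}$ reflects the number of $y$-derivatives of $\phi$ consumed at each stage. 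The estimates \eqref{eq:intAvgDerInv}--\eqref{intAvgDerDerInv} on $\bar\phi$ then follow as a by-product, by exploiting $\bar\phi(x) = \lim_{s\to\infty} P_s^x \phi^x(y)$ together with the uniform-in-$x$ SES bounds (or equivalently by sending $s\to\infty$ in the differentiated representation). Proposition \ref{lemma:IntAvgDerRepresentation} is the place where this derivative-transfer argument is carried out in detail.

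The main obstacle I anticipate is engineering the transfer formula so as to keep careful track of polynomial growth at every stage. In the classical bounded-coefficient setting, the Pardoux--Veretennikov argument represents $\partial_{x_i} u$ as the solution of an auxiliary Poisson equation with a bounded forcing built from $\partial_{x_i}$-differentiated coefficients; here this forcing becomes polynomially growing in $y$, so the auxiliary problem must itself be solved in a polynomially-weighted class, and for $\partial^2_{x_i x_j} u$ one iterates, producing a further auxiliary problem with even larger polynomial weights. This is exactly why Assumption \ref{ass:SGcond} is formulated so as to produce exponentially decaying $y$-derivative estimates for $P_s^x$ up to order \emph{four}, rather than just first or second order: each layer of derivative transfer consumes one order of SES regularity, and two $x$-derivatives of $u$ therefore require four $y$-derivative SES estimates on the frozen semigroup, consistent with Note \ref{Rem:der est for Pois unbounded}.
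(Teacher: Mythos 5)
Your proposal is correct and follows essentially the same route as the paper: well-posedness and the representation formula via Lemma \ref{lemma:probRep}, the zeroth-order bound via Lemma \ref{lemma:IntAvg} and Lemma \ref{prop:momentBounds}, and the $x$-derivative estimates via the transfer formulas of Propositions \ref{lemma:IntAvgDerRepresentation} and \ref{lemma:IntAvgDerDer} driven by fourth-order SES of $P_t^x$. The only minor difference is that the paper establishes $\cL^x u^x_\phi=\phi^x-\bar\phi$ by passing to the $L^2(\mu^x)$-limit of the truncated solutions $u^{x,T}$ and invoking closedness of the generator (Lemma \ref{lemma:psneqn}), rather than differentiating under the improper integral directly, but both routes are viable given the SES and moment bounds you cite.
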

	\begin{proof}  Note \ref{note:avg} explains that $\phi$ is integrable with respect to $\mu^x$ and hence the function $\bar{\phi}$ is well defined.
		The proof of this theorem can be found in Section \ref{sec:psneqn}. In particular, well-posedness of  the Poisson problem \eqref{poisprobgeneral} is given in Lemma \ref{lemma:probRep}. The estimate \eqref{eqn:IntAvg2} follows from \eqref{ft1rep}, Lemma \ref{lemma:probRep} and  Proposition \ref{lemma:IntAvg}. For the proof of \eqref{eq:intAvgDerInv} and \eqref{eq:intAvgDer} see Proposition \ref{lemma:IntAvgDerRepresentation}, and for the proof of \eqref{intAvgDerDerInv} and \eqref{eqn:intAvgDerDer} see Proposition \ref{lemma:IntAvgDerDer}.
	\end{proof}
	
	The above result is of independent interest, but also instrumental to solving the averaging problem, see Note \ref{Rem:poisson for averaging}. Explanations on how SES  of the  semigroup $P_t^x$ helps to tackle the issue posed by the unboundedness of the drift coefficient of the operator $\cL^x$ in the study of the smoothness in $x$ of the solution Poisson problem \eqref{poisprobgeneral} can be found in Note \ref{Rem:der est for Pois unbounded}.

	{\bf Averaging for SDEs. } Our main result on averaging for SDEs in the non fully coupled regime \eqref{slow}-\eqref{fast} is the following. 
	
	\begin{theorem}\label{mainthm} Consider the slow-fast system \eqref{slow}-\eqref{fast} and the semigroups $\{ \cP^\epsilon_t\}_{t\geq0}$ and $\{ \cPbar_t\}_{t\geq0}$ defined in  (\ref{eqn:fullSGdef}) and (\ref{eqn:avgedSGdef}), respectively. Let Assumption \ref{ass:polGrowth} to Assumption \ref{ass:avgSGcond} hold. Then,  for every $f\in C^2_b(\mathbb{R}^n)$, there exists a constant $C>0$, {\em independent of time},  such that
		\begin{equation}\label{eqn:conclnonfullycoupled}
			\left| (\cP_t^\epsilon f)(x,y) - (\cPbar_t f)(x) \right| \leq \epsilon C\|f\|_{C_b^2} (1+ |y|^{M^{g, b}_{y}+\pow{g}{y}}+|x|^{4\pow{b}{x}}), \quad \forall x \in \R^n, \,y\in \R^d, 
		\end{equation}
		where $M^{g, b}_{y}\coloneqq \max\{2\pow{g}{y},\pow{g}{y}+m^{b}_{y}\}$.
	\end{theorem}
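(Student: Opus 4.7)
The plan is to introduce a corrector via the parameter-dependent Poisson equation of Theorem \ref{mainthmpois}, reducing the problem to controlling a remainder PDE whose source decays exponentially in time thanks to the strong exponential stability (SES) of $\cPbar_t$. Set $v(t,x,y) := (\cP_t^\epsilon f)(x,y) - (\cPbar_t f)(x)$. From \eqref{eq:barPDE}, \eqref{eq:epsPDE} and the splitting $\mathfrak{L}_\epsilon = \cL_S + \epsilon^{-1}\cL^x$, and using the two facts that $\cPbar_t f$ depends on $x$ only (so $\cL^x(\cPbar_t f)=0$) and that in the non-fully coupled regime $\sigma=\sigma(x)$ (so the diffusion parts of $\cL_S$ and $\bar\cL$ agree on $\cPbar_t f$), one has
\begin{equation*}
\partial_t v = \mathfrak{L}_\epsilon v + \Phi, \qquad v(0,\cdot,\cdot)=0, \qquad \Phi(t,x,y) := \bigl(b(x,y)-\bar b(x)\bigr)\cdot\nabla_x\cPbar_t f(x).
\end{equation*}
By definition of $\bar b$, $\Phi(t,x,\cdot)$ is mean-zero against $\mu^x$, so Theorem \ref{mainthmpois} applies componentwise to $\phi_i(x,y):=b_i(x,y)-\bar b_i(x)\in\funcSpace{\pow{b}{x}}{\pow{b}{y}}$, producing correctors $\chi_i\in C^{2,4}$ that solve $\cL^x\chi_i^x = \phi_i$ and satisfy the polynomial bounds \eqref{eqn:IntAvg2}--\eqref{eqn:intAvgDerDer}. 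Setting $u^t(x,y):=\sum_i \chi_i(x,y)\,\partial_{x_i}\cPbar_t f(x)$ gives $\cL^x u^t=\Phi$.

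Next, introduce $\tilde v := v + \epsilon u^t$. Using $\mathfrak{L}_\epsilon u^t = \cL_S u^t + \epsilon^{-1}\Phi$, a direct computation yields
\begin{equation*}
\partial_t \tilde v = \mathfrak{L}_\epsilon \tilde v + \epsilon R, \qquad R(s,x,y) := \partial_s u^s(x,y) - \cL_S u^s(x,y), \qquad \tilde v(0) = \epsilon\,\chi\cdot\nabla_x f,
\end{equation*}
so that Duhamel's formula produces the decomposition
\begin{equation*}
v(t,x,y) = \epsilon\,\cP_t^\epsilon u^0(x,y) - \epsilon\,u^t(x,y) + \epsilon\int_0^t \cP_{t-s}^\epsilon\!\bigl[R(s,\cdot,\cdot)\bigr](x,y)\,ds.
\end{equation*}
The first two terms are $O(\epsilon)$ with the required polynomial factor: by \eqref{eqn:IntAvg2}, $|u^t|$ and $|u^0|$ grow polynomially in $(x,y)$; the factor $\|\nabla_x\cPbar_t f\|$ is controlled uniformly in $t$ by SES applied to $\cPbar_t f$; and $\cP_t^\epsilon$ applied to a polynomial is absorbed by the uniform-in-$(t,\epsilon)$ moment bounds for $(\slowV_t,\fastV_t)$ guaranteed by Assumptions \ref{DriftAssump}--\ref{DriftAssumpS} (Lemma \ref{prop:fullMombound}).

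The crux is bounding the Duhamel integral uniformly in $t$. Expanding $R$ gives a linear combination of $\chi$, $\partial_x\chi$ and $\text{Hess}_x\chi$ multiplied with $x$-derivatives of $\cPbar_s f$ up to order two (the $\partial_s u^s$ piece being handled by $\partial_s\cPbar_s f = \cPbar_s\bar\cL f$ together with the semigroup property). SES of $\cPbar_t$ — derived in Section \ref{sec:boundsonft1} under Assumption \ref{ass:avgSGcond} — provides estimates of the form $|\partial_x^\gamma \cPbar_s f(x)| \leq C\|f\|_{C_b^2}(1+|x|^{q_\gamma})e^{-\lambda s}$ for $|\gamma|_*\leq 2$. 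Combining this exponential decay with the polynomial bounds of Theorem \ref{mainthmpois} on $\chi$ and its $x$-derivatives, and absorbing $\cP_{t-s}^\epsilon$ through uniform moment bounds for $(\slowV_{t-s},\fastV_{t-s})$, one arrives at
\begin{equation*}
|\cP_{t-s}^\epsilon R(s)(x,y)| \leq C\|f\|_{C_b^2}\bigl(1+|x|^{4\pow{b}{x}}+|y|^{M^{g,b}_y+\pow{g}{y}}\bigr)e^{-\lambda s},
\end{equation*}
so that the $ds$-integral is finite uniformly in $t$, which combined with the other two terms yields \eqref{eqn:conclnonfullycoupled}. The main obstacle is the interaction of the two derivative estimates: $\cL_S u^t$ forces two $x$-derivatives of $\chi$ (hence four $y$-derivatives of the frozen semigroup, by Theorem \ref{mainthmpois}) together with two $x$-derivatives of $\cPbar_t f$ with exponential decay in $t$ (the SES of $\cPbar_t$, itself nontrivial since $\bar b$ is defined only implicitly via $\mu^x$, see Note \ref{rem:xavgderivs}); reconciling these, and carefully tracking how the factors $b(x,y)$ and $\sigma\sigma^T(x)$ propagate polynomial $(x,y)$-growth through the products, is what dictates the precise exponents $4\pow{b}{x}$ and $M^{g,b}_y+\pow{g}{y}$ in \eqref{eqn:conclnonfullycoupled}.
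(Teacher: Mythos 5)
Your argument is the paper's proof in slightly different notation: your $u^t$ is $-f_t^1$ from \eqref{eqn:f_t^1}, your $\tilde v$ is the remainder $r_t^\epsilon$ of \eqref{Meqn:r}, your source $R(s) = \partial_s u^s - \cL_S u^s$ is exactly $\cL_S f_s^1 - \partial_s f_s^1$, and your Duhamel decomposition coincides with \eqref{eqn:error}. The three estimates you invoke — SES of $\bar{\cP}_t$ from Proposition \ref{prop:avgderivativeest}, the polynomial bounds on $\chi$ and its $x$-derivatives from Theorem \ref{mainthmpois}, and the uniform moment bounds of Lemma \ref{prop:fullMombound} (together with the cancellation of third-order $x$-derivatives of $\bar{\cP}_s f$ in $R$) — are precisely the inputs to the paper's \eqref{step1bound}--\eqref{step3bound}, proved in Proposition \ref{psnEqDer}.
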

	
	We prove Theorem \ref{mainthm} in Section \ref{subsec:detailedproof}. 
	In Example \ref{ex:nonLip} we show that system \eqref{eqn:slowEx}-\eqref{eqn:fastEx} satisfies all the assumptions of Theorem \ref{mainthm}.

	Theorem \ref{mainthm} contains sufficient conditions, phrased in terms of the coefficients of system \eqref{slow}-\eqref{fast}, in order for the UiT averaging result to hold. This makes it ready to use - as one needs only check conditions on the coefficients - but it does not help to highlight the role of DEs and SES. We therefore rephrase it below in terms of SES properties of the semigroups $\cPbar_t$ and $\SGfastsxn{t}{x}$ (we will do a similar thing also for Theorem \ref{mainthmpois} in the fully coupled case, see Theorem \ref{mainthmpoisFullyCoupled}).

	\begin{theorem}\label{thm:nonfullycoupledderestversion}
		Consider the semigroup $\{\cP_t^\epsilon\}$ associated to \eqref{slow}-\eqref{fast} and the semigroup $\{\bar{\cP}_t\}_{t\geq 0}$ associated to \eqref{eqn:fullSGdef}. Let Assumptions \ref{ass:polGrowth} to  \ref{DriftAssumpS} hold. Suppose, furthermore, that  there exists a constant $K>0$ and  $\pow{}{x},\pow{}{y}\geq 0$ such that for any $\psi\in C^{0,k}(\R^n\times\R^d)$ with $\Vseminorm{\psi}{k,\pow{}{x},\pow{}{y}}< \infty$ and for all $x \in \R^n$
		we have
		\begin{equation}\label{eqn:4der}
			\lvert\SGfastsxn{t}{x}\psi^x \rvert_{k,m_x, m_y}\leq K|\psi|_{k,m_{x}^{}, m_{y}^{}}e^{-\cb t}, \quad k\in \{2,4\}\, ,
		\end{equation}
		and that there exist constants $\tilde{K},C>0$ such that for any $\psi\in C_b^2(\R^d)$ we have
		\begin{equation}\label{eqn:2der}
			\sup_{1 \leq \lvert\gamma\rvert_* \leq 2}\lVert \partial^{\gamma}_{x} \bar{\cP}_{t}\psi \rVert_\infty  \leq \tilde{K}e^{-C t}.
		\end{equation}
		Then \eqref{eqn:conclnonfullycoupled} holds. 
	\end{theorem}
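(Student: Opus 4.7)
The plan is to trace through the proof of Theorem \ref{mainthm} given in Section \ref{subsec:detailedproof} and to identify every place where Assumption \ref{ass:SGcond} and Assumption \ref{ass:avgSGcond} are invoked, then to verify that each such use can be replaced by a direct appeal to the SES hypotheses \eqref{eqn:4der} and \eqref{eqn:2der}, respectively. The explicit observation in Note \ref{rem:commentonassumptions} --- that Assumption \ref{ass:SGcond} enters the analysis solely through SES of $\SGfastsxn{t}{x}$, and that Assumption \ref{ass:avgSGcond} enters solely through SES of $\bar{\cP}_t$ --- will be the backbone of the verification.

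More concretely, Assumption \ref{ass:SGcond} is used in Section \ref{sec:sec3} to establish derivative estimates of the form \eqref{derestgeneral} for the frozen semigroup up to order four; these are then the only route by which it feeds into the Poisson analysis (Theorem \ref{mainthmpois}) and hence into the bounds on $u_\phi$, $\partial_x u_\phi$ and $\partial_{xx} u_\phi$ that drive Theorem \ref{mainthm}. Hypothesis \eqref{eqn:4der} is precisely the SES estimate on $\SGfastsxn{t}{x}$ for $k \in \{2,4\}$ phrased through the seminorm $\Vseminorm{\cdot}{k,m,m'}$, so plugging \eqref{eqn:4der} into the proofs of Lemma \ref{lemma:IntAvg}, Proposition \ref{lemma:IntAvgDerRepresentation} and Proposition \ref{lemma:IntAvgDerDer} reproduces the conclusions \eqref{eqn:IntAvg2}--\eqref{eqn:intAvgDerDer} of Theorem \ref{mainthmpois} with only a change of multiplicative constants. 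The accompanying ergodicity/convergence-to-$\mu^x$ statements are already guaranteed by Assumption \ref{DriftAssump} together with Assumption \ref{UniformEllip} and hence require no input from \eqref{eqn:4der}.

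Analogously, Assumption \ref{ass:avgSGcond} is used in Section \ref{sec:boundsonft1} only to produce derivative estimates for $\bar{\cP}_t$ of the shape $\lVert \partial^{\gamma}_{x} \bar{\cP}_{t}\psi \rVert_\infty \leq K e^{-Ct}$ for $1 \leq |\gamma|_* \leq 2$; the need to bypass the structural assumption is underlined in Note \ref{rem:xavgderivs}, which explains why one cannot simply apply a Lorenzi-type criterion to $\bar b$ in the absence of an explicit $\mu^x$. Hypothesis \eqref{eqn:2der} delivers exactly this decay, so the averaged-semigroup bounds that enter the functional step of the proof of Theorem \ref{mainthm} (together with the Poisson bounds above) are available under \eqref{eqn:2der} alone, and the averaging argument then proceeds verbatim to yield \eqref{eqn:conclnonfullycoupled}.

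The main obstacle, more bookkeeping than conceptual, is to confirm that nowhere in the chain of intermediate lemmas are Assumptions \ref{ass:SGcond} or \ref{ass:avgSGcond} used to extract a pointwise bound on the coefficients $g$ or $b$ that is not already implied by Assumptions \ref{ass:polGrowth}--\ref{DriftAssumpS} or by the two SES hypotheses \eqref{eqn:4der}--\eqref{eqn:2der}. Once this audit is completed, the theorem follows with the decay rate $\cb$ in \eqref{eq:driftcondition} replaced by the exponents appearing in \eqref{eqn:4der} and \eqref{eqn:2der}, and with constants tracked through the same polynomial-growth bookkeeping in $x$ and $y$ already used in the proof of Theorem \ref{mainthm}.
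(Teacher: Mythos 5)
Your proposal is correct and takes the same route as the paper: the paper itself, in the discussion following Theorem~\ref{thm:nonfullycoupledderestversion}, observes that Assumptions~\ref{ass:SGcond} and~\ref{ass:avgSGcond} enter the proof of Theorem~\ref{mainthm} only through Propositions~\ref{prop:derivativeest} and~\ref{prop:avgderivativeest}, whose conclusions are precisely~\eqref{eqn:4der} and~\eqref{eqn:2der}, so assuming the SES estimates directly lets one bypass those two propositions while the remainder of the proof is unchanged. The only small inaccuracy is your inclusion of Lemma~\ref{lemma:IntAvg} among the places where~\eqref{eqn:4der} would be invoked, but you immediately and correctly note that the ergodicity estimates rest on Assumptions~\ref{DriftAssump} and~\ref{UniformEllip} alone and require no input from~\eqref{eqn:4der}.
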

	
	Theorem \ref{mainthm} can be seen as a consequence of Theorem \ref{thm:nonfullycoupledderestversion}. Indeed,  the  structure of the proof of  Theorem \ref{mainthm} clearly shows that \eqref{eqn:conclnonfullycoupled} is implied by Assumptions \ref{ass:polGrowth} to Assumption \ref{DriftAssumpS} plus \eqref{eqn:4der} and \eqref{eqn:2der}. The proof of Theorem \ref{mainthm} then goes further and shows that 
	Assumptions \ref{ass:polGrowth} to Assumption \ref{ass:SGcond} imply \eqref{eqn:4der} (see Proposition \ref{prop:derivativeest}) and that Assumptions \ref{ass:polGrowth} to \ref{ass:avgSGcond} imply \eqref{eqn:2der}(see Proposition \ref{prop:avgderivativeest}). So, the line of reasoning in the proof of Theorem \ref{mainthm} implies Theorem \ref{thm:nonfullycoupledderestversion}, and we don't prove the latter separately. The DEs \eqref{eqn:4der} and \eqref{eqn:2der} are central to the proof for the following reason: using relatively standard tricks from semigroup theory, one can express the difference between $\cP_t^\epsilon$  and $\cPbar_t$ in terms of two main objects, namely the second derivative of the semigroup $\cPbar_t$ and the second $x$ derivative of the solution of a Poisson equation of the form \eqref{poisprobgeneral} - see \eqref{rDiff}, Note \ref{Rem:poisson for averaging} and the calculations in Proposition \ref{psnEqDer}. Understanding both these objects requires the analysis of the second $x$ derivatives of  $P_t^x$. Indeed, because the coefficients of the generator of the averaged semigroup $\cPbar_t$ contain the measure $\mu^x$, one needs to study the second derivative of $\mu^x$ with respect to $x$ - see Proposition \ref{lemma:IntAvgDerDer}. In turn,  $\mu^x$ is the limit as $t \rightarrow \infty$ of the semigroup $P_t^x$, hence the appearance of  the second $x$ derivative of  $P_t^x$. Moreover, because the solution of Poisson equations of the form \eqref{poisprobgeneral}  can be expressed through the semigroup $P_t^x$, see \eqref{ft1rep},  it is clear that also the study of the second $x$ derivatives of the  Poisson equation leads one to consider second $x$ derivatives of  $P_t^x$.  As discussed in the introduction, the smoothness in $x$ of the semigroup $P_t^x$ is non-trivial and is gained through some ``transfer formulas" (see for example   \eqref{eqn:repSG} and Note \ref{Rem:der est for Pois unbounded}) which allow one to obtain smoothness in $x$ from the smoothness in $y$ (which is instead straightforward). When using such transfer formulas one $x$ derivative comes at the price of two $y$ derivatives, and this is the reason why we will need four $y$ derivatives of $P_t^x$.  We note that, as opposed to previous literature,  we write these formulas in a way to highlight how they contain appropriate semigroup derivatives and hence the role of such derivatives in our analysis. 
	
	Finally let us give an intuitive explanation on how SES of the semigroup $\cPbar_t$, i.e. \eqref{eqn:2der},  is instrumental to obtain an averaging result which is uniform in time  in Note \ref{Rem: der est for UiT} below.

	\begin{Note}[SES for UiT averaging]\label{Rem: der est for UiT} \textup{ To explain in a simplified setting why SES is key to proving uniform in time convergence, let us consider two Markov semigroups, say $\mathcal T_t$ and $\widebar{\mathcal {T}}_t$. With standard manipulations, the difference between such semigroups can be expressed in terms of the difference between their respective generators, say $\mathcal G$ and 
			$\bar{\mathcal G}$, as follows
			\begin{align*}
				(\widebar{\mathcal {T}}_t \varphi)(z) -  (\mathcal{T}_t \varphi)(z) & =  \int_0^t ds \frac{d}{ds} \mathcal{T}_{t-s}\bar{\mathcal{T}}_s \varphi (z)  =  
				\int_0^t \!\!\!ds \,  \mathcal{T}_{t-s} (\bar{\mathcal G} - \mathcal G) \bar{\mathcal T}_s \varphi (z) \\
				&\leq \int_0^t ds \| \mathcal T_{t-s} (\bar{\mathcal G} - \mathcal G) \bar{\mathcal T}_s \varphi \|_{\infty}
				\leq \int_0^t ds \|  (\bar{\mathcal G} - \mathcal G) \bar{\mathcal T}_s \varphi \|_{\infty}
			\end{align*}
		If $\mathcal G$ and $\bar{\mathcal{G}}$ are differential operators then the latter difference involves derivatives of the semigroup $\bar{\mathcal T}_t$. If such derivatives decay exponentially fast in time, then the difference between such semigroups can be estimated by a constant (independent of time) rather than with exponential growth, which is what would happen by using Gronwall-type arguments. This line of reasoning, when applied to $\bar \cP_t$ and $\cP_t^{\epsilon}$ rather than $\bar{\mathcal T}_t$ and $\mathcal T_t$, inspires our approach - though the precise proof does not exactly follow the above calculation and some further manipulations are required (to obtain the correct power of $\epsilon$ on the RHS). Details of our strategy of proof can be found in Section \ref{subsec:detailedproof}.
		}
	\end{Note}

	\begin{example}\label{ex:nonLip} Let us come back in more detail to the system \eqref{eqn:slowEx}-\eqref{eqn:fastEx}. First of all,  both the slow and the fast components of  \eqref{eqn:slowEx}-\eqref{eqn:fastEx} are  Langevin-type dynamics for the potential $V(x)=x^4/4+x^2/2$, with bounded perturbation $b_0$ (or $g_0$). This setup is rather important in applications, see e.g. \cite{pavliotis2008multiscale}. However we point out that because of our assumptions on the drifts $b$ and $g$ (Assumptions \ref{ass:SGcond} and \ref{ass:avgSGcond}), the case of Langevin dynamics in double well potentials is not covered in this paper. In the numerics section, Section \ref{sec:numerics}, we provide evidence supporting the idea that a UiT result should be  true even in that case, and we reserve this for upcoming work. \\
		 Let us now come to show that Assumptions \ref{ass:polGrowth}-\ref{ass:avgSGcond} are verified  for system \eqref{eqn:slowEx}-\eqref{eqn:fastEx} and hence prove that the UiT result of Theorem \ref{mainthm} holds for such a system. It is immediate to see that Assumption \ref{ass:polGrowth} holds with $m^b_x=3, m^b_y=0, \pow{g}{y}=3$ and that Assumption \ref{UniformEllip} holds with $\lambda_-=\lambda_+=1$. As observed in Note \ref{rem:commentonassumptions}, in order to show that Assumption \ref{DriftAssump} holds it is sufficient to consider $k=1$. For $k=1$, \eqref{eq:driftassump} holds with $C_1=\lVert g_0\rVert_\infty^2/2, r_1=1/2$. Similarly, Assumption \ref{DriftAssumpS} holds with $\tilde{C}_1=\lVert b_0\rVert_\infty^2/2, \tilde{r}_1=1/2$. Assumption \ref{ass:SGcond} holds with $\zeta_1 = \frac{1}{6}$ and any $\kappa < 2$, with a corresponding value $\zeta_2 \leq (2-\kappa)/36$. Since we can take $\kappa$ arbitrarily close to $2$ and the value of $\zeta_2$ does not make a difference to the calculations of the other constants, in the below we take $\kappa = 2$,   therefore obtaining a strict inequality in \eqref{eqn:examplecond}. It remains to verify Assumption \ref{ass:avgSGcond}. Observe that $\yfourder=\yfourdery=
		\ytwoderyonex= \ytwoderyoney= \ytwoderxy= \ytwodery = \ytwoderyy = \onederxg = 0$, and $\ytwoderx=1$. We also have $D_0 = (\sqrt{6}+1)^{1/2}$, $D_b= \lVert \partial_x g_0 \rVert_{\infty}$ and $K_A = K_\Sigma = 0$. Therefore, Assumption \ref{ass:avgSGcond} holds provided \begin{equation}\label{eqn:examplecond}
			\max\{\lVert \partial_y b_0\lVert_{\infty},\lVert \partial_{yy} b_0\lVert_{\infty}\}<\frac{1}{2\sqrt{3}(\sqrt{6}+1)^{1/2}\lVert \partial_x g_0 \rVert_{\infty}}.\end{equation}
		
		In conclusion, if \eqref{eqn:examplecond} holds, then we may apply Theorem \ref{mainthm} to \eqref{eqn:slowEx}-\eqref{eqn:fastEx} so there exists $C>0$, such that
		\begin{equation*}
			\left| \cP_t^\epsilon f(x,y) - \cPbar_t f(x) \right| \leq \epsilon C\|f\|_{C_b^2}(1+ |y|^{9}+|x|^{12}), \text{ for every $f\in C^2_b(\mathbb{R}^n)$.}
		\end{equation*}
		
	\end{example}


	\subsection{The fully coupled regime}\label{subsec:fullycoupledresults}
	
	In this subsection and in Subsection \ref{subsec:fullycoupledsketch} only we consider the fully coupled system \eqref{slowfullycoupled}-\eqref{fastfully coupled} as opposed to \eqref{slow}-\eqref{fast}. The only difference between these two systems is that in the former $a=a(x,y), \sigma=\sigma(x,y)$, i.e. the diffusion coefficients $a, \sigma$ are allowed to depend on both variables, in the latter $a=a(x), \sigma=\sigma(x)$. 
	Therefore, here and in Subsection \ref{subsec:fullycoupledsketch} by $P_t^x, \bar\cP_t$ and $\cP_t^{\epsilon}$ we mean the semigroups associated with the processes \eqref{introfastprocessepsilon=1}, \eqref{eqn:averagedfully coupled} and  \eqref{slowfullycoupled}-\eqref{fastfully coupled}, respectively. In particular,  the generator $\cL^x$ of $\cP_t^x$  is intended to be given by \eqref{heart}, as opposed to \eqref{eqn:gen} and when we say that Assumptions \ref{ass:polGrowth} to Assumption \ref{DriftAssumpS} hold, we mean that they hold for $a=a(x,y), \sigma=\sigma(x,y)$ and Assumption \ref{DriftAssumpS}, \ref{ass:polGrowthDiffFast} should be modified to:
			there exists $0<\nu<1$ such that for each $x\in \R^n$, $A_{ij}(x, \cdot) \in C^{4+\nu}(\R^d)$, and $A_{ij} \in \funcSpace{0}{0}$ for all $1 \leq i,j \leq d$.
	
	
	
	\begin{theorem}\label{mainthmpoisFullyCoupled}
		Let $\drift$ and $\diffusion=aa^T$ satisfy Assumption \ref{ass:polGrowth}, Assumption \ref{UniformEllip}, Assumption \ref{DriftAssump} and suppose \eqref{eqn:4der} holds.  Let $\phi \in \funcSpace{\pow{}{x}}{\pow{}{y}}$ for some $\pow{}{x},\pow{}{y}\geq 0$. Then the solution $u_\phi$ of the Poisson equation \eqref{poisprobgeneral} (with $\mathcal L^x$ given by \eqref{heart}) exists and is unique (in the class of mean-zero functions) and the representation formula \eqref{ft1rep} holds as well. Moreover, 
		\begin{itemize}
			\item There exists some $C>0$ (which may depend on $\pow{}{x},\pow{}{y}$ but is independent of the choice of $\phi$) such that the solution $u_\phi^x$ satisfies the following bound
			\begin{equation}\label{eqn:IntAvg2fullycoupled}
				\left| u_\phi^x(y) \right| \leq C\Vnorm{\phi}{0,\pow{}{x},\pow{}{y}}(1+|x|^{\pow{}{x}}+|y|^{\pow{}{y}}) \quad \text{ for all } x\in \R^n, y\in \R^d.
			\end{equation}
			\item The function $\bar{\phi}$ is well defined, both the solution $u_\phi$ and the function $\bar{\phi}$ are twice differentiable in $x$ and there exist $C,m_x',m_y'>0$ such that 
			\begin{align}\label{eq:intAvgDerInvcoupled}
				&\sup_{1\leq \lvert\gamma\rvert_*\leq 2}\left| \partial_x^\gamma\bar{\phi} \right| \leq C\Vnorm{\phi}{4,\pow{}{x},\pow{}{y}}(1+|x|^{m_x'}), \\\label{eq:intAvgDercoupled}
				&\sup_{1\leq \lvert\gamma\rvert_*\leq 2}\left| \partial_x^\gamma u_{\phi} \right| \leq C\Vnorm{\phi}{4,\pow{}{x},\pow{}{y}}(1+|x|^{m_x'}+\lvert y\rvert^{m_y'}),
			\end{align}
			for all $x\in\R^n,y\in \R^d$.
		\end{itemize}
	\end{theorem}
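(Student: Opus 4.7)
The plan is to follow the same architecture used for Theorem \ref{mainthmpois} (carried out in Section \ref{sec:psneqn}), adapted to the fully coupled setting where $a=a(x,y)$. The key observation is that the representation formula \eqref{ft1rep} and the bound \eqref{eqn:IntAvg2fullycoupled} are essentially insensitive to whether $a$ depends on $y$ or not; what changes materially is the derivation of the transfer formulas relating $x$-derivatives of $P_t^x\phi^x$ to $y$-derivatives of $P_t^x\phi^x$, because now the generator $\cL^x$ in \eqref{heart} depends on $x$ through both $g(x,\cdot)$ and $a(x,\cdot)$.

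First, I would establish well-posedness of \eqref{poisprobgeneral} and the representation formula \eqref{ft1rep}. The SES hypothesis \eqref{eqn:4der} (with $k=2$) applied to $\phi^x - \bar\phi(x) \in \mathrm{Poly}_{m_x,m_y}$ (mean-zero in $y$ w.r.t.\ $\mu^x$) combined with uniform ellipticity gives exponential decay of $|P_s^x\phi^x(y) - \bar\phi(x)|$ in $s$, with a polynomially growing prefactor in $(x,y)$, so the integral in \eqref{ft1rep} converges absolutely and defines a $C^{0,2}$ function of $(x,y)$. This yields \eqref{eqn:IntAvg2fullycoupled} directly. Uniqueness in the mean-zero class follows from an Itô--Dynkin argument using moment bounds (Assumption \ref{DriftAssump}) together with the fact that $\cL^x u = 0$ and $\mu^x(u)=0$ force $u\equiv 0$ by exponential ergodicity of $P_t^x$.

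The crux is the derivative estimates \eqref{eq:intAvgDerInvcoupled}--\eqref{eq:intAvgDercoupled}. I would differentiate the representation \eqref{ft1rep} and control $\partial_x^\gamma (P_s^x\phi^x)$ for $|\gamma|_*\le 2$ by a transfer formula of the same flavour as the one used in the non-fully-coupled case (see \eqref{eqn:repSG} and Note \ref{Rem:der est for Pois unbounded}): differentiating both sides of the PDE $\partial_s P_s^x \phi^x = \cL^x P_s^x\phi^x$ in $x$ and applying Duhamel yields
\begin{equation*}
\partial_{x_i}(P_s^x\phi^x)(y) = P_s^x(\partial_{x_i}\phi^x)(y) + \int_0^s P_{s-r}^x\bigl[(\partial_{x_i}\cL^x)(P_r^x\phi^x)\bigr](y)\,dr,
\end{equation*}
and similarly (with a further commutator) for second $x$-derivatives. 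In the fully coupled case $\partial_{x_i}\cL^x$ is a second-order differential operator in $y$ whose coefficients involve $\partial_{x_i}g$ and $\partial_{x_i}(aa^T)$, so the integrand involves up to two $y$-derivatives of $P_r^x\phi^x$; iterating once more for the second $x$-derivative introduces up to four $y$-derivatives. This is precisely why \eqref{eqn:4der} is required with $k\in\{2,4\}$. Plugging SES for those $y$-derivatives into the Duhamel identity produces an integrand exponentially decaying in $r$ and in $s-r$; the standard convolution estimate then yields exponential decay in $s$ with polynomial prefactor in $(x,y)$, and integrating in $s$ as in \eqref{ft1rep} gives \eqref{eq:intAvgDercoupled}. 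For \eqref{eq:intAvgDerInvcoupled}, one differentiates $\bar\phi(x)=\lim_{s\to\infty}P_s^x\phi^x(y)$ or uses the identity $\partial_{x_i}\bar\phi(x) = \mu^x(\partial_{x_i}\phi^x) + \mu^x((\partial_{x_i}\cL^x)u_\phi^x)$ (obtained by integrating \eqref{poisprobgeneral} against $\mu^x$ and differentiating in $x$) and controls the right-hand side via the already-established bound on $u_\phi^x$ and $\partial_x u_\phi^x$, noting that $\partial_{x_i}\cL^x$ is first-order in the coefficients.

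The main obstacle will be the combinatorial bookkeeping of the polynomial exponents $m_x'$ and $m_y'$ in the prefactors, and — more substantively — the presence of $\partial_{x_i}(aa^T):\mathrm{Hess}_y$ terms in $\partial_{x_i}\cL^x$, absent in the non-fully-coupled case. These require that the SES estimate \eqref{eqn:4der} be applied not only to $\phi$ but to products of $\partial_x$-coefficients with $\partial_y^\gamma P_r^x\phi^x$, which in turn must lie in an appropriate $\mathrm{Poly}_{m_x,m_y}$ class; Assumption \ref{ass:polGrowth} (together with the strengthened $y$-regularity of $A_{ij}$) is what guarantees this. Once the bookkeeping is in place, the proof is conceptually identical to that of Proposition \ref{lemma:IntAvgDerRepresentation} and Proposition \ref{lemma:IntAvgDerDer}, and as noted in the introduction the passage to the fully coupled regime introduces no genuinely new conceptual ingredient beyond this extra term.
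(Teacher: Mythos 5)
Your overall architecture is the right one -- representation formula via Lemma \ref{lemma:probRep}, transfer formulas of the form \eqref{eqn:repSG} and \eqref{eqn:repSGderder} obtained by differentiating the backward Kolmogorov equation in $x$ and applying Duhamel, then closing the estimates with the assumed SES \eqref{eqn:4der} and the moment bounds of Lemma \ref{prop:momentBounds}. This is indeed the route the paper takes for Theorem \ref{mainthmpois}, and Note \ref{note:coupledlemmas} confirms that Lemma \ref{prop:momentBounds} and Proposition \ref{lemma:IntAvg} are already proved for $a=a(x,y)$, so the scaffolding carries over. However there are two points where your sketch as written would not go through.

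First, you propose to obtain the exponential decay of $|P_s^x\phi^x(y)-\bar\phi(x)|$ from the SES hypothesis \eqref{eqn:4der} ``combined with uniform ellipticity.'' That is the Lemma \ref{lemma:IntAvgUB}-style argument: write $P_t^x\phi^x-P_s^x\phi^x=\int_s^t\cL^xP_u^x\phi^x\,du$ and bound $\cL^xP_u^x\phi^x$ using SES together with the polynomial growth of $g$. But this yields the bound \eqref{eqn:IntAvg1}, which scales with $\Vseminorm{\phi}{2,m_x,m_y}$ (requiring two $y$-derivatives of $\phi$) and has exponents $M^g_y$ and $2m_x$ in the prefactor -- \emph{not} the cleaner bound \eqref{eqn:IntAvg2fullycoupled}, which involves only $\Vnorm{\phi}{0,m_x,m_y}$ and exponents $m_x,m_y$. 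Your sketch therefore does not establish \eqref{eqn:IntAvg2fullycoupled} as stated. The correct ingredient here is Proposition \ref{lemma:IntAvg}, which uses the Harris-type minorisation/Lyapunov machinery (from Assumptions \ref{UniformEllip} and \ref{DriftAssump}, both in the hypotheses of Theorem \ref{mainthmpoisFullyCoupled}) rather than SES, and whose proof the paper has already done for $y$-dependent $a$. The paper explicitly flags this distinction in Note \ref{Rem:der est for Pois unbounded}, item (4): using the SES-based decay inside the derivative estimates ``would cause complications'' precisely because the functions involved must then carry two extra orders of $y$-regularity, inflating the exponents at each step.

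Second, a smaller but conceptually telling inaccuracy: you write that the $\partial_{x_i}(aa^T):\mathrm{Hess}_y$ term in $\partial_{x_i}\cL^x$ is ``absent in the non-fully-coupled case.'' It is not -- since $a=a(x)$ still depends on $x$, that term already appears in \eqref{threestar} and in the bound \eqref{eqn:SGdecay} in the non-coupled proof. What actually changes in the fully coupled regime is that $a$ now depends on $y$: the commutator $[\partial_{y_i},\cL^x]$ then acquires terms in $\partial_{y_i}(aa^T)$, which breaks the identity \eqref{eq:comgenwithonderivative} used to prove Proposition \ref{prop:derivativeest} from Assumption \ref{ass:SGcond}. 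This is why \eqref{eqn:4der} is made an outright hypothesis here instead of being derived, and why the modified form of \ref{ass:polGrowthDiffFast} adds H\"older regularity in $y$ for $A_{ij}$. Once you swap Proposition \ref{lemma:IntAvg} in for the ergodic decay and correct this description, your proposal matches what the paper intends.
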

  \begin{proof}
     The proof of Theorem \ref{mainthmpoisFullyCoupled} is very similar to that of Theorem \ref{mainthmpois}, so we omit the details here. Well-posedness of the Poisson problem is, again, given by Lemma \ref{lemma:probRep}, the proof of which does not rely on the diffusion coefficient $a$ being independent of $y$. The proof of \eqref{eqn:IntAvg2fullycoupled} and \eqref{eq:intAvgDerInvcoupled} are Proposition \ref{lemma:IntAvgDerRepresentation} in the case of the first derivative and Proposition \ref{lemma:IntAvgDerDer} in the case of the second. Note that, while Proposition \ref{lemma:IntAvgDerRepresentation}
and Proposition \ref{lemma:IntAvgDerDer} require $a(x,y)$ to be independent of $y$, this is only used for the calculation of the constants in the estimates 
\eqref{eq:intAvgDerInv}-\eqref{eqn:intAvgDerDer}. More specifically, the LHS of \eqref{eqn:elemBound3} and \eqref{eqn:elemBound4} in the proof of Proposition \ref{lemma:IntAvgDerRepresentation} (similarly the LHS of \eqref{eqn:term2} and \eqref{eqn:term3} in the proof of Proposition \ref{lemma:IntAvgDerDer}) contain extra terms, corresponding to the $y$ derivative of the diffusion coefficient $A$. Since $A \in \funcSpace{0}{0}$, these derivatives exist and are bounded. In addition, \eqref{eqn:4der} means that the semigroup derivative estimates can be used in the same way as in the proofs Proposition \ref{lemma:IntAvgDerRepresentation} and Proposition \ref{lemma:IntAvgDerDer}, which instead require Assumption \ref{ass:SGcond}. Hence, the RHS of \eqref{eqn:elemBound3} and \eqref{eqn:elemBound4} are unchanged (they just hold with $C$ being a different constant). This gives the final result. 
\end{proof}
	Before stating the averaging result we require SES of the averaged SDE \eqref{eqn:averagedfully coupled}.

	\begin{assumption}\label{ass:averagederivativeestimate4}
		Let $\{\bar{\cP}_{t} \}_{t\geq 0}$ denote the semigroup associated to \eqref{eqn:averagedfully coupled}, and assume that Assumptions \ref{ass:polGrowth}-\ref{DriftAssumpS} hold.
		Assume there exist constants $\tilde{K},C>0$ such that for any $\psi\in C_b^4(\R^n)$ we have
		\begin{equation*}
			\sup_{1 \leq \lvert\gamma\rvert_* \leq 4}\lVert \partial^{\gamma}_{x} \bar{\cP}_{t}\psi \rVert_\infty  \leq \tilde{K}e^{-C t}\lVert \psi\rVert_{C_b^4(\R^n)}.
		\end{equation*}
	\end{assumption}
	
	\begin{theorem}\label{fullycoupledaveragingtheorem}
		Consider the semigroup $\{\cP_t^\epsilon\}$ associated to \eqref{slowfullycoupled}-\eqref{fastfully coupled} and the semigroup $\{\bar{\cP}_t\}_{t\geq 0}$ associated to \eqref{eqn:averagedfully coupled}. Let Assumption \ref{ass:polGrowth} to Assumption \ref{DriftAssumpS} hold, together with  \eqref{eqn:4der} and Assumption \ref{ass:averagederivativeestimate4}. Then for every $f\in C_b^4(\R^n)$ there exist constants $C,M_x,M_y>0$ independent of $f,t,x,y$ such that
		\begin{equation*}
			\left| (\cP_t^\epsilon f)(x,y) - (\cPbar_t f)(x) \right| \leq \epsilon C\|f\|_{C_b^4} (1+ |y|^{M_{y}}+|x|^{M_x}), \quad \forall x \in \R^n, \,y\in \R^d. 
		\end{equation*}
	\end{theorem}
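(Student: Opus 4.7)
The plan is to follow the strategy outlined in Note \ref{Rem: der est for UiT}, combined with the Poisson-equation trick that kills the singular $1/\epsilon$ contribution, exactly in the spirit of the non-fully coupled argument sketched in Section \ref{subsec:detailedproof}. As the excerpt emphasises, passing from the non-fully coupled to the fully coupled regime is merely computationally more involved; all conceptual novelty has already been absorbed into Theorem \ref{mainthmpoisFullyCoupled} and Assumption \ref{ass:averagederivativeestimate4}.

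First, I would write
\[
(\cP_t^\epsilon f)(x,y) - (\bar{\cP}_t f)(x) = \int_0^t \cP_s^\epsilon \left(\mathfrak{L}_\epsilon - \bar\cL\right)(\bar{\cP}_{t-s} f)(x,y)\, ds.
\]
Because $\bar{\cP}_{t-s} f$ depends only on $x$ and $\cL^x$ differentiates in $y$ only, $\cL^x \bar{\cP}_{t-s} f = 0$, so the integrand equals $\cP_s^\epsilon \phi_{t-s}$ with
\[
\phi_r(x,y) := \bigl(b(x,y) - \bar b(x),\, \nabla_x \bar{\cP}_r f(x)\bigr) + \bigl(\Sigma(x,y) - \bar\Sigma(x)\bigr) : \text{Hess}_x \bar{\cP}_r f(x).
\]
The definitions of $\bar b$ and $\bar \Sigma$ ensure $\int \phi_r(x,y)\,\mu^x(dy)=0$, so Theorem \ref{mainthmpoisFullyCoupled} supplies a unique mean-zero solution $U_s := u_{\phi_{t-s}}^x(y)$ to $\cL^x U_s = \phi_{t-s}$. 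Using $\cL^x = \epsilon(\mathfrak{L}_\epsilon - \cL_S)$, the identity $\cP_s^\epsilon \mathfrak{L}_\epsilon U_s = \frac{d}{ds}(\cP_s^\epsilon U_s) - \cP_s^\epsilon \partial_s U_s$, and the fact that $\partial_s U_s = -u_{(\cL_S - \bar\cL)\bar\cL\bar{\cP}_{t-s} f}$ (by linearity of $\phi \mapsto u_\phi$ in the representation \eqref{ft1rep} together with $\partial_r \bar{\cP}_r f = \bar\cL \bar{\cP}_r f$), an integration by parts in $s$ yields
\[
(\cP_t^\epsilon f - \bar{\cP}_t f)(x,y) = \epsilon\bigl[(\cP_t^\epsilon u_{\phi_0})(x,y) - u_{\phi_t}(x,y)\bigr] - \epsilon \int_0^t \cP_s^\epsilon \bigl[\partial_s U_s + \cL_S U_s\bigr](x,y)\, ds.
\]

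Next, I would bound each term uniformly in $t$ by combining three ingredients. First, Assumption \ref{ass:averagederivativeestimate4} gives $\|\partial_x^\gamma \bar{\cP}_r f\|_\infty \leq \tilde K e^{-Cr}\|f\|_{C_b^4}$ for $1\leq|\gamma|_*\leq 4$; since in any derivative of $\phi_r$ of order $2|\gamma|_* + |\tilde\gamma|_* \leq 4$ the $y$-derivatives fall on the smooth, polynomially bounded functions $b,\Sigma$ (Assumption \ref{ass:polGrowth}) while the $x$-derivatives produce at most four $x$-derivatives of $\bar{\cP}_r f$, one obtains $\|\phi_r\|_{4,\cdot,\cdot} \leq C e^{-Cr}\|f\|_{C_b^4}$, and the analogous bound for $\partial_r\phi_r$. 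Second, Theorem \ref{mainthmpoisFullyCoupled} controls $u_{\phi_r}$ and its first two $x$-derivatives by $C(1+|x|^{m_x'}+|y|^{m_y'})\|\phi_r\|_{4,\cdot,\cdot}$, and the same for $u_{\partial_r\phi_r}$; since $\cL_S$ involves only first and second $x$-derivatives, this dominates both $\cL_S U_s$ and $\partial_s U_s$. Third, applying $\cP_s^\epsilon$ and using the uniform-in-time, uniform-in-$\epsilon$ moment bounds on $(X_s^{\epsilon,x,y}, Y_s^{\epsilon,x,y})$ from Assumptions \ref{DriftAssump}--\ref{DriftAssumpS} (as in Lemma \ref{prop:fullMombound}) converts each polynomial factor in $(X_s^\epsilon, Y_s^\epsilon)$ into a polynomial $(1 + |x|^{M_x} + |y|^{M_y})$ of a fixed order. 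The boundary term $\epsilon u_{\phi_t}$ is then $O(\epsilon e^{-Ct})$, the term $\epsilon \cP_t^\epsilon u_{\phi_0}$ is $O(\epsilon)$, and each integrand in the remaining two integrals decays as $e^{-C(t-s)}$, so the integrals are uniformly bounded in $t$; altogether this gives the claimed $O(\epsilon)$ estimate.

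The main obstacle is the careful bookkeeping of polynomial growth exponents: each $x$-differentiation of $u_{\phi_r}$ passes through the transfer formulas underlying Theorem \ref{mainthmpoisFullyCoupled} and increases the polynomial weight in $(x,y)$, so one must verify that, once the uniform moment bounds of the slow-fast SDE are invoked, the accumulated exponents $M_x, M_y$ remain finite and independent of $t$ and $\epsilon$. A second point requiring care is the identification $\partial_s U_s = u_{\partial_s \phi_{t-s}}$, which is justified by the explicit representation \eqref{ft1rep} and linearity of the Poisson-equation solution map. The $y$-dependence of $\Sigma, A$ in the fully coupled regime adds only routine new terms to this bookkeeping, confirming the excerpt's remark that the fully coupled case is merely more computationally intensive than the non-fully coupled one.
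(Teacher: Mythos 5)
Your proposal is correct and, modulo the algebraic packaging, takes essentially the same route as the paper. The paper posits the ansatz $r_t^\epsilon = \cP_t^\epsilon f - \bar\cP_t f - \epsilon f_t^1$ with $f_t^1$ the mean-zero Poisson solution, derives the evolution equation for $r_t^\epsilon$, and applies variation of constants; you instead start from the Duhamel identity $\cP_t^\epsilon f - \bar\cP_t f = \int_0^t \cP_s^\epsilon(\mathfrak L_\epsilon - \bar\cL)\bar\cP_{t-s}f\,ds$, substitute $\phi_{t-s}=\cL^x U_s$ with $\cL^x=\epsilon(\mathfrak L_\epsilon-\cL_S)$, and integrate by parts. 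Unwinding both (using $U_s=-f_{t-s}^1$, $r_0^\epsilon=-\epsilon f_0^1$) gives the identical final formula, and the three estimates that close the argument (Assumption \ref{ass:averagederivativeestimate4} for $\partial_x^\gamma\bar\cP_r f$ up to order 4, Theorem \ref{mainthmpoisFullyCoupled} for the Poisson solution and its first two $x$-derivatives, and Lemma \ref{prop:fullMombound}-type moment bounds for $\cP_s^\epsilon$) are the same ones invoked in the paper's proof.

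The one genuine presentational difference is that you apply Theorem \ref{mainthmpoisFullyCoupled} directly to the time-dependent right-hand side $\phi_r=(\cL_S-\bar\cL)\bar\cP_r f$, whereas the paper exploits linearity to decompose $f_t^1 = -\sum_i u_{b_i}\partial_{x_i}\bar\cP_t f - \sum_{i,j} u_{\Sigma_{ij}}\partial_{x_ix_j}\bar\cP_t f$ and applies the Poisson estimates only to the fixed, $t$-independent functions $b_i$ and $\Sigma_{ij}$. Your route is marginally slicker but requires the small extra verification that $\phi_r\in\funcSpace{m_x}{m_y}$ with $\Vnorm{\phi_r}{4,m_x,m_y}\lesssim e^{-\omega r}\|f\|_{C_b^4}$ uniformly in $r$; this is exactly where Assumption \ref{ass:averagederivativeestimate4} enters, since the $|\gamma|_*=2$ part of the $\Vnorm{\cdot}{4,\cdot,\cdot}$ norm applied to $\phi_r$ produces fourth $x$-derivatives of $\bar\cP_r f$. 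The paper's decomposition makes this bookkeeping more explicit by displaying the terms $\partial_x^\gamma\bar\cP_s f$ ($|\gamma|_*\le 4$) one by one in the expansion of $(\cL_S-\partial_s)f_s^1$, but the two arguments carry the same content.
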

A sketch of the  proof of Theorem \ref{fullycoupledaveragingtheorem} is contained in Subsection \ref{subsec:fullycoupledsketch}. 
	The main difference between the assumptions of the above theorem and Theorem \ref{thm:nonfullycoupledderestversion} is that in the latter we require $x$ derivative estimates of $\bar{\cP}_t$ up to order $2$ whereas for Theorem \ref{fullycoupledaveragingtheorem} we require $x$ derivatives of $\bar{\cP}_t$ up to order $4$. As explained after Theorem \ref{thm:nonfullycoupledderestversion} the two main objects in the proof of Theorem \ref{thm:nonfullycoupledderestversion} are the second $x$ derivative of the semigroup $\bar{\cP}_t$ and the second $x$ derivative of the solution of a Poisson equation. In contrast, in the setting of Theorem \ref{fullycoupledaveragingtheorem} we need fourth order $x$ derivatives of the semigroup $\bar{\cP}_t$ but still only two $x$ derivatives of the solution of a Poisson equation. In general to obtain derivative estimates of $\bar{\cP}_t$ up to order $4$ of the form in Assumption \ref{ass:averagederivativeestimate4} requires the coefficients of the generator $\bar{\cL}$ to be $x$-differentiable up to order $4$. Since these coefficients involve the measure $\mu^x$, one needs $\mu^x$ to be $4$ times $x$-differentiable. As $\mu^x$ is the limit of $P_t^x$, one can show the $x$-regularity of $\mu^x$ by first showing the $x$-regularity of $P_t^x$. The $x$ derivatives of $P_t^x$ can be bounded using the $y$ derivatives of $P_t^x$ by using the ``transfer formulae'' (see for example \eqref{eqn:repSG}) however this would then require \eqref{eqn:SGdecayConc} to hold instead for derivatives up to order $8$. We avoid needing $y$ derivatives up to order $8$ by assuming that Assumption \ref{ass:averagederivativeestimate4} hold. (However in order to obtain bounds on the second $x$ derivative of the solution to a Poisson equation we still require bounds on $y$ derivatives of $P_t^x$ up to order 4.) 
	With specific examples at hand, checking that Assumption  \ref{ass:averagederivativeestimate4} holds is not difficult, but it can be rather lengthy. An example of a system that satisfies all the assumptions of Theorem \ref{fullycoupledaveragingtheorem} is the following: 
	
\begin{example}
    Let $d=n=1$ and consider the following system:
\begin{align*}
        dX_t^{\epsilon,x,y} &= (-X_t^{\epsilon,x,y}-\frac{1}{2}e^{-(Y_t^{\epsilon,x,y})^2})dt +\sqrt{2}\sqrt{1+\frac{1}{4}\sin(Y_t^{\epsilon,x,y})-\frac{X_t^{\epsilon,x,y}}{4\sqrt{e}\sqrt{1+(X_t^{\epsilon,x,y})^2}}}dW_t\\
        dY_t^{\epsilon,x,y} &= \frac{1}{\epsilon}\left[\arctan(X_t^{\epsilon,x,y})-Y_t^{\epsilon,x,y}\right]dt+\sqrt{\frac{2}{\epsilon}}\left(1+\frac{\sin(b_0(X_t^{\epsilon,x,y}) +Y_t^{\epsilon,x,y})}{12}\right) dB_t\\
        &+\frac{1}{\epsilon}\left[\frac{\cos(b_0(X_t^{\epsilon,x,y})+Y_t^{\epsilon,x,y})}{6} \left(1+\frac{\sin(b_0(X_t^{\epsilon,x,y})+Y_t^{\epsilon,x,y})}{12} \right)\right]dt .
    \end{align*}
    Here $b_0:\R\to\R$ are smooth bounded functions with bounded derivatives of all orders. {Observe that square root term in $X_t^{\epsilon,x,y}$ is always real valued since the expression in the square root is bounded below by $1-\frac{1}{4}-\frac{1}{4\sqrt{e}}$ which is positive, this follows since the functions $\sin$ and $x\mapsto \frac{x}{\sqrt{1+x^2}}$ are bounded in absolute value by $1$.}
    With calculations analogous to those that we will do to obtain the derivative estimates of the next section (which we don't repeat here for this specific example)  one can show that the corresponding frozen semigroup is SES. Moreover we have that $\mu^x$ is normally distributed with mean $\arctan(x)$ and variance $1$ and hence the averaged equation is given by
   \begin{equation*}
        d\bar{X}_t^{x} = (-\bar{X}_t^{x}-\frac{1}{2}\exp(-\frac{1}{3}\arctan(\bar{X}_t^{x})^2))dt +\sqrt{2}dW_t.
    \end{equation*}
    Since the drift is strictly monotone it is immediate to check that it satisfies conditions of the form \eqref{eqn:driftCondSimpler} and hence the averaged semigroup is SES.
\end{example}



	
	\section{Preliminary Results on the frozen semigroup $P_t^x$} \label{sec:sec3}
	Looking at the representation formula \eqref{ft1rep}, it is clear that the ergodic properties of the semigroup $P_t^x$ are central to the study of the Poisson problem \eqref{poisprobgeneral} -- trivially, if $P_t^x$ does not decay fast enough to $\bar{\phi}^x$ then \eqref{ft1rep} makes no sense.   With this motivation,  Lemma \ref{prop:momentBounds} and Proposition \ref{lemma:IntAvg} address the ergodic properties of the semigroup ${P}^x_t$. In Proposition \ref{prop:derivativeest} we show SES for the same semigroup. 
	Throughout the section we assume  that the operator $\Lfastx{x}$ is uniformly elliptic, i.e $ \diffusion$ satisfies Assumption \ref{UniformEllip} and we don't repeat this fact in every statement.
	

	\begin{lemma}\label{prop:momentBounds}
		
		
		Let $\drift$ and $\diffusion$ satisfy Assumption \ref{ass:polGrowth} (\ref{ass:polGrowthDriftFast},\ref{ass:polGrowthDiffFast}), 
		and Assumption \ref{DriftAssump}. Let $\fastFixedI_{t}$ be the solution of the SDE (\ref{fastF}) (which does exist under our assumptions, see Note \ref{rem:commentonassumptions}). Then for any integer $k \geq 1$, 
		$x\in \mathbb{R}^n, y\in \mathbb{R}^d$ we have
		\begin{align}
			\label{eqn:momBoundsY1}
			&\fastExp{y} \left[|\fastFixedI_t|^k\right] \leq e^{-r_k' t}|y|^k + \frac{ C_k'}{r_k'} \\\label{eqn:momBoundsYinv}
			&\int_{\mathbb{R}^d}|{y}|^k \mu^x(d{y}) \leq \frac{C_k'}{r_k'},
		\end{align}
		where  {$C_1'=r_1'\sqrt{\frac{C_2'}{r_2'}}$, $r_1'=r_2'/2, C_2'=C_2/r_2, r_2'=2r_2$,  and for $k> 2$}
		\begin{equation*}
	C_k'=2C_k\left(\frac{2C_k(k-2)}{kr_k}\right)^{\frac{k-2}{2}}, \qquad 
			r_k'=\frac{kr_k}{2} \,.
		\end{equation*}
		
	\end{lemma}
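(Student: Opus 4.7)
The proof is a routine Lyapunov/Itô argument, but the constants need to be tracked. My plan is as follows.

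For $k\ge 2$, the function $h(y)=|y|^k$ is $C^2$ and one computes
\[
\mathcal{L}^x h(y) = k|y|^{k-2}\bigl(g(x,y),y\bigr) + k(k-2)|y|^{k-4}|a(x)^T y|^2 + k|y|^{k-2}\,a(x){:}a(x).
\]
Using $|a^T y|^2 \le (a{:}a)|y|^2$ and regrouping, this gives
\[
\mathcal{L}^x h(y) \le k|y|^{k-2}\Bigl[(g(x,y),y) + (k-1)a(x){:}a(x)\Bigr] \le -k r_k |y|^k + k C_k |y|^{k-2},
\]
where the second inequality uses Assumption \ref{DriftAssump}. Then I apply Young's inequality with exponents $\tfrac{k}{k-2}$ and $\tfrac{k}{2}$ to absorb the $|y|^{k-2}$ term: choosing the Young parameter so that the $|y|^k$ coefficient becomes $\tfrac{kr_k}{2}$ produces exactly the constants $r_k'=kr_k/2$ and $C_k'$ in the statement (with the convention that the $k=2$ case reads $\mathcal{L}^x h \le -2r_2 |y|^2 + 2C_2$ directly, absorbed into the same formulas via the convention $0^0=1$).

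Next, Dynkin's formula (applied after a standard localization with stopping times $\tau_n = \inf\{t: |Y_t|>n\}$ to handle the a-priori non-integrability, followed by Fatou as $n\to\infty$) combined with the Lyapunov bound $\mathcal{L}^x h \le -r_k' h + C_k'$ yields $\tfrac{d}{dt}\mathbb{E}_y[|Y_t^{x,y}|^k]\le -r_k'\mathbb{E}_y[|Y_t^{x,y}|^k] + C_k'$; Gronwall's lemma then gives \eqref{eqn:momBoundsY1}. The bound \eqref{eqn:momBoundsYinv} on the invariant measure follows either by integrating $\mathcal{L}^x h \le -r_k' h + C_k'$ against $\mu^x$ (since $\int \mathcal{L}^x h\,d\mu^x = 0$ for $h$ of polynomial growth, justified again via a truncation argument and Assumption \ref{UniformEllip} which ensures the needed regularity of $\mu^x$), or by letting $t\to\infty$ in \eqref{eqn:momBoundsY1} and using the weak convergence $\mathrm{Law}(Y_t^{x,y})\Rightarrow\mu^x$ plus uniform integrability in $t$ provided by the bound on a slightly higher moment.

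The $k=1$ case is not covered by the above computation (since $|y|$ is not $C^2$ at the origin, and the drift condition is degenerate). Instead, I obtain it as a direct corollary of the $k=2$ case by Jensen's inequality:
\[
\mathbb{E}_y|Y_t^{x,y}| \le \bigl(\mathbb{E}_y|Y_t^{x,y}|^2\bigr)^{1/2} \le \bigl(e^{-r_2' t}|y|^2 + C_2'/r_2'\bigr)^{1/2} \le e^{-r_2' t/2}|y| + \sqrt{C_2'/r_2'},
\]
which matches the claim with $r_1'=r_2'/2$ and $C_1'/r_1' = \sqrt{C_2'/r_2'}$; the analogous argument applied to $\mu^x$ (or Jensen applied to the already-obtained $k=2$ invariant bound) gives \eqref{eqn:momBoundsYinv} for $k=1$.

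The only genuine obstacles are bookkeeping: carefully tracking constants through Young's inequality to match the formulas announced for $C_k',r_k'$, and justifying the localization/Fatou steps (which require that the moment bound one already has from the localized SDE transfers to the limit—this is where Assumption \ref{DriftAssump} being imposed for \emph{every} $k$ is convenient, since a higher-moment crude bound gives the uniform integrability needed to pass to the limit). Everything else is standard.
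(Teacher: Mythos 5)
Your proposal is correct and follows essentially the same route as the paper: compute $\mathcal{L}^x|y|^k$, invoke Assumption \ref{DriftAssump} to get $\mathcal{L}^x|y|^k \leq -kr_k|y|^k + kC_k|y|^{k-2}$, absorb the lower-order term by Young's inequality, run the Gronwall/Dynkin argument, and deduce the $k=1$ case from $k=2$ by Jensen. The paper phrases the Gronwall step directly at the semigroup level (differentiating $e^{r_k't}P_t^xV$) and cites the localization details to the literature, but these are cosmetic differences from your Dynkin-plus-stopping-time formulation.
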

	We give a sketch of the proof of Lemma \ref{prop:momentBounds} in Appendix \ref{app:proofofprelims} and there expand on how Assumption \ref{DriftAssump} implies moment bounds by showing that Assumption \ref{DriftAssump} implies that the function $V(y) = |y|^k$ is a Lyapunov function for the SDE.

	\begin{Note}\label{note:avg}
		Recall from Note \ref{rem:commentonassumptions} that $\mu^x$ exists and is unique. Therefore, if $\mu^x$ has polynomial moments of all orders, then $\overline{\psi}$ (introduced in \eqref{eqn:barNotation}) is well defined for any arbitrary $\psi\in \funcSpace{m}{m'}$ and $m,m'>0$. The measure $\mu^x$ has polynomial moments of all orders by Lemma \ref{prop:momentBounds}, and hence $\overline{\psi}$ is well defined. Moreover we have that $\widebar{\psi} \in \funcSpace{m}{0}$.
	\end{Note}

	\begin{proposition}\label{lemma:IntAvg}
		Let $\SGfastsx{t}{x}$ be given by \eqref{infty}.  Let $\drift$ and $\diffusion$ satisfy Assumption \ref{ass:polGrowth} (\ref{ass:polGrowthDriftFast},\ref{ass:polGrowthDiffFast}), \ref{UniformEllip} and \ref{DriftAssump}. Let $\phi \in C(\R^n \times \R^d)$ with $\Vnorm{\phi}{0,\pow{}{x},\pow{}{y}}<\infty$ for some  $\pow{}{x},\pow{}{y}>0$; then there exist constants $c,C>0$ (independent of $x\in\R^n,y\in\R^d$) such that the following holds.
		\begin{equation}\label{eqn:SGconv}
			\left| \left(\SGfastsx{s}{x} \phi^x\right) (y) - \mu^x(\phi^x)\right| \leq C \Vnorm{\phi}{0,\pow{}{x},\pow{}{y}} e^{-cs}(1+|x|^{\pow{}{x}}+|y|^{\pow{}{y}}) \quad \text{ for all } x \in \R^n, y\in \R^d, s>0.
		\end{equation}
		As a consequence, there exists some $C'>0$ such that
		\begin{equation*}
			\left| \int_0^\infty  \left[ \left(\SGfastsx{s}{x} \phi^x\right) (y) - \mu^x(\phi^x)  \right]ds \right| \leq C'\Vnorm{\phi}{0,\pow{}{x},\pow{}{y}} (1+|x|^{\pow{}{x}}+|y|^{\pow{}{y}}).
		\end{equation*}
	\end{proposition}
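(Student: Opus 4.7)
The plan is to derive Proposition \ref{lemma:IntAvg} from a weighted exponential ergodicity estimate for the frozen semigroup $\SGfastsxn{t}{x}$, with decay constants that are \emph{uniform in the parameter} $x \in \R^n$. Once such a bound is in hand, the second (integrated) inequality is immediate by integrating $e^{-cs}$ over $s \in [0,\infty)$, so the substance of the proof lies in the pointwise estimate \eqref{eqn:SGconv}.

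The natural tool is a Harris-type theorem for Markov semigroups (in the Meyn--Tweedie or Hairer--Mattingly framework), which requires two ingredients, both uniform in $x$. The Lyapunov side is supplied by Assumption \ref{DriftAssump}: for $V_k(y) := 1+|y|^k$, one has $\cL^x V_k \leq -r_k' V_k + \tilde{C}_k'$ with $r_k', \tilde{C}_k'$ the very constants appearing in Lemma \ref{prop:momentBounds}, manifestly independent of $x$. The minorization side follows from Assumption \ref{UniformEllip} together with Assumption \ref{ass:polGrowth}\ref{ass:polGrowthDriftFast}: since the latter has vanishing $x$-exponent, the coefficient $g(x,\cdot)$ is bounded on any fixed $y$-ball by a constant independent of $x$, while the diffusion is uniformly elliptic with $x$-independent bounds $\lambda_\pm$. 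Classical two-sided (Aronson) Gaussian bounds on the transition density of \eqref{fastF} then yield, for every $R>0$, a lower bound of the form $\SGfastsxn{1}{x}(y,\cdot) \geq \delta(R)\,\nu_R$, valid for all $x\in\R^n$ and all $y$ with $V_k(y) \leq R$.

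Feeding both inputs into the Harris theorem produces constants $C, c > 0$ independent of $x$ such that, for every measurable $\psi\colon\R^d\to\R$ with $\|\psi\|_{V_k} := \sup_y |\psi(y)|/V_k(y) < \infty$,
\begin{equation*}
\left| (\SGfastsxn{s}{x}\psi)(y) - \mu^x(\psi) \right| \leq C\, e^{-cs}\, \|\psi\|_{V_k}\, V_k(y).
\end{equation*}
To deduce \eqref{eqn:SGconv}, take $k = \pow{}{y}$ and apply this with $\psi(\cdot) = \phi(x,\cdot)$: from $|\phi(x,y)| \leq \Vnorm{\phi}{0,\pow{}{x},\pow{}{y}}(1+|x|^{\pow{}{x}}+|y|^{\pow{}{y}})$ one reads off $\|\phi^x\|_{V_{\pow{}{y}}} \leq 2\,\Vnorm{\phi}{0,\pow{}{x},\pow{}{y}}(1+|x|^{\pow{}{x}})$, so that combining with $V_{\pow{}{y}}(y) = 1+|y|^{\pow{}{y}}$ delivers an exponential bound with the required polynomial prefactor in $(x,y)$. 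The integrated inequality then follows by integrating in $s$.

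The main technical obstacle is establishing the minorization uniformly in $x$: a priori the $x$-dependence of the coefficients of $\cL^x$ could ruin uniformity of the Gaussian lower bounds on the heat kernel. The rescue is precisely the vanishing $x$-exponent in Assumption \ref{ass:polGrowth}\ref{ass:polGrowthDriftFast}, which confines the $x$-variation of $g$ to quantities controlled uniformly on compact $y$-balls. Once this is accepted, the rest is a textbook application of the Harris framework.
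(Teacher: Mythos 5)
Your overall strategy coincides with the paper's: apply a Harris-type ergodic theorem in $V$-weighted total variation (the paper uses the Mattingly--Stuart formulation, which sits in the same Meyn--Tweedie / Hairer--Mattingly family you have in mind), with the Lyapunov function $V_k(y)=1+|y|^k$ supplied by Assumption \ref{DriftAssump}, and trace the constants to make them uniform in $x$. The Lyapunov side of your argument is exactly the paper's, and the way you read off $\|\phi^x\|_{V_{\pow{}{y}}}$ and combine it with $V_{\pow{}{y}}(y)$ to reach \eqref{eqn:SGconv}, and then integrate in $s$, is correct.

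The genuine divergence from the paper, and the step that carries the real content, is how the uniform-in-$x$ minorization is established. The paper does this with a soft compactness argument: assuming the infimum of the density degenerates along some sequence $x_n$, it extracts (Arzela--Ascoli) locally uniformly convergent subsequences of the coefficients, builds a limiting elliptic diffusion whose density is strictly positive, shows the transition densities themselves converge to that of the limit, and derives a contradiction. No quantitative Gaussian bound is ever invoked. You instead propose a hard route via two-sided Aronson estimates on the transition density of \eqref{fastF}. If carried out it would actually yield a more explicit minorization constant $\delta(R)$ than the paper obtains, so this is a genuinely different and in some ways stronger route.

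As written, though, there is a gap at the crucial step. Classical Aronson heat-kernel estimates require the drift to be globally bounded, and the Gaussian lower-bound constants depend on that global bound; here $g(x,\cdot)$ grows polynomially in $y$, so one cannot invoke ``classical two-sided Gaussian bounds on the transition density of \eqref{fastF}'' directly. The correct statement you want is local: for $y,y'$ in a compact set $\bar U$, compare $p^x_T(y,y')$ from below to the density $p^{x,\mathrm{killed}}_T(y,y')$ of the process killed on exiting a ball $B_{2R}\supset\supset\bar U$; on $B_{2R}$ the coefficients are bounded uniformly in $x$ (this is where the vanishing $x$-exponent in Assumption \ref{ass:polGrowth} \ref{ass:polGrowthDriftFast} enters), so interior Gaussian lower bounds hold for the killed density with $x$-independent constants, and $p^{x,\mathrm{killed}}_T\le p^x_T$. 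This localization is standard but it is exactly what your phrase ``then yield'' elides, and without it the uniform minorization is not justified. Your own diagnosis of the obstacle (the $x$-dependence could ruin the Gaussian bounds) is right; the fix you gesture at is incomplete, and the paper sidesteps the entire issue by working qualitatively via Arzela--Ascoli instead.
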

	\begin{proof}
		The proof can be found in Appendix \ref{app:proofofprelims}.
	\end{proof}
	\begin{Note}\label{note:coupledlemmas}
		 Lemma \ref{prop:momentBounds} and Proposition \ref{lemma:IntAvg} still hold if the diffusion coefficient $a$ is also a function of $y$ and indeed we do the proofs in this case. This is relevant in the proofs for the fully coupled case.
	\end{Note}
	The above proposition would be standard if we were not tracing the dependence on the parameter $x$ in the main result. That is, using techniques from \cite{meynTweedie} (see, e.g \cite[Theorem 2.5 ]{MATTINGLYSTUARTergodicity} or, alternatively, using the results of \cite{xie2020ergodicity}) one could obtain
	$$
	\left| \left(\SGfastsx{s}{x} \phi^x\right) (y) - \mu^x(\phi^x)\right| \leq C(x)e^{-c(x)s}.
	$$
	This is not sufficient for our purposes,  since we need a control on the constants $C,c$ which is uniform in the parameter  $x$. Hence the need to adapt the strategy and track the dependence on $x$ in  the proofs of \cite[Theorem 2.5]{MATTINGLYSTUARTergodicity}.  The bulk of the work lies in showing that a minorisation condition holds uniformly in $x$, which substantially follows from Assumption \ref{UniformEllip} (but is still not immediate, as  Assumption \ref{UniformEllip} gives us strict positivity of the transition probabilities for each $x\in \R^n$, but not directly that this strict positivity is uniform in $x$, see Appendix \ref{app:proofofprelims}).\\
	We also note that the convergence  result of the above proposition can also be obtained using semigroup DE such as those in the conclusion of Proposition \ref{prop:derivativeest}, see Note \ref{Rem:der est for Pois unbounded} and  Lemma \ref{lemma:IntAvgUB} on this point.


	\begin{proposition}\label{prop:derivativeest}
		Let $\{\SGfastsx{t}{x} \}_{t\geq 0}$ denote the semigroup defined in \eqref{infty}. Suppose that Assumptions \ref{ass:polGrowth}, \ref{UniformEllip}, \ref{DriftAssump} and \ref{ass:SGcond} hold.
		Let $\psi\in C^{0,2}(\R^n\times\R^d)$ such that $\Vseminorm{\psi}{2,\pow{}{x},\pow{}{y}}< \infty$ for some $\pow{}{x},\pow{}{y}\geq 0$. Then there exists a constant $K>0$ independent of $x \in \R^n$ such that 
		
		\begin{equation}\label{eqn:SGdecayConc2}
			\lvert\SGfastsx{t}{x}\psi^x \rvert_{2,m_x, m_y}\leq K|\psi|_{2,m_{x}^{}, m_{y}^{}}e^{-\cb t}\, , \quad \mbox{for every $x \in \R^n, t\geq 0$}\,.
		\end{equation}
		Moreover, for any $\psi\in C^{0,4}(\R^n\times\R^d)$ such that $\Vseminorm{\psi}{4,\pow{}{x},\pow{}{y}}< \infty$ for some $\pow{}{x},\pow{}{y}\geq 0$,  we have
		\begin{equation}\label{eqn:SGdecayConc}
			\lvert\SGfastsx{t}{x}\psi^x \rvert_{4,\pow{}{x}, \pow{}{y}}\leq K|\psi|_{4,\pow{}{x}, \pow{}{y}}e^{-\cb t}\, , \quad \mbox{for every $x \in \R^n, t>0$}\,.
		\end{equation}
		We recall that $\kappa$ is the constant appearing in Assumption \ref{ass:SGcond} and that the seminorm $\Vseminorm{\cdot}{4,\pow{}{x},\pow{}{y}}$ (defined in \eqref{eqn:Vseminormdef})  contains all the $y$ derivatives up to order $4$ but no $x$ derivatives and does not contain the function itself.
	\end{proposition}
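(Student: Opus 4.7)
The plan is to use the stochastic flow representation $(P_t^x\psi^x)(y) = \mathbb{E}[\psi^x(Y_t^{x,y})]$ and differentiate under the expectation, expressing $\partial_y^\gamma(P_t^x\psi^x)(y)$ via Fa\`a di Bruno as a sum of expectations of derivatives of $\psi^x$ evaluated at $Y_t^{x,y}$ contracted with products of the flow variations up to order $|\gamma|_*$. Since $a=a(x)$ does not depend on $y$ in the non-fully-coupled regime, differentiating \eqref{fastF} pathwise in $y$ yields \emph{deterministic} linear ODEs (along each Brownian path) for these variations. Writing $J_t$ for the Jacobian $\partial_y Y_t^{x,y}$ and $K_t, L_t, M_t$ for the second, third and fourth $y$-derivatives, one has $\dot{J}_t^i = (\nabla_y g)(x,Y_t^{x,y})\,J_t^i$, and the higher variations satisfy equations with the same linear part $(\nabla_y g)\cdot$ plus source terms: $\mathrm{Hess}_y g$ contracted with $J_t\otimes J_t$ for $K_t$; $\partial_y^3 g$ against $J_t^{\otimes 3}$ plus cross terms with $K_t$ for $L_t$; and $\partial_y^4 g$ against $J_t^{\otimes 4}$ plus cross terms involving $K_t, L_t$ for $M_t$. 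The goal is pointwise exponential decay of each variation, uniformly in $x$.

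For the first variation, Assumption \ref{ass:SGcond} applied with $\xi = J_t^i$ (dropping the nonnegative $\zeta_1, \zeta_2, \zeta_3$ terms) gives $\tfrac{d}{dt}|J_t^i|^2 = 2\sum_{\ell,k}(\partial_{y_\ell}g_k)J_{t,k}^i J_{t,\ell}^i \leq -\kappa|J_t^i|^2$, so $|J_t^i|$ decays exponentially. For the second variation, the crucial algebraic step is a Cauchy--Schwarz splitting:
$$2\bigl(K_t^{ij},\mathrm{Hess}_y g\,(J_t^i, J_t^j)\bigr) = 2\sum_{\ell,m}\bigl(\partial_{y_\ell y_m}g, K_t^{ij}\bigr)J_{\ell i}J_{mj} \leq \zeta_1\sum_{\ell,m}\bigl(\partial_{y_\ell y_m}g, K_t^{ij}\bigr)^2 + \zeta_1^{-1}|J_t^i|^2|J_t^j|^2.$$
Applying Assumption \ref{ass:SGcond} now with $\xi = K_t^{ij}$, the first Cauchy--Schwarz term is absorbed \emph{exactly} by the $\zeta_1$ contribution, leaving $\tfrac{d}{dt}|K_t^{ij}|^2 \leq -\kappa|K_t^{ij}|^2 + \zeta_1^{-1}|J_t^i|^2|J_t^j|^2$. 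Since $K_0=0$, Gronwall yields exponential decay of $|K_t^{ij}|$. The same mechanism, now driven by the $\zeta_2$ and $\zeta_3$ terms, produces exponential decay for $|L_t|$ and $|M_t|$: at each order, a good dissipative term $-\kappa|\cdot|^2$ arises from the linear part, the source (which involves products of lower variations contracted with the appropriate derivative of $g$) is Cauchy--Schwarz-decoupled, and the resulting quadratic in the current variation is absorbed by the matching $\zeta_j$-term in Assumption \ref{ass:SGcond}.

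Finally, since $|\partial_y^\gamma\psi^x(y)| \leq \Vseminorm{\psi^x}{k,m_x,m_y}(1+\mathbb{I}_{m_x>0}|x|^{m_x}+\mathbb{I}_{m_y>0}|y|^{m_y})$ for $1\leq|\gamma|_*\leq k$, the expansion of $\partial_y^\gamma(P_t^x\psi^x)(y)$ consists of expectations of products (derivative of $\psi^x$)$\times$(variations), each of which is bounded pointwise by an exponentially decaying factor times a polynomial in $|Y_t^{x,y}|$. Taking expectations and invoking the uniform moment estimate $\mathbb{E}[|Y_t^{x,y}|^{m_y}] \leq C(1+|y|^{m_y})$ from Lemma \ref{prop:momentBounds} closes the argument and yields \eqref{eqn:SGdecayConc2} and \eqref{eqn:SGdecayConc}, with constants independent of $x$ because $\kappa, \zeta_1, \zeta_2, \zeta_3$ are $x$-independent. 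The main obstacle I anticipate is the combinatorial bookkeeping for the fourth variation: the source term for $M_t$ expands into sums indexed by partitions of the four differentiated coordinates, producing both pure products $J_t^{\otimes 4}$ and mixed terms such as $J_t\otimes L_t$ and $K_t\otimes K_t$ contracted with $\partial_y^2 g, \partial_y^3 g$ or $\partial_y^4 g$; one must verify that \emph{every} such piece pairs, via Cauchy--Schwarz, against exactly one of the quadratic completions in the $\zeta_1,\zeta_2,\zeta_3$ terms of Assumption \ref{ass:SGcond}, with the remaining polynomial-in-$J,K,L$ factor decaying fast enough (by the previously established exponential bounds) to be handled by Gronwall without introducing a leftover of the wrong sign.
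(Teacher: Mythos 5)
Your proof is a genuinely different route from the paper's, and the core mechanism is sound. The paper works entirely at the semigroup level: it defines a weighted Lyapunov functional
\begin{equation*}
\Gamma(f_t^x)=\sum_i |\partial_{y_i} f_t^x|^2+\gamma_1\sum_{i,j}|\partial_{y_iy_j}f_t^x|^2+\gamma_2\sum_{i,j,k}|\partial_{y_iy_jy_k}f_t^x|^2+\gamma_3\sum_{i,j,k,\ell}|\partial_{y_iy_jy_ky_\ell}f_t^x|^2
\end{equation*}
with $\gamma_1,\gamma_2,\gamma_3$ tuned so that the commutator expansion of $(\partial_t-\cL^x)\Gamma(f_t^x)$ closes, via Young's inequality and the $\zeta_j$-budget in Assumption~\ref{ass:SGcond}, to $(\partial_t-\cL^x)\Gamma(f_t^x)\leq-\kappa\Gamma(f_t^x)$; the conclusion then follows from the semigroup interpolation identity $\partial_s P^x_{t-s}\Gamma(f_s^x)=P^x_{t-s}\left[(\partial_s-\cL^x)\Gamma(f_s^x)\right]$ and Gronwall. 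You instead work at the SDE level with the flow variations $J_t,K_t,L_t,M_t$, using that $a=a(x)$ makes the variation equations pathwise ODEs, and close each order separately by Cauchy--Schwarz against the same $\zeta_j$-budget plus a nested Gronwall. The arithmetic is essentially identical (your Cauchy--Schwarz decoupling is the probabilistic mirror of the Young's-inequality step in the $\Gamma$-calculus, and your ``absorbed exactly by the $\zeta_1$ term'' observation is exactly the paper's choice $\nu_0=\zeta_1/\gamma_1$), and the combinatorial bookkeeping you flag for $M_t$ is the counterpart of the paper's choices of $\nu_2,\ldots,\nu_5$. Two small advantages of the paper's route: it does not require differentiating under the expectation (it uses only the PDE-level differentiability of $P^x_t\psi$ coming from ellipticity, whereas you implicitly need pathwise $C^4$-differentiability of $y\mapsto Y_t^{x,y}$ and a dominated-convergence argument to pass derivatives through $\mathbb{E}$), and the single functional $\Gamma$ with weights avoids the cascade of nested Gronwall constants. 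A small advantage of yours is that the Fa\`a di Bruno expansion makes the structure of the cross terms and which $\zeta_j$ they must pair with completely transparent. One caveat you should be aware of (it applies equally to the paper): since both arguments control sums of squares of derivatives and obtain decay $\sim e^{-\kappa t}$ for those squares, the resulting pointwise decay of a single derivative is $e^{-\kappa t/2}$, not $e^{-\kappa t}$; the precise exponent in the conclusion should be read accordingly.
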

	\begin{proof}
		The proof of this Proposition can be found in Appendix \ref{app:proofofprelims}.
	\end{proof}
		In Section \ref{sec:psneqn} we will require control of both the second and fourth order derivatives of the semigroup $\SGfastsx{t}{x}$. The bound \eqref{eqn:SGdecayConc} implies a bound on the second order derivatives of the semigroup; that is, it implies
		\begin{equation*}
			\lvert\SGfastsx{t}{x}\psi^x \rvert_{2,\pow{}{x}, \pow{}{y}} \leq K|\psi|_{4,\pow{}{x}, \pow{}{y}}e^{-\cb t} \,.
		\end{equation*}
		However the above cannot be applied to  functions $\psi$ that are only twice differentiable; this fact  motivates obtaining \eqref{eqn:SGdecayConc2} as well as \eqref{eqn:SGdecayConc}. \\
		Finally, note that if Assumption \ref{ass:SGcond} holds  with $\zeta_2,\zeta_3 >0$ then it holds with $\zeta_2=\zeta_3 =0$ as well and moreover note that   the derivative estimate \eqref{eqn:SGdecayConc2} can be proven under Assumption \ref{ass:SGcond} with $\zeta_2=\zeta_3 =0$. That is, if there exist $\cb,\zeta_1>0$ independent of $x,y,\xi$ such that for any $x\in \R^n, y, \xi\in \R^d,$ we have
		\begin{equation}\label{eq:driftcondition2}
			\begin{aligned}
				2  \sum_{i,j=1}^n \partial_{y_i}g_j(x,y) \xi_i\xi_j+\sum_{i,j=1}^d\zeta_1(\partial_{y_i}\partial_{y_j}g(x,y), \xi)^2 \leq -\cb \lvert\xi\rvert^2, 
			\end{aligned}
		\end{equation} 
		then \eqref{eqn:SGdecayConc2} holds. This is shown in the proof of Proposition \ref{prop:derivativeest}. 

	\begin{Note}\label{note:smoothing} The DE  of Proposition \ref{prop:derivativeest}, are not smoothing-type estimates; indeed,  smoothing estimates for a given Markov Semigroup  $\cP_t$ are, generally, of the form
	$$
	\lv \mathfrak{D}\cP_t f(x) \rv \leq \frac{1}{t^{\gamma}} \|f\|_{\infty}, \quad f \in C_b, t\in (0,1), x\in \R^n,
	$$
	where $\mathfrak{D}$ is some appropriate differential operator and $\gamma>0$ depends on $\mathfrak{D}$ (for example, in \eqref{derestgeneral} $\mathfrak{D}$ is the usual gradient; if the process is hypoelliptic $\gamma$ will depend on the number of commutators needed to obtain the direction $\mathfrak{D}$), see \cite{bakry2014analysis, crisanottobre} and references therein for a comprehensive review. Note that in the above $f \in C_b$ while in this paper the function $f$ will not be bounded but will be $C^2$ at least; smoothing estimates can be seen as quantifying the ``explosion" of heat-type semigroups as $t\rightarrow 0$. Here we want a specific (i.e. exponential) quantitative estimate for $t$ large. Note that by the semigroup property the short and long-time estimates could be ``glued" together, and this is routinely done in the literature; we do not  do it here as the smoothing effect is not the main concern of the paper, what we are interested in is the long-time regime.
	\end{Note}
	
	
	\section{Poisson equation with parameter} \label{sec:psneqn}
	In this section we study the Poisson problem \eqref{poisprobgeneral}. In particular, in Lemma \ref{lemma:probRep} we show that 
	the representation formula \eqref{ft1rep}  provides a classical solution to  \eqref{poisprobgeneral} (in the sense that the function in \eqref{ft1rep} is smooth in $y$)  in $L^2(\mu^x)$. \footnote{This functional choice is quite natural as the semigroup $\SGfastsx{t}{x}$  can be extended to a $C_0$-semigroup in $L^2(\mu^x)$ (for each $x$, because $\mu^x$ is an invariant measure), and it had already been made in \cite{cattiaux2011central}.}  In Proposition \ref{lemma:IntAvgDerRepresentation} (and in Proposition \ref{lemma:IntAvgDerDer}) we prove regularity properties in the parameter $x$ of the solution and of the measure $\mu^x$.
	
	Formula \eqref{ft1rep} is by now standard in the case in which the operator $\cL^x$ has bounded coefficients. To the best of our knowledge, when the coefficients of $\cL^x$ are unbounded,  such a representation formula  has only been studied in \cite{cattiaux2011central}. The validity of formula \eqref{ft1rep} below can heuristically be seen as follows: for $\phi^x \in L^2(\mu^x)$ and  for $T>0$ fixed, let
	\begin{equation}\label{eqn:truncPsn}
		u^{x,T}(y) = -\int_0^T \left((\SGfastsx{s}{x} \phi^x)(y)-\bar{\phi}(x)\right) ds \,.
	\end{equation}
	By the fundamental theorem of calculus, and using the fact that $\cL^x$ is the generator of $\SGfastsx{s}{x}$  we have 
	\begin{equation}
		\cL^x u^{x,T} = \lim_{h \rightarrow 0}\frac{\SGfastsx{h}{x}u^{x,T} - u^{x,T}}{h} = -\left[\partial_h \int^{T+h}_h \left( \SGfastsx{s}{x} \phi^x-\bar{\phi}(x)\right) ds \right]_{h=0} = \phi^x- \SGfastsx{T}{x}\phi^x.
	\end{equation}
	Letting $T \rightarrow \infty$ and using the convergence  $\SGfastsx{T}{x}\phi^x \rightarrow \bar{\phi}(x)$ as $T \rightarrow \infty$,  yields $\cL^x u^x_\phi = \phi^x-\bar{\phi}(x)$, provided $u^{x,T} \rightarrow u^x_\phi$ in $L^2(\mu^x)$ for some function $u^x_\phi  \in L^2(\mu^x)$, since $\cL^x$ is closed. Lemma \ref{lemma:probRep} is largely devoted to proving the latter convergence.

	Finally, before stating Lemma \ref{lemma:probRep} below, we clarify that the lemma and its proof are a simple adaptation of \cite[Corollary 3.10]{cattiaux2011central},  the main difference being that in Lemma \ref{lemma:probRep}, we explicitly impose conditions on the coefficients of the operator $\cL^x$ which give the desired result while \cite[Corollary 3.10]{cattiaux2011central} is phrased in terms  of requirements on the time-behaviour of the semigroup $\SGfastsx{s}{x}$. 
	
	\begin{lemma}\label{lemma:probRep}
		Consider the Poisson equation \eqref{poisprobgeneral}, and let $\SGfastsx{t}{x}$ be given by \eqref{infty}. Assume that $\drift$ and $a$ satisfy Assumption \ref{ass:polGrowth} (\ref{ass:polGrowthDriftFast},\ref{ass:polGrowthDiffFast}), \ref{UniformEllip} and \ref{DriftAssump}.
		Then, for any $\pow{}{x},\pow{}{y} \geq 0$ and  $\Vnorm{\phi^x}{0, m_x,m_y} < \infty$, the function $u^x_\phi$ defined by \eqref{ft1rep} 
		is well defined, (i.e for each $x \in \R^n$ the function $u_\phi^x$ exists and $u_\phi^x= u_\phi^x(y)<\infty$ for almost every $y\in \R^d$), it is a classical solution to \eqref{poisprobgeneral}, and $u^x_\phi \in L^2(\mu^x)$ for each $x$. Furthermore this solution is infinitely differentiable in the $y$ variable.
	\end{lemma}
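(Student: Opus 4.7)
My plan is to follow the heuristic sketched immediately before the lemma statement, carrying out three tasks in order: pointwise well-definedness of $u^x_\phi$ and of its truncation $u^{x,T}$ (defined in \eqref{eqn:truncPsn}); convergence $u^{x,T}\to u^x_\phi$ in $L^2(\mu^x)$; and finally reading off the Poisson equation from the generator calculation and the smoothness in $y$ from elliptic smoothing. The two inputs I use repeatedly are the decay estimate of Proposition \ref{lemma:IntAvg} -- which provides an exponentially-in-$s$ decaying bound on $\SGfastsxn{s}{x}\phi^x(y) - \bar\phi(x)$ with polynomial envelope in $x,y$ -- and the polynomial moment bounds of Lemma \ref{prop:momentBounds} for $\mu^x$. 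Integrating the exponential decay in $s$ over $[0,\infty)$ shows immediately that $u^x_\phi(y)$ is finite for every $y$ and satisfies $|u^x_\phi(y)| \leq C\Vnorm{\phi}{0,m_x,m_y}(1+|x|^{m_x}+|y|^{m_y})$; the same envelope bounds $u^{x,T}$ uniformly in $T$. Since Lemma \ref{prop:momentBounds} gives all polynomial moments of $\mu^x$, this envelope is square-integrable against $\mu^x$, so $u^{x,T}, u^x_\phi \in L^2(\mu^x)$, and pointwise convergence together with dominated convergence yield $u^{x,T}\to u^x_\phi$ in $L^2(\mu^x)$ as $T\to\infty$.

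The generator calculation that makes the heuristic rigorous is the standard one: using $\SGfastsxn{h}{x}\SGfastsxn{s}{x}=\SGfastsxn{s+h}{x}$, $\SGfastsxn{h}{x}\bar\phi(x) = \bar\phi(x)$ (since $\bar\phi(x)$ is $y$-constant) and a change of variables,
\begin{equation*}
\frac{\SGfastsxn{h}{x} u^{x,T}(y) - u^{x,T}(y)}{h} = \frac{1}{h}\int_0^h (\SGfastsxn{s}{x}\phi^x(y) - \bar\phi(x))\,ds - \frac{1}{h}\int_T^{T+h}(\SGfastsxn{s}{x}\phi^x(y) - \bar\phi(x))\,ds.
\end{equation*}
Sending $h\to 0^+$ in $L^2(\mu^x)$, which is justified by $L^2$-continuity of $s\mapsto \SGfastsxn{s}{x}\phi^x$, gives $\cL^x u^{x,T} = \phi^x - \SGfastsxn{T}{x}\phi^x$ in the $L^2(\mu^x)$-generator sense. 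Letting $T\to\infty$, the quantitative bound of Proposition \ref{lemma:IntAvg} paired with the moments of $\mu^x$ delivers $\SGfastsxn{T}{x}\phi^x \to \bar\phi(x)$ in $L^2(\mu^x)$, so by closedness of $\cL^x$ on $L^2(\mu^x)$ and the $L^2$-convergence of $u^{x,T}$ from the previous step, $u^x_\phi$ lies in the domain of $\cL^x$ and solves \eqref{poisprobgeneral}.

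Finally, for smoothness in $y$, I would exploit the smoothing of the uniformly elliptic semigroup $\SGfastsxn{s}{x}$: under Assumption \ref{UniformEllip} and the $C^{4+\nu}$ regularity in $y$ of $g$ and $A$ from Assumption \ref{ass:polGrowth} \ref{ass:polGrowthDriftFast}, \ref{ass:polGrowthDiffFast}, $\SGfastsxn{\varepsilon}{x} u^x_\phi$ is smooth in $y$ for every $\varepsilon>0$; the identity $u^x_\phi = \SGfastsxn{\varepsilon}{x} u^x_\phi - \int_0^\varepsilon(\SGfastsxn{s}{x}\phi^x - \bar\phi(x))\,ds$, combined with interior Schauder estimates for the elliptic equation $\cL^x u^x_\phi = \phi^x - \bar\phi(x)$, then promotes $u^x_\phi$ itself to the maximal regularity in $y$ permitted by the coefficients. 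The main obstacle -- singled out explicitly in the heuristic preceding the lemma -- is the $L^2(\mu^x)$ convergence $u^{x,T}\to u^x_\phi$, and this dissolves cleanly once the polynomial envelope of Proposition \ref{lemma:IntAvg} is paired with the moment bounds of Lemma \ref{prop:momentBounds}; everything else is the generator calculation and closedness of $\cL^x$.
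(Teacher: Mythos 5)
Your proof is correct and follows essentially the same strategy the paper uses: truncate to $u^{x,T}$, show $L^2(\mu^x)$ convergence as $T\to\infty$, read off the Poisson equation via the generator identity and closedness of $\cL^x$, and finish with elliptic regularity -- all powered by Proposition \ref{lemma:IntAvg} and the moment bounds of Lemma \ref{prop:momentBounds}. The one place your route diverges from the paper's is the $L^2$-convergence step: the paper invokes a Minkowski-type integral inequality (Lemma \ref{Aineq}, the content of \cite[Note 3.1]{michelaLongTimeAsymptotics}) to bound $\|u^{x,T}-u^x_\phi\|_{L^2(\mu^x)}$ directly by the tail $\int_T^\infty\|P^x_s\phi^x-\bar\phi\|_{L^2(\mu^x)}\,ds$, which gives a quantitative exponential rate in $T$; you instead pair the pointwise envelope from Proposition \ref{lemma:IntAvg} with the square-integrability of the polynomial majorant against $\mu^x$ and apply dominated convergence, which yields the same qualitative conclusion with a slightly lighter argument. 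You also spell out the generator difference quotient explicitly -- the paper delegates this to \cite[Section 3]{cattiaux2011central} via Lemma \ref{lemma:psneqn} -- and your identity $u^x_\phi = P^x_\varepsilon u^x_\phi - \int_0^\varepsilon(P^x_s\phi^x-\bar\phi)\,ds$ for the smoothness step is a clean alternative to the paper's appeal to \cite[Theorem 1]{pardoux2001} on a ball. Both are valid; the paper's version is marginally more quantitative, yours is more self-contained.
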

	\begin{proof}
		The proof of Lemma \ref{lemma:probRep} can be found in  Appendix \ref{app:proofspsneqn} . \footnote{
				The proof of the representation formula \eqref{ft1rep} that we do in appendix follows the approach of \cite{cattiaux2011central}.  An alternative approach to obtain \eqref{ft1rep} is to follow steps (a)-(d) in the proof of \cite[Theorem 1]{pardoux2001} and try and relax the assumption on the boundedness of the coefficients there. The step where removing the boundedness of the coefficients causes the most difficulty is step (c) which concerns proving continuity of the solution. This can be done in our setting,  since we assume greater regularity on $\phi^x$ and therefore can construct a sequence $u^{x,T}$, as defined in \eqref{eqn:truncPsn}, and one can show that this sequence converges locally uniformly as $T\to \infty$ to $u_\phi^x$ and is continuous. We don't take this approach here as we will need anyway our approach of using DE to remove boundedness of the coefficients when proving the smoothness in $x$ of higher derivatives of the solution (i.e. not just the representation formula). 
		}
	\end{proof}
We now  study the properties of the solution  $u_\phi^x$. 
	We  use the expression $\LfastxDeri$ to denote the second order differential operator obtained by taking the partial derivative of the coefficients of $\Lfastx{x}$ by $x_i$, i.e
	\begin{equation}\label{threestar}
		\left(\LfastxDeri u^{x}\right)(y) := \slowDer{i}g (x,y) \cdot \nabla_y u^x(y) + \slowDer{i}\left(a(x)  a(x)^T\right) : \text{Hess}_y u^x(y),
	\end{equation}
	where $\slowDer{i}\left(a(x)  a(x)^T\right)$ is meant element-wise. 
	
	
	\begin{proposition}
		\label{lemma:IntAvgDerRepresentation}
		Let $\SGfastsx{t}{x}$ be given by \eqref{infty}.  Let $\drift$ and $\diffusion$ satisfy Assumption \ref{ass:polGrowth} (\ref{ass:polGrowthDriftFast},\ref{ass:polGrowthDiffFast}), \ref{UniformEllip}, \ref{DriftAssump} and \ref{ass:SGcond}. Furthermore let $\phi \in \funcSpace{\pow{}{x}}{\pow{}{y}}$ for some $\pow{}{x},\pow{}{y}>0$. Then:
		
		i) the functions $x \mapsto \left(\SGfastsx{t}{x} \phi^{x}\right)(y)$, $x \mapsto \pa_{y_i}\left(\SGfastsx{t}{x} \phi^{x}\right)(y)$, 
		$x \mapsto \pa^2_{y_iy_j}\left(\SGfastsx{t}{x} \phi^{x}\right)(y)$ are continuous locally uniformly with respect to $x$ and $y$. Indeed, there exist $c,C>0$ such that for every $x \in \R^n$, $y\in \R^d$ and $0<h< 1$, the following bounds hold for all $t \geq 0$ :
		
		\begin{equation}
			\label{unifConv1}
			\lvert\left(\SGfastsx{t}{x+h} \phi^{x+h}\right)(y)-\left(\SGfastsx{t}{x} \phi^{x}\right)(y)\rvert \leq 	hC \Vnorm{\phi}{2,\pow{}{x},\pow{}{y}} (1+ |y|^{M^{g}_{y}}+|x|^{2\pow{}{x}}),
		\end{equation}
		and
		\begin{equation}\label{eqn:unifConv}
			\begin{split}
				\lvert\left(\fastDer{i}\SGfastsx{t}{x+h}\phi^{x+h}\right)(y) &-  \left(\fastDer{i}\SGfastsx{t}{x}\phi^x\right)(y)\rvert + \lvert  \left(\fastDerDer{i}{j}\SGfastsx{t}{x+h}\phi^{x+h}\right)(y) -  \left(\fastDerDer{i}{j}\SGfastsx{t}{x}\phi^x\right)(y) \rvert
				\\&
				\leq 
				hC \Vnorm{\phi}{4,\pow{}{x},\pow{}{y}} (1+ |y|^{M^{g}_{y}}+|x|^{2\pow{}{x}}),
			\end{split}
		\end{equation}
		where $M^{g}_{y} \coloneqq \max\{2\pow{g}{y},\pow{g}{y}+m^{}_{y}\}$.
		
		ii) the functions $x \mapsto \left(\SGfastsx{t}{x} \phi^{x}\right)(y) $ and $x \mapsto \mu^x( \phi^{x})$ are continuously differentiable in $x$ and we have the following representation formulas for their derivatives:
		\begin{equation}\label{eqn:repSG}
			\partial_{x_i} \left(\SGfastsx{t}{x} \phi^{x}\right)(y) =  \left(\SGfastsx{t}{x}\partial_{x_i}  \phi^{x}\right)(y) + \int_0^{t}\left[ \SGfastsx{t-s}{x} \LfastxDeri \SGfastsx{s}{x}\phi^{x}\right](y) ds
		\end{equation}
		and
		\begin{equation}\label{eqn:repInvMeas}
			\partial_{x_i}\mu^x( \phi^{x})= \mu^x(\partial_{x_i} \phi^{x})  +  \int_0^{\infty} \mu^x\left( \LfastxDeri \left(\SGfastsx{s}{x}\phi^{x}\right)\right) ds\,,
		\end{equation}
		for all $i \in \{1, \cdots, n \}$. Furthermore, for all $i \in \{1, \cdots, n \}$, there exist $C, C'>0$ (which are independent of $x$ and $y$) such that \eqref{eq:intAvgDerInv} and \eqref{eq:intAvgDer} hold.

	\end{proposition}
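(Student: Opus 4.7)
The plan is to base the proof on Duhamel's principle, which formally produces \eqref{eqn:repSG}. Setting $u(t,x,y) := (\SGfastsx{t}{x}\phi^x)(y)$, this function solves the backward Kolmogorov equation $\partial_t u = \cL^x u$ with $u|_{t=0} = \phi^x$. Formally differentiating in $x_i$ gives $\partial_t(\partial_{x_i}u) = \cL^x(\partial_{x_i}u) + (\partial_{x_i}\cL^x) u$ with $(\partial_{x_i}u)|_{t=0} = \partial_{x_i}\phi^x$, and Duhamel's formula then yields the candidate representation \eqref{eqn:repSG}. My scheme is to prove part (i) first, then pass to the limit in the associated difference quotients to obtain \eqref{eqn:repSG} and continuity of the derivative in (ii), and finally to read off \eqref{eq:intAvgDerInv} and \eqref{eq:intAvgDer} from the explicit formulas \eqref{eqn:repSG} and \eqref{eqn:repInvMeas}.

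For part (i) I would decompose
\[
(\SGfastsx{t}{x+h}\phi^{x+h})(y) - (\SGfastsx{t}{x}\phi^x)(y) = \SGfastsx{t}{x+h}(\phi^{x+h}-\phi^x)(y) + \big[(\SGfastsx{t}{x+h}-\SGfastsx{t}{x})\phi^x\big](y).
\]
The first summand is $O(h)$ by Taylor expansion $\phi^{x+h}-\phi^x = h\int_0^1\partial_{x_i}\phi^{x+rh}\,dr$: since $\partial_{x_i}\phi$ is polynomially bounded in $y$ (controlled by $\Vnorm{\phi}{2,\pow{}{x},\pow{}{y}}$), the moment bounds of Lemma \ref{prop:momentBounds} convert the action of $\SGfastsx{t}{x+h}$ into the polynomial $(1+|x|^{2\pow{}{x}}+|y|^{M^g_y})$. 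For the second summand I use the semigroup identity
\[
(\SGfastsx{t}{x+h}-\SGfastsx{t}{x})\phi^x = \int_0^t \SGfastsx{t-s}{x}\,(\cL^{x+h}-\cL^x)\,\SGfastsx{s}{x+h}\phi^x\,ds,
\]
together with $\cL^{x+h}-\cL^x = h\int_0^1 \partial_{x_i}\cL^{x+rh}\,dr$, so that the integrand features $\nabla_y(\SGfastsx{s}{x+h}\phi^x)$ and $\text{Hess}_y(\SGfastsx{s}{x+h}\phi^x)$; both decay exponentially in $s$ by the SES estimate \eqref{eqn:SGdecayConc2}, so after absorbing the polynomial $y$-growth of $\partial_{x_i} g$ and $\partial_{x_i}A$ through the outer semigroup via Lemma \ref{prop:momentBounds} one obtains \eqref{unifConv1}. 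The estimate \eqref{eqn:unifConv} follows from the same decomposition after commuting $\partial_{y_k}$ and $\partial_{y_ky_\ell}$ inside the Duhamel integral; two extra $y$-derivatives of $\SGfastsx{s}{x+h}\phi^x$ appear in the integrand, which is precisely why the relevant bound is the fourth-order SES estimate \eqref{eqn:SGdecayConc} and the hypothesis $\Vnorm{\phi}{4,\pow{}{x},\pow{}{y}}<\infty$ is used.

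For part (ii), dividing the decomposition above by $h$ and invoking dominated convergence inside the Duhamel integral (justified by the uniform bounds of part (i) and by the polynomial domination delivered by Lemma \ref{prop:momentBounds} and Proposition \ref{prop:derivativeest}) lets me identify the pointwise $h\to 0$ limit with the right-hand side of \eqref{eqn:repSG}, establishing differentiability of $\SGfastsx{t}{x}\phi^x$ in $x$ together with the representation formula. The continuity of $\partial_{x_i}(\SGfastsx{t}{x}\phi^x)$ in $x$ then follows by applying the same kind of perturbation argument to the right-hand side of \eqref{eqn:repSG}. Formula \eqref{eqn:repInvMeas} is obtained by letting $t\to\infty$ in \eqref{eqn:repSG}: the initial term converges to $\mu^x(\partial_{x_i}\phi^x)$ by Proposition \ref{lemma:IntAvg}, and for each fixed $s$ the integrand $\SGfastsx{t-s}{x}[(\partial_{x_i}\cL^x)\SGfastsx{s}{x}\phi^x](y)$ converges to the $\mu^x$-average $\mu^x[(\partial_{x_i}\cL^x)\SGfastsx{s}{x}\phi^x]$; that these limits can be passed inside the $ds$-integral rests on absolute integrability on $[0,\infty)$, which in turn follows because $(\partial_{x_i}\cL^x)\SGfastsx{s}{x}\phi^x$ involves only $y$-derivatives of $\SGfastsx{s}{x}\phi^x$ — decaying like $e^{-\kappa s}$ by Proposition \ref{prop:derivativeest}. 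The explicit polynomial bounds \eqref{eq:intAvgDerInv} and \eqref{eq:intAvgDer} are then extracted directly from \eqref{eqn:repInvMeas} and \eqref{eqn:repSG} by combining the exponential decay of the $y$-derivatives with the polynomial $y$-growth of $\partial_{x_i}g$ (and boundedness of $\partial_{x_i}A$), producing the exponents $2\pow{}{x}$ and $M^g_y = \max\{2\pow{g}{y},\pow{g}{y}+\pow{}{y}\}$ recorded there.

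The main obstacle is precisely that $\partial_{x_i}g$ is \emph{not} bounded but only polynomial in $y$, so integrability in $s$ of the Duhamel integrand cannot be obtained by classical means as in \cite{pardoux2001,rockner2020diffusion}. This forces us to rely on the strong exponential stability of the frozen semigroup: the $e^{-\kappa s}$ decay of $\nabla_y\SGfastsx{s}{x}\phi^x$ and $\text{Hess}_y\SGfastsx{s}{x}\phi^x$ from Proposition \ref{prop:derivativeest} must dominate the polynomial $y$-weight coming from $\partial_{x_i}g$, and the moment bounds of Lemma \ref{prop:momentBounds} must convert the action of the outer semigroup into the explicit polynomial in $|x|$ and $|y|$ appearing on the right-hand side of \eqref{eq:intAvgDer}. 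This is also where the seminorm $\Vnorm{\cdot}{k,m,m'}$ introduced in Section \ref{sec:notation} has been calibrated to track exactly the combinatorics of growth exponents needed to close the estimate.
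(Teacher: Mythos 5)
Your outline follows the paper's proof in all essential respects: both hinge on a Duhamel-type difference-quotient identity, the SES estimates of Proposition \ref{prop:derivativeest} to control the polynomially growing $\partial_{x_i}g$, the moment bounds of Lemma \ref{prop:momentBounds}, and dominated convergence to pass $h\to 0$ and $t\to\infty$. Your bookkeeping differs slightly from the paper's: the paper works with a single Duhamel formula for the quotient $q_t^{h,\phi^x}$, so that the first summand $\SGfastsx{t}{x}q_0^{h,\phi^x}$ has the outer semigroup at $x$ and converges to $\SGfastsx{t}{x}\partial_{x_i}\phi^x$ by dominated convergence alone; your decomposition places $\SGfastsx{t}{x+h}$ on the first summand, so in part (ii) you incur the extra task of establishing $x$-continuity of $\SGfastsx{t}{x}$ applied to the fixed function $\partial_{x_i}\phi^x$, which has only $C^2$ regularity in $y$ (not $\funcSpace{m_x}{m_y}$ regularity) and needs a small separate argument based on \eqref{eqn:SGdecayConc2}.

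The one genuine gap is in the passage from \eqref{eqn:repSG} to \eqref{eqn:repInvMeas}. You show the right-hand side of \eqref{eqn:repSG} converges as $t\to\infty$; this identifies $\lim_{t\to\infty}\partial_{x_i}(\SGfastsx{t}{x}\phi^x)(y)$, but it does not, without more, establish that $\mu^x(\phi^x)$ is differentiable in $x$ with that limit as its derivative. You must justify interchanging $\partial_{x_i}$ and $\lim_{t\to\infty}$, which requires that the convergence of $\partial_{x_i}(\SGfastsx{t}{x}\phi^x)(y)$ be locally uniform. The paper's Step 3 proves precisely this via the quantitative estimate \eqref{justification}, obtained by decomposing
\begin{equation*}
\partial_{x_i}\left(\SGfastsx{t}{x}\phi^x\right)(y) - \lim_{t\to\infty}\partial_{x_i}\left(\SGfastsx{t}{x}\phi^x\right)(y)
\end{equation*}
into three pieces, each bounded by $Ce^{-ct}$ times a polynomial in $|x|,|y|$ using \eqref{eqn:SGconv}, \eqref{eqn:SGdecay} and Lemma \ref{prop:momentBounds}. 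That same estimate is what legitimates interchanging $\partial_{x_i}$ with $\int_0^\infty dt$ in Step 4 when deriving \eqref{eq:intAvgDer}, a point you treat only implicitly. Without the locally-uniform-convergence estimate, neither the differentiability of $x\mapsto\mu^x(\phi^x)$ nor the bounds \eqref{eq:intAvgDerInv}, \eqref{eq:intAvgDer} are actually secured.
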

	\begin{Note}\label{Rem:der est for Pois unbounded}
		We make a number of comments on Proposition \ref{lemma:IntAvgDerRepresentation}.
		\begin{enumerate}
			\item Equation \eqref{eqn:repSG} (and similarly \eqref{eqn:repSGderder} below) is called a  ``transfer formula'',  as it allows to write $x$ derivatives of the semigroup $\SGfastsx{t}{x}\phi^x$ in terms of $y$ derivatives of the same semigroup (with the advantage that the latter derivatives exist by ellipticity in $y$ of the generator $\cL^x$). Indeed, the left hand side of \eqref{eqn:repSG} contains an $x$ derivative of $\SGfastsx{t}{x}\phi^x$, while  the right hand side contains (besides $x$ derivatives of $\phi^x$ and of the coefficients $g, a$)  two $y$ derivatives of the semigroup $\SGfastsx{t}{x}\phi^x$ (and no $x$ derivatives of  $\SGfastsx{t}{x}\phi^x$), as the operator $\frac{\partial \cL^x}{\partial x_i}$ is a second order differential operator in the variable $y$, see \eqref{threestar}. In other words, ``one $x$ derivative comes at the cost of two $y$ derivatives''. In Section \ref{subsec:detailedproof}  we will need bounds on the second $x$ derivative of $\SGfastsx{t}{x}\phi^x$, hence the need to control $y$ derivatives of the semigroup up to order four (see also \eqref{eqn:repSGderder}). This motivates the definition of the seminorm $\Vnorm{\cdot}{4,\pow{}{x},\pow{}{y}}$, which involves $y$ derivatives up to order four,  but only $x$ derivatives up to order $2$.
			\item 
			The $y$ derivatives of $\SGfastsx{t}{x}$ not only exist,   but they can also be estimated, and indeed in Proposition \ref{prop:derivativeest} we obtained exponentially fast decay for such derivatives.  Since the estimates of Proposition \ref{prop:derivativeest} can be integrated  in time over $(0, \infty)$, combined with the transfer formula, they can be employed to obtain formula \eqref{eqn:repInvMeas} for $\mu^x$.
			\item The right hand side of  \eqref{eqn:repSG} can be written in terms of the density of the transition probabilities of the process \eqref{fastF} rather than in terms of the semigroup $\SGfastsx{t}{x}$. This is what is done in \cite{pardoux2003}.  If our drift $g$ was bounded then we could use estimates on the derivatives of the transition density, as is the approach of \cite{pardoux2003}. In the case of locally Lipschitz coefficients, these estimates are hard to obtain. Hence, we use Proposition \ref{prop:derivativeest} in order to bound the derivatives of the semigroup directly.
			\item An important ingredient in the proof of Proposition \ref{lemma:IntAvgDerRepresentation} (and Proposition \ref{lemma:IntAvgDerDer} below) is the convergence of the semigroup $\SGfastsx{t}{x}\phi^x$ as $t\to \infty$. This was obtained in Proposition \ref{lemma:IntAvg} by using Lyapunov techniques, however an alternative strategy of proof is to use the derivative estimates of Proposition \ref{prop:derivativeest}. The latter strategy is used in Lemma \ref{lemma:IntAvgUB}. The main difference between Proposition \ref{lemma:IntAvg}  and Lemma \ref{lemma:IntAvgUB} is the class of functions for which they hold, as Proposition \ref{lemma:IntAvg} holds for $\phi$ with $\Vnorm{\phi}{0,\pow{}{x},\pow{}{y}}<\infty$ whereas Lemma \ref{lemma:IntAvgUB} requires $\Vseminorm{\phi}{2,\pow{}{x},\pow{}{y}}<\infty$; that is, the latter requires control on higher derivatives of the function to which it is applied (though not of the function itself). This would cause complications if we were to use Lemma \ref{lemma:IntAvgUB} within the proof of Proposition \ref{lemma:IntAvgDerRepresentation} (and Proposition \ref{lemma:IntAvgDerDer}), as the functions to which the result needs to be applied in that context are quite involved;  this is the main reason why Proposition \ref{lemma:IntAvg} is needed  in this paper. 
		\end{enumerate}
	\end{Note}
	\begin{proof}[Proof of Proposition \ref{lemma:IntAvgDerRepresentation}]
		Because we need to prove continuity and differentiability, throughout the proof we will consider increments $he_i$ where $e_i$ is the $i^{th}$ element of the standard basis of $\R^n$ and $h\in \R$. To simplify notation we write the proof as if in one dimension. The higher dimensional proof follows by replacing $h$ with $h e_i$. Throughout the proof we take $|h| < 1$.
		
		
		Before outlining the steps of this proof, we first show some preliminary bounds which will be used repeatedly. First,  notice the following:
		\begin{equation}\label{eqn:elemBound1}
			\begin{split}
				\left| \frac{\Lfastx{x+h}-\Lfastx{x}}{h}\left( \SGfastsx{s}{x+h}\phi^{x+h}\right)(y)\right| &\leq 
				\sum_{j=1}^d \left|\frac{\drift_j(x+h,y)-\drift_j(x,y)}{h}\fastDer{j}\SGfastsx{s}{x+h}\phi^{x+h}(y)\right| \\
				&+ \sum_{j,k = 1}^d \left|  \frac{\diffusion_{jk}(x+h)-\diffusion_{jk}(x) }{h}\fastDerDer{j}{k}\SGfastsx{s}{x+h}\phi^{x+h}(y)\right| 
				\\&\leq  C\Vseminorm{\phi}{2,\pow{}{x},\pow{}{y}}e^{-cs}(1+ |y|^{M^{g}_y}+|x|^{2\pow{}{x}}),
			\end{split}
		\end{equation}
		by \eqref{eqn:SGdecayConc2} and Assumption \ref{ass:polGrowth} (\ref{ass:polGrowthDriftFast},\ref{ass:polGrowthDiffFast}), where we use the fact that that the coefficients $g$ and $A$ are differentiable with a bound on the derivative. {Here $C$ and $c$ are constants which may change line by line.}
		Applying the semigroup to both sides of \eqref{eqn:elemBound1}, we get \begin{equation}\label{eqn:elemBound2}
			\begin{split}
				\left|\left(\SGfastsx{t-s}{x}\frac{\Lfastx{x+h}-\Lfastx{x}}{h} \SGfastsx{s}{x+h}\phi^{x+h}\right)(y)\right| &\leq C\Vseminorm{\phi}{2,\pow{}{x},\pow{}{y}}e^{-cs}(1 + \mathbb{E}\left[|\fastFixedI_{t-s}|^{M^{g}_y}\right]+|x|^{2\pow{}{x}}) \\
				&\stackrel{\eqref{eqn:momBoundsY1}}{\leq} C\Vseminorm{\phi}{2,\pow{}{x},\pow{}{y}}e^{-cs}(1+ |y|^{M^{g}_y}+|x|^{2\pow{}{x}}).
			\end{split}
		\end{equation}
		The LHS expression of \eqref{eqn:elemBound2} will appear in the representation formula for the difference quotient that we consider (see Step 1 below). 
		Similarly to what we have done to obtain \eqref{eqn:elemBound1},  we get the bounds
		\begin{equation}\label{eqn:elemBound3}
			\left|\fastDer{i}\left(\frac{\Lfastx{x+h}-\Lfastx{x}}{h}\left( \SGfastsx{s}{x+h}\phi^{x+h}\right)\right)(y)\right|
			\leq  C\Vseminorm{\phi}{4,\pow{}{x},\pow{}{y}}e^{-cs}(1+ |y|^{M^{g}_y}+|x|^{2\pow{}{x}})
		\end{equation}
		and 
		\begin{equation}\label{eqn:elemBound4}
			\left|\fastDerDer{i}{l}\left(\frac{\Lfastx{x+h}-\Lfastx{x}}{h}\left( \SGfastsx{s}{x+h}\phi^{x+h}\right)\right)(y)\right|
			\leq  C\Vseminorm{\phi}{4,\pow{}{x},\pow{}{y}}e^{-cs}(1+ |y|^{M^{g}_y}+|x|^{2\pow{}{x}})
		\end{equation}
		by \eqref{eqn:SGdecayConc} and Assumption \ref{ass:polGrowth} (\ref{ass:polGrowthDriftFast},\ref{ass:polGrowthDiffFast}). The final bound we need before outlining the structure of this proof is
		\begin{equation}
			\begin{split}\label{eqn:SGdecay}
				\left|\LfastxDeri \SGfastsx{s}{x}\phi^{x}(y)\right| &= \sum_{j = 1}^{d}  \left|\slowDer{i}\drift_j(x,y)\fastDer{j}\SGfastsx{s}{x}\phi^{x}(y)\right| +  \sum_{j,k = 1}^{d}\left| \slowDer{i} \diffusion(x)_{jk}\fastDerDer{j}{k}\SGfastsx{s}{x}\phi^{x}(y)\right| \\ 
				&\leq C\Vseminorm{\phi}{2,\pow{}{x},\pow{}{y}}e^{-cs} (1+ |y|^{M^{g}_y}+|x|^{2\pow{}{x}}),
			\end{split}
		\end{equation}
		which follows from \eqref{eqn:SGdecayConc2} and Assumption \ref{ass:polGrowth} (\ref{ass:polGrowthDriftFast},\ref{ass:polGrowthDiffFast}).
		
		Now we outline the strategy of proof, before addressing each step consecutively.
		
		\begin{itemize}
			\item Step 1: Define the difference quotient \begin{equation}\label{eqn:quotient} q_{t}^{h,\phi^{x}}( y) \coloneqq \frac{1}{h}\left[ \left(\SGfastsx{t}{x+h}\phi^{x+h}\right)(y) -  \left(\SGfastsx{t}{x}\phi^x\right)(y) \right]\end{equation} (again, $q_{t}^{h,\phi^{x}}$ should be $q_{t}^{he_i,\phi^{x}}$) and obtain the representation formula			\begin{equation}\label{eqn:quotientRep}
				q_{t}^{h,\phi^{x}}( y) = \left(\SGfastsx{t}{x} q_0^{h,\phi^x}\right)(y) +\int_0^{t} \left[ \SGfastsx{t-s}{x}
				\left[\left(\frac{\Lfastx{x+h}- \Lfastx{x}}{h}\right)\left(\SGfastsx{s}{x+h}\phi^{x+h}\right)\right]\right](y)ds.
			\end{equation}
			The above expression will be instrumental for proving $(i)$ as well as both \eqref{eqn:repSG} and \eqref{eqn:repInvMeas}, which are obtained in Step 2 and Step 3 respectively.
			\item Step 2: Let $h \rightarrow 0$ in the above expression to obtain (\ref{eqn:repSG}). At least formally, it is easy to see that letting $h \rightarrow 0$ one obtains the desired result, so this step consists of justifying using the Dominated Convergence Theorem (DCT) to take this limit.
			\item Step 3: Let $t \rightarrow \infty$ in (\ref{eqn:repSG}) to obtain \eqref{eqn:repInvMeas}. Again, formally it is easy to see that letting $t \rightarrow \infty$ one obtains \eqref{eqn:repInvMeas}, so this step primarily consists of justifying the limit. The bound \eqref{eq:intAvgDerInv} is then a consequence of \eqref{eqn:repInvMeas}, as we will show.
			\item Step 4: Integrate $	\partial_{x_i} \left(\SGfastsx{t}{x}\phi^x\right)(y) -	\partial_{x_i}\mu^x\left(\phi^x\right)$ with respect to $t$ using the representation formulas \eqref{eqn:repSG}-\eqref{eqn:repInvMeas} to obtain \eqref{eq:intAvgDer}.
		\end{itemize}
		
		Step 1:
		We differentiate (\ref{eqn:quotient}) in time to get
		\begin{align*}
			\partial_{t} q_{t}^{h,\phi^{x}}( y) &= \frac{1}{h}\left[\left(\Lfastx{x+h} \SGfastsx{t}{x+h}\phi^{x+h}\right)(y) - \left(\Lfastx{x} \SGfastsx{t}{x}\phi^{x}\right)(y) \right] \\
			&= \left[\left(\frac{\Lfastx{x+h}- \Lfastx{x}}{h}\right)\left(\SGfastsx{t}{x+h}\phi^{x+h}\right)\right](y)+ \left[ \Lfastx{x}\left( \frac{\SGfastsx{t}{x+h}\phi^{x+h} - \SGfastsx{t}{x}\phi^{x}}{h} \right)\right](y) \\
			&=
			\left[\left(\frac{\Lfastx{x+h}- \Lfastx{x}}{h}\right)\left(\SGfastsx{t}{x+h}\phi^{x+h}\right)\right](y)+ \left(\Lfastx{x}  q_{t}^{h,\phi^{x}}\right)(y).
		\end{align*}
		We can now use the variation of constants formula to obtain (\ref{eqn:quotientRep}). To prove \emph{(i)}, observe that by \eqref{eqn:quotientRep} we have
		\begin{equation}\label{eqn:diffquotient}
			\begin{split}
				&\left(\SGfastsx{t}{x+h}\phi^{x+h}\right)(y) -  \left(\SGfastsx{t}{x}\phi^x\right)(y) \\
				&= h\left( \left(\SGfastsx{t}{x} q_0^{h,\phi^x}\right)(y) +\int_0^{t} \left[ \SGfastsx{t-s}{x}
				\left[\left(\frac{\Lfastx{x+h}- \Lfastx{x}}{h}\right)\left(\SGfastsx{t}{x+h}\phi^{x+h}\right)\right]\right](y)ds\right).
			\end{split}
		\end{equation}
		Then, by differentiating in $y$ (which is allowed because of Assumption  \ref{UniformEllip}), one gets
		\begin{equation}\label{eqn:1diffquotient}
			\begin{split}
				&\left(\fastDer{i}\SGfastsx{t}{x+h}\phi^{x+h}\right)(y) -  \left(\fastDer{i}\SGfastsx{t}{x}\phi^x\right)(y) \\&= h\Bigg( \left(\fastDer{i}\SGfastsx{t}{x} q_0^{h,\phi^x}\right)(y)
				+\int_0^{t} \left[ \fastDer{i}\SGfastsx{t-s}{x}
				\left[\left(\frac{\Lfastx{x+h}- \Lfastx{x}}{h}\right)\left(\SGfastsx{t}{x+h}\phi^{x+h}\right)\right]\right](y)ds\Bigg),
			\end{split}
		\end{equation}
		where we can take the derivative under the integral sign because of \eqref{eqn:elemBound3}.
		If we differentiate again we obtain
		\begin{equation}\label{eqn:2diffquotient}
			\begin{split}
				&\left(\fastDerDer{i}{j}\SGfastsx{t}{x+h}\phi^{x+h}\right)(y) -  \left(\fastDerDer{i}{j}\SGfastsx{t}{x}\phi^x\right)(y) \\
				&= h\Bigg( \left(\fastDerDer{i}{j}\SGfastsx{t}{x} q_0^{h,\phi^x}\right)(y) +\int_0^{t} \left[ \fastDerDer{i}{j}\SGfastsx{t-s}{x}
				\left[\left(\frac{\Lfastx{x+h}- \Lfastx{x}}{h}\right)\left(\SGfastsx{t}{x+h}\phi^{x+h}\right)\right]\right](y)ds\Bigg),
			\end{split}
		\end{equation}
		
		where again the derivative and integral commute because of \eqref{eqn:elemBound4}. To work on the first addends of \eqref{eqn:diffquotient}-\eqref{eqn:2diffquotient}, we observe  that, since $\phi \in \funcSpace{\pow{}{x}}{\pow{}{y}}$ (so that in particular that $\phi \in C^{2,4}(\R^n \times \R^d)$). we have 
		\begin{align*}
			q_0^{h,\phi^x}(y)  &= \frac{\phi^{x+h}(y) - \phi^x(y)}{h}= \partial_{x_i}\phi^{\xi}(y),
		\end{align*}
		for some $\xi \in (x, x+h)$.
		Hence we obtain the following bounds, for $|h|<1$:  
		\begin{equation}\label{eqn:boundfirsttermnodiff}
			\left|q_0^{h,\phi^x}(y)\right| \leq C\Vnorm{\phi}{2,\pow{}{x},\pow{}{y}}(1+ |y|^{\pow{}{y}}+|x|^{\pow{}{x}})\, ,
		\end{equation}
		and
		\begin{equation}\label{eqn:boundfirstterm}
			\left|\fastDer{j}q_0^{h,\phi^x}(y)\right|, \left|\fastDerDer{j}{k}q_0^{h,\phi^x}(y) \right| \leq C\Vnorm{\phi}{4,\pow{}{x},\pow{}{y}}(1+ |y|^{\pow{}{y}}+|x|^{\pow{}{x}}).
		\end{equation}
		From \eqref{eqn:elemBound2}, \eqref{eqn:diffquotient} and \eqref{eqn:boundfirsttermnodiff}, we obtain \eqref{unifConv1}.
		Now, using \eqref{eqn:SGdecayConc2} (which we can use because of \eqref{eqn:elemBound3}, \eqref{eqn:elemBound4} and \eqref{eqn:boundfirstterm}), \eqref{eqn:1diffquotient} and \eqref{eqn:2diffquotient} we have, for $|h|<1$,
		\begin{align*}
			\lvert\left(\fastDer{i}\SGfastsx{t}{x+h}\phi^{x+h}\right)(y) &-  \left(\fastDer{i}\SGfastsx{t}{x}\phi^x\right)(y)\rvert +\lvert  \left(\fastDerDer{i}{j}\SGfastsx{t}{x+h}\phi^{x+h}\right)(y) -  \left(\fastDerDer{i}{j}\SGfastsx{t}{x}\phi^x\right)(y) \rvert
			\\&\leq 
			hC \Vnorm{\phi}{4,\pow{}{x},\pow{}{y}}
			 (1+ |y|^{M^{g}_y}+|x|^{2\pow{}{x}}),
		\end{align*}
		which implies \eqref{eqn:unifConv}.

		Step 2: We now justify letting $h \rightarrow 0$ in \eqref{eqn:quotientRep}. We start from the first  addend in \eqref{eqn:quotientRep}; by definition
		$$
		\left(\SGfastsx{t}{x} q_0^{h,\phi^x}\right)(y) = \mathbb E \left[ \frac{\phi^{x+h}(\fastFixedI_{t}) - \phi^x(\fastFixedI_{t})}{h}\right]\,.
		$$
		Since $\phi$ is differentiable, the integrand on the RHS of the above can be trivially bounded by
		$$
		\left\vert \frac{\phi^{x+h}(\fastFixedI_{t}) - \phi^x(\fastFixedI_{t})}{h} \right \vert
		\leq \left \vert\pa_{x_i} \phi^{\xi}(\fastFixedI_{t}) \right \vert,
		$$
		where $\xi \in (x, x+h)$ so that now using polynomial growth of $\slowDer{i}\phi$, the bound \eqref{eqn:momBoundsY1} allows to apply the DCT to the first  addend in \eqref{eqn:quotientRep}, and conclude that such a term
		tends to $\SGfastsx{t}{x}\partial_x \phi^{x}(y)$ as $h\rightarrow 0$. 
		
		We now move on to the second addend in \eqref{eqn:quotientRep}. For each $x \in \R^n,y\in \R^d$ and $s>0$ we have
		\begin{equation*}
			\lim_{h\rightarrow 0} \left[ \frac{\Lfastx{x+h}-\Lfastx{x}}{h}\left( \SGfastsx{s}{x+h}\phi^{x+h}\right)(y) \right]= \LfastxDeri \SGfastsx{s}{x}\phi^{x}(y),
		\end{equation*}
		by \emph{(i)}, the smoothness of $\phi$ and the coefficients of $\mathcal{L}^x$. Hence, if the DCT holds for the integral(s) in \eqref{eqn:quotientRep}, then we are done. Notice, indeed, that since $\SGfastsx{t-s}{x}$ is an integral itself, we need to justify using the DCT to pass the limit $h\rightarrow 0$ both under the time integral and under the integral implied by the definition of $\SGfastsx{t-s}{x}$. The bound \eqref{eqn:elemBound2} can be used for such tasks. The RHS of \eqref{eqn:elemBound2} is indeed integrable both in time and in space, the latter because of Lemma \ref{prop:momentBounds}.
		
		Step 3: Now we let $t \rightarrow \infty$. Letting $t \rightarrow \infty$ in the first term on the RHS of \eqref{eqn:repSG} is straightforward from \eqref{eqn:SGconv} of Proposition \ref{lemma:IntAvg}. As for the second addend, we write the integral as
		\begin{equation*}
			\int_0^{t}\left[ \SGfastsx{t-s}{x} \LfastxDeri \SGfastsx{s}{x}\phi^{x}\right](y) ds =
			\int_0^{\infty} \mathbf{1}_{s < t}\left[\SGfastsx{t-s}{x} \LfastxDeri \SGfastsx{s}{x}\phi^{x}\right](y) ds.
		\end{equation*}
		Then,  by (\ref{eqn:SGconv}) of Proposition \ref{lemma:IntAvg}, which we are able to use because of (\ref{eqn:SGdecay}), we have
		\begin{equation*} \lim_{t \rightarrow \infty}\mathbf{1}_{s < t}\left[\SGfastsx{t-s}{x} \LfastxDeri \SGfastsx{s}{x}\phi^{x}\right](y)  = \int_{\mathbb{R}^d} \LfastxDeri \SGfastsx{s}{x}\phi^{x}(y) \mu^x(dy), 
		\end{equation*}
		for every $s$ fixed; hence, assuming we can use the DCT, we have
		\begin{equation}\label{eqn:lim}
			\lim_{t \rightarrow \infty} \int_0^{\infty} \mathbf{1}_{s < t}\left[\SGfastsx{t-s}{x} \LfastxDeri \SGfastsx{s}{x}\phi^{x} \right](y)ds =\int_0^{\infty} \int_{\mathbb{R}^d} \LfastxDeri \SGfastsx{s}{x}\phi^{x}(y) \mu^x(dy) ds.
		\end{equation} To justify the use of the DCT in the above, we apply the semigroup to \eqref{eqn:SGdecay} and use (\ref{eqn:momBoundsY1}) to get
		\begin{equation*}
			\left|\mathbf{1}_{s < t}\left[\SGfastsx{t-s}{x} \LfastxDeri \SGfastsx{s}{x}\phi^{x}\right](y) \right| \leq C\Vseminorm{\phi}{2,\pow{}{x},\pow{}{y}}e^{-cs}(1+ |y|^{M^{g}_y}+|x|^{2\pow{}{x}}),
		\end{equation*}
		where the expression on the RHS is integrable in $s$. This justifies the limit \eqref{eqn:lim}.
		So far we have proved that the RHS of \eqref{eqn:repSG} tends to the RHS of \eqref{eqn:repInvMeas} as $t \rightarrow \infty$. We now want to do the same for the LHS and to this end we must show that we are allowed to exchange the limit $t \rightarrow \infty$ with the derivative $\slowDer{i}$, which we will do by proving that the convergence shown thus far is locally uniform. If we prove this, then \eqref{eqn:repInvMeas} follows again from Proposition \ref{lemma:IntAvg}.
		From \eqref{eqn:repSG} and \eqref{eqn:lim} we have 
		\begin{align*}
			\partial_{x_i} \left(\SGfastsx{t}{x}\phi^x\right)(y) &- \lim_{t\rightarrow\infty} 	\partial_{x_i} \left(\SGfastsx{t}{x}\phi^x\right)(y)	 \\
			&= \SGfastsx{t}{x}\partial_{x_i} \phi^{x}(y) - \int_{\mathbb{R}^d} \partial_{x_i} \phi^{x}(y) \mu^x(dy) \\
			&+\int_0^{t} \SGfastsx{t-s}{x}\left[ \LfastxDeri \SGfastsx{s}{x}\phi^{x}\right](y)ds- \intPos \int_{\mathbb{R}^d} \LfastxDeri \SGfastsx{s}{x}\phi^{x}(y) \mu^x(dy) ds    \\
			&= \left(\SGfastsx{t}{x} \slowDer{i}\phi^x\right) (y) - \mu^x(\slowDer{i}\phi^x)\\
			&+\underbrace{\int_0^{t}\left(\SGfastsx{t-s}{x}-\mu^x\right) \left(  \LfastxDeri \SGfastsx{s}{x}\phi^{x} \right)ds}_{\eqqcolon\RNum{1}}\\ &- \underbrace{\int_{t}^\infty\left( \int_{\mathbb{R}^d} \LfastxDeri \SGfastsx{s}{x}\phi^{x}(\tilde{y}) \mu^x(d\tilde{y})\right) ds}_{\eqqcolon\RNum{2}}.
		\end{align*}
		We now want to show that each of the addends in the above converge to zero locally uniformly in $x$ and $y$.
		The claim is trivial for the first term, since by Proposition \ref{lemma:IntAvg}, we have 
		\begin{equation}\label{eqn:unifConv1st}
			|\left(\SGfastsx{t}{x} \partial_{x_i} \phi^x\right) (y) - \mu^x(\partial_{x_i} \phi^x)| \leq C\Vnorm{\phi}{2,\pow{}{x},\pow{}{y}}e^{-ct}(1+ |y|^{\pow{}{y}}+|x|^{\pow{}{x}}) \,. 
		\end{equation}
		If we show the following two bounds, the proof is concluded:
		\begin{align}\label{eqn:int1}
			|\RNum{1}| &\leq C\Vseminorm{\phi}{2,\pow{}{x},\pow{}{y}}e^{-ct}(1+ |y|^{M^{g}_y}+|x|^{2\pow{}{x}}) \\
			\label{eqn:int2}
			|\RNum{2}| &\leq C\Vseminorm{\phi}{2,\pow{}{x},\pow{}{y}}e^{-ct}(1+|x|^{2\pow{}{x}}).
		\end{align}	
		The bound \eqref{eqn:int1} follows from \eqref{eqn:SGdecay} and Proposition \ref{lemma:IntAvg}. We note to the reader that the constants in the exponential seen in \eqref{eqn:SGdecay} and Proposition \ref{lemma:IntAvg} are not necessarily equal. As for \eqref{eqn:int2}, using again \eqref{eqn:SGdecay}, we have
		
		\begin{align*}
			|\RNum{2}| &\leq \int_{t}^\infty\left( \int_{\mathbb{R}^d} \left| \LfastxDeri \SGfastsx{s}{x}\phi^{x}(\tilde{y})\right| \mu^x(d\tilde{y})\right)  ds \\
			&\leq C\Vseminorm{\phi}{2,\pow{}{x},\pow{}{y}} \int_{t}^\infty e^{-cs}\left( \int_{\mathbb{R}^d} (1+ |\tilde{y}|^{M^{g}_y}+|x|^{2\pow{}{x}}) \mu^x(d\tilde{y})\right)  ds,
		\end{align*}
		so that \eqref{eqn:int2} follows from \eqref{eqn:momBoundsYinv} in Lemma \ref{prop:momentBounds}.
		From \eqref{eqn:unifConv1st}, \eqref{eqn:int1} and \eqref{eqn:int2}, we have
		\begin{equation} \label{justification}
			\begin{split}
				|\partial_{x_i} \left(\SGfastsx{t}{x}\phi^x\right)(y) - \lim_{t\rightarrow\infty}\partial_{x_i}\left(\SGfastsx{t}{x}\phi^x\right)(y)|
				&\leq C\Vnorm{\phi}{2,\pow{}{x},\pow{}{y}}e^{-ct}(1+ |y|^{M^{g}_y}+|x|^{2\pow{}{x}})\end{split}
		\end{equation}
		meaning that the convergence of the derivative is locally uniform, and so we can exchange the limit and derivative. This concludes the proof of (\ref{eqn:repInvMeas}).
		
		Now we show (\ref{eq:intAvgDerInv}). Using (\ref{eqn:momBoundsYinv}), (\ref{eqn:repInvMeas}) and (\ref{eqn:SGdecay}) we have
		\begin{equation}\label{eqn:invMeasfirst}
			\begin{split}
				\left|\partial_{x_i}\int_{\mathbb{R}^d} \phi^{x}(y) \mu^x(dy)\right|&\leq   \left|\int_{\mathbb{R}^d} \partial_{x_i} \phi^{x}(y) \mu^x(dy)\right|+  \left|\int_0^{\infty} \int_{\mathbb{R}^d} \LfastxDeri \SGfastsx{s}{x}\phi^{x}(y) \mu^x(dy) ds \right| \\
				&\leq C\Vnorm{\phi}{2,\pow{}{x},\pow{}{y}}(1+|x|^{\pow{}{x}})\\ 
				&+  \int_0^{\infty} \int_{\mathbb{R}^d} C\Vseminorm{\phi}{2,\pow{}{x},\pow{}{y}}e^{-cs} (1+ |y|^{M^{g}_y}+|x|^{2\pow{}{x}}) \mu^x(dy) ds.
			\end{split}
		\end{equation}
		We use \eqref{eqn:momBoundsYinv} again to obtain 
		\begin{equation}\begin{split}\label{eqn:invMeassecond}
				\int_0^{\infty} \int_{\mathbb{R}^d} e^{-cs} (1+ |y|^{M^{g}_y}+|x|^{2\pow{}{x}}) \mu^x(dy) ds
				\leq C\int_0^{\infty} e^{-cs} (1+|x|^{2\pow{}{x}})ds 
				\leq \frac{C}{c}(1+|x|^{2\pow{}{x}}).
			\end{split}
		\end{equation}
		Using \eqref{eqn:invMeasfirst} and \eqref{eqn:invMeassecond} together concludes the proof of \eqref{eq:intAvgDerInv}.

		Step 4: Lastly, we show \eqref{eq:intAvgDer}.
		First we write
		\begin{equation*}
			\left| \partial_{x_i}\int_0^\infty   \SGfastsx{t}{x}\left(\phi^x -\mu^x(\phi^x)\right)(y)dt \right| = \left|\int_0^\infty  \partial_{x_i} \left(\SGfastsx{t}{x}\phi^x\right)(y) -\partial_{x_i}(\mu^x(\phi^x))  dt\right|.
		\end{equation*}
		
		This means that we have, using (\ref{justification})
		\begin{align*}
			\left| \partial_{x_i}\int_0^\infty   \SGfastsx{t}{x}\left(\phi^x -\mu^x(\phi^x)\right)(y)dt \right| &\leq\int_0^\infty |\partial_{x_i} \left(\SGfastsx{t}{x}\phi^x\right)(y) -\partial_{x_i}(\mu^x(\phi^x)) | dt \\
			&\leq  C\Vnorm{\phi}{2,\pow{}{x},\pow{}{y}}(1+ |y|^{M^{g}_y}+|x|^{2\pow{}{x}}),
		\end{align*}
		and so the proof is done.
	\end{proof}
	
	\begin{lemma}\label{lemma:IntAvgUB}
		Let the assumptions of Proposition \ref{lemma:IntAvgDerRepresentation} hold. Then for every $\phi\in C^{0,2}(\R^n\times\R^d)$ such that  $\Vseminorm{\phi}{2,\pow{}{x},\pow{}{y}}<\infty$ for some $\pow{}{x},\pow{}{y}>0$ we have
		\begin{equation}\label{eqn:SGconvUB}
			\left| \left(\SGfastsx{s}{x} \phi^x\right) (y) - \mu^x(\phi^x)\right| \leq C\Vseminorm{\phi}{2,\pow{}{x},\pow{}{y}}e^{-cs}(1+ |y|^{M^{g,}_y}+|x|^{2\pow{}{x}}),
		\end{equation}
		where $M^{g}_{y}\coloneqq \max\{2\pow{\drift}{y},\pow{\drift}{y}+m^{}_{y}\}$.
		Combining \eqref{ft1rep} and \eqref{eqn:SGconvUB} gives
		\begin{equation}\label{eqn:IntAvg1}
			\lvert u_\phi^x(y)\rvert\leq \left| \int_0^\infty  \left[ \left(\SGfastsx{s'}{x} \phi^x\right) (y) - \mu^x(\phi^x)  \right]ds' \right| \leq C\Vseminorm{\phi}{2,\pow{}{x},\pow{}{y}}(1+ |y|^{M^{g}_y}+|x|^{2\pow{}{x}}).
		\end{equation}
	\end{lemma}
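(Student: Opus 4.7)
The plan is to leverage the strong exponential stability of the frozen semigroup --- specifically the first-order derivative estimate \eqref{eqn:SGdecayConc2} from Proposition \ref{prop:derivativeest} --- instead of the Lyapunov/minorisation argument used in Proposition \ref{lemma:IntAvg}. This is the reason why we only need control of the seminorm $\Vseminorm{\phi}{2,m^{}_x,m^{}_y}$ (i.e.\ of the $y$-derivatives of $\phi$), and not of $\phi$ itself, as indicated in Note \ref{Rem:der est for Pois unbounded}.

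The starting identity is the invariance of $\mu^x$ under $\SGfastsx{s}{x}$: since $\mu^x(\SGfastsx{s}{x}\phi^x)=\mu^x(\phi^x)$,
\[
(\SGfastsx{s}{x}\phi^x)(y)-\mu^x(\phi^x)=\int_{\R^d}\Bigl[(\SGfastsx{s}{x}\phi^x)(y)-(\SGfastsx{s}{x}\phi^x)(\tilde y)\Bigr]\mu^x(d\tilde y).
\]
The right-hand side depends on $\phi^x$ only through $y$-differences of $\SGfastsx{s}{x}\phi^x$, so I may (and will) replace $\phi^x$ by $\phi^x-\phi^x(0)$ without changing either side. After this reduction $|\phi^x(y)|$ is polynomially bounded purely in terms of the seminorm $\Vseminorm{\phi}{2,m^{}_x,m^{}_y}$, which, combined with the uniform in $x$ moment bounds of Lemma \ref{prop:momentBounds}, guarantees integrability against $\mu^x$ and ensures that $\mu^x(\phi^x)$ is well defined.

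Next, by the fundamental theorem of calculus applied to $z\mapsto(\SGfastsx{s}{x}\phi^x)(z)$ (which is smooth in $z$ thanks to the uniform ellipticity in $y$ of $\cL^x$),
\[
(\SGfastsx{s}{x}\phi^x)(y)-(\SGfastsx{s}{x}\phi^x)(\tilde y)=\int_0^1 \nabla_y(\SGfastsx{s}{x}\phi^x)\bigl(\tilde y+\theta(y-\tilde y)\bigr)\cdot(y-\tilde y)\,d\theta.
\]
To the gradient I then apply \eqref{eqn:SGdecayConc2}, which yields, for every $z\in\R^d$,
\[
\bigl|\nabla_y(\SGfastsx{s}{x}\phi^x)(z)\bigr|\leq K\Vseminorm{\phi}{2,m^{}_x,m^{}_y}\,e^{-\kappa s}\bigl(1+|x|^{m^{}_x}+|z|^{m^{}_y}\bigr).
\]

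The remainder is bookkeeping. Bounding $|\tilde y+\theta(y-\tilde y)|^{m^{}_y}\leq C(|y|^{m^{}_y}+|\tilde y|^{m^{}_y})$ and $|y-\tilde y|\leq|y|+|\tilde y|$, multiplying out the resulting polynomial and integrating in $\tilde y$ against $\mu^x$ using the uniform in $x$ moment bounds \eqref{eqn:momBoundsYinv}, then applying Young's inequality to mixed products of the form $|x|^{m^{}_x}|y|$ to split them between $|x|^{2m^{}_x}$ and the relevant power of $|y|$ absorbed into $|y|^{M^{g}_{y}}$, produces \eqref{eqn:SGconvUB} with the stated constants. The bound \eqref{eqn:IntAvg1} is then immediate: insert \eqref{eqn:SGconvUB} into the representation formula \eqref{ft1rep} and integrate the exponential factor $e^{-cs}$ over $(0,\infty)$. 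I expect the only real subtlety to be the exponent book-keeping (keeping track of the precise powers of $|x|$ and $|y|$ coming out of the triangle and Young inequalities so that they fit into $|x|^{2m^{}_x}$ and $|y|^{M^{g}_{y}}$ with $M^{g}_{y}=\max\{2m^{g}_{y},m^{g}_{y}+m^{}_{y}\}$); the rest is essentially mechanical once the identity that replaces $\mu^x(\phi^x)$ by an integral of differences has been exploited.
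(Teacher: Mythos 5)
Your approach is correct at heart but takes a genuinely different route from the paper's. The paper integrates in time: it writes $(\SGfastsx{t}{x}\phi^x)(y) - (\SGfastsx{s}{x}\phi^x)(y) = \int_s^t (\cL^x \SGfastsx{u}{x}\phi^x)(y)\,du$, bounds the integrand via \eqref{eqn:SGdecayConc2} together with the polynomial growth of $g$ and boundedness of $A$ (Assumption \ref{ass:polGrowth}), integrates in $u$, and sends $t\to\infty$, invoking Proposition \ref{lemma:IntAvg} (via Assumption \ref{DriftAssump}) to identify the limit as $\mu^x(\phi^x)$. You instead integrate in space: you use $\mu^x$-invariance to rewrite $(\SGfastsx{s}{x}\phi^x)(y)-\mu^x(\phi^x)$ as an integral over $\tilde y$ of $y$-differences of $\SGfastsx{s}{x}\phi^x$, apply the fundamental theorem of calculus in $y$ together with \eqref{eqn:SGdecayConc2}, and integrate against $\mu^x(d\tilde y)$ using the moment bounds of Lemma \ref{prop:momentBounds}. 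Both routes rest on the same two inputs — the SES derivative estimate and the moment bounds; the paper's route exploits $\partial_u = \cL^x$, yours exploits that $\mu^x$ is invariant. Your route has the minor advantage of avoiding the limit $t\to\infty$, at the cost of having to verify up front that $\phi^x$ (and hence $\SGfastsx{s}{x}\phi^x$) is $\mu^x$-integrable, which you do correctly via the normalisation $\phi^x(0)=0$.

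One caveat on the ``mechanical'' bookkeeping you defer: the exponent on $|y|$ that your method naturally produces is $\max\{2, m_y+1\}$ (the $m_y+1$ from $|\tilde y+\theta(y-\tilde y)|^{m_y}|y-\tilde y|$ after integrating out $\tilde y$, the $2$ from Young applied to $|x|^{m_x}|y|$), not $M^g_y = \max\{2\pow{g}{y}, \pow{g}{y}+m_y\}$. These agree, or yours is sharper, only because $\pow{g}{y}\geq 1$; that is a consequence of Assumption \ref{DriftAssump} (the condition $(g(x,y),y)\leq -r|y|^2+C$ forces $|g(x,y)|\geq r|y|-C/|y|$, so $g$ grows at least linearly) but it is not recorded anywhere. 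The paper's time-integration route produces $M^g_y$ directly because the integrand $\cL^x\SGfastsx{u}{x}\phi^x$ contains $g$, whereas your spatial route never touches $g$ and so never sees $\pow{g}{y}$. This step is not purely mechanical and should be made explicit if you want your bound to match the statement as written (rather than to strictly improve it).
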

	\begin{proof}
		The proof can be found in Appendix \ref{app:proofspsneqn}.
	\end{proof}
	
	We now state the analogous result of Proposition \ref{lemma:IntAvgDerRepresentation}, for higher derivatives. The proof is analogous to the proof of Proposition \ref{lemma:IntAvgDerRepresentation}, hence it is deferred to Appendix \ref{app:proofspsneqn}.

	\begin{proposition}
		\label{lemma:IntAvgDerDer}
		Let the assumptions of Proposition \ref{lemma:IntAvgDerRepresentation} hold and furthermore let $\phi \in \funcSpace{\pow{}{x}}{\pow{}{y}}$ for some $\pow{}{x},\pow{}{y}>0$. Then:

		i) for all $i,j,k \in \{1, \ldots, n \}$
		\begin{equation}\label{eqn:intAvgDerFast}
			\left| \fastDer{j}\slowDer{i}\left(\SGfastsx{t}{x} \phi^{x}\right) (y) \right| + \left| \fastDerDer{j}{k}\slowDer{i}\left(\SGfastsx{t}{x} \phi^{x}\right) (y) \right| \leq  C \Vnorm{\phi}{4,\pow{}{x},\pow{}{y}}e^{-ct} (1+ |y|^{M^{g}_{y}}+|x|^{2\pow{}{x}}),
		\end{equation}
		where $M^{g}_{y}$ is as defined in Proposition \ref{lemma:IntAvgDerRepresentation}. The functions $x \mapsto \fastDer{j}\slowDer{i}\left(\SGfastsx{t}{x} \phi^{x}\right)(y) $ and $x \mapsto \fastDerDer{j}{k}\slowDer{i}\left(\SGfastsx{t}{x} \phi^{x}\right)(y) $ are continuous in $x$.
		
		ii) the functions $x \mapsto \left(\SGfastsx{t}{x} \phi^{x}\right)(y) $ and $x \mapsto \mu^x( \phi^{x})$ are twice continuously differentiable and we have the following representation formulas for their second derivatives:
		\begin{equation}\label{eqn:repSGderder}
			\begin{split}
				&\slowDerDer{i}{j} \left(\SGfastsx{t}{x}\phi^x\right)(y) =  (\SGfastsx{t}{x}\slowDerDer{i}{j}\phi^x)(y) \\
				&+ \int_{0}^{t} \SGfastsx{t-s}{x}\Big[\LfastxDeriDerj \SGfastsx{s}{x}\phi^{x}(y) + \LfastDerj{x}\slowDer{i} \left(\SGfastsx{s}{x}\phi^{x}\right)(y) +  \LfastDeri{x}\slowDer{j} \left(\SGfastsx{s}{x}\phi^{x}\right)(y) \Big]ds.
			\end{split}
		\end{equation}
		and
		\begin{equation}\label{eqn:invMeasDerDer}
			\begin{split}
				\slowDerDer{i}{j}\mu^x(\phi^x) &=  \int_{\mathbb{R}^d}\slowDerDer{i}{j}\phi^x(y)\mu^x(dy)\\
				&+ \int_{0}^{\infty} \int_{\mathbb{R}^d}\left[\LfastxDeriDerj \SGfastsx{s}{x}\phi^{x}(y) +\LfastDerj{x} \partial_{x_j} \SGfastsx{s}{x} \phi^{x}(y)  + \LfastDeri{x} \partial_{x_i} \SGfastsx{s}{x} \phi^{x}(y) \right]\mu^x(dy) ds.
			\end{split}
		\end{equation}
		Furthermore, for all $i \in \{1, \cdots, n \}$, there exists $C>0$ (which is independent of $x$ and $y$) such that the estimates \eqref{intAvgDerDerInv} and \eqref{eqn:intAvgDerDer} hold. 
	\end{proposition}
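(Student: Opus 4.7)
The plan is to mimic the four-step architecture of the proof of Proposition \ref{lemma:IntAvgDerRepresentation} one level up, i.e.\ to differentiate the first-order representation \eqref{eqn:repSG} a second time in $x$. Fixing $i,j\in\{1,\ldots,n\}$ and $h\in\R$, I would introduce the difference quotient
\[
\tilde q_t^{h,\phi^x}(y) := \frac{1}{h}\Big[\slowDer{i}(\SGfastsx{t}{x+he_j}\phi^{x+he_j})(y) - \slowDer{i}(\SGfastsx{t}{x}\phi^x)(y)\Big],
\]
and, applying \eqref{eqn:repSG} at both $x+he_j$ and $x$, decompose the numerator into (a) a difference quotient of $\SGfastsx{t}{\cdot}\slowDer{i}\phi^{\cdot}$, handled by \eqref{eqn:repSG} applied to $\slowDer{i}\phi^x$ (which produces $\SGfastsx{t}{x}\slowDerDer{i}{j}\phi^x$ and one transfer term), and (b) a difference quotient of the time integral $\int_0^t \SGfastsx{t-s}{\cdot}\LfastDeri{\cdot}\SGfastsx{s}{\cdot}\phi^{\cdot}(y)\,ds$, which is further decomposed by telescoping into three pieces corresponding to differencing the outer semigroup, the operator $\LfastDeri{\cdot}$ (producing $\LfastxDeriDerj$ in the limit), and the inner semigroup $\SGfastsx{s}{\cdot}\phi^{\cdot}$. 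Assembling these limits gives \eqref{eqn:repSGderder}.

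The passage $h\to 0$ is then carried out by dominated convergence. This is exactly where the four-$y$-derivative SES \eqref{eqn:SGdecayConc} is indispensable: each $x$-derivative acting on a semigroup costs two $y$-derivatives through the transfer identity \eqref{eqn:repSG}, so a second $x$-derivative requires control of $y$-derivatives of $\SGfastsx{s}{x}\phi^x$ up to order four. The dominating integrands are assembled from \eqref{eqn:SGdecayConc}, Assumption \ref{ass:polGrowth} on the growth of $g$ and $A$, and the moment bounds \eqref{eqn:momBoundsY1}--\eqref{eqn:momBoundsYinv}; the $y$-growth exponent accumulated along the way is $M^{g}_{y} + \pow{g}{y}$, which accounts for its appearance in \eqref{eqn:intAvgDerDer}. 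The continuity statements in part $i)$ of the proposition follow from the same DCT argument, exactly as in Step 1 of the proof of Proposition \ref{lemma:IntAvgDerRepresentation}; the pointwise bounds \eqref{eqn:intAvgDerFast} drop out by applying \eqref{eqn:SGdecayConc}, Proposition \ref{lemma:IntAvgDerRepresentation} and \eqref{eqn:momBoundsY1} to the $y$-differentiated form of \eqref{eqn:repSG}.

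The limit $t\to\infty$ in \eqref{eqn:repSGderder} delivers \eqref{eqn:invMeasDerDer}. The leading term $\SGfastsx{t}{x}\slowDerDer{i}{j}\phi^x(y)$ converges to $\mu^x(\slowDerDer{i}{j}\phi^x)$ by Proposition \ref{lemma:IntAvg}; for the time integral I would write $\int_0^t = \int_0^\infty \mathbf{1}_{s<t}$ and pass to the limit under the integral via Lemma \ref{lemma:IntAvgUB}, which applies because the bracketed integrand has exponentially decaying $y$-derivatives by \eqref{eqn:SGdecayConc} and \eqref{eqn:intAvgDerFast}. The tail $\int_t^\infty$ is controlled by the same estimates together with \eqref{eqn:momBoundsYinv}. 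Locally uniform convergence in $x$, of the type displayed in \eqref{justification}, then permits exchanging $\slowDer{j}$ with $\lim_{t\to\infty}$. Finally the a priori bounds \eqref{intAvgDerDerInv} and \eqref{eqn:intAvgDerDer} follow by inserting \eqref{eqn:SGdecayConc}, \eqref{eqn:momBoundsY1}--\eqref{eqn:momBoundsYinv} and Proposition \ref{lemma:IntAvgDerRepresentation} into \eqref{eqn:repSGderder}--\eqref{eqn:invMeasDerDer} and integrating $e^{-cs}$ in $s$.

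The main technical obstacle is the bookkeeping in Step 1: one must identify and cleanly separate the contributions producing each of the three terms inside the square bracket of \eqref{eqn:repSGderder}, and then verify that the resulting finite-$h$ expressions all admit uniform dominants, so that the DCT can be applied piece by piece. Once this decomposition is correctly arranged, the remaining steps are strictly parallel to those of Proposition \ref{lemma:IntAvgDerRepresentation}, with the SES of $\SGfastsx{t}{x}$ up to four $y$-derivatives from Proposition \ref{prop:derivativeest}, the first-order results of Proposition \ref{lemma:IntAvgDerRepresentation}, and the moment/ergodicity estimates of Lemma \ref{prop:momentBounds} and Proposition \ref{lemma:IntAvg} all already in place.
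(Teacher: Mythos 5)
Your overall architecture matches the paper's: part $i)$ by differentiating the transfer formula \eqref{eqn:repSG} in $y$ and invoking \eqref{eqn:SGdecayConc}; a difference quotient $\tilde q^h_t$ for part $ii)$; dominated convergence for $h\to0$; then $t\to\infty$ and a final integration in $s$. However, there are two points worth raising, the first of which is a genuine gap.

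The gap is in your proposal to pass $t\to\infty$ ``via Lemma \ref{lemma:IntAvgUB}''. Lemma \ref{lemma:IntAvgUB} requires $\Vseminorm{\psi}{2,\pow{}{x},\pow{}{y}}<\infty$ for the function $\psi$ it is applied to, i.e.\ control on the first and second $y$-derivatives of $\psi$. Here $\psi$ is the bracketed integrand of \eqref{eqn:repSGderder}, which already involves second-order differential operators in $y$ acting on $\SGfastsx{s}{x}\phi^x$ and on $\slowDer{i}(\SGfastsx{s}{x}\phi^x)$. Two further $y$-derivatives therefore require fourth-order $y$-derivatives of $\slowDer{i}(\SGfastsx{s}{x}\phi^x)$, and by the transfer formula \eqref{eqn:repSG} those in turn cost sixth-order $y$-derivatives of $\SGfastsx{s}{x}\phi^x$ — which Proposition \ref{prop:derivativeest} does not provide. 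The bound \eqref{eqn:intAvgDerFast} that you cite only controls up to second-order $y$-derivatives of $\slowDer{i}(\SGfastsx{s}{x}\phi^x)$, which is enough to bound the bracket itself but not its derivatives. This is exactly the pitfall flagged in Note \ref{Rem:der est for Pois unbounded}(4). The paper instead applies Proposition \ref{lemma:IntAvg}, which only requires a sup-norm bound on the function (not on its $y$-derivatives), and that bound is supplied by \eqref{eqn:SGdecay3} together with \eqref{eqn:intAvgDerFast} and Assumption \ref{ass:polGrowth}. You should replace Lemma \ref{lemma:IntAvgUB} by Proposition \ref{lemma:IntAvg} at that step.

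Secondly, your Step 1 uses a different mechanism than the paper's. You propose to substitute \eqref{eqn:repSG} at both $x$ and $x+he_j$ and telescope the differences (outer semigroup, operator $\LfastDeri{\cdot}$, inner semigroup). The paper instead differentiates $\widebar q^h_t$ in time and uses variation of constants, which produces \eqref{eqn:quotientrepDerDer} directly with a clean $\SGfastsx{t-s}{x}$ prefactor. Your telescoping is a valid identity, but the piece coming from differencing the outer semigroup $\SGfastsx{t-s}{\cdot}$ is, in the limit, the $x_j$-derivative of the semigroup at a fixed argument, which (via \eqref{eqn:repSG} with a parameter-independent test function) unfolds into a nested double time-integral; one must then perform a change of variables in the $(s,u)$-plane to recognise that the sum of your pieces agrees with \eqref{eqn:repSGderder}. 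This is extra bookkeeping the paper's Duhamel/variation-of-constants route avoids, and also means the finite-$h$ dominants for the DCT are somewhat more involved than you indicate; it is workable, just not the most economical path.
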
	

		This reasoning and line of proof can be extended to any number of derivatives by an induction step. For an illustration of this in the bounded coefficients context see \cite[Section $4.5$]{pardoux2003}.

	\section{Averaging: Proof of Theorem \ref{mainthm} and of Theorem \ref{fullycoupledaveragingtheorem}}\label{subsec:detailedproof}
	
	This section contains the Proof of Theorem \ref{mainthm}. In particular, in Section \ref{subsec:heur} we give a heuristic argument which explains the approach we take; Section \ref{subsec:proofofmainthm} contains the proof itself.

	
	\subsection{Heuristics}\label{subsec:heur} 
	The structure of the proof of Theorem \ref{mainthm} is analogous to the one introduced in \cite[Section 4]{barr2020fast}. 
	Let us start by recalling the heuristic argument which  motivates the approach taken in the proof.  In particular, the heuristic argument below (which relies on the linearity of all the involved semigropus and associated generators and we don't repeat this every time) shows how Poisson equations and their derivatives  enter the picture. 
	
	To find an expression for the difference $\cP_t^{\epsilon} f - \bar \cP_t f$ in which we are interested, 
	we start by formally expanding the semigroup $\cP_t^{\epsilon}$ in powers of $\epsilon$:
	\begin{equation} \label{Meqn:asymExp}
		\cP_t^{\epsilon} f = f^0_t + \epsilon f^1_t+...
	\end{equation}
	From \eqref{eq:epsPDE}, we have 
	$$
	\partial_t\cP_t^\epsilon f - \mathfrak{L}_{\epsilon} (\cP_t^\epsilon f) = 0.
	$$
	Plugging the asymptotic expansion (\ref{Meqn:asymExp}) into the above, using the decomposition of the generator \eqref{decompLepsilon}   and  collecting terms with the same power of $\epsilon$ gives us the following set of equations
	\begin{align}
		\mathcal{O} \left( \epsilon^{-1} \right)&: \cL^x f_t^0 = 0 \label{Meqn:ft0},\\
		\mathcal{O} (\epsilon^0)&: \partial_t f_t^0 - \cL_S f_t^0 = \cL^x f_t^1 \label{Meqn:ft1}\,.
	\end{align}
	From the ergodicity of the process $\fastFixed_t$, defined by \eqref{fastF}, (and associated to the generator $\cL^x$ which, we recall, is a differential operator in the $y$ variable only),  equation  (\ref{Meqn:ft0}) implies that  $f_t^0(x,y)$ is constant in $y$,  \footnote{For every fixed $x$, the semigroup $\SGfastsxn{t}{x}$ has a unique invariant measure; hence, by \cite[Proposition 12.27 \& Theorem 12.31]{daintrostochasticanalysis}, the set of stationary points of $\SGfastsxn{t}{x}$ (i.e. the set of functions $f$ such that $\SGfastsxn{t}{x}f(y)=f(y)$ for almost every $y$) comprises only of constant functions. However if $\cL^xf=0$ then $\partial_t\SGfastsxn{t}{x}f(y) =\SGfastsxn{t}{x}\cL^x f(y) =0$, that is, $f$ is a stationary point of $\SGfastsxn{t}{x}$ and hence it must be constant as a function of $y$.} i.e.  $f_t^0(x,y) = f_t^0 (x)$. Now we fix $x$ and integrate (\ref{Meqn:ft1}) with respect to the  invariant measure $\mu^x(dy)$ to get
	\begin{equation*}
		\int \partial_t f_t^0(x) \mu^x(dy) - \int (\cL_S f_t^0)(x,y) \mu^x(dy) = \int (\cL^x f_t^1)(x,y) \mu^x(dy).
	\end{equation*}
	The right hand side of the above vanishes because $\mu^x$ is an invariant measure of the process $\fastFixed_t$,  and hence $\mu^x(\cL^x f)=0$ for all functions $f$ in the domain of the generator $\cL^x$. 
	Using the expressions for $\cL_S$ and $\bar\cL$ (namely, equations \eqref{avgDef}, \eqref{bar L} and \eqref{LFast and LSlow}-\eqref{eqn:gen}), from the above we obtain
	\begin{equation*}
		\partial_t f_t^0(x) - (\bar \cL f_t^0)(x) = 0.
	\end{equation*}
	The above equation has a unique solution (see e.g. \cite[Proposition 4.1.1]{lorenzi2006analytical}) which, by \eqref{eq:barPDE}, needs to coincide with the semigroup $\bar \cP_t$, i.e. $f_t^0 = \bar \cP_t f$.  Substituting this expression into the expansion  (\ref{Meqn:asymExp}) and into \eqref{Meqn:ft1} gives 
	\begin{align}
		&\cP_t^{\epsilon} f - \bar \cP_t f = \epsilon f^1_t +  \dots \label{ded1}\\
		&\cL^x f_t^1  = \left[\left( \widebar{\mathcal{L}}-\mathcal{L_S} \right)\widebar{\mathcal{P}}_t f\right](x) \,.\label{ded2}
	\end{align}
	The difference on the LHS of \eqref{ded1} is precisely the quantity which we want to study and  \eqref{ded1} indicates that in order to understand the behaviour of such a quantity we need to study  $f_t^1$. In turn, \eqref{ded2} can be seen as a Poisson equation in the unknown $f_t^1$ of the type studied in Section \ref{sec:psneqn}.  Indeed, let 
	\begin{align}
		v_t(x,y) &:= 	\left( \widebar{\mathcal{L}}-\mathcal{L_S} \right)\widebar{\mathcal{P}}_t f(x) \nonumber\\
		&=  \sum^n_{i=1}\left(\widebar{b}_i(x) - b_i(x,y)\right)\slowDer{i} \widebar{\mathcal{P}}_t f(x)\, ,\label{eqn:vt}
	\end{align}
	where in the above the terms involving $\Sigma$ vanish when taking the difference $\widebar{\mathcal{L}}-\mathcal{L_S}$, since the operators $\widebar{\mathcal{L}}$ and $\cL_S$ have the same diffusion coefficient (because of \eqref{eqn:bigDiffdef}). 
	Then  \eqref{ded2} becomes
	\begin{equation} \label{eqn:psnEqn}
		\mathcal{L}^x f_t^1(x,y) = v_t(x,y), \quad x \in \mathbb{R}^n, \, y \in \mathbb{R}^d \,.
	\end{equation}
	Recall that $\cL^x$ is a differential operator in the $y$ variable, with coefficients depending on the parameter $x$. Since $\bar{\cP}_tf(x)$ does not depend on $y$ and $\mu^x(b)=\bar{b}$ by definition,  the right hand side of \eqref{eqn:psnEqn} is of the same form as the right hand side of  \eqref{poisprobgeneral} and, by Assumption \ref{ass:polGrowth} (\ref{ass:polGrowthDrift}) and Proposition \ref{prop:avgderivativeest},  it is of the  growth required to apply Lemma \ref{lemma:probRep} (indeed for some $m_x,m_y >0$, $\Vnorm{b_i\slowDer{i} \widebar{\mathcal{P}}_t f(x)}{0,m_x,m_y} < \infty$ for every $i$) .
	Hence, from  Lemma \ref{lemma:probRep}, there exists a solution to \eqref{eqn:psnEqn}.

	Finally, we emphasize  that $f_t^1$ is not the unique solution to the Poisson equation \eqref{eqn:psnEqn} and  the proof of Theorem \ref{mainthm} does not rely on having a unique solution to such an equation. However,  due to such non-uniqueness,  $f_t^1$ may not represent the entire order $\epsilon$ term in the expansion \eqref{Meqn:asymExp}.

	\subsection{Proof of Theorem \ref{mainthm}}\label{subsec:proofofmainthm}
	
	\begin{proof}[Proof of Theorem \ref{mainthm}] Let $f_t^1$ be the function defined in \eqref{eqn:psnEqn}.
		From Lemma \ref{lemma:probRep}, this function is well defined and is a classical solution of the Poisson equation \eqref{eqn:psnEqn}.  Motivated by the heuristics presented in Section \ref{subsec:heur}, we define
		\begin{equation} \label{Meqn:r}
			r_t^\epsilon(x,y) \coloneqq (\cP_t^\epsilon f)(x,y) - (\bar \cP_t f)(x) -\epsilon f_t^1(x,y).
		\end{equation}
		Here we recall $f$ is any function in $C_b^2(\R^n)$.
		With this definition, we have
		\begin{equation} \label{eqn:boundterm}
			|\left(\cP_t^\epsilon f\right) (x,y) - \bar \cP_t f(x)| = |\epsilon f_t^1(x,y) + r_t^\epsilon (x,y)|,
		\end{equation}
		so that we need to study the terms $f_t^1$ and $r_t^\epsilon$. We first turn our attention to the latter. By differentiating \eqref{Meqn:r} with respect to time we have the following:
		\begin{align*}
			\partial_t r_t^\epsilon(x,y) &= \partial_t(\cP_t^\epsilon f)(x,y) - \partial_t (\bar \cP_t f)(x) -\epsilon \partial_t f_t^1(x,y) \\
			& \stackrel{\eqref{eq:epsPDE}}{=} \mathfrak{L}_{\epsilon} \cP_t^\epsilon f(x,y) -   \partial_t (\bar \cP_t f)(x) -\epsilon \partial_t f_t^1(x,y) \,.
		\end{align*}
		Rearranging (\ref{Meqn:r}), we have   $(\cP_t^\epsilon f)(x,y) = r_t^\epsilon(x,y) + (\bar \cP_t f)(x) +\epsilon f_t^1(x,y) $;
		using this fact,  the decomposition $\mathfrak{L}_{\epsilon} = \frac{1}{\epsilon}\cL^x + \cL_S$, and remembering that $(\bar \cP_t f)(x)$ does not depend on the variable $y$ (so that $\mathfrak{L}_{\epsilon} \bar \cP_t = \cL_S \bar \cP_t $), we obtain
		\begin{align*}
			\partial_t r_t^\epsilon(x,y) &= \mathfrak{L}_{\epsilon} r_t^\epsilon(x,y) + \mathfrak{L}_{\epsilon} \bar \cP_t f(x) + \epsilon\mathfrak{L}_{\epsilon} f_t^1 (x,y) - \partial_t \bar \cP_t f(x) - \epsilon \partial_t f_t^1(x,y) \\
			&= \mathfrak{L}_{\epsilon} r_t^\epsilon(x,y) + \cL_S \bar \cP_t f(x) +  \epsilon\mathfrak{L}_{\epsilon} f_t^1 (x,y) -\bar \cL \bar \cP_t f(x) - \epsilon \partial_t f_t^1(x,y),
		\end{align*}
		where for the penultimate addend we have used \eqref{eq:barPDE}.
		We can now use the fact that $f_t^1$ satisfies the Poisson equation \eqref{eqn:psnEqn}, that is,  $(\cL_S-\bar \cL)\bar \cP_t f = -\cL^x f_t^1$, and again \eqref{decompLepsilon} to conclude
		\begin{align*}
			\partial_t r_t^\epsilon(x,y) &= \mathfrak{L}_{\epsilon} r_t^\epsilon(x,y) + \epsilon\left( \cL_S f_t^1 (x,y) - \partial_t f_t^1 (x,y) \right).
		\end{align*}
		The variation of constants formula then  gives 
		\begin{equation} \label{rDiff}
			r_t^\epsilon (x,y) = \cP_t^\epsilon r_0^\epsilon (x,y) + \epsilon \int_0^t \cP_{t-s}^\epsilon (\cL_S f_s^1 - \partial_s f_s^1) (x,y) ds.
		\end{equation}
		Substituting the above expression for the remainder $r_t^\epsilon (x,y)$ into \eqref{eqn:boundterm}, we obtain
		\begin{equation} \label{eqn:error}
			|\cP_t^\epsilon (x,y) - \bar \cP_t f(x)| = |\epsilon f_t^1(x,y) + \cP_t^\epsilon r_0^\epsilon (x,y) + \epsilon \int_0^t \cP_{t-s}^\epsilon (\cL_S f_s^1 - \partial_s f_s^1) (x,y) ds|.
		\end{equation}
		Hence, to prove the statement it suffices to prove the following three estimates: 
		\begin{align}\label{step1bound}
			&|\cP_{t-s}^\epsilon (\cL_S - \partial_s)f_s^1| \leq C\lVert f\rVert_{C_b^2} e^{-\omega s} (1+ |y|^{M^{g, b}_{y}+\pow{g}{y}}+|x|^{4\pow{b}{x}}),\\
			\label{step2bound}
			&|f_t^1(x,y)| \leq C\lVert f\rVert_{C_b^2}e^{-\omega t}(1+ |y|^{m^{b}_{y}}+|x|^{\pow{b}{x}})
			,\\
			\label{step3bound}
			&|\cP_t^\epsilon r_0^\epsilon (x,y)| \leq \epsilon C\lVert f\rVert_{C_b^2}(1+ |y|^{m^{b}_{y}}+|x|^{\pow{b}{x}}) \, ,
		\end{align}
		where in the above and throughout we use $C$ to denote a positive constant that is dependent on $n$ and $f$, but is independent of $x$ and $y$, and may change line by line, and $\omega>0$ is a positive constant. We prove the above three bounds in turn.  
		To prove \eqref{step1bound} it suffices to show the following:
		\begin{equation}\label{blabla}
			\left| (\mathcal{L}_S - \partial_s)f_s^1(x,y) \right| \leq C\lVert f\rVert_{C_b^2} e^{-\omega s} (1+ |y|^{M^{g, b}_{y}+\pow{g}{y}}+|x|^{4\pow{b}{x}}).
		\end{equation}
		Indeed, once the above has been shown, we can apply the 
		semigroup $\{\cP_t^\epsilon \}_{t\geq 0}$ to both sides of the above and bound the right hand side using  Lemma \ref{prop:fullMombound}, to obtain
		\begin{equation*}
			\cP_{t-s}^\epsilon\left( C e^{-\omega s}\lVert f\rVert_{C_b^2}(1+ |y|^{M^{g, b}_{y}+\pow{g}{y}}+|x|^{4\pow{b}{x}})\right) \leq C e^{-\omega s}\lVert f\rVert_{C_b^2}(1+ |y|^{M^{g, b}_{y}+\pow{g}{y}}+|x|^{4\pow{b}{x}}) \, ,
		\end{equation*} which implies the desired result. The proof of \eqref{blabla} heavily relies on our results on Poisson equations, i.e. on the results of Section \ref{sec:psneqn}, and it is deferred to Proposition \ref{psnEqDer}, see  (\ref{eqn:Ls-ds}).
		Similarly, 	the proof of \eqref{step2bound} is deferred to Proposition \ref{psnEqDer}, see \eqref{eqn:ft1Bound} below. To obtain \eqref{step3bound} note that from (\ref{Meqn:r}) we have
		\begin{align*}
			r_0^\epsilon (x,y) = f(x) - f(x) - \epsilon f_0^1 = - \epsilon f_0^1.
		\end{align*}
		
		Now we use \eqref{step2bound} at $t=0$ to conclude \eqref{step3bound}. The proof is hence finished.
	\end{proof}
	
	\begin{Note}\label{Rem:poisson for averaging} {  Using linearity of the Poisson equation we can write
			\begin{equation}\label{eqn:f_t^1}
				f_t^1(x,y)  = -\sum^n_{i=1} u_{ b_i}(x,y)\slowDer{i} \widebar{\mathcal{P}}_t f(x),
			\end{equation}
			where  $u_{ b_i}(x,y)$ is the solution to \eqref{eqn:psnEqn} with the right hand side equal to $b_i$ (compatibly with the notation set at the beginning of Section \ref{sec:mainresults}).}
		With the above formula in mind, we can explain how the results of Section \ref{sec:psneqn} on Poisson equations will be used in the averaging proof. The expression for  $f_t^1$ in the above involves the product of solutions to the Poisson Equation \eqref{eqn:psnEqn} and derivatives of the semigroup $\bar{\cP}_t$ with respect to  $x$. Moreover, in order to obtain \eqref{step1bound} we will need to apply $\cL_S$ (defined in \eqref{LFast and LSlow}) to $f_t^1$;  this in turn requires bounds on the $x$ derivative of  $u_{ b_i}(x,y)$  and $x$ derivatives of the averaged semigroup which decay fast enough in time (fast enough to be integrable over $(0, \infty)$). The derivative estimates for the averaged semigroup are obtained in Section \ref{sec:SGder}. From what we have said so far, it might seem that, when applying $\cL_S$ (which is a second order differential operator) to $f_t^1$ one needs to deal with third order derivatives of $\bar{\cP}_t$. However this is not the case as in \eqref{step1bound} one needs to consider the difference $\cL_S-\pa_t$ applied to $f_t^1$. When taking this difference the third order derivatives cancel, see proof of Proposition \ref{psnEqDer}. 
		
		With regards to the $x$ derivatives of the Poisson equation \eqref{eqn:psnEqn}, note that $x$ appears only as a parameter. Derivatives with respect to a parameter of a Poisson equation were studied in Section \ref{sec:psneqn}. In Section \ref{sec:appPsnEqn}, we use Proposition \ref{lemma:IntAvgDerRepresentation} and Proposition \ref{lemma:IntAvgDerDer} to get the required bounds. 
		

		
	\end{Note}
	
	\begin{lemma}\label{prop:fullMombound}
		Let Assumption \ref{ass:polGrowth}-\ref{DriftAssumpS} hold and
		let $V(x,y)=|x|^{4\pow{b}{x}} + |y|^{k}$;  then, for all $k \geq 0$ and $\epsilon \leq 1$, we have 
		\begin{equation}\label{momBoundFull}
			(\cP_t^\epsilon V)(x,y) \leq e^{-\tilde{r}' t}|x|^{4\pow{b}{x}} +e^{-r'_k t}|y|^{k} +\frac{C'_k}{r'_k} +\frac{\tilde{C}'}{\tilde{r}'},
		\end{equation}
		where the constants $C'_k,r'_k$ are defined in Lemma \ref{prop:momentBounds}, and 
		\begin{align*}
			\tilde{C}'&=2\tilde{C}\left(\frac{\tilde{C}(2\pow{b}{x}-1)}{\pow{b}{x}\tilde{r}}\right)^{2\pow{b}{x}-1},\\
			\tilde{r}'&=2\pow{b}{x}\tilde{r} \,,
		\end{align*}
		with $\tilde{C}, \tilde{r}$ being from Assumption \ref{DriftAssumpS}.
	\end{lemma}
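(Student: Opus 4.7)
The plan is to exploit the additive separability of $V(x,y) = |x|^{4\pow{b}{x}} + |y|^k$: since $\cL^x$ differentiates only in $y$ and $\cL_S$ only in $x$, we have $\cL^x |x|^{4\pow{b}{x}} = 0$ and $\cL_S |y|^k = 0$, so
\[
\mathfrak{L}_\epsilon V(x,y) = \cL_S|x|^{4\pow{b}{x}} + \tfrac{1}{\epsilon}\cL^x|y|^k.
\]
Each summand can then be analysed by its own Lyapunov assumption, and the whole bound is obtained by adding two independent Gronwall estimates.

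First I establish dissipative pointwise bounds for each summand. Computing the gradient and Hessian of $|x|^{4\pow{b}{x}}$ and using that $\Sigma$ is PSD (so that $(\Sigma x, x)/|x|^2 \leq \mathrm{tr}(\Sigma) = \sigma:\sigma$), I get
\[
\cL_S|x|^{4\pow{b}{x}} \leq 4\pow{b}{x}\,|x|^{4\pow{b}{x}-2}\Big[(b(x,y),x) + (4\pow{b}{x}-1)\sigma(x):\sigma(x)\Big] \leq -4\pow{b}{x}\tilde{r}|x|^{4\pow{b}{x}} + 4\pow{b}{x}\tilde{C}|x|^{4\pow{b}{x}-2},
\]
where the last step is Assumption \ref{DriftAssumpS}. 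The analogous computation for $|y|^k$, using Assumption \ref{DriftAssump} together with $(Ay,y)/|y|^2 \leq \mathrm{tr}(A) = a:a$, yields
\[
\cL^x|y|^k \leq -kr_k |y|^k + kC_k|y|^{k-2}\qquad(k\geq 2).
\]
(The cases $0\leq k<2$ follow by Jensen's inequality from the $k=2$ bound, exactly as in the $k=1$ line of Lemma \ref{prop:momentBounds}.) I then absorb the lower-order terms using a scaled Young's inequality with conjugate exponents $k/(k-2)$ and $k/2$: there exists $\delta>0$ such that $|y|^{k-2}\leq \tfrac{k-2}{k}\delta^{-k/(k-2)}|y|^k + \tfrac{2}{k}\delta^{k/2}$, and choosing $\delta$ so that $kC_k \cdot \tfrac{k-2}{k}\delta^{-k/(k-2)} = \tfrac{kr_k}{2}$ delivers $\cL^x|y|^k \leq -r'_k|y|^k + C'_k$ with $r'_k = kr_k/2$ and $C'_k$ as in Lemma \ref{prop:momentBounds}. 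The identical procedure, with $(r_k,C_k,k)$ replaced by $(\tilde{r},\tilde{C},4\pow{b}{x})$, gives $\cL_S|x|^{4\pow{b}{x}} \leq -\tilde{r}'|x|^{4\pow{b}{x}} + \tilde{C}'$ with $\tilde{r}'=2\pow{b}{x}\tilde{r}$ and $\tilde{C}'$ as stated.

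Finally, set $u(t) = (\cP_t^\epsilon |x|^{4\pow{b}{x}})(x,y)$ and $v(t) = (\cP_t^\epsilon |y|^k)(x,y)$. By Ito's formula applied along the SDE \eqref{slow}-\eqref{fast}, with a standard localisation argument to handle the polynomial growth of $V$, Dynkin's formula holds and, combined with the pointwise bounds above and the monotonicity of $\cP_t^\epsilon$, yields
\[
u'(t) \leq -\tilde{r}' u(t) + \tilde{C}', \qquad v'(t) \leq -\tfrac{r'_k}{\epsilon} v(t) + \tfrac{C'_k}{\epsilon}.
\]
Gronwall's inequality gives $u(t) \leq e^{-\tilde{r}'t}|x|^{4\pow{b}{x}} + \tilde{C}'/\tilde{r}'$ and $v(t) \leq e^{-r'_k t/\epsilon}|y|^k + C'_k/r'_k$, and since $\epsilon\leq 1$ implies $e^{-r'_k t/\epsilon}\leq e^{-r'_k t}$, adding $u(t)+v(t)=\cP_t^\epsilon V$ produces \eqref{momBoundFull}.

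The main technical obstacle is matching the precise constants $\tilde{C}'$ and $C'_k$ prescribed in the statement: this requires committing to the specific scaling $\delta$ in Young's inequality and keeping careful track of the prefactors generated by the $|x|^{4\pow{b}{x}-2}$ and $|y|^{k-2}$ cross terms. The structural steps (splitting of $V$, Lyapunov bound on each piece, Dynkin--Gronwall, and $\epsilon\leq 1$ to degrade the fast rate to a uniform one) are otherwise routine.
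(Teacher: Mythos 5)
Your proposal is correct and follows the same route as the paper's proof: split $V$ additively, note that $\mathfrak{L}_\epsilon V_1=\cL_S V_1$ and $\mathfrak{L}_\epsilon V_2 = \tfrac{1}{\epsilon}\cL^x V_2$, obtain pointwise Lyapunov bounds from Assumptions \ref{DriftAssumpS} and \ref{DriftAssump} together with Young's inequality, integrate via Gronwall, and use $\epsilon\leq 1$ to replace $e^{-r'_kt/\epsilon}$ by $e^{-r'_kt}$. The paper's version simply cites the analogous computation in Lemma \ref{prop:momentBounds} rather than repeating it; your writeup fills in those intermediate details explicitly.
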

	\begin{proof}
		The proof can be found in Appendix \ref{app:proofsaveraging}.
	\end{proof}

	\subsection{Proof of Theorem \ref{fullycoupledaveragingtheorem}}\label{subsec:fullycoupledsketch}
 We recall that this section is devoted to the study of the fully coupled regime. 
	\begin{proof}[Proof of Theorem \ref{fullycoupledaveragingtheorem}]
		This proof follows that of Theorem \ref{mainthm}, so we point out the places in which it differs, and sketch the rest. The Poisson equation that $f_t^1$ solves in this case is slightly different. In particular, the right hand side of \eqref{eqn:psnEqn} will contain two derivatives of the averaged semigroup, instead of just one. This explains why Assumption \ref{ass:averagederivativeestimate4} contains four derivatives, as opposed to the two required in the proof of Theorem \ref{mainthm}. Indeed, instead of \eqref{eqn:vt}-\eqref{eqn:psnEqn}, $f_t^1$ is the solution of $\cL^x f_t^1=v_t$ with $\mu^x(f_t^1)=0$ where
		\begin{align*}
			v_t(x,y) &= 	\left( \widebar{\mathcal{L}}-\mathcal{L_S} \right)\widebar{\mathcal{P}}_t f(x) \nonumber\\
			&=  \sum^n_{i=1}\left(\widebar{b}_i(x) - b_i(x,y)\right)\slowDer{i} \widebar{\mathcal{P}}_t f(x) + \sum^n_{i,j=1}\left(\widebar{\Sigma}_{ij}(x) - \Sigma_{ij}(x,y)\right)\slowDerDer{i}{j} \widebar{\mathcal{P}}_t f(x) \,.
		\end{align*}
		Again, from  Lemma \ref{lemma:probRep} there exists a solution to \eqref{eqn:psnEqn} (we observe we can apply Lemma \ref{lemma:probRep}  to the fully coupled case as such a lemma  only relies on Proposition \ref{lemma:IntAvg},  which holds also in this setting as explained in Note \ref{note:coupledlemmas}). Using the linearity of the Poisson equation we can write
		\begin{equation}\label{eqn:f_t^1fullycoupled}
			f_t^1(x,y) = -\sum^n_{i=1} u_{ b_i}(x,y)\slowDer{i} \widebar{\mathcal{P}}_t f(x) -\sum^n_{i,j=1} u_{ \Sigma_{ij}}(x,y)\slowDerDer{i}{j} \widebar{\mathcal{P}}_t f(x),
		\end{equation}
		where we recall the notation $u_{ b_i}(x,y)$ as the solution to \eqref{eqn:psnEqn} with the right hand side equal to $b_i$, and analogously for $u_{ \Sigma_{ij}}(x,y)$. Following the proof of Theorem \ref{mainthm} it is sufficient to prove that $f_t^1$ satisfies the estimates \eqref{step1bound}-\eqref{step3bound}. Since \eqref{step1bound} is the most challenging to prove we will give more details for this estimate. Observe that \eqref{step2bound} follows directly from \eqref{eq:intAvgDercoupled} with $\phi=b_i$ or $\phi=\Sigma_{ij}$ and Assumption \ref{ass:averagederivativeestimate4}. The bound \eqref{step3bound} follows from \eqref{step2bound} setting $t=0$. Let us show that \eqref{step1bound} holds, which follows once we have that \eqref{blabla} holds.
		Differentiating \eqref{eqn:f_t^1fullycoupled} with respect to $t$ we get
		\begin{equation*}
		\begin{split}
				\partial_t  f_t^1(x,y) =  -\sum^n_{i=1} u_{b_i}(x,y) \partial_{x_i}\widebar{\cL} \widebar{\mathcal{P}}_t f(x) -\sum^n_{i,j=1} u_{ \Sigma_{ij}}(x,y)\slowDerDer{i}{j}\widebar{\cL} \widebar{\mathcal{P}}_t f(x).
			\end{split}
		\end{equation*}
		Further, using \eqref{bar L} and \eqref{LFast and LSlow} we can write
		\begin{align*}
			&(\mathcal{L}_S - \partial_s)f_s^1(x,y) = \\
			&-\sum^n_{i,j=1}b_i(x,y)\partial_{x_i}u_{ b_j}(x,y)\slowDer{j} \widebar{\mathcal{P}}_s f(x)  - \sum^n_{i,j=1}\left(b_i(x,y)-\bar{b}_i(x)\right)u_{ b_j}(x,y)\slowDerDer{i}{j} \widebar{\mathcal{P}}_s f(x)\\
			&-\sum^{n}_{i,j,k = 1}\Sigma_{ij}(x)\slowDerDer{i}{j}u_{b_k}(x,y) \slowDer{k} \widebar{\mathcal{P}}_s f(x) -2\sum^{n}_{i,j,k = 1}\Sigma_{ij}(x)\slowDer{i}u_{ b_k}(x,y) \slowDerDer{j}{k} \widebar{\mathcal{P}}_s f(x) \\
			&+ \sum^n_{i,j=1}\slowDer{i}\bar{b}_j(x)u_{b_i}(x,y)\slowDer{j} \widebar{\mathcal{P}}_s f(x) +\sum^n_{i,j,k=1}\slowDer{i}\Sigma_{jk}(x)u_{ b_i}(x,y)\slowDerDer{j}{k} \widebar{\mathcal{P}}_s f(x) \\
			&- \sum^n_{i,j,k=1}b_i(x,y)\partial_{x_i}u_{ \Sigma_{jk}}(x,y)\slowDerDer{j}{k} \widebar{\mathcal{P}}_s f(x)  - \sum^n_{i,j,k=1}b_i(x,y)u_{ \Sigma_{jk}}(x,y)\slowDerDerDer{i}{j}{k} \widebar{\mathcal{P}}_s f(x)\\
			&-\sum^{n}_{i,j,k,l = 1}\Sigma_{ij}(x)\slowDerDer{i}{j}u_{\Sigma_{kl}}(x,y) \slowDerDer{k}{l} \widebar{\mathcal{P}}_s f(x) -2\sum^{n}_{i,j,k,l = 1}\Sigma_{ij}(x)\slowDer{i}u_{ \Sigma_{kl}}(x,y) \slowDerDerDer{j}{k}{l} \widebar{\mathcal{P}}_s f(x) \\
			& - \sum^n_{i,j,k,l=1}\left(\Sigma_{ij}(x,y)-\bar{\Sigma}_{ij}(x)\right)u_{ \Sigma_{kl}}(x,y)\slowDerDerDerDer{i}{j}{k}{l} \widebar{\mathcal{P}}_s f(x)  \\
			&+ \sum^n_{i,j,k=1}\slowDerDer{i}{j}\bar{b}_{k}(x)u_{\Sigma_{ij}}(x,y)\slowDer{k} \widebar{\mathcal{P}}_s f(x) +2\sum^n_{i,j,k=1}\slowDer{i}\bar{b}_{k}(x)u_{\Sigma_{ij}}(x,y)\slowDerDer{j}{k} \widebar{\mathcal{P}}_s f(x) \\
			&+ \sum^n_{i,j,k,l=1}\slowDerDer{i}{j}\bar{\Sigma}_{kl}(x)u_{\Sigma_{ij}}(x,y)\slowDerDer{k}{l} \widebar{\mathcal{P}}_s f(x) +2\sum^n_{i,j,k,l=1}\slowDer{i}\bar{\Sigma}_{kl}(x)u_{\Sigma_{ij}}(x,y)\slowDerDerDer{j}{k}{l} \widebar{\mathcal{P}}_s f(x) \\
			&+ \sum^n_{i,j,k=1}\bar{b}_{k}(x)u_{\Sigma_{ij}}(x,y)\slowDerDerDer{i}{j}{k} \widebar{\mathcal{P}}_s f(x)-\sum^n_{i,j,k=1}\left(\Sigma_{ij}(x,y)-\bar{\Sigma}_{ij}(x)\right)u_{ b_{k}}(x,y)\slowDerDerDer{i}{j}{k} \widebar{\mathcal{P}}_s f(x).
		\end{align*}
		We use Assumption \ref{ass:averagederivativeestimate4}, along with \eqref{eqn:IntAvg2fullycoupled}-\eqref{eq:intAvgDercoupled} (with $\phi = b_i$ and $\phi = \Sigma_{ij}$) and Assumption \ref{ass:polGrowth} to get
		\begin{equation*}
			\left| (\mathcal{L}_S - \partial_s)f_s^1(x,y) \right| \leq Ce^{-\omega s}\lVert f\rVert_{C_b^4} (1+ |y|^{M_{y}}+|x|^{M_x}).
		\end{equation*}
		Hence, as in the proof of Theorem \ref{mainthm}, we obtain \eqref{step1bound}-\eqref{step3bound}, and conclude the proof.
	\end{proof}

	\section{Strong ergodicity of the averaged semigroup and  application of Section \ref{sec:psneqn} to Averaging}\label{sec:boundsonft1}
	In this section we prove the remaining results needed to prove Theorem \ref{mainthm}. In Section \ref{sec:SGder} we address the decay in time of the space derivative of the averaged semigroup $\bar{\cP}_t$. In Section \ref{sec:appPsnEqn}, we will use the results of Section \ref{sec:psneqn} and Section \ref{sec:SGder} to prove bounds on $f_t^1$ (solution to the Poisson problem \eqref{eqn:psnEqn}) and its $x$ derivatives. This is the section where we most heavily make use of our results on Poisson equations within the proof of the averaging result, Theorem \ref{mainthm}. The role of these results within the proof of Theorem \ref{mainthm} has been explained in Note  \ref{Rem:poisson for averaging}.
	
	
	\subsection{Derivative estimates for the Averaged Semigroup and Examples}\label{sec:SGder}
	
	We now prove that the Assumption \ref{ass:avgSGcond} made on the coefficients implies the required derivative estimates. In order to prove the derivative estimates we make use of \cite[Theorem 7.1.5]{lorenzi2006analytical}, with an added mollification argument. Indeed, \cite[Theorem 7.1.5]{lorenzi2006analytical} requires in particular that the averaged drift $\bar b$ (see \eqref{avgDef}) is in $C^{2+\nu}(\R^n)$, meaning that the second derivative is H\"older continuous. From our results, see Proposition \ref{lemma:IntAvgDerDer}, we only have that $\bar b$ is twice continuously differentiable. Hence, to prove Theorem \ref{thm:lorenziderest} we follow a mollification argument (similar to that in \cite[Theorem 6.1.9]{lorenzi2006analytical}) which relaxes \cite[Theorem 7.1.5]{lorenzi2006analytical} (in our setting) to twice continuously differentiable coefficients.
	
	\begin{theorem}\label{thm:lorenziderest}
		Let Assumptions \ref{ass:polGrowth} and \ref{UniformEllip} hold. Assume that there exists a polynomial $R\colon \R^n \rightarrow \R$ and a constant $K_{\Sigma}>0$ such that for all $x \in \R^n$ \begin{equation}\label{eqn:lorenziderestAsumpHess}
			\lvert\partial_x^\gamma \overline{b}(x)\rvert\leq R(x) \text{ for all multi-indices $\gamma$ of length $n$ such that  $\lvert \gamma\rvert_*=2$,}
		\end{equation} and $\lvert\partial_x^\gamma \Sigma(x) \rvert\leq K_\Sigma$ for any $\gamma$ such that $\lvert \gamma\rvert_*=1,2.$ Moreover there exists a polynomial $\tilde{R}\colon \R^n \rightarrow \R$ and a constants $L>0$ such that
		\begin{align}\label{eqn:lorenziderestAsump1}
			&\sum_{i,j=1}^n \partial_{x_i}\overline{b}_j(x) \xi_i\xi_j\leq -\tilde{R}(x)\lvert \xi\rvert^2,\\\label{eqn:lorenziderestAsump2}
			&-C_1 \coloneqq \sup_{x\in \R^n} \left(-\tilde{R}(x)+LR(x)+\frac{K_{\Sigma}^2n^3}{4\lambda_{-}}\right) <0.
		\end{align}
		Then, there exist positive constants $M,\omega>0$ such that for all $f\in C_b^2(\R^n)$
		\begin{equation}\label{eqn:lorenziBertoldi1}
			\sum_{1\leq \lvert\gamma\rvert_* \leq 2}\lVert \partial_x^\gamma \bar{\cP}_tf(x)\rVert_{\infty}^2 \leq Me^{-\omega t} \sum_{1\leq \lvert\gamma\rvert_* \leq 2}\lVert \partial_x^\gamma f(x) \rVert_{\infty}^2.
		\end{equation}
	\end{theorem}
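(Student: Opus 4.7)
The strategy is to regularize $\bar{b}$ by mollification so that \cite[Theorem 7.1.5]{lorenzi2006analytical} becomes directly applicable to a smoothed SDE, obtain the derivative estimate \eqref{eqn:lorenziBertoldi1} for each mollified semigroup with constants that are \emph{uniform in the mollification parameter}, and then pass to the limit. The mollification is needed because from Proposition \ref{lemma:IntAvgDerDer} we only know that $\bar{b} \in C^2(\R^n)$, whereas \cite[Theorem 7.1.5]{lorenzi2006analytical} requires Hölder continuity of the second derivatives. This scheme mimics the extension argument of \cite[Theorem 6.1.9]{lorenzi2006analytical}.

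Concretely, let $\rho_\eta(z) = \eta^{-n}\rho(z/\eta)$, $\eta \in (0,1)$, be a standard mollifier (smooth, non-negative, compactly supported, with $\int\rho = 1$), and set $\bar{b}^{\eta} := \bar{b} * \rho_\eta$ and $\Sigma^{\eta} := \Sigma*\rho_\eta$. Then $\bar{b}^{\eta},\Sigma^\eta \in C^\infty(\R^n) \subset C^{2+\nu}(\R^n)$, and $\partial_x^\gamma \bar{b}^{\eta} = (\partial_x^\gamma \bar{b})*\rho_\eta$ for $|\gamma|_* \le 2$, analogously for $\Sigma^\eta$. The crucial observation is that convolution with a non-negative kernel preserves the pointwise inequalities in our hypotheses. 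Indeed, from \eqref{eqn:lorenziderestAsump1},
\begin{align*}
\sum_{i,j} \partial_{x_i}\bar{b}^\eta_j(x)\xi_i\xi_j
&= \int \sum_{i,j}\partial_{x_i}\bar{b}_j(x-z)\xi_i\xi_j \, \rho_\eta(z)dz
\le -\tilde{R}^\eta(x)|\xi|^2,
\end{align*}
with $\tilde{R}^\eta := \tilde{R}*\rho_\eta$; similarly $|\partial_x^\gamma \bar{b}^\eta(x)| \le R^\eta(x) := R*\rho_\eta(x)$ for $|\gamma|_*=2$, and $|\partial_x^\gamma \Sigma^\eta|\le K_\Sigma$ for $|\gamma|_*=1,2$. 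Since $R$ and $\tilde R$ are polynomials, $R^\eta, \tilde{R}^\eta$ are also polynomials of the same degree and converge locally uniformly to $R,\tilde{R}$ as $\eta\to 0$; moreover the pointwise differences $|R^\eta - R|$, $|\tilde{R}^\eta-\tilde{R}|$ are bounded uniformly on compacts by $o(1)$ as $\eta\to 0$. Hence for $\eta$ small enough the analogue of \eqref{eqn:lorenziderestAsump2}, namely $\sup_x(-\tilde{R}^\eta + L R^\eta + K_\Sigma^2 n^3/(4\lambda_-)) \le -C_1/2 < 0$, continues to hold with a constant independent of $\eta$.

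With these uniform hypotheses verified, I would apply \cite[Theorem 7.1.5]{lorenzi2006analytical} directly to the semigroup $\bar{\cP}^\eta_t$ associated with the SDE whose drift and diffusion are $\bar{b}^\eta$ and $\sqrt{2}\,(\Sigma^\eta)^{1/2}$. The inspection of the proof in \cite{lorenzi2006analytical} shows that the constants $M,\omega$ produced in the derivative estimate depend only on $\lambda_-$, $K_\Sigma$, $L$, $n$, and on the uniform bounds provided by the pair $(R^\eta, \tilde{R}^\eta)$ through quantities like $C_1$ above; consequently they can be chosen independent of $\eta\in (0,\eta_0)$. This gives
\[
\sum_{1\leq |\gamma|_* \leq 2}\|\partial_x^\gamma \bar{\cP}^\eta_t f\|_\infty^2
\le M e^{-\omega t}\sum_{1\leq|\gamma|_* \leq 2}\|\partial_x^\gamma f\|_\infty, \qquad \text{for all } f\in C_b^2(\R^n).
\]

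The final step is passage to the limit $\eta\to 0$. Standard stability results for SDEs with locally Lipschitz coefficients and locally uniformly convergent coefficients (using Assumption \ref{DriftAssumpS}, which is inherited by $\bar{b}^\eta$ for small $\eta$ by the same convolution argument) yield $\bar{\cP}^\eta_t f(x) \to \bar{\cP}_t f(x)$ pointwise as $\eta\to 0$. The uniform bound on the first \emph{and} second $x$-derivatives of $\bar{\cP}^\eta_t f$ gives, by Arzelà–Ascoli, local uniform convergence of $\partial_x^\gamma \bar{\cP}^\eta_t f$ along subsequences to $\partial_x^\gamma \bar{\cP}_t f$ for $|\gamma|_* = 1$; the same bound then allows us to identify the limit of the second derivatives. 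The displayed inequality then transfers to $\bar{\cP}_t f$, yielding \eqref{eqn:lorenziBertoldi1}.

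The main obstacle in executing this plan is confirming that the constants $M,\omega$ produced by \cite[Theorem 7.1.5]{lorenzi2006analytical} genuinely admit the $\eta$-uniform form described above. This requires tracing through the proof in \cite{lorenzi2006analytical}: the Bernstein-type argument there uses Itô/Bianchi calculations on $|\nabla \bar{\cP}^\eta_t f|^2$ and $|\mathrm{Hess}\, \bar{\cP}^\eta_t f|^2$ together with the dissipativity \eqref{eqn:lorenziderestAsump1} and the bound \eqref{eqn:lorenziderestAsumpHess}; one must check that the constants arising from Young's inequality, together with the ellipticity constant $\lambda_-$, combine precisely as in \eqref{eqn:lorenziderestAsump2}, so that the rate $\omega$ inherits the sign of $-C_1$ and nothing worse. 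All other steps---the pointwise convergence of the semigroups, the mollification bounds, and the diagonal extraction---are routine once this uniformity is established.
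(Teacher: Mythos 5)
Your overall strategy (mollify $\bar b$, invoke \cite[Theorem 7.1.5]{lorenzi2006analytical} for the mollified semigroups, argue that the constants are uniform in the mollification parameter, and pass to the limit via Arzel\`a--Ascoli) is precisely the route the paper takes, so the plan is sound. However, two of your intermediate claims have concrete gaps.

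First, your justification that the dissipativity condition \eqref{eqn:lorenziderestAsump2} survives mollification is not valid as written. You argue that $|R^\eta - R|$ and $|\tilde R^\eta - \tilde R|$ are ``bounded uniformly on compacts by $o(1)$'' and conclude that $\sup_{x\in\R^n}\bigl(-\tilde R^\eta + L R^\eta + K_\Sigma^2 n^3/(4\lambda_-)\bigr) \le -C_1/2$. But the supremum in \eqref{eqn:lorenziderestAsump2} is over \emph{all} of $\R^n$, not a compact set, and for a nonconstant polynomial $R$ the difference $R^\eta - R$ is itself a (lower-degree) polynomial with $O(\eta)$-sized coefficients --- it does not tend to zero uniformly over $\R^n$. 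To make the argument go, you must exploit the specific polynomial structure: for instance, if $R$ has nonnegative coefficients then $(R*\rho_\eta)(x) \le R(x) + R(\eta e_1)$, i.e.\ the mollification error is a \emph{constant} rather than an $x$-dependent term; and Jensen's inequality (using that $\rho_\eta$ is even and $w\mapsto |w|^m$ is convex) gives the one-sided lower bound $\tilde R * \rho_\eta \ge \tilde R$ directly. With those one-sided, $x$-uniform bounds one can then reduce $L$ slightly to a suitable $\bar L < L$ and choose $\eta$ small to preserve \eqref{eqn:lorenziderestAsump2} uniformly.

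Second, the passage to the limit of the second derivatives is not justified by the uniform $L^\infty$ bound alone. Arzel\`a--Ascoli requires both uniform boundedness \emph{and} equicontinuity of the family; your displayed estimate gives uniform $L^\infty$ control of $\partial_x^\gamma \bar\cP_t^\eta f$ for $|\gamma|_* \le 2$, which yields equicontinuity of $\bar\cP_t^\eta f$ and of the first derivatives, but says nothing about equicontinuity of the second derivatives. For those you need a local H\"older (Schauder) estimate on $\partial_x^\gamma\bar\cP^\eta_t f$, $|\gamma|_* = 2$, uniform in $\eta$; such interior Schauder estimates are available (see e.g.\ \cite[Theorem C.1.4]{lorenzi2006analytical}) but must be invoked explicitly --- without them you only get weak-$*$ convergence of the second derivatives, not the locally uniform convergence needed to identify the limit with $\partial_x^\gamma \bar\cP_t f$ and transfer the pointwise estimate.

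Finally, you correctly flag that the $\eta$-uniformity of the constants in \cite[Theorem 7.1.5]{lorenzi2006analytical} needs to be traced, and you mollify $\Sigma$ as well as $\bar b$; the latter is harmless in principle (convolution with a probability kernel preserves uniform ellipticity and the $K_\Sigma$ bounds) but introduces unnecessary $\eta$-dependence in those terms --- since $\Sigma$ is a given coefficient, not an averaged one, the only object whose regularity is at issue is $\bar b$, and it suffices to mollify just that.
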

	
	\begin{proof}[Proof of Theorem \ref{thm:lorenziderest}]
		In order to use \cite[Theorem 7.1.5]{lorenzi2006analytical}, we need  $\bar b \in C^{2+\nu}$ for some $\nu >0$. Since this is not necessarily true under our assumptions, we apply a classical procedure and first  smooth the coefficient $\bar b$, apply \cite[Theorem 7.1.5]{lorenzi2006analytical} to such regularised coefficients and then obtain estimates independent of the regularization parameter. We follow the smoothing procedure outlined in \cite[Theorem 6.1.9]{lorenzi2006analytical}. To this end,  for any $\delta >0$ let $\varphi^\delta(x) = \delta^{-n}\varphi(x/\delta)$, where $\varphi \in C^{\infty}(\R^n)$ is any non-negative even function compactly supported in the ball centred at $0$ with radius $1$ and  such that $\int_{\R^n}\varphi(x) dx =1$. Denote by $\bar b^\delta$ the convolution between $\bar b$ and $\varphi^\delta$, 
		i.e.
		$$\bar b^{\delta}:= (\bar b * \varphi^\delta)(x) \coloneqq \int_{\R^n}\bar b(\bar x)\varphi^\delta(x-\bar x) d\bar x.$$ We let $\bar \cL^\delta$ be defined as $\bar \cL$ (see \eqref{bar L}) with $\bar b$ replaced by $\bar b^\delta$ for all $i =1, \ldots, n$. We will first prove the estimate \eqref{eqn:lorenziBertoldi1} for the smoothed semigroup $\bar \cP^{\delta}_t$, corresponding to the operator $\bar \cL^{\delta}$, and then argue that we can take $\delta \rightarrow 0$.		
		To prove the required derivative estimate, we can use \cite[Theorem 7.1.5]{lorenzi2006analytical} (where $p=2,k=2$ in their notation). It is important in order to be able to take $\delta \rightarrow 0$ that the constants in the hypotheses of \cite[Theorem 7.1.5]{lorenzi2006analytical} should be uniform in $\delta$. Let us first observe that the hypotheses of \cite[Theorem 7.1.5]{lorenzi2006analytical} are given by \cite[Hypothesis 6.1.1 (i)-(iii), Hypothesis 7.1.3 (ii-2)]{lorenzi2006analytical}. For the readers convenience, we gather the assumptions that involve $\bar b^\delta$ explicitly in Assumption \ref{ass:lorenziBertoldi}, modified to match our notation. Then we will explain how the remaining assumptions are satisfied (the remaining ones are straightforward), before verifying Assumption \ref{ass:lorenziBertoldi} in detail.
		
		\begin{assumption}\label{ass:lorenziBertoldi} Here we collect \cite[Hypothesis 6.1.1 (iii), Equation (6.1.2)]{lorenzi2006analytical} and \cite[Hypothesis 7.1.3 (ii-2)]{lorenzi2006analytical}, noting that we can take the function $\kappa=\kappa(x)$ in \cite{lorenzi2006analytical} to be $\kappa(x) = \lambda_{-}$. 
			\begin{enumerate}
				\item \label{ass:lorenziBertoldi1} There exists a constant $C>0$ such that \begin{equation}\label{eqn:driftCondLorenzi}
					\sum_{i=1}^n \bar b^\delta_i(x) x_i \leq C(1+|x|^2), \quad \mbox{for any $x \in \R^n$}\,.
				\end{equation}
				\item \label{ass:lorenziBertoldi2} $\Sigma_{ij}, \bar{b}^\delta_{j} \in C^{2+\nu}$ for some $\nu \in (0,1)$ and any $i,j = 1, \ldots, n$ and there exist $\pow{\Sigma}{}$, a constant $K_{\Sigma}>0$ and a positive function $r \colon \R^n \rightarrow \R$ such that 
				\begin{equation}\label{eqn:lorenziderestHess1}
					\sup_{|\gamma|_*=2}\lvert \partial^\gamma_x \bar{b}^\delta_i(x)\rvert \leq r(x),
				\end{equation} and $\sup_{1\leq|\gamma|\leq 2}\lvert \partial^\gamma_x \Sigma_{ij}(x) \rvert \leq K_\Sigma $ for any $x \in \R^n$ and $i,j =1,\cdots,n$. Moreover, there exist a function $\ell:\R^n \rightarrow \R$ such that 
				\begin{align}\label{eqn:lorenziderestAsumpdelta}
					\sum_{i,j=1}^n \partial_{x_i}\bar b^\delta_j(x) \xi_i\xi_j&\leq \ell(x)\lvert \xi\rvert^2 \quad \mbox{for all $\xi \in \R^n$}\,, 
				\end{align}
				and  a constant $\tilde{L}>0$ such that
				\begin{equation}\label{eqn:LBdriftCondAss}
					0 > -C_2 \coloneqq \sup_{x \in \R^n} \left( \ell(x) + \tilde{L}r(x) + \frac{K_\Sigma^2 n^3}{4\lambda_{-}} \right).
				\end{equation}
				Finally, there exists a constant $K_1\in \R$ such that
				\begin{equation}\label{eqn:lorenziBertoldiDiff}
					\sum_{i,j,h,k}\partial_{x_h,x_k}\Sigma_{ij}(x)M_{ij}M_{hk} \leq K_1\sum_{h,k =1}^n M_{hk}^2
				\end{equation}
				for any symmetric matrix $M = (M_{hk})_{h,k = 1}^n$ and any $x \in \R^n$.
			\end{enumerate}
		\end{assumption}
		Let us now explain why Assumption \ref{ass:polGrowth}, \ref{UniformEllip} and \ref{ass:lorenziBertoldi} implies that we can apply \cite[Theorem 7.1.5]{lorenzi2006analytical}. 
		Indeed, \cite[Hypothesis 6.1.1 (i)]{lorenzi2006analytical} follows from Assumption \ref{UniformEllip}. \cite[Hypothesis 6.1.1 (ii)]{lorenzi2006analytical} follows from \eqref{eqn:driftCondLorenzi}, with $\varphi = |x|^2$. \cite[Hypothesis 6.1.1 (iii)]{lorenzi2006analytical} follows since $\Sigma$ is bounded (Assumption \ref{ass:polGrowth}) and \eqref{eqn:driftCondLorenzi}. It is now left to show that Assumption \ref{ass:lorenziBertoldi} holds under the assumptions of Theorem \ref{thm:lorenziderest}.

		We start with Assumption \ref{ass:lorenziBertoldi} (\ref{ass:lorenziBertoldi2}). Notice that for any polynomial $R(x) \coloneqq c(1+|x|^m)$ with constants $c,m>0$, we have that $(R*\varphi^{\delta})(x) \leq R(x) + c\delta^m$. Indeed, using the fact  that $\varphi^{\delta}$ is compactly supported on the ball centred at $0$ with radius $\delta$ and that $\varphi^{\delta}$ integrates to $1$, we can write
		\begin{align}\label{eqn:convPoly}\begin{split}
				(R*\varphi^{\delta})(x) &= \int_{\R^n}c(1+|x-\bar x|^m) \varphi^\delta(\bar x) d\bar x \\
				&\leq c(1+|x|^m+\delta^m)\int_{\R^n}  \varphi^\delta(\bar x) d\bar x \\
				&\leq c(1+|x|^m)+c\delta^m.\end{split}
		\end{align}
		By linearity of the convolution we can generalise this to  polynomials with non-negative coefficients to find $(R*\varphi^{\delta})(x)\leq R(x)+ R(\delta\cdot e_1)$, where $e_1$ is the first element of the standard basis of $\R^n$.
		Convolving both sides of \eqref{eqn:lorenziderestAsumpHess} and bounding the convolution of the polynomial on the right hand side using the above, we obtain \eqref{eqn:lorenziderestHess1} with $r(x) = c_1(1+|x|^{m_1})+c_1\delta_0^{m_1}$; similarly \eqref{eqn:lorenziderestAsump1} implies that \eqref{eqn:lorenziderestAsumpdelta} holds with $\ell(x) =-c_2(1+|x|^{m_2})+c_2\delta_0^{m_2}$, for all $\delta \leq \delta_0$, with $\delta_0$ to be chosen later.
		
		
		Also,
		\begin{equation*} \sup_{1\leq|\gamma|_*\leq 2}\lvert \partial^\gamma_x \Sigma_{ij}(x) \rvert \leq K_\Sigma,\end{equation*}
		immediately by Assumption \ref{ass:polGrowth}.
		Using \eqref{eqn:lorenziderestAsump2} and \eqref{eqn:lorenziderestAsumpdelta} (as well as our definitions of $r(x),\ell(x)$ above), we have that for all $\bar L < L$,
		\begin{equation*}
			-C_2 \leq -C_1+  \bar L c_1\delta_0^{m_1} + c_2 \delta_0^{m_2}.
		\end{equation*}
		We pick $\delta_0 = \min\{ 1, \frac{C_1}{2c_2}\}$ and $\bar L = \min\{L, \frac{c_2}{2c_1}\}$ to obtain
		\begin{equation*}
			-C_2 \leq -C_1 + \frac{C_1}{4}+ \frac{C_1}{2} = -\frac{C_1}{4} <0,
		\end{equation*}
		so that \eqref{eqn:lorenziderestHess1}-\eqref{eqn:LBdriftCondAss} hold, independent of $\delta$, for $\delta < \delta_0$.
		By Assumption \ref{ass:polGrowth}, \eqref{eqn:lorenziBertoldiDiff} holds uniformly in $\delta$ because we are not smoothing the diffusion coefficient $\Sigma$, and hence Assumption \ref{ass:lorenziBertoldi} (\ref{ass:lorenziBertoldi2}) holds with constants that are independent of $\delta$. Now we show that in our setting, Assumption \ref{ass:lorenziBertoldi} (\ref{ass:lorenziBertoldi2}) implies Assumption \ref{ass:lorenziBertoldi} (\ref{ass:lorenziBertoldi1}). First, \eqref{eqn:LBdriftCondAss} implies that $\ell(x) < -C_2<0$; using this and \eqref{eqn:lorenziderestAsumpdelta} with $(x, \xi) = (zx,x)$ for $z \in [0,1]$, we have
		\begin{equation}\label{eqn:tobeintegrated}
			\begin{split}
				\frac{d}{dz}\sum_{j=1}^n \bar b^{\delta}_j(z x) x_j&\leq -C_2\lvert x\rvert^2.
			\end{split}
		\end{equation}
		Integrating \eqref{eqn:tobeintegrated} over $z \in [0,1]$, we have
		\begin{equation*}
			\begin{split}
				\sum_{j=1}^n \bar b^{\delta}_j(x) x_j &\leq \sum_{j=1}^n \bar b^{\delta}_j(0) x_j -C_2\lvert x\rvert^2 \\
				&\leq \lvert \bar b^{\delta}_j(0)\rvert\lvert x\rvert -C_2\lvert x\rvert^2 \\
				&\leq C|x| - C_2|x|^2 ,
			\end{split}
		\end{equation*}
		for some $C>0$ and all $0<\delta <1$,
		where we use Assumption \ref{ass:polGrowth} with the bound on a convolved polynomial, as in \eqref{eqn:convPoly}.
		This gives Assumption \ref{ass:lorenziBertoldi} (\ref{ass:lorenziBertoldi1}).
		
		Now, since the conditions do hold for each $\delta$, we obtain \eqref{eqn:lorenziBertoldi1} for the semigroup corresponding to the mollified coefficients. That is, there exist positive constants  $M,\omega, \delta_0>0$ (independent of $\delta$) such that for all $f\in C_b^2(\R^n)$ and $\delta<\delta_0$
		\begin{equation}\label{eqn:lorenziBertoldi1epsilon}
			\sum_{1\leq \lvert\gamma\rvert \leq 2}\lVert \partial_x^\gamma \bar{\cP}^{\delta}_tf(x)\rVert_{\infty}^2 \leq Me^{-\omega t} \sum_{1\leq \lvert\gamma\rvert \leq 2}\lVert \partial_x^\gamma f(x) \rVert_{\infty}.
		\end{equation}
		We observe that by Arzela-Ascoli, we can find a sequence $\delta_k \rightarrow 0$ such that $(\bar{\cP}^{\delta_k}_tf)(x)$ and its first two derivatives converge locally uniformly (in $t$ and $x$) to some $Q_t f(x)$ (and its first two derivatives, respectively) satisfying
		\begin{equation*}
			\sum_{1\leq \lvert\gamma\rvert \leq 2}\lVert \partial_x^\gamma Q_tf(x)\rVert_{\infty}^2 \leq Me^{-\omega t} \sum_{1\leq \lvert\gamma\rvert \leq 2}\lVert \partial_x^\gamma f(x) \rVert_{\infty},
		\end{equation*} with the same positive constants $M, \omega>0$ as \eqref{eqn:lorenziBertoldi1epsilon}. Indeed, we can apply Arzela-Ascoli to the set $\{\bar{\cP}^{\delta}_tf(x), \slowDer{i} \bar{\cP}^{\delta}_tf(x), \slowDerDer{i}{j}\bar{\cP}^{\delta}_tf(x)\}_{\delta}$ for all $i,j=1,\ldots n$, noting that this set is uniformly bounded (by \eqref{eqn:lorenziBertoldi1epsilon}) and equicontinuous, (by Schauder estimates; see \cite[Theorem C.1.4]{lorenzi2006analytical}). It remains to show that $Q_t f = \bar \cP_t f$. We leave the details to \cite[Theorem 6.1.9]{lorenzi2006analytical}, which uses that $\bar b^\delta$ converges locally uniformly to $\bar b$ to show $Q_t f$ solves the PDE \eqref{eq:barPDE}, and hence $Q_t f = \bar \cP_t f$.	
	\end{proof}
	Under Assumptions \ref{ass:polGrowth}-\ref{ass:avgSGcond} the conditions of Theorem \ref{thm:lorenziderest} are satisfied. Hence we get the following result.

	\begin{proposition}\label{prop:avgderivativeest}
		Let $\{\bar{\cP}_{t} \}_{t\geq 0}$ denote the semigroup described by \eqref{eqn:avgedSGdef}, and assume that Assumptions \ref{ass:polGrowth}-\ref{ass:avgSGcond} hold.
		Then there exists constants $\tilde{K},C>0$ such that for any $\psi\in C_b^2(\R^d)$ we have
		\begin{equation*}
			\sup_{1 \leq \lvert\gamma\rvert_* \leq 2}\lVert \partial^{\gamma}_{x} \bar{\cP}_{t}\psi(x) \rVert_\infty^2  \leq \tilde{K}e^{-2C t}\sum_{1 \leq \lvert\gamma\rvert_* \leq 2}\lVert \partial^{\gamma}_{x} \psi(x) \rVert_\infty^2.
		\end{equation*}
	\end{proposition}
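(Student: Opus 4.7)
The plan is to deduce Proposition \ref{prop:avgderivativeest} directly from Theorem \ref{thm:lorenziderest}, so the work reduces to verifying the hypotheses of that theorem for the averaged operator $\bar{\cL}$. Assumptions \ref{ass:polGrowth} and \ref{UniformEllip} are given, so it remains to check the polynomial bound on $\partial_x^{\gamma}\bar b$ for $|\gamma|_*=2$, the boundedness of $\partial_x^{\gamma}\Sigma$ for $|\gamma|_*=1,2$, the quadratic-form estimate \eqref{eqn:lorenziderestAsump1}, and the supremum condition \eqref{eqn:lorenziderestAsump2}.

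The bounds on $\partial_x^\gamma \Sigma$ follow immediately from \ref{ass:polGrowthDiff} (since $\Sigma_{ij}\in\funcSpace{0}{0}$), and the polynomial bound \eqref{eqn:lorenziderestAsumpHess} on the Hessian of $\bar b$ is exactly what \eqref{intAvgDerDerInv} in Proposition \ref{lemma:IntAvgDerDer} delivers when applied with $\phi=b_i$ (which lies in $\funcSpace{m^b_x}{m^b_y}$ by \ref{ass:polGrowthDrift}); this yields $R(x)=C\Vnorm{b}{4,m^b_x,m^b_y}(1+|x|^{4m^b_x})$, picking up the exponents $|x|^{\yfourder},|x|^{\ytwoderyonex},|x|^{\ytwoderx}$ that appear inside Assumption \ref{ass:avgSGcond}.

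The heart of the argument is verifying \eqref{eqn:lorenziderestAsump1}. First I would invoke the representation formula \eqref{eqn:repInvMeas} with $\phi=b_j$ to write
\begin{equation*}
\partial_{x_i}\bar b_j(x) \;=\; \int_{\R^d}\partial_{x_i}b_j(x,y)\,\mu^x(dy) \;+\; \int_0^{\infty}\mu^x\!\left(\tfrac{\partial\cL^x}{\partial x_i}\,P_s^x b_j^x\right)ds.
\end{equation*}
Contracting with $\xi_i\xi_j$, the first term is controlled by averaging \eqref{eq:avgdriftcondition} against $\mu^x$, which produces the negative principal part $-\zeta(1+|x|^{\yfourder}+|x|^{\ytwoderyonex}+|x|^{\ytwoderx})|\xi|^2$ together with the bookkeeping terms involving $\Vseminorm{b}{2,\ytwodery,\ytwoderyy}D_b(x)$ and $K_\Sigma^2 n^3/(4\lambda_-)$. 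For the second term, I would expand $\tfrac{\partial\cL^x}{\partial x_i}$ using \eqref{threestar}, estimate the resulting $y$-derivatives of $P_s^x b_j^x$ by the SES bound \eqref{eqn:SGdecayConc2} of Proposition \ref{prop:derivativeest}, integrate against $\mu^x$ via the moment bounds \eqref{eqn:momBoundsYinv} of Lemma \ref{prop:momentBounds}, and finally integrate the factor $e^{-\kappa s}$ in $s$ over $(0,\infty)$; the combinatorial constant $D_0=\max\{(\zeta_1\lambda_-)^{-1/2},1\}(d+\sqrt{\zeta_1\lambda_-}\,d^2)$ in Assumption \ref{ass:avgSGcond} is precisely what is needed to absorb the prefactors from this calculation, and the factor $D_b(x)$ bundles the $x$-growth coming from $\partial_{x_i} g$, $\partial_{x_i} A$, and the $\mu^x$-moments of $|y|^{m^g_y}$, $|y|^{\ytwoderyy}$, etc. Together these two contributions yield \eqref{eqn:lorenziderestAsump1} with $\tilde R(x)$ given by the right-hand side of \eqref{eq:avgdriftcondition}, and then the uniform-in-$x$ condition \eqref{eqn:lorenziderestAsump2} is immediate since the negative part $-\zeta(1+|x|^{\yfourder}+\dots)$ dominates $L R(x)+K_\Sigma^2 n^3/(4\lambda_-)$ for $L$ chosen as in Theorem \ref{thm:lorenziderest}.

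With all hypotheses in place, Theorem \ref{thm:lorenziderest} delivers constants $M,\omega>0$ such that $\sum_{1\leq|\gamma|_*\leq 2}\|\partial_x^\gamma\bar\cP_t f\|_\infty^2\leq M e^{-\omega t}\sum_{1\leq|\gamma|_*\leq 2}\|\partial_x^\gamma f\|_\infty$, which is the claim with $\tilde K=M$ and $C=\omega/2$. The main obstacle I foresee is the bookkeeping in Step 3: one must carefully align the exponents appearing in the SES estimate for $P_s^x b_j^x$ (namely $m^g_y$ combined with $\ytwoderyy$), the moment bounds $C'_k/r'_k$ from Lemma \ref{prop:momentBounds}, and the mixed-derivative polynomial bounds from \ref{ass:polGrowth} so that the resulting $x$-polynomial in the second term of the representation formula is exactly dominated by the $D_0\Vseminorm{b}{2,\ytwodery,\ytwoderyy}D_b(x)|\xi|^2$ quantity in Assumption \ref{ass:avgSGcond}; the assumption has been precisely tailored for this estimate, so the conceptual work is small but the verification is lengthy.
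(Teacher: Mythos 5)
Your overall route — deduce the result from Theorem \ref{thm:lorenziderest}, split $\partial_{x_i}\bar b_j$ via \eqref{eqn:repInvMeas} into a $\mu^x$-average plus a correction, control the average by Assumption \ref{ass:avgSGcond} and the correction by the SES estimate \eqref{eqn:SGdecayConc2} together with the moment bounds of Lemma \ref{prop:momentBounds}, integrate the exponential in $s$ — is exactly what the paper does, and your reading of the roles of $D_0$ and $D_b(x)$ in Assumption \ref{ass:avgSGcond} as the precise quantities needed to absorb the correction term is correct.

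However, your derivation of the Hessian bound $R(x)$ has a real gap. You propose to quote \eqref{intAvgDerDerInv} with $\phi=b_i$ to obtain $R(x)=C\Vnorm{b}{4,\pow{b}{x},\pow{b}{y}}(1+|x|^{4\pow{b}{x}})$, and then assert that this ``picks up the exponents $\yfourder,\ytwoderyonex,\ytwoderx$.'' These are not the same quantities: since $b_i\in\funcSpace{\pow{b}{x}}{\pow{b}{y}}$ forces $\yfourder,\ytwoderyonex,\ytwoderx\leq\pow{b}{x}$, the polynomial $1+|x|^{4\pow{b}{x}}$ generically dominates $1+|x|^{\yfourder}+|x|^{\ytwoderyonex}+|x|^{\ytwoderx}$, not the other way around. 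But Assumption \ref{ass:avgSGcond} supplies a dissipative margin of order only $\zeta(1+|x|^{\yfourder}+|x|^{\ytwoderyonex}+|x|^{\ytwoderx})$, so with your crude $R(x)$ the supremum in \eqref{eqn:lorenziderestAsump2} is $+\infty$ whenever $4\pow{b}{x}>\max\{\yfourder,\ytwoderyonex,\ytwoderx\}$ — which is exactly the situation in Example \ref{ex:nonLip}, where $\pow{b}{x}=3$ while $\yfourder=\ytwoderyonex=0$, $\ytwoderx=1$. The paper avoids this by not treating \eqref{intAvgDerDerInv} as a black box: it returns to the representation formula \eqref{eqn:invMeasDerDer}, expands the three contributions separately, and tracks the individual refined exponents $\yfourder$, $\ytwoderyonex$, $\ytwoderx$, $\ytwodery$ through the term-by-term bounds listed after Assumption \ref{ass:SGcond}, arriving at the sharper $R(x)=\hat C(1+|x|^{\yfourder}+|x|^{\ytwoderyonex}+|x|^{\ytwoderx})$ which \eqref{eqn:lorenziderestAsump2} can absorb. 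You need to carry out that more careful re-derivation rather than invoke the seminorm-based estimate \eqref{intAvgDerDerInv}, which is uniformly valid but loses precisely the exponent information that Assumption \ref{ass:avgSGcond} is tailored to.
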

	\begin{proof}
		The proof of this result can be found in Appendix \ref{app:proofsstrongergodicity}.
	\end{proof}
	Below, we gather another example for which our main result, Theorem \ref{mainthm}, holds, and in particular for which the semigroup derivative estimates concluded in Proposition \ref{prop:avgderivativeest} hold. We will use this example to illustrate the discussion in Note \ref{rem:xavgderivs}.
	
	\begin{example}\label{ex:Lip}
		Let us consider following the system
		\begin{align*}
			dX_{t}^{\epsilon,x,y} & =  (-X_t^\epsilon +b_0( Y_{t}^{\epsilon,x,y}) )dt + \sqrt{2} \, dW_t \\ 
			dY_{t}^{\epsilon,x,y} & = \frac{1}{\epsilon}(-Y_t^\epsilon+g_0(X_{t}^{\epsilon,x,y})) dt + \frac{1}{\sqrt{\epsilon}} \sqrt{2}\, dB_t
		\end{align*}
		for some $b_0,g_0\in C_b^\infty(\R)$. Though we wish to emphasise the verification of Assumption \ref{ass:avgSGcond} in this example, we first show that Assumptions \ref{ass:polGrowth}-\ref{ass:SGcond} hold. It is immediate to see that Assumption \ref{ass:polGrowth} holds with $m^b_x=1, m^b_y=0, \pow{g}{y}=1$ and that Assumption \ref{UniformEllip} holds with $\lambda_-=\lambda_+=1$. As commented in Note \ref{rem:commentonassumptions} in order to show that Assumption \ref{DriftAssump} holds it is sufficient to consider $k=1$. For $k=1$, \eqref{eq:driftassump} holds with $C_1=\lVert g_0\rVert_\infty^2/2, r_1=1/2$. Similarly, Assumption \ref{DriftAssumpS} holds with $\tilde{C}_1=\lVert b_0\rVert_\infty^2/2, \tilde{r}_1=1/2$. Assumption \ref{ass:SGcond} holds with $\zeta_1=\delta$ and $\kappa=2$ for any $\delta>0$. It remains to verify Assumption \ref{ass:avgSGcond}. Observe that $\yfourder=\yfourdery=
		\ytwoderyonex= \ytwoderyoney=\ytwoderx= \ytwoderxy= \ytwodery = \ytwoderyy = \onederxg = 0$, $D_0 = \sqrt{6}$ (taking $\delta = \lambda_-$), and $D_b= \lVert \partial_x g_0 \rVert_{\infty}$.
		Therefore, Assumption \ref{ass:avgSGcond} holds provided $$\max\{\lVert \partial_y b_0\lVert_{\infty},\lVert \partial_{yy} b_0\lVert_{\infty}\}<\frac{1}{\sqrt{6}\lVert \partial_x g_0 \rVert_{\infty}}.$$
		
		It may not be immediately obvious that there should be such a relation between the $y$ derivatives of the drift of the slow equation  and the $x$ derivative of the drift of the fast equation, so let us motivate it with a specific choice of $b_0$ and $g_0$. As noted in Note \ref{rem:xavgderivs}, it is in some cases possible to obtain an explicit expression for $\bar b$, allowing one to verify a condition of the type Assumption \ref{ass:SGcond}. If we set $g_0(x)=\sin(x)$ and $b_0(y)=-r \cos(y)$, for some $r \in \R$ then the averaged equation for this setting becomes
		\begin{equation*}
			d\bar{X}_{t}^{x}  =  -\bar{X}_{t}^{x} -r\sqrt{\frac{1}{e}} \cos(\sin(\bar{X}_{t}^{x})) dt + \sqrt{2} \, dW_t.
		\end{equation*}
		From the above, this system would satisfy Assumption \ref{ass:avgSGcond} when $|r| < 1/\sqrt{6}$.
		For $|r| <3.5$ the averaged coefficient $\bar{b}(x)$ is monotonic. In particular it would satisfy a drift condition of the form \ref{ass:SGcond} with $g$ replaced by $\bar{b}$ and taking $x$ derivatives instead of $y$ derivatives. Therefore we do have derivative estimates of the form given by the conclusion of Proposition \ref{prop:avgderivativeest}. Hence we see, in both cases, that $r$ is limited to a bounded set of values, but that our Assumption \ref{ass:avgSGcond} is more restrictive. For $1/\sqrt{6}<|r|<3.5$, our Assumption $\ref{ass:avgSGcond}$ does not hold, but we can still obtain derivative estimates of the form given by the conclusion of Proposition \ref{prop:avgderivativeest}, since we have a usable expression for $\bar b$. This partly motivates the discussion in Note \ref{rem:xavgderivs}.
	\end{example}


	\subsection{Application of Section \ref{sec:psneqn} to Averaging}\label{sec:appPsnEqn}
	
	Here we use the results of Section \ref{sec:psneqn} to obtain bounds on $f_t^1$, defined by \eqref{eqn:f_t^1}, which are required for the proof of Theorem \ref{mainthm}. 
	We recall the notation of Section \ref{sec:mainresults} and Section \ref{sec:psneqn} where we refer to the solution of \eqref{poisprobgeneral} as $u_\phi$ for $\phi\in \funcSpace{\pow{}{x}}{\pow{}{y}}$. 
	
	
	\begin{proposition}\label{psnEqDer}
		Let Assumptions \ref{ass:polGrowth}-\ref{ass:avgSGcond} hold. Then there exists $C, \omega > 0$ such that, for all $t>0$, $x\in \R^n$, $y\in \R^d$, we have
		\begin{align}\label{eqn:ft1Bound}
			|f_t^1(x,y)| &\leq C\lVert f\rVert_{C_b^2}(\Vnorm{b}{4,\pow{b}{x},\pow{b}{y}})e^{-\omega t}(1+ |y|^{m^{b}_{y}}+|x|^{4\pow{b}{x}})\\
			\label{eqn:1cor}
			\left|\sum^n_{i = 1} \slowDer{i} f_t^1(x,y)\right| &\leq C\lVert f\rVert_{C_b^2}(\Vnorm{b}{4,\pow{b}{x},\pow{b}{y}})e^{-\omega t}(1+ |y|^{M^{g, b}_{y}}+|x|^{4\pow{b}{x}}) \\
			\label{eqn:2cor}
			\left|\sum^n_{i,j = 1} \slowDerDer{i}{j} f_t^1(x,y)\right| &\leq C\lVert f\rVert_{C_b^2}(\Vnorm{b}{4,\pow{b}{x},\pow{b}{y}})e^{-\omega t}(1+ |y|^{M^{g, b}_{y}+\pow{g}{y}}+|x|^{4\pow{b}{x}}),
		\end{align}
		where $M^{g, b}_{y}$ is defined in Theorem \ref{mainthm}.
		Furthermore,
		\begin{equation}\label{eqn:Ls-ds}
			\left| (\mathcal{L}_S - \partial_t)f_t^1(x,y) \right| \leq Ce^{-\omega t} \lVert f\rVert_{C_b^2}(\Vnorm{b}{4,\pow{b}{x},\pow{b}{y}}+\Vnorm{\Sigma}{4,0,0})(1+ |y|^{M^{g, b}_{y}+\pow{g}{y}}+|x|^{4\pow{b}{x}}) \,.
		\end{equation}
		
	\end{proposition}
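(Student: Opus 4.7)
The plan is to start from the explicit representation \eqref{eqn:f_t^1},
\begin{equation*}
f_t^1(x,y) = -\sum_{k=1}^n u_{b_k}(x,y)\,\slowDer{k}\bar{\cP}_t f(x),
\end{equation*}
where $u_{b_k}$ solves the Poisson equation \eqref{poisprobgeneral} with right-hand side $b_k-\bar b_k$, and to derive each stated bound by Leibniz-rule differentiation combined with two inputs. The first input is the Poisson-equation estimates of Theorem \ref{mainthmpois} applied to $\phi=b_k\in\funcSpace{\pow{b}{x}}{\pow{b}{y}}$, which control $u_{b_k}$, $\slowDer{i}u_{b_k}$ and $\slowDerDer{i}{j}u_{b_k}$ with the respective $y$-powers $\pow{b}{y}$, $M^{g,b}_y$, $M^{g,b}_y+\pow{g}{y}$ and the corresponding $x$-powers $\pow{b}{x}$, $2\pow{b}{x}$, $4\pow{b}{x}$. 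The second input is the SES bound of Proposition \ref{prop:avgderivativeest}, namely $\|\partial_x^\gamma\bar{\cP}_t f\|_\infty\leq C\|f\|_{C_b^2}e^{-\omega t}$ for $1\leq|\gamma|_*\leq 2$. Bounds on $\bar b$ and its first two $x$-derivatives, which are needed once $\cL_S$ is applied to $f_t^1$, come from \eqref{eq:intAvgDerInv}--\eqref{intAvgDerDerInv}.

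The estimates \eqref{eqn:ft1Bound}, \eqref{eqn:1cor} and \eqref{eqn:2cor} follow by direct differentiation and multiplication of factors. For \eqref{eqn:ft1Bound}, multiply $|u_{b_k}|\leq C\Vnorm{b}{4,\pow{b}{x},\pow{b}{y}}(1+|x|^{\pow{b}{x}}+|y|^{\pow{b}{y}})$ by the SES bound on $\slowDer{k}\bar{\cP}_t f$; intermediate $x$-powers $|x|^m$ with $m\leq 4\pow{b}{x}$ are absorbed into $1+|x|^{4\pow{b}{x}}$ by crude enlargement. For \eqref{eqn:1cor}, one Leibniz step yields $(\slowDer{i}u_{b_k})(\slowDer{k}\bar{\cP}_t f)$ (bound via \eqref{eq:intAvgDer}) plus $u_{b_k}(\slowDerDer{i}{k}\bar{\cP}_t f)$ (bound via \eqref{eqn:IntAvg2}), the former contributing the dominant $y$-power $M^{g,b}_y$. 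For \eqref{eqn:2cor}, a second Leibniz step introduces in particular $(\slowDerDer{i}{j}u_{b_k})(\slowDer{k}\bar{\cP}_t f)$, whose bound \eqref{eqn:intAvgDerDer} contributes the dominant $y$-power $M^{g,b}_y+\pow{g}{y}$.

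The technical heart of the proposition is \eqref{eqn:Ls-ds}, and the main obstacle is the a priori appearance of third-order $x$-derivatives of $\bar{\cP}_t f$, which are not available from Proposition \ref{prop:avgderivativeest}. Expanding $\cL_S f_t^1$ via \eqref{LFast and LSlow} produces in particular the third-order contribution
\begin{equation*}
-\sum_{i,k,\ell}\Sigma_{k\ell}(x)\,u_{b_i}(x,y)\,\slowDerDerDer{i}{k}{\ell}\bar{\cP}_t f(x).
\end{equation*}
The resolution is that $\partial_t f_t^1=-\sum_i u_{b_i}\,\slowDer{i}(\bar{\cL}\bar{\cP}_t f)$, and in the non-fully coupled regime $\bar{\cL}$ shares exactly the same diffusion $\Sigma(x)$ as $\cL_S$ (compare \eqref{LFast and LSlow} and \eqref{bar L}); hence $\slowDer{i}\bar{\cL}\bar{\cP}_t f$ contains the identical third-derivative term with the opposite sign, and the two cancel in $(\cL_S-\partial_t)f_t^1$, as anticipated in Note \ref{Rem:poisson for averaging}. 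After this cancellation every surviving term couples $u_{b_k}$ or its $x$-derivatives of order at most $2$ (controlled via Theorem \ref{mainthmpois}) with $x$-derivatives of $\bar{\cP}_t f$ of order at most $2$ (controlled via Proposition \ref{prop:avgderivativeest}), multiplied by bounded or polynomial factors from $b$, $\Sigma$, $\bar b$ or their $x$-derivatives (from Assumption \ref{ass:polGrowth} and \eqref{eq:intAvgDerInv}--\eqref{intAvgDerDerInv}); aggregating the worst $y$- and $x$-growths yields the stated bound $(1+|y|^{M^{g,b}_y+\pow{g}{y}}+|x|^{4\pow{b}{x}})e^{-\omega t}$.
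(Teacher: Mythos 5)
Your proposal is correct and takes essentially the same route as the paper's proof: Leibniz differentiation of \eqref{eqn:f_t^1} together with the Poisson-equation estimates of Theorem \ref{mainthmpois} applied to $\phi=b_i$ and the SES bound of Proposition \ref{prop:avgderivativeest}, and for \eqref{eqn:Ls-ds} the recognition that the third-order $x$-derivatives of $\bar{\cP}_t f$ cancel between $\cL_S f_t^1$ and $\partial_t f_t^1$ because $\cL_S$ and $\bar{\cL}$ share the diffusion $\Sigma(x)$ in the non-fully-coupled regime. The paper performs the cancellation implicitly by writing out the full expansion of $(\cL_S-\partial_s)f_s^1$ (using \eqref{eq:barPDE} and \eqref{eqn:DerAvg}), whereas you flag the mechanism explicitly as anticipated in Note \ref{Rem:poisson for averaging}; the substance is identical.
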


	\begin{proof}[Proof of Proposition \ref{psnEqDer}]
		By \eqref{eqn:IntAvg2}, \eqref{eq:intAvgDer} and \eqref{eqn:intAvgDerDer} with $\phi=b_i$ we have for each $i\in \{1,\ldots,n\}$
		\begin{align}
			\left| u_{b_i} \right| &\leq C\Vnorm{b_i}{0,\pow{b}{x},\pow{b}{y}}(1+ |y|^{\pow{b}{y}}+|x|^{\pow{b}{x}})\label{eqn:invL}\\
			\left|\slowDer{i} u_{b_i} \right| &\leq C\Vnorm{b_i}{2,\pow{b}{x},\pow{b}{y}}(1+ |y|^{M^{g, \phi}_{y}}+|x|^{2\pow{b}{x}})\label{eqn:invLder}\\
			\left|\slowDerDer{i}{j} u_{b_i} \right| &\leq C\Vnorm{b_i}{4,\pow{b}{x},\pow{b}{y}}(1+ |y|^{M^{g, \phi}_{y}+\pow{g}{y}}+|x|^{4\pow{b}{x}})\label{eqn:invLderDer}.
		\end{align}
		We use Proposition \ref{prop:avgderivativeest} along with \eqref{eqn:invL}-\eqref{eqn:invLderDer} to conclude \eqref{eqn:ft1Bound}-\eqref{eqn:2cor}.
		Let us now consider \eqref{eqn:Ls-ds}.	We differentiate (\ref{eqn:f_t^1}) with respect to time to get
		\begin{align*}
			\partial_t f_t^1(x,y) = &-\sum^n_{i=1} u_{ b_i}(x,y) \partial_t\partial_{x_i} \widebar{\mathcal{P}}_t f(x).
		\end{align*}
		Hence, by \eqref{eq:barPDE}
		\begin{equation}\label{eqn:ft1partialt}\begin{split}
				\partial_t  f_t^1(x,y) =  -\sum^n_{i=1} u_{b_i}(x,y) \partial_{x_i}\widebar{\cL} \widebar{\mathcal{P}}_t f(x).
			\end{split}
		\end{equation}
		The term $u_{b_i}$ is bounded by \eqref{eqn:IntAvg2}. So we turn our attention to $ \partial_{x_i}\widebar{\cL} \widebar{\mathcal{P}}_t f(x)$:
		\begin{align}\label{eqn:DerAvg}
			\begin{split}
				\slowDer{k} \left(\widebar{\cL}\widebar{\mathcal{P}}_t f(x)\right) &\stackrel{\eqref{bar L}}{=} \sum_{i=1}^n \left(\slowDer{k}\widebar{b}_i(x)\right) \slowDer{i} \widebar{\mathcal{P}}_t f(x) + \sum_{i,j = 1}^n \left(\slowDer{k}\widebar{\Sigma}_{ij}(x)\right) \slowDerDer{i}{j}\widebar{\mathcal{P}}_t f(x) + \bar{\cL} \slowDer{k}\widebar{\mathcal{P}}_t f(x).
			\end{split}
		\end{align}
		
		Further, using \eqref{bar L}, \eqref{LFast and LSlow},  and \eqref{eqn:ft1partialt}-\eqref{eqn:DerAvg} we can write
		\begin{align*}
			&(\mathcal{L}_S - \partial_s)f_s^1(x,y) = \\
			&-\sum^n_{i,j=1}b_i(x,y)\partial_{x_i}u_{ b_j}(x,y)\slowDer{j} \widebar{\mathcal{P}}_s f(x)  - \sum^n_{i,j=1}\left(b_i(x,y)-\bar{b}_i(x)\right)u_{ b_j}(x,y)\slowDerDer{i}{j} \widebar{\mathcal{P}}_s f(x)\\
			&-\sum^{n}_{i,j,k = 1}\Sigma_{ij}(x)\slowDerDer{i}{j}u_{b_k}(x,y) \slowDer{k} \widebar{\mathcal{P}}_s f(x) -2\sum^{n}_{i,j,k = 1}\Sigma_{ij}(x)\slowDer{i}u_{ b_k}(x,y) \slowDerDer{j}{k} \widebar{\mathcal{P}}_s f(x) \\
			&+ \sum^n_{i,j=1}\slowDer{i}\bar{b}_j(x)u_{b_i}(x,y)\slowDer{j} \widebar{\mathcal{P}}_s f(x) +\sum^n_{i,j,k=1}\slowDer{i}\Sigma_{jk}(x)u_{ b_i}(x,y)\slowDerDer{j}{k} \widebar{\mathcal{P}}_s f(x) ,
		\end{align*}
		and using Proposition \ref{prop:avgderivativeest}, \eqref{eq:intAvgDerInv}, \eqref{eqn:invL}, \eqref{eqn:invLder}, \eqref{eqn:invLderDer} and Assumption \ref{ass:polGrowth} (\ref{ass:polGrowthDrift},\ref{ass:polGrowthDiff})
		\begin{equation*}
			\left| (\mathcal{L}_S - \partial_s)f_s^1(x,y) \right| \leq Ce^{-\omega s}\lVert f\rVert_{C_b^2}\Vnorm{b}{4,\pow{b}{x},\pow{b}{y}}(1+\Vnorm{\Sigma}{4,0,0}) (1+ |y|^{M^{g, b}_{y}+\pow{g}{y}}+|x|^{4\pow{b}{x}}) \,.
		\end{equation*}
	\end{proof}

	\section{Numerics and Examples}\label{sec:numerics}
	In this section we provide numerical evidence for the validity of Theorem \ref{mainthm} and illustrate the applicability of the result. The three systems that we numerically solve in this section have different drift terms for the fast equation. The first two differ in how  they are coupled, but both have monotonic drifts (corresponding to a convex potential), and the third does not. The aim of this is to produce a set of numerical results that fit the conditions of Theorem $\ref{mainthm}$, as well as a set of numerical results that can motivate or support future work, being outside the conditions of Theorem $\ref{mainthm}$. The second example will illustrate that we believe the semigroup derivative estimates to be an important part in obtaining a UiT bound. We begin with an example where $g$ in (\ref{fast}) is monotonic.
		\begin{figure}[ht]
		\centering
		\includegraphics[width = \textwidth, keepaspectratio]{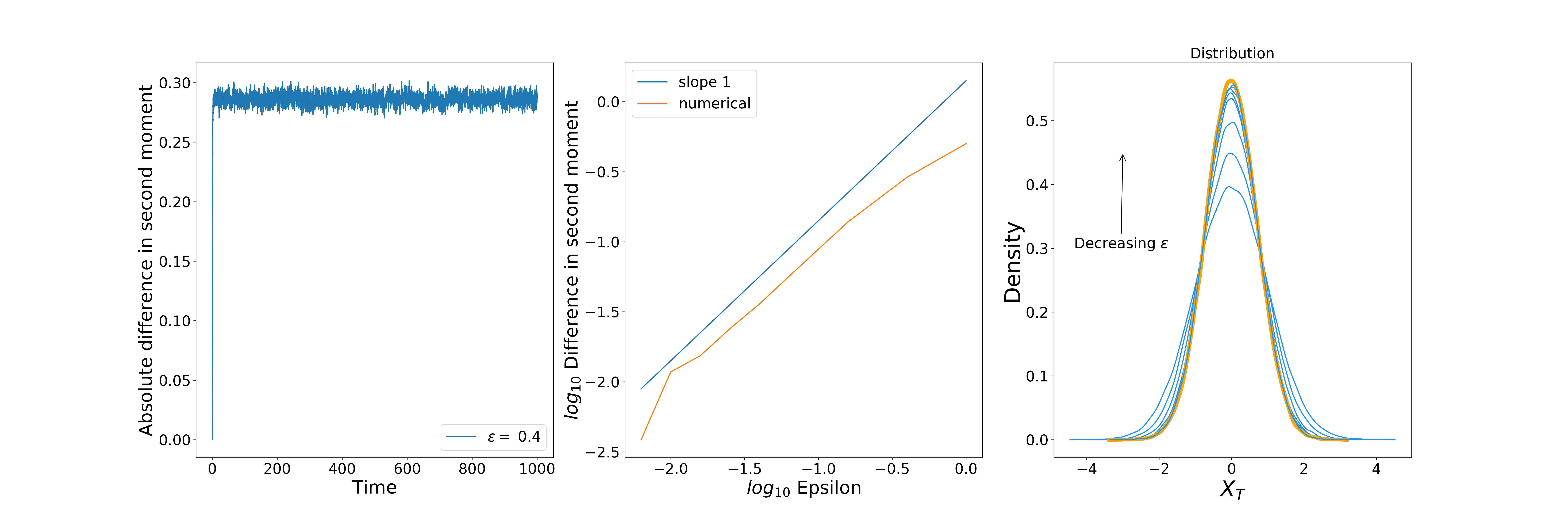}
		\caption{\emph{Left.} Plot of $\left|\mathbb{E}\left| X_{t}^{\epsilon,x,y} \right|^2-\mathbb{E}\left| \bar X_{t}^{x} \right|^2\right|$, plotted up until time $t=10^3$, with $\epsilon = 0.4$. \emph{Middle.} The maximum difference (over time) in second moment, plotted against $\epsilon$. Plotted alongside a line of slope $1$ for comparison, which is what we expect from Theorem \ref{mainthm}. \emph{Right.} The distributions of $X_{T}^{\epsilon,x,y}$ for different $\epsilon$ values. Plotted alongside the distribution of $\bar{X}_{T}^{x}$ (in yellow), which is independent of $\epsilon$. These plots were produced using the behaviour of $10^5$ particles all initialised at $0$, with $\epsilon = 10^{-m}  $ and $\Delta t = 10^{-m-3}$ for  values $m \in \{0,0.4,0.8,1.2,1.4,1.6,1.8,2,2.2 \}$. All realisations were run for $N=10^6$ steps (hence run until $T=10^{-m+3}$) and were initialised at $X_{0}^{\epsilon,x,y}=\bar X_{0}^{x}=Y_{0}^{\epsilon,x,y}=0$.}
		\label{fig:Conv}
	\end{figure}
	\begin{example}[Convex] \label{ex:conv}
		Consider the following coupled slow-fast system
		\begin{align}
			dX_{t}^{\epsilon,x,y} & =  -\left[ Y_{t}^{\epsilon,x,y} + X_{t}^{\epsilon,x,y}\right] dt +  \, dW_t, \label{slowConv} \\ 
			dY_{t}^{\epsilon,x,y} & = -\frac{1}{\epsilon}\left[ Y_{t}^{\epsilon,x,y} \right] dt + \frac{\sqrt{2}}{\sqrt{\epsilon}} \, dB_t,  \label{fastConv} 
		\end{align}
		and the corresponding averaged equation
		\begin{align}
			\label{avgConv}
			d \bar{X}_{t}^{x} = -\bar{X}_{t}^{x} dt + d\tilde{B}_t,
		\end{align}
		where $W_t,B_t,\tilde{B}_t$ are all Brownian motions independent to each other. Note that the above system does not satisfy Assumption \ref{DriftAssumpS}, since the constant $\tilde{C}$ cannot be independent of $y$. However, what we do throughout the following is replace the drift of $X_{t}^{\epsilon,x,y}$ with $b(x,y) = -\left[ y\mathbb{I}_{\left|y\right|<M}+\varphi(y)\mathbb{I}_{\left|y\right|>M} + x\right]$, where $\varphi(y)$ is a smooth function with $\varphi(M)=M, \varphi(-M)=-M, \varphi'( M)=1$ and where $\varphi(y)=\varphi(-y)=0$ for all $y >M+1$. In practice, this makes no difference to the numerics because, for large $M$, say $M=100$, the dynamics (as it is Gaussian) reaches this point with such low probability that it is never seen in the discretisation.
		
		We numerically solve both the coupled system (\ref{slowConv}-\ref{fastConv}) and (\ref{avgConv}) using the Euler-Maruyama scheme (see, e.g, \cite{numericsForSDEs}), with a time step $\Delta t \ll \epsilon$, to ensure negligible discretisation error in the time interval (the exact $\Delta t$ will be given later). Halving the time step resulted in no observable change of results. There are two approximations being made here-- one being the method of averaging, and one being discretising the equations to produce numerical results. Since we are testing a result that is UiT, it is important that the discretisation of the system that we use to solve it is itself UiT.  For this, we refer the reader to \cite{uitEuler}. Using the results of \cite{uitEuler}, one can see that both the coupled system and the averaged equation are approximated uniformly in time by the Euler-Maruyama scheme. In Figure \ref{fig:Conv}, we show that the method of averaging produces a UiT approximation to the system (\ref{slowConv})-(\ref{fastConv}). The left hand plot indicates that the difference between the second moment of $\bar{X}_{t}^{x}$ and the second moment of $X_{t}^{\epsilon,x,y}$ is bounded uniformly in time. \footnote{We choose the second moment as the comparison statistic because of the Gaussian nature of the considered processes, and because the first moment is trivial.} The left hand plot is for one value of $\epsilon$, and is indeed not surprising on its own for a system in which the second moment is bounded uniformly in time itself. The middle plot indicates the preservation of the convergence in $\epsilon$. Here, for each value of $\epsilon$, maximum of the differences in second moment over the time. We have, then, that one statistic of the approximation induced by the method of averaging stays uniformly close in time to the coupled system seen in Example \ref{ex:conv}, and that this closeness increases as $\epsilon$ decreases. In theory, the $f$ from Theorem \ref{mainthm} needed to produce the second moment (i.e the quadratic $f(x) = x^2$) is unbounded, meaning that the results in this paper, strictly speaking, do not apply. In practice, however, this does not particularly pose a problem, since we discretise. Indeed, because we discretise, the domain sampled is finite meaning the function is bounded. In the right hand side plot one can see that this does generalise to the distribution, giving indication that for any bounded statistic $f$, the bound in Theorem \ref{mainthm} would hold. We conclude that these results are in agreement with the theory developed in this paper.
	\end{example}

		Now we introduce an example to illustrate the fact that not all systems for which convergence of the method of averaging holds exhibit uniform in time convergence.
  
	\begin{example}\label{ex:decDerCE}
		\begin{align}
			dX_{t}^{\epsilon,x,y} & =  -\left[ Y_{t}^{\epsilon,x,y} + X_{t}^{\epsilon,x,y} \right] dt +  \, dW_t \label{slowDecDer} \\ 
			dY_{t}^{\epsilon,x,y} & = -\frac{1}{\epsilon}\left[ Y_{t}^{\epsilon,x,y} + X_{t}^{\epsilon,x,y} \right] dt + \frac{\sqrt{1} }{\sqrt{\epsilon}}\, dB_t  \label{fastDecDer} 
		\end{align}
		which yields
		\begin{align}
			\label{avgDecDer}
			d \bar{X}_{t}^{x} = d\tilde{B}_t,
		\end{align}
		where $W_t,B_t,\tilde{B}_t$ are all Brownian motions independent of each other.
		
		 The system \eqref{slowDecDer}-\eqref{fastDecDer} is fairly simple. Indeed, one can solve the coupled system since it is a multivariate Ornstein-Uhlenbeck process (see for e.g. \cite[Section 4.4.6]{gardiner1985handbook}): the law of $X_{t}^{\epsilon,0,0}$ where the system is initialised at $\left( X_{0}^{\epsilon,x,y}, Y_{0}^{\epsilon,x,y}\right) = (0,0)$ is that of a Gaussian random variable with the following mean and variance:
   \begin{equation}\label{eqn:exactdecder}
       \mathbb{E} \left[ X_{t}^{\epsilon,0,0} \right] = 0, \quad \mathbb{E} \left[ \left(X_{t}^{\epsilon,0,0}\right)^2 \right] = \frac{\epsilon^2 \left(1-e^{-\frac{2(1+\epsilon)t}{\epsilon}} \right)+2(1+\epsilon)t}{2(1+\epsilon)^2}.
   \end{equation}
   Of course, we can also exactly solve the averaged equation \eqref{avgDecDer}, which, when initialised at $\bar{X}_{0}^{x} =0$, has mean $0$ and variance $t$. We can see that, for every $\epsilon >0$, the variance of \eqref{avgDecDer} diverges from the variance of \eqref{slowDecDer}. We plot these functions for a couple of different values of $\epsilon$ in Figure \ref{fig:decderplotvars}. Hence, one cannot expect uniform in time convergence to hold.
   \begin{figure}[ht]
		\centering
		\includegraphics[width = 0.6\textwidth, keepaspectratio]{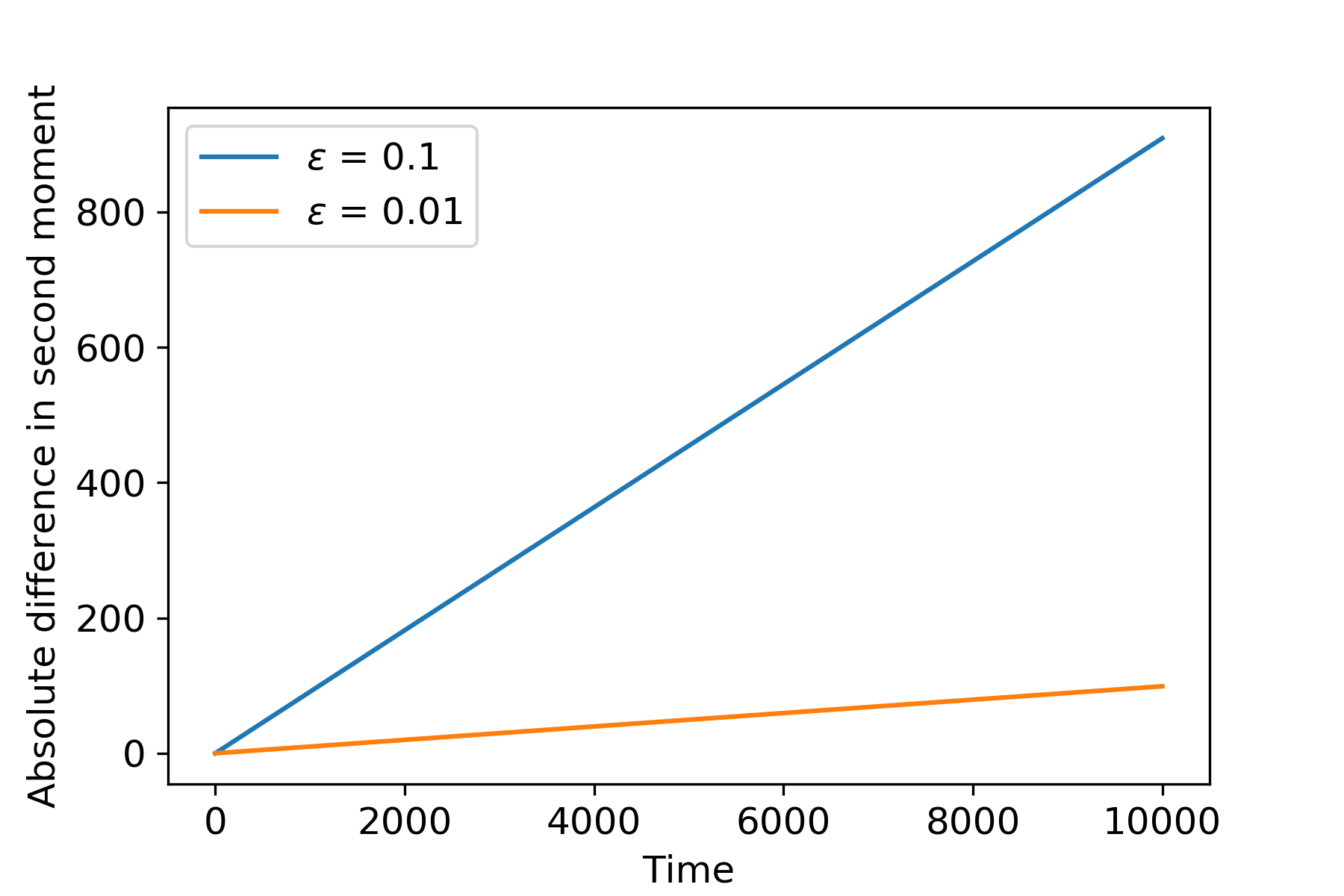}
		\caption{Plot of $\left|\mathbb{E}\left| X_{t}^{\epsilon,x,y} \right|^2-\mathbb{E}\left| \bar X_{t}^{x} \right|^2\right|$ for the system \eqref{slowDecDer}-\eqref{fastDecDer}, plotted up until time $t=10^5$, for both $\epsilon = 0.1$ and $\epsilon = 0.01$. Note that this is exact, since the system can be solved for analytically (see \eqref{eqn:exactdecder}).}
		\label{fig:decderplotvars}
	\end{figure}

Indeed, though simple, the system does not satisfy several of our conditions. Assumption \ref{ass:polGrowth} is not satisfied because the drift in \eqref{fastDecDer} is not bounded in the slow variable $x$. Where this becomes a problem in the proof is that the constants $C_k$ in Assumption \ref{DriftAssump} are no longer independent of $x$. Hence, the constants in the exponential ergodicity bound obtained in Lemma \ref{prop:momentBounds} are not independent of $x$. This causes difficulties throughout the proofs of the results for the Poisson equation. In addition, Assumption \ref{ass:avgSGcond} does not hold, and moreover the averaged semigroup is not SES in the sense of Assumption \ref{ass:averagederivativeestimate4}. In fact our method of proof would still work if the bound of the averaged semigroup derivatives was merely integrable in time over the positive real line, but again this would not hold, since the averaged equation \eqref{avgDecDer} is a brownian motion. Hence, this is an example in which both the `fast' and `slow' equation are well-behaved but the corresponding averaged equation is not so, and this rules out the possibility of a UiT bound under our mechanism of proof. Indeed, the authors of this paper believe that conditions on the averaged semigroup derivatives (or conditions that imply conditions on the averaged semigroup derivatives) are necessary to obtain UiT convergence since they appear in $f_t^1$, see \eqref{eqn:f_t^1}. This example would satisfy the conditions of prior, non uniform in time results (see e.g \cite[Theorem 2.3]{rockner2020diffusion}), and therefore is a case of a relatively simple system for which one can obtain convergence over finite time horizons, but where uniform in time convergence does not hold. 
 
		\end{example}
	
	Now we introduce an example of a system that does not satisfy the conditions under which we have proven the UiT bound, with numerical experiments similar to those in Example \ref{ex:conv}.
	\begin{example}[Double Well]\label{ex:DW}
		Consider the following coupled slow-fast system
		\begin{align}
			dX_{t}^{\epsilon,x,y} & =  -\left[ Y_{t}^{\epsilon,x,y} + X_{t}^{\epsilon,x,y} \right] dt +  \, dW_t \label{slowDW} \\ 
			dY_{t}^{\epsilon,x,y} & = -\frac{1}{\epsilon}\left[ Y_{t}^{\epsilon,x,y}(Y_{t}^{\epsilon,x,y}+2)(Y_{t}^{\epsilon,x,y}-2) \right] dt + \frac{\sqrt{2}}{\sqrt{\epsilon}}\, dB_t  \label{fastDW} 
		\end{align}
		and the associated averaged equation
		\begin{align}
			\label{avgDW}
			d \bar{X}_{t}^{x} = -\bar{X}_{t}^{x} dt + d\tilde{B}_t,
		\end{align}
		where $W_t,B_t,\tilde{B}_t$ are all Brownian motions independent to each other.
As in Example \ref{ex:conv}, we replace the drift of $X_{t}^{\epsilon,x,y}$ with $b(x,y) = -\left[ y\mathbb{I}_{\left|y\right|<M}+\varphi(y)\mathbb{I}_{\left|y\right|>M} + x\right]$, where $\varphi(y)$ is a smooth function with $\varphi(M)=M, \varphi(-M)=-M, \varphi'( M)=1$ and where $\varphi(y)=\varphi(-y)=0$ for all $y >M+1$ with the same observation that for large $M$, say $M=100$, the dynamics never reaches this point.
  Note that, while the above system satisfies \ref{ass:polGrowth}-\ref{DriftAssumpS}, it does not satisfy Assumption \ref{ass:SGcond}.
		
		We use Euler-Maruyama, as in the previous example, to solve this system numerically. Again, we can verify that $Y_{t}^{\epsilon,x,y}$  satisfies the conditions in \cite[Hypothesis $3.1$]{uitEuler}. This is not immediate, but one can utilise Proposition \ref{prop:derivativeest} to obtain the semigroup derivative decay. The resulting plots can be seen in Figure \ref{fig:DW}, and they were produced in the same way as Figure \ref{fig:Conv}. From the plots in Figure \ref{fig:DWnz}, one might conjecture that there can indeed be a similar UiT bound that holds when $g$ in (\ref{fast}) is non-convex. That is, Assumption \ref{ass:SGcond} may be sufficient rather than necessary (though, as discussed in the previous example, the authors expect that decay of the derivatives of the semigroup in some form is necessary). We note to the reader that we numerically solved the system for a shorter and shorter time frame (see the caption of Figure \ref{fig:DW} for details) as $\epsilon$ decreased to allow for the finer discretisation needed, but that all distributions were checked to be in or very close to equilibrium at the point of stopping.
		
	    Though not directly relevant to the existence of a UiT bound, we include Figure \ref{fig:DWz} as an example of a transition in the qualitative behaviour of the method of averaging as $\epsilon$ varies. It is simply a restriction to the middle $\epsilon$-regime of Figure \ref{fig:DWnz}. From the right hand side plot, we see that for large $\epsilon$ the settled distribution of the averaged equation and the fully coupled system are qualitatively distinct; that is, they are single-modal and bi-modal respectively. On the left hand side plot, it is clear that this is also a quantitative difference, though it does also portray a smooth transition as the convergence in $\epsilon$ is shallower in this large $\epsilon$ regime than in the small regime. While the order of convergence is $1$ as $\epsilon \rightarrow 0$ (as we have proved in Theorem \ref{mainthm}), the order of convergence may be far slower if $\epsilon$ is too large. Hence, it is important when using the method of averaging to ensure that $\epsilon$ is small enough that there is not such a qualitative difference between the behaviour of the averaged equation and the coupled system.
	\end{example}
	\begin{figure}[t!]
		\begin{subfigure}{\textwidth}
			\includegraphics[width = \textwidth, keepaspectratio]{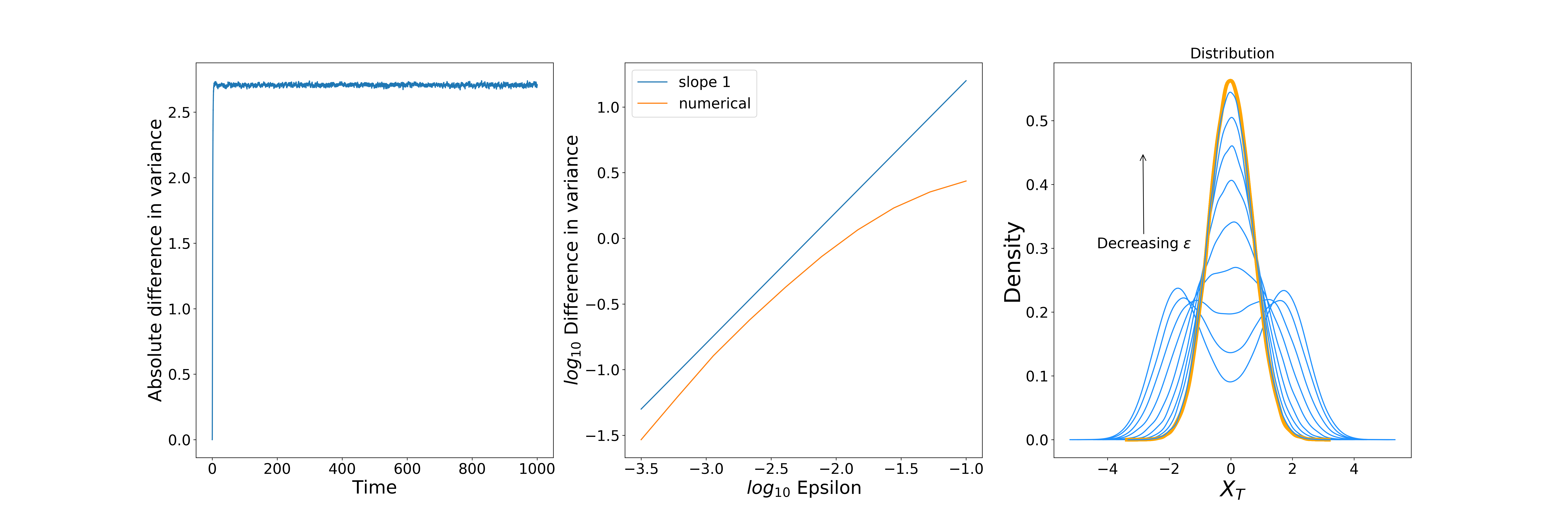}
			\caption{\emph{Left.}  Plot of $\left|\mathbb{E}\left| X_{t}^{\epsilon,x,y} \right|^2-\mathbb{E}\left| \bar X_{t}^{x} \right|^2\right|$, plotted until time $t=10^3$, with $\epsilon = 0.1$. \emph{Middle.} The maximum difference in second moment, plotted against $\epsilon$. Plotted alongside a line of slope $1$ for comparison. \emph{Right.} The corresponding distributions of $X_{t}^{\epsilon,x,y}$ for different $\epsilon$ values. Plotted alongside the distribution of $\bar{X}_{t}^{x}$, which is independent of $\epsilon$. These plots were produced using the behaviour of $10^5$ particles, with $\epsilon = 10^{-m}  $ and $\Delta t = 10^{-m-3}$ for  values $m \in \{1.        , 1.28, 1.56, 1.83, 2.11,
				2.39, 2.67, 2.94, 3.22, 3.5 \}$. All realisations were run for $N=10^6$ steps (hence run until $T=10^{-m+3}$) and were initialised at $X_{0}^{\epsilon,x,y}=\bar X_{0}^{x}=Y_{0}^{\epsilon,x,y}=0$. The authors checked that the distributions had reached equilibrium for all values of $\epsilon$.}
			\label{fig:DWnz}
		\end{subfigure}
		\begin{subfigure}{\textwidth}
			\includegraphics[width = \textwidth, keepaspectratio]{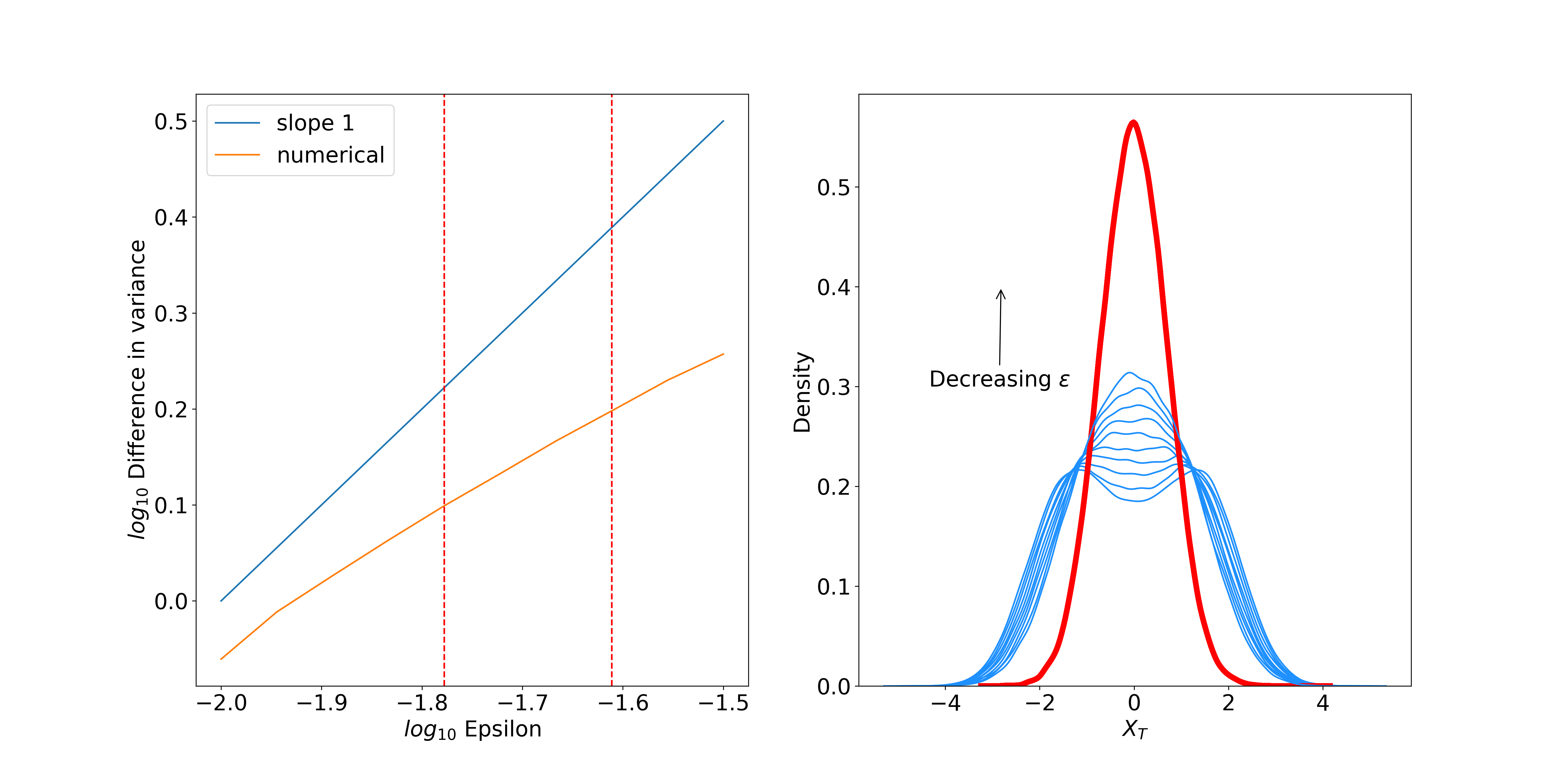}
			\caption{\emph{Left.} The maximum difference in second moment, plotted against $\epsilon$. Plotted alongside a line of slope $1$ for comparison. The red-dashed lines indicate points at which the coupled system is clearly bimodal (right) and clearly single-modal (left). \emph{Right.} The distributions of $X_{T}^{\epsilon,x,y}$ for different $\epsilon$ values. Plotted alongside the distribution of $\bar{X}_{T}^{x}$ (in orange), which is independent of $\epsilon$. These plots were produced with the same parameters as above, using $m \in \{1.5       , 1.56, 1.61, 1.67, 1.72,
				1.78, 1.83, 1.89, 1.94, 2. \}$.}
			\label{fig:DWz}
		\end{subfigure}
		\caption{}\label{fig:DW}
	\end{figure}

\begin{appendix}
	\section{Proofs}\label{Asec:proofLemma}
	\subsection{Proofs of Section \ref{sec:sec3}}\label{app:proofofprelims}
	
	Let us recall (see Note \ref{note:coupledlemmas}) that we need 
		 Lemma \ref{prop:momentBounds} and Proposition \ref{lemma:IntAvg} to hold if the diffusion coefficient $a$ is also a function of $y$, as is relevant in the proofs for the fully coupled case. So the proofs of Lemma \ref{prop:momentBounds} and Proposition \ref{lemma:IntAvg} below are done in this more general setting.

	\begin{proof}[Proof of Lemma \ref{prop:momentBounds}]
		The proof of this result can be found in \cite[Lemma $3.6$]{liu2020averaging}. We observe that Assumption \ref{DriftAssump} implies \cite[Assumption $A_k$]{liu2020averaging} for every $k\geq 2$ which is the main assumption of \cite[Lemma $3.6$]{liu2020averaging}. We include the calculation here to give an explicit form of the constants $C'_k$ and $r'_k$. 
		The result holds for $k=1$ by Jensen's inequality and the case $k=2$. Indeed, $$\fastExp{y} \left[|\fastFixedI_t|\right] \leq \left(\fastExp{y} \left[|\fastFixedI_t|^2\right]\right)^{1/2}\leq e^{-r_2't/2}|y| + \sqrt{\frac{ C_2'}{r_2'}}.$$
		
		Now let us consider the case {$k > 2$}. Assumption \ref{DriftAssump} implies that the function $V(y) = |y|^k$ is a Lyapunov function for the process $\fastFixedI_{t}$ (see \cite[Lemma $3.6$]{liu2020averaging} for details) in the sense that for all $x \in \R^n, y \in \R^d$
		\begin{align}\label{eqn:lyapunov}
			\cL^x V(y) &\leq -kr_k \lvert y\rvert^k+C_kk\lvert y\rvert^{k-2}\\
			&\leq -\frac{kr_k}{2} V(y) + 2C_k\left(\frac{2C_k(k-2)}{kr_k}\right)^{\frac{k-2}{2}}.
		\end{align}
		To obtain the last inequality we have used Young's inequality.

		Then, using the positivity of Markov semigroups, we have
		\begin{equation*}
			\frac{d}{dt}(e^{kr_k t/2}\SGfastsx{t}{x}V(y)) = (\frac{1}{2}kr_k\SGfastsx{t}{x}V(y)+\SGfastsx{t}{x}\cL^x V(y))e^{kr_kt/2} \leq 2C_k\left(\frac{2C_k(k-2)}{kr_k}\right)^{\frac{k-2}{2}}e^{kr_kt/2}.
		\end{equation*}
		We can then integrate and obtain
		\begin{equation*}
			\mathbb{E}\left[|\fastFixedI_t|^k \right] =	\SGfastsx{t}{x}V(y) \leq e^{-kr_k t/2} V(y) + \frac{4C_k}{kr_k}\left(\frac{2C_k(k-2)}{kr_k}\right)^{\frac{k-2}{2}}.
		\end{equation*}
        {By an analogous argument for $k=2$ we have
        \begin{equation*}
			\mathbb{E}\left[|\fastFixedI_t|^2 \right] =	\SGfastsx{t}{x}V(y) \leq e^{-2r_2 t} V(y) + \frac{C_2}{r_2}.
		\end{equation*}}
	\end{proof}
	\begin{proof}[Proof of Proposition \ref{lemma:IntAvg}.]
		Using \cite[Theorem 2.5]{MATTINGLYSTUARTergodicity}, which holds under our Assumptions \ref{UniformEllip} and \ref{DriftAssump}, we obtain \eqref{eqn:SGconv} for some $C=C(x), c=c(x)$.\footnote{Strictly speaking \cite[Theorem 2.5]{MATTINGLYSTUARTergodicity} refers to a hypoelliptic generator with constant diffusion matrix. The process to which we apply this theorem is elliptic, with non-constant diffusion matrix. Under our assumptions the conclusion of \cite[Theorem 2.5]{MATTINGLYSTUARTergodicity} still holds, as one can easily see by following the proof of that theorem. Nonetheless, because we need to control the dependence of the various constants on $x$, in what follows we anyway retrace a good part of that proof. } What we need within our scheme of proof, is that $C(x)$ and $c(x)$ are uniformly bounded above and below respectively. To prove this fact, we will show that the conditions of \cite[Theorem 2.5]{MATTINGLYSTUARTergodicity} hold uniformly in $x$. In particular, in \cite[Theorem 2.5]{MATTINGLYSTUARTergodicity}, the authors require two conditions to hold: a Minorisation condition, and a Lyapunov condition. For the readers convenience we gather these two conditions and then write a modified Minorisation condition, which we will use for our proof.
		\begin{assumption}[Minorisation condition] \label{ass:minor}
			Let $\fastFixedI_t$ be the process (\ref{fastF}) with semigroup $\{\SGfastsx{t}{x}\}_{t\geq 0}$ and transition probability
			denoted by $\SGfastsx{t}{x}(y,A) = \left(\SGfastsx{t}{x}\mathbb{I}_{A}\right)(y)$, where $A \in \mathcal{B}(\R^d)$ and $\mathcal{B}(\R^d)$ denotes the Borel sets of $\R^d$. For all fixed compact sets $\bar{U} \in \mathcal{B}(\R^d)$,  there exist $\eta,T>0$ independent of $x \in \R^n$ such that
			\begin{equation*}
				\SGfastsx{T}{x}(y,A) \geq \eta \lambda(A \cap \bar{U}) \quad \forall A\in \mathcal{B}(\R^d), y \in \bar{U}.
			\end{equation*}
			where $\lambda$ denotes the Lebesgue measure on $\R^d$.
		\end{assumption}

		\begin{assumption}[Lyapunov Condition]\label{ass:lyapunovMS}
			There is a function $V :\R^d \rightarrow [1,\infty)$, with $\lim_{y\rightarrow \infty}V(y) = \infty$, and real numbers $a,d\in (0,\infty)$, independent of $x$, such that
			\begin{equation*}
				\Lfastx{x} V(y) \leq -aV(y) + d,
			\end{equation*}
			where $\mathcal{L}^x$ is the generator for the process outlined in \eqref{eqn:gen}. 
			
		\end{assumption}
		Assumption \ref{ass:lyapunovMS} is exactly \cite[Assumption 2.4]{MATTINGLYSTUARTergodicity}. Assumption \ref{ass:minor} implies the conclusion of \cite[Lemma 2.3]{MATTINGLYSTUARTergodicity}, with $\nu(A)=\lambda(A \cap \bar{U})$.
		Now we write a modified Minorisation condition that implies Assumption \ref{ass:minor} in our setting.
		\begin{assumption}[Minorisation condition \RNum{1}] \label{ass:minor1}
			Let $\fastFixedI_t$ be the process (\ref{fastF}) with semigroup $\{\SGfastsx{t}{x}\}_{t\geq 0}$ and density $p_t^{x}(y,y')$
			such that $\SGfastsx{t}{x}(y,A) = \int_{A}p^x_t(y,y')dy' $. For all fixed compact sets $\bar{U} \in \mathcal{B}(\R^d)$, $y \in \bar{U}$ and $T>0$, there exists $\eta(y)>0$ independent of $x \in \R^n$ such that
			\begin{equation*}
				\inf_{x \in \R^n}\inf_{y' \in \bar{U}}p^x_T(y,y') \geq \eta(y).
			\end{equation*}
		\end{assumption}
		
		Indeed, if Assumption \ref{ass:minor1} holds, then we have, for each $A \in  \mathcal{B}(\R^d)$, $y \in \bar{U}$ and $x \in \R^n$
		\begin{equation*}
			\SGfastsx{T}{x}(y,A) = \int_{A}p^x_T(y,y')dy' \geq  \int_{A\cap \bar{U}}p^x_T(y,y')dy' \geq \eta(y) \lambda(A \cap \bar{U}).
		\end{equation*}
		Since $\bar{U}$ is a compact subset of $\R^d$, $\eta \coloneqq\inf_{y \in \bar{U}}\eta(y)>0$ and so we have that Assumption \ref{ass:minor1} implies Assumption \ref{ass:minor}.
		Hence, if we show that both Assumption \ref{ass:lyapunovMS} and Assumption \ref{ass:minor1} hold uniformly in $x\in \R^n$, then we are done. The Lyapunov condition stated in Assumption \ref{ass:lyapunovMS} is uniform in $x$ and follows directly from Assumption \ref{DriftAssump}, with $V(y) = |y|^k$ for any $k>0$, see the proof of Lemma \ref{prop:momentBounds}. The remaining problem is in verifying Assumption \ref{ass:minor1}. 
		
		We prove that Assumption \ref{ass:minor1} holds uniformly in $x$ by contradiction. Suppose that there exists a compact set $\bar{U} \in \mathcal{B}(\R^d)$, a point $y\in \bar{U}$, and a time $T>0$ (which from this point onwards we fix) such that $\inf_{x \in \R^n}\inf_{y' \in \bar{U}}p^x_T(y,y') = 0$.
		That is, there exists a sequence $(x_n)_{n=1}^\infty$ such that
		\begin{equation}\label{ptgoes0}
			\inf_{y' \in \bar{U}}p^{x_n}_T(y,y') \rightarrow 0, \quad \text{as $n\rightarrow\infty$}.
		\end{equation}
		We will arrive at a contradiction by first considering a process whose coefficients are the limit of a subsequence of the coefficients corresponding to the sequence of processes $\left(Y_{t}^{x_n,y}\right)^{\infty}_{n=1}$;  in particular we will show that the limiting process still satisfies uniform ellipticity and so has strictly positive density. This strict positivity will contrast the limit of the densities necessarily being not strictly positive, and we will argue that these two densities must agree; hence a contradiction.
		
		We first consider the sequence of drift coefficients $(g^{x_n}(z))_{n=1}^\infty$, as functions of $z \in \R^d$. This sequence is pointwise bounded due to Assumption \ref{ass:polGrowth}, \ref{ass:polGrowthDriftFast}, and equicontinuous when restricted to compact sets by local lipschitzianity. Hence, by Arzela-Ascoli (see e.g \cite{rudin1964principles}) we have a subsequence of drifts $(g^{x_{n_k}}(z))_{k=1}^\infty$ which converges locally uniformly to a function $g^*(z)$. Similarly, we take a further subsequence to obtain local uniform convergence of the sequence of diffusion coefficients $(a^{x_{n_k}}(z))_{k=1}^{\infty}$ to some $a^{*}(z)$. Note that $a^{*}(z)$ still satisfies Assumption \ref{UniformEllip}. Now, we consider the $\R^d$-valued process defined by
		\begin{equation*}
			d\mathcal{Y}^{*,y}_t = g^*(\mathcal{Y}^{*,y}_t)dt + \sqrt{2}a^*(\mathcal{Y}^{*,y}_t)d\tilde{W}_t
		\end{equation*}
		with corresponding semigroup $\{\mathcal{P}^*_t\}_{t\geq 0}$ and generator
		\begin{equation*}
			(\cL^*h)(y) = g^* (y) \cdot \nabla_y h(y) + a^*(y) a^*(y)^T : \text{Hess}_y h(y).
		\end{equation*}
		By the ellipticity of $a^{*}(z)$, the density $p^*_T(y, z) $ associated with the process $\mathcal{Y}^{*,y}_t $ at time $t = T$ is strictly positive everywhere; that is
		\begin{equation}\label{eqn:posEverywhere}
			p^*_T(y, z) >0,\quad \text{ for all } z \in \R^d.
		\end{equation}
		
		Now, we consider the sequence of the continuous densities $(p_T^{x_{n_k}}(y,z))_{k=1}^\infty$ (densities of the process $Y_T^{x_{n_k},y}$) as functions of $z \in \R^d$. This sequence is uniformly bounded and equicontinuous when restricted to compact sets (uniform boundedness follows from \cite[Theorem 7.3.3]{bogachev2015fokker} equicontinuity follows from the fact that the derivative is uniformly locally bounded using \cite[Theorem 6.4.5]{bogachev2015fokker}, where we use that the density is uniformly bounded, which itself is from \cite[Theorem 7.3.3]{bogachev2015fokker}). Hence, we take a subsequence that converges locally uniformly to some density $ \tilde{p}_T(y,z)$. By \eqref{ptgoes0}, and since $\bar{U}$ is compact, we have $\inf_{y' \in \bar{U}}\tilde{p}_T(y,y') =0$;  this infimum is attained at some $y_0 \in \bar{U}$, i.e $\tilde{p}_T(y,y_0) = 0$. By this and \eqref{eqn:posEverywhere}, we have a contradiction (and thus the proof is complete) if we can show $p^{*}_T(y,\cdot) = \tilde{p}_T(y,\cdot)$. To do this, we consider the semigroup associated with the density $\tilde{p}_T(y,\tilde{y})$. i.e $\tilde{\cP}_T f(y) = \int_{\R^d} f(\tilde{y})\tilde{p}_T(y,\tilde{y})d\tilde{y}$. For $f \in C^2_b(\R^d)$ we have
		
		\begin{equation}
			\label{eqn:SGconvergence}
			\begin{split}
				|\cP^*_T f(y) - P^{x_n}_T f(y)| &= \bigg \lvert\int_0^T \left(\cP^{*}_{T-s} \left( \cL^{x_n} - \cL^*\right)P^{x_n}_s f\right)(y) ds \bigg \rvert \\
				&\leq \sum_{i=1}^{d}\int_0^T \bigg \lvert\left(\cP^{*}_{T-s} \left( g^{x_n}_i - g^*_i\right)\partial_{y_i}P^{x_n}_s f\right)(y) \bigg \rvert ds \\&+ \sum_{i,j=1}^{d}\int_0^T\bigg \lvert \left(\cP^{*}_{T-s} \left( \diffusion^{x_n}_{ij} - \diffusion^*_{ij}\right)\partial_{y_i y_j}P^{x_n}_s f\right)(y)\bigg \lvert ds,
			\end{split}
		\end{equation}
  where $ \diffusion^*(y) = a^*(y) a^*(y)^T$.
		From  \cite[Theorem 6.1.7]{lorenzi2006analytical} (or also from our own results), we have that $\|\partial_{y_i}P^{x_n}_s f\|_\infty, \|\partial_{y_i y_j}P^{x_n}_s f\|_\infty < \infty$. Indeed, since the hypotheses of \cite[Theorem 6.1.7]{lorenzi2006analytical} hold for us uniformly in $x \in \R^n$ (for details of this verification, see the proof of Theorem \ref{thm:lorenziderest} above), we have  $$\|\partial_{y_i}P^{x_n}_s f\|_\infty+ \|\partial_{y_i y_j}P^{x_n}_s f\|_\infty < C,$$ for some $C>0$ independent of $n$. Then, by \eqref{eqn:SGconvergence}
		\begin{align*}
			|\cP^*_T f(y) - P^{x_n}_T f(y)| &\leq C\sum_{i=1}^{d}\int_0^T \left(\cP^{*}_{T-s} \big \lvert g^{x_n}_i - g^*_i\right)\big \rvert(y)  ds \\&+ C\sum_{i,j=1}^{d}\int_0^T \left(\cP^{*}_{T-s} \big \lvert \diffusion^{x_n}_{ij} - \diffusion^*_{ij}\big \lvert \right)(y)ds.
		\end{align*}
		By Assumption \ref{ass:polGrowth} \ref{ass:polGrowthDriftFast},\ref{ass:polGrowthDiffFast} and Lemma \ref{prop:momentBounds}, we can use the DCT to take the limit inside both the time integral and the integral implied by the semigroup. This yields
		\begin{equation*}
			|\cP^*_T f(y) - P^{x_n}_T f(y)| \rightarrow 0
		\end{equation*}
		as $n \rightarrow \infty$. Then $\cP^*_T f(y) = \tilde{\cP}_T f(y)$ (for every $f \in C_b^2$ and, by density, for every $f \in C_b$), and so in particular $0<p^{*}_T(y,y_0) = \tilde{p}_T(y,y_0)=0$. This is a contradiction. Thus, Assumption \ref{ass:minor1} holds uniformly in $x$.
		
		
		
		Hence, both Assumption \ref{ass:lyapunovMS} and Assumption \ref{ass:minor1} hold independent of $x$, and this entails that all constants in the proof of \cite[Theorem 2.5]{MATTINGLYSTUARTergodicity} are independent of $x$, and we have the result as required.
	\end{proof}
	
	\begin{proof}[Proof of Proposition \ref{prop:derivativeest}]
		We first prove \eqref{eqn:SGdecayConc}, before we explain how to adapt the proof to obtain \eqref{eqn:SGdecayConc2}.
		Fix $\psi\in \funcSpace{\pow{}{x}}{\pow{}{y}}$ and set $f_t^x(y) = \SGfastsx{t}{x}\psi^x(y)$. 
		Define the function 
		\begin{equation*}
			\Gamma(f_t^x) = \sum_{i=1}^d \lvert \partial_{y_i} f_t^x\rvert^2 + \gamma_1\sum_{i,j=1}^d \lvert \partial_{y_i}\partial_{y_j} f_t^x\rvert^2 +  \gamma_2\sum_{i,j,k=1}^d \lvert \partial_{y_i,y_j,y_k} f_t^x\rvert^2+ \gamma_3\sum_{i,j,k,\ell=1}^d \lvert \partial_{y_i,y_j,y_k,y_\ell} f_t^x\rvert^2. 
		\end{equation*}
		Here $\gamma_1,\gamma_2,\gamma_3$ are positive constants which will be chosen later in the proof. If we show that
		\begin{equation}\label{eq:Gronwallcond}
			\partial_s \SGfastsx{t-s}{x}\Gamma(f_s^x) \leq - \kappa \SGfastsx{t-s}{x}\Gamma(f_s^x) 
		\end{equation}
		then by Gronwall's inequality we have
		\begin{equation}\label{eq:aftergronwall}
			\SGfastsx{t-s}{x}\Gamma(f_s^x) \leq e^{-\kappa s}\SGfastsx{t}{x}\Gamma(f_0^x). 
		\end{equation}
		By Lemma \ref{prop:momentBounds}, we have
		\begin{equation*}
   			\SGfastsx{t}{x}(1+|x|^{\tilde{\pow{}{}}}+|y|^{\pow{}{}}) \leq 1+ |x|^{\tilde{\pow{}{}}}+|y|^{\pow{}{}}+ \frac{C'_{\pow{}{}}}{r'_{\pow{}{}}},
		\end{equation*}
for any $m, \tilde{m} >0$.
		
		Since $\Gamma(f^x_0)\leq 3(d+\gamma_1 d^2+\gamma_2 d^3+\gamma_3 d^4)\Vseminorm{\psi}{4,\pow{}{x},\pow{}{y}}^{2}\left(1+ |x|^{ 2\pow{}{x}}+|y|^{ 2\pow{}{y}}\right)$ and using the positivity of Markov semigroups we have
		\begin{equation*}
			\SGfastsx{t}{x}\Gamma(f^x_0)\leq 3 \Vseminorm{\psi}{4,\pow{}{x},\pow{}{y}}^{2}(d+\gamma_1 d^2+\gamma_2 d^3+\gamma_3 d^4)\left( 1+ |x|^{2\pow{}{x}}+|y|^{2\pow{}{y}} + \frac{C'_{2\pow{}{y}}}{r'_{ 2\pow{}{y}}}\right).
		\end{equation*}
		Therefore taking $s=t$ in \eqref{eq:aftergronwall} we have
		\begin{equation*}
			\Gamma(f_t^x) \leq  3 \Vseminorm{\psi}{4,\pow{}{x},\pow{}{y}}^{2}(d+\gamma_1 d^2+\gamma_2 d^3+\gamma_3 d^4)\left( 1+ |x|^{2\pow{}{x}}+|y|^{2\pow{}{y}} + \frac{C'_{ 2\pow{}{y}}}{r'_{ 2\pow{}{y}}}\right). 
		\end{equation*}
		Thus \eqref{eqn:SGdecayConc} holds for $K>0$.
		It remains to prove \eqref{eq:Gronwallcond}, note that \eqref{eq:Gronwallcond} follows (see \cite[Proposition 3.4]{crisanottobre} for details) provided there exists $\kappa>0$ with
		\begin{equation}\label{eq:gammanesest}
			(\partial_t-\cL^x)\Gamma(f_t^x)  \leq -\kappa \Gamma(f_t^x).
		\end{equation}
		We prove that \eqref{eq:gammanesest} holds by expanding each term in $\Gamma$ and using Young's inequality, ellipticity and Assumption \ref{ass:SGcond}. Since the left hand side of \eqref{eq:gammanesest} involves the generator $\cL^x$ applied to $\Gamma(f_t^x)$ we expect derivatives up to order $6$ of $f_t^x$ to appear. However the $6$-th order derivatives cancel and we see that the $5$-th order derivatives appear with a minus sign and hence can be bounded above by zero.
		
		Let us first consider the first order derivative terms
		\begin{align}\label{eq:expandinggammafirstorder}
			(\partial_t-\cL^x)(\lvert \partial_{y_i} f_t^x\rvert^2) & = 2 (\partial_{y_i} (\cL^xf_t^x))( \partial_{y_i} f_t^x) - 2 (\cL^x\partial_{y_i} f_t^x)( \partial_{y_i} f_t^x) -  2\sum_{j=1}^d (a^j(x),\nabla_y\partial_{y_i} f_t^x)^2.
		\end{align}
		Here $a^j(x)$ denotes the $j$-th column of $a(x)$.
		Observe that since the diffusion coefficient $a$ is independent of $y$ we can write
		\begin{equation}\label{eq:comgenwithonderivative}
			\partial_{y_i} \cL^xf_t^x -  \cL^x\partial_{y_i}f_t^x = (\partial_{y_i}g(x), \nabla_y f_t^x).
		\end{equation}
		Substituting \eqref{eq:comgenwithonderivative} into \eqref{eq:expandinggammafirstorder} we have
		\begin{align*}
			(\partial_t-\cL^x)(\lvert \partial_{y_i} f_t^x\rvert^2) & = 2 (\partial_{y_i}g(x), \nabla_y f_t^x)( \partial_{y_i} f_t^x) -   2\sum_{j=1}^d (a^j(x),\nabla_y\partial_{y_i} f_t^x)^2.
		\end{align*}
		Summing over $i$ we have
		\begin{align*}
			\sum_{i=1}^d(\partial_t-\cL^x)(\lvert \partial_{y_i} f_t^x\rvert^2) & = 2 \sum_{i=1}^d (\partial_{y_i}g(x), \nabla_y f_t^x)( \partial_{y_i} f_t^x) - 2\sum_{i=1}^d\sum_{j=1}^d (a^j(x),\nabla_y\partial_{y_i} f_t^x)^2.
		\end{align*}
		
		Next consider the second order derivatives 
		\begin{align*}
			(\partial_t-\cL^x)(\lvert \partial_{y_i}\partial_{y_j} f_t^x\rvert^2) & = 2 (\partial_{y_i}\partial_{y_j} \cL^xf_t^x)(\partial_{y_i}\partial_{y_j} f_t^x) - 2 (\cL^x\partial_{y_i}\partial_{y_j} f_t^x)( \partial_{y_i}\partial_{y_j} f_t^x)\\
			& - 2\sum_{k=1}^d(a^k,\nabla_y\partial_{y_i}\partial_{y_j} f_t^x)^2.
		\end{align*}
		Using that $\cL^x$ is given by \eqref{eqn:gen} we have
		\begin{equation}\label{eq:secondordercommutator}
			\partial_{y_i}\partial_{y_j} \cL^xf_t^x -  \cL^x\partial_{y_i}\partial_{y_j}f_t^x = (\partial_{y_i}\partial_{y_j}g(x), \nabla_y f_t^x) +(\partial_{y_i}g(x), \nabla_y \partial_{y_j}f_t^x)+(\partial_{y_j}g(x), \partial_{y_i}\nabla_y f_t^x) .
		\end{equation}
		Here $\partial_{y_i}\nabla_y$ is meant component wise, i.e. $\partial_{y_i}\nabla_y f_t^x$ is a vector whose $j$-th entry is $\partial_{y_i}\partial_{y_j}f_t^x$. 
		Note that
		\begin{equation*}
			\sum_{j=1}^d (\partial_{y_j}g(x), \partial_{y_i}\nabla_y f_t^x )( \partial_{y_i}\partial_{y_j} f_t^x) = (\nabla_y\partial_{y_i}f_t^x)^T(\nabla_yg(x))(\nabla_y\partial_{y_i}f_t^x).
		\end{equation*}
		Then using Young's inequality and the ellipticity assumption, Assumption \ref{UniformEllip}, we have for any $\nu_0>0$ which will be chosen later
		\begin{align*}
			\sum_{i,j=1}^d(\partial_t-\cL^x)(\lvert \partial_{y_i}\partial_{y_j} f_t^x\rvert^2)  	&\leq  \sum_{i,j=1}^d\nu_0(\partial_{y_i}\partial_{y_j}g(x),\nabla_y f_t^x)^2+\nu_0^{-1}( \partial_{y_i}\partial_{y_j} f_t^x)^2 \\
			&+4\sum_{i=1}^d(\nabla_y\partial_{y_i}f_t^x)^T(\nabla_yg(x))(\nabla_y\partial_{y_i}f_t^x)-2\lambda_-\sum_{i,j=1}^d(\nabla_y\partial_{y_i}\partial_{y_j} f_t^x)^2.
		\end{align*}
		Similarly for the third order derivatives,
		\begin{align*}\label{eq:expandinggammathirdorder}
			&\sum_{i,j,k=1}^d(\partial_t-\cL^x)(\lvert \partial_{y_i,y_j,y_k} f_t^x\rvert^2) \\
			& = \sum_{i,j,k=1}^d2 ([\partial_{y_i,y_j,y_k}, \cL^x]f_t^x)(\partial_{y_i,y_j,y_k} f_t^x) - 2\sum_{i,j,k=1}^d\sum_{\ell=1}^d(a^\ell,\nabla_y\partial_{y_i,y_j,y_k} f_t^x)^2\\
			&=2 \sum_{i,j,k=1}^d\left((\partial_{y_i,y_j,y_k}g(x),\nabla_y f_t^x)+3(\partial_{y_i,y_j}g(x), \nabla_y \partial_{y_k}f_t^x) + 3(\partial_{y_i}g(x), \nabla_y \partial_{y_j,y_k}f_t^x)\right)(\partial_{y_i,y_j,y_k} f_t^x) \\
			&-2\sum_{i,j,k,\ell=1}^d (a^\ell,\nabla_y\partial_{y_i,y_j,y_k} f_t^x)^2.
		\end{align*}
		Using Young's inequality and ellipticity we have for any $\nu_1,\nu_2>0$ to be chosen later
		\begin{align*}
			&\sum_{i,j,k=1}^d(\partial_t-\cL^x)(\lvert \partial_{y_i,y_j,y_k} f_t^x\rvert^2)  \\
			&\leq  \sum_{i,j,k=1}^d\nu_1(\partial_{y_i,y_j,y_k}g(x), \nabla_y f_t^x)^2+(\nu_1^{-1}+3\nu_2^{-1})(\partial_{y_i,y_j,y_k} f_t^x)^2+3\nu_2(\partial_{y_i,y_j}g(x), \nabla_y \partial_{y_k}f_t^x)^2 \\
			&+\sum_{j,k=1}^d 6(\nabla_y\partial_{y_j,y_k} f_t^x)^T(\nabla_yg(x))(\nabla_y \partial_{y_j,y_k}f_t^x) -2\lambda_-\sum_{i,j,k=1}^d (\nabla_y\partial_{y_i,y_j,y_k} f_t^x)^2.
		\end{align*}
		Similarly,
		\begin{align*}
			&\sum_{i,j,k,\ell=1}^d(\partial_t-\cL^x)(\lvert \partial_{y_i,y_j,y_k,y_\ell} f_t^x\rvert^2)  = \sum_{i,j,k,\ell=1}^d2 ([\partial_{y_i,y_j,y_k,y_\ell}, \cL^x]f_t^x)(\partial_{y_i,y_j,y_k,y_\ell} f_t^x)\\
			&-2 \sum_{i,j,k,\ell,p=1}^d (a^p,\nabla_y\partial_{y_i,y_j,y_k,y_\ell} f_t^x)^2
			\\
			&=2 \sum_{i,j,k,\ell=1}^d\Big((\partial_{y_i,y_j,y_k,y_\ell}g(x), \nabla_y f_t^x)+4(\partial_{y_i,y_j,y_k}g(x), \nabla_y \partial_{y_\ell}f_t^x) + 6(\partial_{y_i,y_j}g(x), \nabla_y \partial_{y_k,y_\ell}f_t^x)\\
			&+ 4\partial_{y_i}g(x), \nabla_y \partial_{y_j,y_k,y_\ell}f_t^x\Big)(\partial_{y_i,y_j,y_k,y_\ell} f_t^x) -2\sum_{i,j,k,\ell,p=1}^d (a^p,\nabla_y\partial_{y_i,y_j,y_k,y_\ell} f_t^x)^2
		\end{align*}
		Using Young's inequality and ellipticity once more we have
		\begin{align*}
			&\sum_{i,j,k,\ell=1}^d(\partial_t-\cL^x)(\lvert \partial_{y_i,y_j,y_k,y_\ell} f_t^x\rvert^2) \\
			&\leq  \sum_{i,j,k,\ell=1}^d\Big(\nu_3(\partial_{y_i,y_j,y_k,y_\ell}g(x)\cdot \nabla_y f_t^x)^2+(\nu_3^{-1}+4\nu_4^{-1}+6\nu_5^{-1})(\partial_{y_i,y_j,y_k,y_\ell} f_t^x)^2\\
			&+4\nu_4(\partial_{y_i,y_j,y_k}g(x)\cdot \nabla_y \partial_{y_\ell}f_t^x)^2+ 6\nu_5(\partial_{y_i,y_j}g(x)\cdot \nabla_y \partial_{y_k,y_\ell}f_t^x)^2\Big)\\
			&+ 8 \sum_{i,j,k,\ell=1}^d(\partial_{y_i}g(x)\cdot \nabla_y \partial_{y_j,y_k,y_\ell}f_t^x)(\partial_{y_i,y_j,y_k,y_\ell} f_t^x) -2\lambda_-\sum_{i,j,k,\ell=1}^d (\nabla_y\partial_{y_i,y_j,y_k,y_\ell} f_t^x)^2
		\end{align*}
	Combining these terms we have \useshortskip \begin{equation*}\begin{split}
	    &(\partial_t-\cL^x)\Gamma(f_t^x) \leq 2 ( \nabla_y f_t^x)^T\nabla_yg(x)( \nabla_y f_t^x) - 2\lambda_- \sum_{i=1}^d (\nabla_y\partial_{y_i} f_t^x)^2 \\
			&+\sum_{i,j=1}^d\gamma_1\nu_0(\partial_{y_i}\partial_{y_j}g(x), \nabla_y f_t^x)^2+\gamma_1\nu_0^{-1}( \partial_{y_i}\partial_{y_j} f_t^x)^2 \\
			&+4\sum_{i=1}^d\gamma_1(\nabla_y\partial_{y_i}f_t^x)^T(\nabla_yg(x))(\nabla_y\partial_{y_i}f_t^x)-2\gamma_1\lambda_-\sum_{i,j=1}^d(\nabla_y\partial_{y_i}\partial_{y_j} f_t^x)^2\\
			&+\sum_{i,j,k=1}^d\gamma_2\nu_1(\partial_{y_i,y_j,y_k}g(x), \nabla_y f_t^x)^2+\gamma_2(\nu_1^{-1}+3\nu_2^{-1})(\partial_{y_i,y_j,y_k} f_t^x)^2+3\gamma_2\nu_2(\partial_{y_i,y_j}g(x), \nabla_y \partial_{y_k}f_t^x)^2 \\
			&+\sum_{j,k=1}^d \gamma_26(\nabla_y\partial_{y_j,y_k} f_t^x)^T(\nabla_yg(x))(\nabla_y \partial_{y_j,y_k}f_t^x) -2\gamma_2\lambda_-\sum_{i,j,k=1}^d (\nabla_y\partial_{y_i,y_j,y_k} f_t^x)^2\\
			&+\sum_{i,j,k,\ell=1}^d\Big(\gamma_3\nu_3(\partial_{y_i,y_j,y_k,y_\ell}g(x), \nabla_y f_t^x)^2+\gamma_3(\nu_3^{-1}+4\nu_4^{-1}+6\nu_5^{-1})(\partial_{y_i,y_j,y_k,y_\ell} f_t^x)^2\\
			&+4\nu_4\gamma_3(\partial_{y_i,y_j,y_k}g(x), \nabla_y \partial_{y_\ell}f_t^x)^2+ 6\gamma_3\nu_5(\partial_{y_i,y_j}g(x), \nabla_y \partial_{y_k,y_\ell}f_t^x)^2\Big)\\
			&+ 8 \sum_{i,j,k,\ell=1}^d\gamma_3(\partial_{y_i}g(x),\nabla_y \partial_{y_j,y_k,y_\ell}f_t^x)(\partial_{y_i,y_j,y_k,y_\ell} f_t^x) -2\gamma_3\lambda_-\sum_{i,j,k,\ell=1}^d (\nabla_y\partial_{y_i,y_j,y_k,y_\ell} f_t^x)^2.
	\end{split}
		\end{equation*}
		Rearranging and bounding the fifth order derivative terms by zero we have
		\begin{align*}
			&(\partial_t-\cL^x)\Gamma(f_t^x) \leq 2 ( \nabla_y f_t^x)^T\nabla_yg(x)( \nabla_y f_t^x)+\sum_{i,j=1}^d\gamma_1\nu_0(\partial_{y_i}\partial_{y_j}g(x), \nabla_y f_t^x)^2 \\
			&+\sum_{i,j,k=1}^d\gamma_2\nu_1(\partial_{y_i,y_j,y_k}g(x), \nabla_y f_t^x)^2+\sum_{i,j,k,\ell=1}^d\gamma_3\nu_3(\partial_{y_i,y_j,y_k,y_\ell}g(x), \nabla_y f_t^x)^2 \\
			&- 2\lambda_- \sum_{i=1}^d (\nabla_y\partial_{y_i} f_t^x)^2+\sum_{i,j=1}^d\gamma_1\nu_0^{-1}( \partial_{y_i}\partial_{y_j} f_t^x)^2 +4\sum_{i=1}^d\gamma_1(\nabla_y\partial_{y_i}f_t^x)^T(\nabla_yg(x))(\nabla_y\partial_{y_i}f_t^x)\\
			&+3\sum_{i,j,k=1}^d\gamma_2\nu_2(\partial_{y_i,y_j}g(x), \nabla_y \partial_{y_k}f_t^x)^2+\sum_{i,j,k,\ell=1}^d4\nu_4\gamma_3(\partial_{y_i,y_j,y_k}g(x), \nabla_y \partial_{y_\ell}f_t^x)^2\\
			&-2\gamma_1\lambda_-\sum_{i,j=1}^d(\nabla_y\partial_{y_i}\partial_{y_j} f_t^x)^2+\sum_{i,j,k=1}^d\gamma_2(\nu_1^{-1}+3\nu_2^{-1})(\partial_{y_i,y_j,y_k} f_t^x)^2\\
			&+\sum_{j,k=1}^d \gamma_26(\nabla_y\partial_{y_j,y_k} f_t^x)^T(\nabla_yg(x))(\nabla_y \partial_{y_j,y_k}f_t^x)+ \sum_{i,j,k,\ell=1}^d6\gamma_3\nu_5(\partial_{y_i,y_j}g(x), \nabla_y \partial_{y_k,y_\ell}f_t^x)^2 \\
			&-2\gamma_2\lambda_-\sum_{i,j,k=1}^d (\nabla_y\partial_{y_i,y_j,y_k} f_t^x)^2+\gamma_3(\nu_3^{-1}+4\nu_4^{-1}+6\nu_5^{-1})(\partial_{y_i,y_j,y_k,y_\ell} f_t^x)^2\\
			&+ 8 \sum_{i,j,k,\ell=1}^d\gamma_3(\nabla_y\partial_{y_j,y_k,y_\ell} f_t^x)^T(\nabla_{y}g(x))( \nabla_y \partial_{y_j,y_k,y_\ell}f_t^x) .
		\end{align*}
		By making the choice
		\begin{align*}
			&\nu_0=\zeta_1/\gamma_1,  \nu_1=\zeta_2/\gamma_2, \nu_3=\zeta_3/\gamma_3\\
			&\nu_2=\frac{2\gamma_1\zeta_1}{3\gamma_2}, \nu_4=\frac{\gamma_1}{2\gamma_3}\zeta_2, \nu_5=\frac{\gamma_2}{2\gamma_3}\zeta_1,\\
			&\gamma_1=\sqrt{2\zeta_1\lambda_{-}},\quad  \gamma_2=\sqrt{\frac{2\gamma_1\lambda_{-}}{\zeta_2^{-1}+\frac{9}{2}\gamma_1^{-1}\zeta_1^{-1}}} ,\quad \gamma_3=\sqrt{\frac{2\gamma_2\lambda_{-}}{\zeta_3^{-1}+8\gamma_1^{-1}\zeta_2^{-1}+12\gamma_2^{-1}\zeta_1^{-1}}}.
		\end{align*}
		we obtain
		\begin{equation}\label{eqn:finalderivativeestimate}\begin{split}
				&(\partial_t-\cL^x)\Gamma(f_t^x) \leq 2 ( \nabla_y f_t^x)^T\nabla_yg(x)( \nabla_y f_t^x)+\sum_{i,j=1}^d\zeta_1(\partial_{y_i}\partial_{y_j}g(x), \nabla_y f_t^x)^2 \\
				&+\sum_{i,j,k=1}^d\zeta_2(\partial_{y_i,y_j,y_k}g(x), \nabla_y f_t^x)^2+\sum_{i,j,k,\ell=1}^d\zeta_3(\partial_{y_i,y_j,y_k,y_\ell}g(x), \nabla_y f_t^x)^2 \\
				&+4\sum_{i=1}^d\gamma_1(\nabla_y\partial_{y_i}f_t^x)^T(\nabla_yg(x))(\nabla_y\partial_{y_i}f_t^x)\\
				&+\sum_{i,j,k=1}^d2\gamma_1\zeta_1(\partial_{y_i,y_j}g(x), \nabla_y \partial_{y_k}f_t^x)^2+\sum_{i,j,k,\ell=1}^d2\zeta_2\gamma_1(\partial_{y_i,y_j,y_k}g(x), \nabla_y \partial_{y_\ell}f_t^x)^2\\
				&+\sum_{j,k=1}^d \gamma_26(\nabla_y\partial_{y_j,y_k} f_t^x)^T(\nabla_yg(x))(\nabla_y \partial_{y_j,y_k}f_t^x)+ \sum_{i,j,k,\ell=1}^d3\gamma_2\zeta_1(\partial_{y_i,y_j}g(x), \nabla_y \partial_{y_k,y_\ell}f_t^x)^2 \\
				&+ 8 \sum_{i,j,k,\ell=1}^d\gamma_3(\nabla_y\partial_{y_j,y_k,y_\ell} f_t^x)^T(\nabla_{y}g(x))( \nabla_y \partial_{y_j,y_k,y_\ell}f_t^x) .
			\end{split}
		\end{equation}
		Now by applying \eqref{eq:driftcondition} we have
		\begin{align*}
			(\partial_t-\cL^x)\Gamma(f_t^x) &\leq -\cb\lvert\nabla_y f_t^x\rvert^2-2\cb \sum_{i=1}^d\gamma_1\lvert\nabla_y\partial_{y_i}f_t^x\rvert^2-3\cb\sum_{j,k=1}^d \gamma_2\lvert\nabla_y\partial_{y_j,y_k} f_t^x\rvert^2\\
			&-4\cb \sum_{i,j,k,\ell=1}^d\gamma_3\lvert\nabla_y\partial_{y_j,y_k,y_\ell} f_t^x\rvert^2 \\
			&\leq -\cb\Gamma(f_t^x) .
		\end{align*}
		Therefore \eqref{eq:Gronwallcond} holds.
		Let us now explain how to prove \eqref{eqn:SGdecayConc2}. {The proof follows the same argument as above with the difference that we set $\gamma_2=\gamma_3=0$.  in the above argument. That is we replace $\Gamma(f_t^x)$ by
  \begin{equation*}
    \Gamma(f_t^x) = \sum_{i=1}^d \lvert \partial_{y_i} f_t^x\rvert^2 + \gamma_1\sum_{i,j=1}^d \lvert \partial_{y_i}\partial_{y_j} f_t^x\rvert^2 .
  \end{equation*}
  Following the previous argument with $\zeta_2$, and $\zeta_3$ set to zero leads to \eqref{eqn:finalderivativeestimate} }becoming
		\begin{equation*}
			\begin{split}
				&(\partial_t-\cL^x)\Gamma(f_t^x) \leq 2 ( \nabla_y f_t^x)^T\nabla_yg(x)( \nabla_y f_t^x)+\sum_{i,j=1}^d\zeta_1(\partial_{y_i}\partial_{y_j}g(x), \nabla_y f_t^x)^2 \\
				&+4\sum_{i=1}^d\gamma_1(\nabla_y\partial_{y_i}f_t^x)^T(\nabla_yg(x))(\nabla_y\partial_{y_i}f_t^x)\\
				&+\sum_{i,j,k=1}^d2\gamma_1\zeta_1(\partial_{y_i,y_j}g(x), \nabla_y \partial_{y_k}f_t^x)^2.
			\end{split}
		\end{equation*}
		Therefore, applying \eqref{eq:driftcondition2},
  we have that $(\partial_t-\cL^x)\Gamma(f_t^x)\leq -\kappa \Gamma(f_t^x)$ and we obtain the $\Gamma(f_t^x)$ decays exponentially by setting $s=t$ in \eqref{eq:aftergronwall}. We can express this in terms of the derivatives of $f_t^x$ by using that 
		for any $\pow{}{x},\pow{}{y}\geq 0$ and $\psi\in C^{0,2}(\R^n\times\R^d)$ with $\Vseminorm{\psi}{2,\pow{}{x},\pow{}{y}}<\infty$ and for all $x \in \R^n, y \in \R^d$
		we have
		\begin{equation}\label{eqn:SGderfor6.2}
			\begin{split}
				&\sum_{i=1}^d \lvert \partial_{y_i} \SGfastsx{t}{x}\psi^x(y)\rvert^2 + \sum_{i,j=1}^d \lvert \partial_{y_i}\partial_{y_j} \SGfastsx{t}{x}\psi^x(y)\rvert^2\\ &\leq 3\max\{(\zeta_1\lambda_-)^{-1/2} ,1\}\left( d+\sqrt{\zeta_1\lambda_-}d^2\right)\Vseminorm{\psi}{2,\pow{}{x},\pow{}{y}}^{2}e^{-\kappa t}\left(1+ |x|^{2\pow{}{x}}+|y|^{2\pow{}{y}} +\frac{C'_{2\pow{}{y}}}{r'_{2\pow{}{y}}}\right),
			\end{split}
		\end{equation}
		and so \eqref{eqn:SGdecayConc2} holds. {Here the term $\max \{(\zeta_1\lambda_{-})^{-\frac{1}{2}},1\}$ arises inverse of the smallest coefficient in $\Gamma$, i.e. $(\min\{1,\gamma_1\})^{-1}$ and recalling that $\gamma_1=\sqrt{\kappa \lambda_{-}}$.} In particular, we have
\begin{equation}\label{eqn:SGderfor6.2 1Norm}
			\begin{split}
				&\sum_{i=1}^d \lvert \partial_{y_i} \SGfastsx{t}{x}\psi^x(y)\rvert , \sum_{i,j=1}^d \lvert \partial_{y_i}\partial_{y_j} \SGfastsx{t}{x}\psi^x(y)\rvert\\ &\leq \sqrt{3}d\max\{(\zeta_1\lambda_-)^{-1/4} ,1\}\left( d+\sqrt{\zeta_1\lambda_-}d^2\right)^{1/2}\Vseminorm{\psi}{2,\pow{}{x},\pow{}{y}}e^{-\kappa t/2}\left(1+ |x|^{\pow{}{x}}+|y|^{\pow{}{y}} +\sqrt{\frac{C'_{2\pow{}{y}}}{r'_{2\pow{}{y}}}}\right),
			\end{split}
		\end{equation}
  and so \eqref{eqn:SGdecayConc2} holds.

		
	\end{proof}
	
	\subsection{Proofs of Section \ref{sec:psneqn}}\label{app:proofspsneqn} 	
	Before we prove Lemma \ref{lemma:probRep} we give two preliminary lemmas. The lemma below is a specific case of \cite[Note $3.1$]{michelaLongTimeAsymptotics}. 
	\begin{lemma}\label{Aineq}
		Let $([T,\infty), \lambda)$ be the truncated real line (for some $T \in \R$) equipped with the Lebesgue measure, $(\mathbb{R}^d, \mu)$ be Euclidean space equipped with some measure $\mu$ and $f: [0,\infty)\times \mathbb{R}^d \rightarrow \R$ a positive function. Suppose
		\begin{equation*}
			F(y) \coloneqq \int_T^\infty f(s,y) ds  < \infty \quad \mu\text{-a.e}
		\end{equation*}
		and \begin{equation*}
			\int  \left(\int_T^\infty  f(s,y)ds \right)^m d\mu(y)< \infty \text{ for some } m>1.
		\end{equation*}
		Then
		\begin{equation*}
			\int_{\R^d}  \left(\int_T^\infty f(s,y)ds\right)^m d\mu(y)\leq \left( \int_T^\infty  \left( \int \left(f(s,y)\right)^m d\mu(y) \right)^\frac{1}{m} \right)^m ds
		\end{equation*}
	\end{lemma}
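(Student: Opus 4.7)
The statement is (up to what appears to be a transcription issue) the Minkowski integral inequality in $L^m(\mu)$, and my plan is to prove it via the standard H\"older-duality argument. Reading the right-hand side literally as typeset gives $\int F^m\,d\mu\le\bigl(\int_T^\infty(\int f\,d\mu)^{1/m}\bigr)^m ds$, which is not dimensionally consistent ($ds$ sits outside the outer power) and in fact fails for simple product test functions $f(s,y)=g(s)h(y)$ whenever $\int h^m>\int h$. I will therefore read the intended right-hand side as
$$\left(\int_T^\infty\!\Bigl(\int_{\R^d} f(s,y)^m\,d\mu(y)\Bigr)^{1/m}\,ds\right)^m,$$
with both the placement of $ds$ and the missing exponent on the inner $f$ corrected; this is the inequality the proof below delivers.

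Write $F(y):=\int_T^\infty f(s,y)\,ds$. The first step is to expand $F^m=F\cdot F^{m-1}$ and use Tonelli (legal because $f,F\ge 0$ and measurable, independent of finiteness hypotheses):
$$\int_{\R^d} F(y)^m\,d\mu(y)=\int_T^\infty\!\!\int_{\R^d} f(s,y)\,F(y)^{m-1}\,d\mu(y)\,ds.$$
Next, for each fixed $s$ apply H\"older's inequality in $y$ with conjugate exponents $m$ and $m/(m-1)$:
$$\int_{\R^d} f(s,y)\,F(y)^{m-1}\,d\mu(y)\le\Bigl(\int_{\R^d} f(s,y)^m\,d\mu\Bigr)^{1/m}\Bigl(\int_{\R^d} F^m\,d\mu\Bigr)^{(m-1)/m}.$$

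Integrating in $s$, the second factor is independent of $s$ and by the second hypothesis is finite, so it can be pulled out. Dividing both sides by $(\int F^m\,d\mu)^{(m-1)/m}$ (trivial if this quantity is zero) yields $(\int F^m\,d\mu)^{1/m}\le\int_T^\infty(\int f^m\,d\mu)^{1/m}\,ds$; raising to the $m$-th power gives the claim. The only step requiring attention is the division, which is legitimate precisely because of the hypothesis $\int F^m\,d\mu<\infty$; everything else is routine once nonnegativity allows unrestricted use of Tonelli.
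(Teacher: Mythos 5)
Your correction of the typeset statement is right: the $ds$ has escaped the outer $m$-th power, the inner integral lacks the $m$-th power on $f$, and the $\int_0^\infty$ on the left should be an integral over $\R^d$; what is actually needed and used (in the proof of Lemma~\ref{lemma:psneqn}, with $m=2$) is the Minkowski integral inequality
\[
\left(\int_{\R^d}\Bigl(\int_T^\infty f(s,y)\,ds\Bigr)^m d\mu(y)\right)^{1/m}\le\int_T^\infty\Bigl(\int_{\R^d} f(s,y)^m\,d\mu(y)\Bigr)^{1/m} ds.
\]
Your proof --- Tonelli applied to $F\cdot F^{m-1}$ (valid by nonnegativity), H\"older in $y$ with conjugate exponents $m$ and $m/(m-1)$, integrate in $s$, pull out the $s$-independent factor $(\int F^m\,d\mu)^{(m-1)/m}$, divide (legitimate by the finiteness hypothesis, trivial when zero), and raise to the $m$-th power --- is the standard duality argument for Minkowski's integral inequality and is correct. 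The paper itself does not prove the lemma but merely cites an external reference, so there is no internal proof to compare against; what you have written is exactly the classical argument the cited note contains.
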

	\begin{proof}
		We refer the reader to \cite[Note $3.1$]{michelaLongTimeAsymptotics}.
	\end{proof}
	In Lemma \ref{lemma:psneqn} we will prove that under certain conditions the solution to the Poisson equation, \eqref{poisprobgeneral}, is given by \eqref{ft1rep}.
	The following is a slight generalisation of \cite[Corollary $3.10$]{cattiaux2011central}, but less general than Proposition $3.21$ from the same paper.
	
	\begin{lemma}\label{lemma:psneqn}
		Let $\phi:\R^n\times\R^d \to \R$ be such that for each $x\in\R^n$ $y\mapsto\phi^x(y)\in L^2(\mu)$. Let Assumption \ref{ass:polGrowth} \ref{ass:polGrowthDriftFast}, \ref{ass:polGrowthDiffFast} and Assumption \ref{UniformEllip} hold. If
		\begin{equation*}
			\intPos \|\SGfastsx{s}{x} \phi-\bar{\phi}\|_{L^2(\mu)}ds < \infty,
		\end{equation*}
		then the function $u$ defined in \eqref{ft1rep} is a classical solution to the Poisson equation \eqref{poisprobgeneral}.
	\end{lemma}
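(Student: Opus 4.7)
The plan is to follow the heuristic argument sketched in Section \ref{sec:psneqn} (just before the statement of Lemma \ref{lemma:probRep}) and make it rigorous using the $L^2(\mu^x)$ hypothesis, essentially in the spirit of \cite[Corollary 3.10]{cattiaux2011central}. Throughout we fix $x \in \R^n$ and work in $L^2(\mu^x)$; we recall that $\SGfastsxn{t}{x}$ extends to a $C_0$-semigroup of contractions on $L^2(\mu^x)$ because $\mu^x$ is invariant, and that its generator $\cL^x$ (defined on its natural domain in $L^2(\mu^x)$) is a closed operator. Define the truncated candidate solution
\begin{equation*}
u^{x,T}(y) \coloneqq -\int_0^T \bigl((\SGfastsxn{s}{x}\phi^x)(y) - \bar{\phi}(x)\bigr)\,ds.
\end{equation*}

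First, I would verify that $u^{x,T} \to u^x_\phi$ in $L^2(\mu^x)$ as $T \to \infty$. Since the integrand $\SGfastsxn{s}{x}\phi^x - \bar{\phi}(x)$ is assumed to be in $L^2(\mu^x)$ with $\int_0^\infty \|\SGfastsxn{s}{x}\phi - \bar{\phi}\|_{L^2(\mu)}\,ds < \infty$, the sequence $\{u^{x,T}\}_{T>0}$ is Cauchy in $L^2(\mu^x)$: indeed, for $T_2 > T_1$,
\begin{equation*}
\|u^{x,T_2} - u^{x,T_1}\|_{L^2(\mu^x)} \leq \int_{T_1}^{T_2} \|\SGfastsxn{s}{x}\phi^x - \bar{\phi}(x)\|_{L^2(\mu^x)}\,ds \longrightarrow 0.
\end{equation*}
Hence $u^{x,T} \to u^x_\phi$ in $L^2(\mu^x)$ and $u^x_\phi \in L^2(\mu^x)$.

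Second, I would apply $\cL^x$ to $u^{x,T}$. Using the fundamental theorem of calculus together with the identity $\frac{d}{ds}\SGfastsxn{s}{x}\psi = \cL^x \SGfastsxn{s}{x}\psi = \SGfastsxn{s}{x}\cL^x\psi$ (valid in $L^2(\mu^x)$ on the domain of $\cL^x$), one computes, exactly as in the heuristics preceding Lemma \ref{lemma:probRep},
\begin{equation*}
\cL^x u^{x,T} = \phi^x - \SGfastsxn{T}{x}\phi^x,
\end{equation*}
the computation being rigorous either by approximating $\phi^x$ by elements of the domain or by noting that $u^{x,T}$ lies in the domain of $\cL^x$ as a Bochner integral of strongly continuous trajectories. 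As $T \to \infty$, the right-hand side converges in $L^2(\mu^x)$ to $\phi^x - \bar{\phi}(x)$ by the hypothesis that $\|\SGfastsxn{T}{x}\phi^x - \bar{\phi}(x)\|_{L^2(\mu^x)} \to 0$ (which is implied by integrability over $(0,\infty)$ together with monotonicity of $s \mapsto \|\SGfastsxn{s}{x}\phi-\bar{\phi}\|_{L^2(\mu)}$, or more directly by choosing a subsequence). Combined with $u^{x,T} \to u^x_\phi$, closedness of $\cL^x$ gives $u^x_\phi \in \mathrm{Dom}(\cL^x)$ and $\cL^x u^x_\phi = \phi^x - \bar{\phi}(x)$ in $L^2(\mu^x)$.

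Finally, I would upgrade this $L^2$-identity to a classical one. Here the main, and essentially only non-routine, obstacle is elliptic regularity: I would use that under Assumption \ref{UniformEllip} the operator $\cL^x$ is uniformly elliptic in $y$ with smooth coefficients (Assumption \ref{ass:polGrowth}\ref{ass:polGrowthDriftFast},\ref{ass:polGrowthDiffFast}), so any $L^2(\mu^x)$ solution of $\cL^x u = \phi^x - \bar{\phi}(x)$ with right-hand side $C^{4+\nu}$ in $y$ is, by interior Schauder/local regularity estimates, actually $C^{2}$ (indeed $C^{4+\nu}$) in $y$ and satisfies the equation pointwise. This yields the classical solution and completes the proof. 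The main technical difficulty is verifying that the Bochner-integral manipulations commute with the generator $\cL^x$; this is resolved by the standard density argument in $C_0$-semigroup theory together with closedness of $\cL^x$.
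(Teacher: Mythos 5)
Your proof is correct and follows essentially the same route as the paper: truncate the time integral, show the truncations converge in $L^2(\mu^x)$ via the Minkowski/Bochner-integral triangle inequality (the paper uses Lemma \ref{Aineq}, you use the equivalent contraction bound for Bochner integrals), pass to the limit using closedness of $\cL^x$ on $L^2(\mu^x)$, and upgrade to a classical solution by interior elliptic regularity. The only cosmetic difference is that you spell out the generator computation and the closedness argument that the paper delegates to \cite[Section 3]{cattiaux2011central}; your observation that $s\mapsto\|\SGfastsxn{s}{x}\phi^x-\bar\phi\|_{L^2(\mu^x)}$ is monotone (hence converges to zero by integrability) is a correct and clean way to control $\SGfastsxn{T}{x}\phi^x\to\bar\phi$.
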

	\begin{proof}
		Fix $x\in \R^n$. As argued in \cite[Section $3$]{cattiaux2011central}, it is sufficient to show that for $u^{x,T}$ defined by \eqref{eqn:truncPsn} we have $u^{x,T} \to u^x$ as $T\to \infty$ in $L^2(\mu^x)$, as this implies that $u^x$ is a solution of $\cL^x u^x(y) = \phi^x-\bar{\phi}(x)$. Using Lemma \ref{Aineq},
		\begin{align*}
			\|u^{x,T} - u^x\|^2_{L^2(\mu^x)} &= \left[ \int  \left(\int^\infty_T (\SGfastsx{s}{x} \phi^x)(y)-\bar{\phi}(x) ds\right)^2 \mu^x(dy)\right]^{\frac{1}{2}} \\
			&\leq \int_T^\infty \left(\int |\SGfastsx{s}{x} \phi^x(y)-\bar{\phi}(x)|^2  \mu^x(dy)\right)^\frac{1}{2}ds \\
			&= \int_T^\infty \|\SGfastsx{s}{x} \phi^x-\bar{\phi}(x)\|_{L^2(\mu^x)}ds.
		\end{align*}
		Taking $T \rightarrow \infty$ concludes the proof. Smoothness follows by ellipticity. Indeed, by considering the PDE $\cL u^x = \phi^x-\bar{\phi}(x)$ restricted to a ball, one can use standard results to obtain smoothness. This is done in \cite[Theorem 1]{pardoux2001}, where we make the observation that since we are restricting to a ball, our coefficients satisfy their conditions.
		Assume, conversely, that $\cL^x u^x = \phi^x-\bar{\phi}(x)$ with $\int u^xd\mu^x=0$. Then
		\begin{equation*}
			\SGfastsx{t}{x}u^x - u^x = \int^t_0 \partial_s\SGfastsx{s}{x}u^x ds = \int^t_0 \cL^x \SGfastsx{s}{x} u^x ds = \int_0^t \SGfastsx{s}{x} \cL^x  u^x ds = \int_0^t \SGfastsx{s}{x}\phi^x-\bar{\phi}(x) ds.
		\end{equation*}
		From Proposition \ref{lemma:IntAvg}, we have that $\lim_{t \rightarrow \infty}\SGfastsx{t}{x}u^x = \int u^x d\mu^x =0$ then we have
		\begin{equation*}
			u^x = -\int_0^\infty \left(\SGfastsx{s}{x} \phi^x-\bar{\phi}(x)\right) ds,
		\end{equation*}
		so that the solution is unique in the class $L^2(\mu^x)$ when restricted to mean-zero functions.
	\end{proof}
	An alternative proof to Lemma \ref{lemma:psneqn}, can be obtained by using the methods in  \cite{cattiaux2011central}, in particular by combining  \cite[Section 5.5]{cattiaux2011central}, \cite[Remark 5.7]{cattiaux2011central} and  \cite[Proposition 3.21]{cattiaux2011central}. 
	
	\begin{proof}[Proof of Lemma \ref{lemma:probRep}]
		It is sufficient to show that the conditions of Lemma \ref{lemma:psneqn} hold. To see this, we use (\ref{eqn:SGconv}) to write
		\begin{align*}
			\intPos \|\SGfastsx{s}{x} \phi^x-\bar{\phi}(x)\|_{L^2(\mu^x)}ds &=
			\int^\infty_0\left[ \int   |\left(\SGfastsx{s}{x} \phi^x\right)(y)-\bar{\phi}(x)|^2 \mu^x(dy)\right]^{\frac{1}{2}}ds \\
			&\leq \int^\infty_0\left[ \int  Ce^{-2cs}(1+ |x|^{2\pow{}{x}} + |y|^{2\pow{}{y}})  \mu^x(dy)\right]^{\frac{1}{2}}ds.
		\end{align*}
		Then we use (\ref{eqn:momBoundsYinv}) from Lemma \ref{prop:momentBounds}
		\begin{equation*}
			\intPos \|\SGfastsx{s}{x} \phi^x-\bar{\phi}^x\|_{L^2(\mu^x)}ds \leq C(1+ |x|^{\pow{}{x}}) \int^\infty_0 e^{-cs} ds < \infty.
		\end{equation*}
		Hence Lemma \ref{lemma:psneqn} holds and we conclude.
		
	\end{proof}

	\begin{proof}[Proof of Lemma \ref{lemma:IntAvgUB}]
		This is a method of proof seen in the literature elsewhere (see e.g \cite{dragoni2010ergodicity}) so we only sketch it. We write
		\begin{align*}
			|\left(\SGfastsx{t}{x}\phi^x\right)(y) -\left(\SGfastsx{s}{x}\phi^x\right)(y)| =\left\lvert \int_s^t \partial_u \left(\SGfastsx{u}{x}\phi^x\right)(y)du\right\rvert
			= \left\lvert\int_s^t \left(\Lfastx{x}\SGfastsx{u}{x}\phi^x\right)(y)du\right\rvert.
		\end{align*}
		Now we use Assumption \ref{ass:polGrowth} (\ref{ass:polGrowthDriftFast},\ref{ass:polGrowthDiffFast}) and Proposition \ref{prop:derivativeest} to have
		\begin{align*}
			|\left(\SGfastsx{t}{x}\phi^x\right)(y) -\left(\SGfastsx{s}{x}\phi^x\right)(y)| &\leq C \Vseminorm{\phi}{2,\pow{}{x},\pow{}{y}}\int_s^t \left( 1 + |y|^{\pow{\drift}{y}}\right)\left(1 + |x|^{\pow{}{x}} + |y|^{\pow{}{y}}\right)e^{-cu} du \\
			&+ C \Vseminorm{\phi}{2,\pow{}{x},\pow{}{y}}\int_s^t \left(1 + |x|^{\pow{}{x}} + |y|^{\pow{}{y}}\right)e^{-cu} du\\
			&\leq C\Vseminorm{\phi}{2,\pow{}{x},\pow{}{y}}(1 + |y|^{\pow{\drift}{y}+\pow{}{y}}+|y|^{2\pow{g}{y}} + |x|^{2\pow{}{x}})(e^{-cs}-e^{-ct}).
		\end{align*}Taking $t \rightarrow \infty$ on both sides concludes \eqref{eqn:SGconvUB}. Here we use Assumption \ref{DriftAssump} to apply Lemma \ref{lemma:IntAvg} so that $\left(\SGfastsx{t}{x}\phi^x\right)(y) \rightarrow \mu^x(\phi^x)$. Integrating over $s$ yields \eqref{eqn:IntAvg1}.
	\end{proof}


	\begin{proof}[Proof of Proposition \ref{lemma:IntAvgDerDer}]
		This proof will follow a similar outline to the proof of Proposition \ref{lemma:IntAvgDerRepresentation}, and in the same way we will consider increments $he_j$ where $e_j$ is the $j^{th}$ element of the standard basis of $\R^n$.
		We begin by showing (\ref{eqn:intAvgDerFast}) and some preliminary bounds, before outlining the steps to proving the continuity and the second $x$ derivative representations \eqref{eqn:repSGderder} and \eqref{eqn:invMeasDerDer}.
		We use \eqref{eqn:repSG} to write
		\begin{equation}\label{eqn:term0}
			\fastDer{j} \left(\partial_{x_i} \SGfastsx{t}{x} \phi^{x}(y)\right) =  \fastDer{j}\SGfastsx{t}{x}\partial_{x_i}  \phi^{x}(y) + \int_0^{t}\fastDer{j} \left[\SGfastsx{t-s}{x} \LfastxDeri \SGfastsx{s}{x}\phi^{x}\right](y) ds.
		\end{equation}
		Similarly,
		\begin{equation}\label{eqn:termdyy}
			\fastDer{k}\fastDer{j} \left(\partial_{x_i} \SGfastsx{t}{x} \phi^{x}(y)\right) =  \fastDer{k}\fastDer{j}\SGfastsx{t}{x}\partial_{x_i}  \phi^{x}(y) + \int_0^{t}\fastDer{k}\fastDer{j} \left[\SGfastsx{t-s}{x} \LfastxDeri \SGfastsx{s}{x}\phi^{x}\right](y) ds.
		\end{equation}
		Addressing the first terms of \eqref{eqn:term0} and \eqref{eqn:termdyy} we have that, from Proposition \ref{prop:derivativeest}, since $\Vseminorm{\partial_{x_i} \phi}{2,\pow{}{x},\pow{}{y}}\leq \Vnorm{ \phi}{4,\pow{}{x},\pow{}{y}}$,
		\begin{equation}\label{eqn:term1}
			\left| \fastDer{j}\SGfastsx{t}{x}\partial_{x_i}  \phi^{x}(y) \right| + \left| \fastDerDer{j}{k}\SGfastsx{t}{x}\partial_{x_i}  \phi^{x}(y) \right| \leq C\Vnorm{\phi}{4,\pow{}{x},\pow{}{y}}e^{-ct}(1+ |y|^{\pow{}{y}}+|x|^{\pow{}{x}}).
		\end{equation}
		Here $C$ and $c$ are constants which may change line by line.
		Addressing the integral terms in \eqref{eqn:term0} and \eqref{eqn:termdyy} we have, using Proposition \ref{prop:derivativeest} (we bound $ \fastDer{j}
		\LfastxDeri \SGfastsx{s}{x}\phi^{x}$ and $\fastDerDer{j}{k}
		\LfastxDeri \SGfastsx{s}{x}\phi^{x} $ by taking the limit of \eqref{eqn:elemBound3} and \eqref{eqn:elemBound4} as $h \rightarrow 0$) again, that
		\begin{equation}\label{eqn:term2}
			\begin{split}
				&\left| \int_0^{t}\fastDer{j} \left[\SGfastsx{t-s}{x} \LfastxDeri \SGfastsx{s}{x}\phi^{x}\right](y) ds \right|\\ &\leq \int_0^{t}C\Vseminorm{\phi}{4,\pow{}{x},\pow{}{y}}e^{-c(t-s)}e^{-cs}(1+ |y|^{\max\{2\pow{\drift}{y},\pow{\drift}{y}
					+\pow{}{y}\}}+|x|^{2\pow{}{x}})ds \\
				&\leq C\Vseminorm{\phi}{4,\pow{}{x},\pow{}{y}} te^{-ct}(1+ |y|^{M^g_y}+|x|^{2\pow{}{x}}).
			\end{split}
		\end{equation}
		as well as
		\begin{equation}\label{eqn:term3}
			\left| \int_0^{t}\fastDerDer{j}{k} \left[\SGfastsx{t-s}{x} \LfastxDeri \SGfastsx{s}{x}\phi^{x}\right](y) ds \right| \leq C\Vseminorm{\phi}{4,\pow{}{x},\pow{}{y}} te^{-ct} (1+ |y|^{M^g_y}+|x|^{2\pow{}{x}}).
		\end{equation}
		Now we put \eqref{eqn:term1}, \eqref{eqn:term2} and \eqref{eqn:term3} together with \eqref{eqn:term0} and \eqref{eqn:termdyy} to write 
		\begin{equation*}
			\left|\fastDer{j} \left(\partial_{x_i} \SGfastsx{t}{x} \phi^{x}(y)\right) \right| + \left|\fastDerDer{j}{k} \left(\partial_{x_i} \SGfastsx{t}{x} \phi^{x}(y)\right) \right|\leq C (1+t)\Vnorm{\phi}{4,\pow{}{x},\pow{}{y}}e^{-ct} (1+ |y|^{M^g_y}+|x|^{2\pow{}{x}}),
		\end{equation*}
		which implies the required inequality \eqref{eqn:intAvgDerFast}. Now we show that $\fastDer{j} \left(\partial_{x_i} \SGfastsx{t}{x} \phi^{x}(y)\right)$ and $\fastDer{k}\fastDer{j} \left(\partial_{x_i} \SGfastsx{t}{x} \phi^{x}(y)\right)$ are continuous in $x$. In order to see this, notice that we can extend the result $(i)$ from Proposition \ref{lemma:IntAvgDerRepresentation} to all $\psi \in C^2(\R^n\times \R^d)$ with $\lvert \psi \rvert_{2,\pow{}{x}, \pow{}{y}}< \infty$ by taking a sequence $\phi_k \in \funcSpace{\pow{}{x}}{\pow{}{y}}$ that converges pointwise to $\psi$ and such that $\lvert \psi -  \phi_k \rvert_{2,\pow{\psi}{x}, \pow{\psi}{y}} \rightarrow 0$ as $k\rightarrow \infty$. Indeed, using the DCT (justified by the uniform bound  obtained from Proposition \ref{prop:derivativeest}) concludes that we can extend $(i)$ from Proposition \ref{lemma:IntAvgDerRepresentation}. Next, use this along with the DCT (justified by the uniform bounds  \eqref{eqn:term2} and \eqref{eqn:term3}) to obtain that \eqref{eqn:term0} and \eqref{eqn:termdyy} are continuous in $x$ as required. This concludes the proof of \emph{(i)}. 
		We move on to proving some preliminary bounds that will be useful for proving \emph{(ii)}.
		First, similarly to \eqref{eqn:SGdecay},
		\begin{equation}
			\begin{split}\label{eqn:SGdecay1}
				\left|\LfastxDeriDerj \SGfastsx{s}{x}\phi^{x}(y)\right| &=
				\sum_{k=1}^d\left|\slowDerDer{i}{j}\drift_k(x,y)\fastDer{k}\SGfastsx{s}{x}\phi^{x}(y)\right| + \sum_{k,l=1}^d\left| \slowDerDer{i}{j} \diffusion(x)_{kl}\fastDerDer{k}{l}\SGfastsx{s}{x}\phi^{x}(y)\right|\\ 
				&\leq C\Vseminorm{\phi}{4,\pow{}{x},\pow{}{y}}e^{-cs} (1+ |y|^{M^g_y}+|x|^{2\pow{}{x}}).
			\end{split}
		\end{equation}
		We also have, by direct calculation and \eqref{eqn:intAvgDerFast}
		\begin{equation}
			\begin{split}\label{eqn:SGdecay2}
				\left|\LfastxDeri \partial_{x_i} \SGfastsx{t}{x} \phi^{x}(y) \right| &= \sum_{j=1}^d\left|\slowDer{i}\drift_j(x,y)\fastDer{j}\partial_{x_i} \SGfastsx{s}{x} \phi^{x}(y)\right| + \sum_{j,k = 1}^d\left| \slowDer{i} \diffusion(x)_{jk}\fastDerDer{j}{k} \partial_{x_i} \SGfastsx{s}{x} \phi^{x}(y)\right|\\ 
				&\leq C\Vnorm{\phi}{4,\pow{}{x},\pow{}{y}}e^{-cs} (1+ |y|^{M^g_y + \pow{g}{y}}+|x|^{4\pow{}{x}}).
			\end{split}
		\end{equation}
		Finally, with (\ref{eqn:SGdecay1}) and (\ref{eqn:SGdecay2}) we have
		\begin{equation}\label{eqn:SGdecay3}
			\begin{split}
				\bigg|\LfastxDeriDerj \SGfastsx{s}{x}\phi^{x}(y) &+ \LfastDerj{x} \partial_{x_j} \SGfastsx{s}{x} \phi^{x}(y)  + \LfastDeri{x} \partial_{x_i} \SGfastsx{s}{x} \phi^{x}(y) \bigg| \\
				&\leq \left|\LfastxDeriDerj \SGfastsx{s}{x}\phi^{x}(y)\right| + \left| \LfastDerj{x} \partial_{x_i} \SGfastsx{s}{x} \phi^{x}(y)\right|  +\left| \LfastDeri{x} \partial_{x_j} \SGfastsx{s}{x} \phi^{x}(y) \right| \\
				&\leq C\Vnorm{\phi}{4,\pow{}{x},\pow{}{y}} e^{-cs} (1+ |y|^{M^g_y + \pow{g}{y}}+|x|^{4\pow{}{x}}).
			\end{split}
		\end{equation}
		Now we outline the strategy of proof for \emph{(ii)}, making the note similarly to the start of the proof of Proposition \ref{lemma:IntAvgDerRepresentation} that here and throughout $h$ is in fact $he_j$.
		\begin{itemize}
			\item Step $1$: Define the difference quotient
			\begin{equation}\label{eqn:qbar}
				\widebar{q}_{t}^{h,\phi^x}(y) \coloneqq \frac{\slowDer{i} \left(\SGfastsx{t}{x+h}\phi^{x+h}\right)(y) - \slowDer{i} \left(\SGfastsx{t}{x}\phi^x\right)(y)}{h}
			\end{equation}
			and obtain the representation formula
			\begin{equation}\label{eqn:quotientrepDerDer}
				\begin{split}
					\widebar{q}_{t}^{h,\phi^x}(y) = \SGfastsx{t}{x}\widebar{q}_{0}^{h,\phi^x}(y) + \int_{0}^{t} \SGfastsx{t-s}{x}&\bigg[\LfastDeri{x+h} q_{s}^{h,\phi^x}(y) + \frac{1}{h}\left( \LfastDeri{x+h} -\LfastDeri{x}\right)\SGfastsx{s}{x}\phi^{x}(y) \\
					&+ \left(\frac{\Lfastx{x+h} - \Lfastx{x}}{h}\right)\slowDer{i} \left(\SGfastsx{s}{x+h}\phi^{x+h}\right)(y) \bigg]ds.
				\end{split}
			\end{equation}
			\item Step $2$: Let $h\rightarrow 0$ in \eqref{eqn:quotientrepDerDer} to obtain \eqref{eqn:repSGderder}, and prove the continuity of the LHS of \eqref{eqn:repSGderder}.
			\item Step $3$: Let $t \rightarrow \infty$ in \eqref{eqn:repSGderder} to obtain \eqref{eqn:invMeasDerDer}. The bound \eqref{intAvgDerDerInv} is then a consequence of \eqref{eqn:invMeasDerDer}.
			\item Step $4$: Integrate $	\partial_{x_i x_j} \left(\SGfastsx{t}{x}\phi^x\right)(y) -	\partial_{x_i x_j}\mu^x\left(\phi^x\right)$ with respect to $t$ using the representation formulas \eqref{eqn:repSGderder}-\eqref{eqn:invMeasDerDer} to obtain \eqref{eqn:intAvgDerDer}.
		\end{itemize}

		Step 1: We differentiate \eqref{eqn:qbar} in time using the formulas from Proposition \ref{lemma:IntAvgDerRepresentation} to get
		\begin{align*}
			\partial_{t} \widebar{q}_{t}^{h,\phi^x}(y) &= \frac{1}{h}\left( \LfastDeri{x+h}\SGfastsx{t}{x+h}\phi^{x+h}(y) + \Lfastx{x+h}\slowDer{i} \left(\SGfastsx{t}{x+h}\phi^{x+h}\right)(y) \right) \\
			&- \frac{1}{h}\left(\LfastDeri{x}\SGfastsx{t}{x}\phi^{x}(y) +  \Lfastx{x}\slowDer{i} \left(\SGfastsx{t}{x}\phi^x\right)(y) \right) \\
			&= \LfastDeri{x+h} \left( \frac{\SGfastsx{t}{x+h}\phi^{x+h}- \SGfastsx{t}{x}\phi^x}{h} \right)(y) + \frac{1}{h}\left( \LfastDeri{x+h} -\LfastDeri{x}\right)\SGfastsx{t}{x}\phi^{x}(y) \\
			&+ \left(\frac{\Lfastx{x+h} - \Lfastx{x}}{h}\right)\slowDer{i} \left(\SGfastsx{t}{x+h}\phi^{x+h}\right)(y) \\
			&+ \Lfastx{x}\left(\frac{\slowDer{i} \left(\SGfastsx{t}{x+h}\phi^{x+h}\right)(y) - \slowDer{i} \left(\SGfastsx{t}{x}\phi^x\right)(y)}{h}\right),
		\end{align*}
		which, using \eqref{eqn:qbar} and recalling the definition of $q_{t}^{h,\phi^x}(y)$ from \eqref{eqn:quotient}, gives us
		\begin{align*}
			\partial_{t} \widebar{q}_{t}^{h,\phi^x}(y) &=\LfastDeri{x+h} q_{t}^{h,\phi^x}(y) + \frac{1}{h}\left( \LfastDeri{x+h} -\LfastDeri{x}\right)\SGfastsx{t}{x}\phi^{x}(y) \\
			&+ \left(\frac{\Lfastx{x+h} - \Lfastx{x}}{h}\right)\slowDer{i} \left(\SGfastsx{t}{x+h}\phi^{x+h}\right)(y) + \Lfastx{x}\widebar{q}_{t}^{h,\phi^x}(y).
		\end{align*}
		Using variation of constants we get \eqref{eqn:quotientrepDerDer}.
		
		Step $2$: We let $h \rightarrow 0$ in a very similar way to the proof of Proposition \ref{lemma:IntAvgDerRepresentation}:
		\begin{align*}
			\lim_{h\rightarrow 0}\widebar{q}_{t}^{h,\phi^x}(y) &
			=  (\SGfastsx{t}{x}\slowDerDer{i}{j}\phi^x)(y) \\
			& + \int_{0}^{t} \SGfastsx{t-s}{x}\Big[\LfastxDeriDerj \SGfastsx{s}{x}\phi^{x}(y) + \LfastDerj{x}\slowDer{j} \left(\SGfastsx{s}{x}\phi^{x}\right)(y) +  \LfastDeri{x}\slowDer{i} \left(\SGfastsx{s}{x}\phi^{x}\right)(y) \Big]ds
		\end{align*}
		to conclude (\ref{eqn:repSGderder}). Indeed, to take $h \rightarrow 0$ we get pointwise convergence under the integral by the smoothness of our coefficients, and the continuity in $x$ of $\fastDer{i}\SGfastsx{s}{x}\phi^x(y)$, $\fastDerDer{i}{j}\SGfastsx{s}{x}\phi^x(y)$, $\fastDer{i}\slowDer{j}\SGfastsx{s}{x}\phi^x(y)$ and $\fastDerDer{i}{j}\slowDer{k}\SGfastsx{s}{x}\phi^x(y)$, and then use the DCT. 
		
		We use \eqref{eqn:SGdecay3} with \eqref{eqn:repSGderder} to prove continuity in $x$ of $\slowDerDer{i}{j} \left(\SGfastsx{t}{x}\phi^x\right)(y)$. Indeed, this is the uniform bound needed to apply the DCT along with the smoothness of our coefficients as well as the continuity in $x$ of $\fastDer{i}\SGfastsx{s}{x}\phi^x(y),\fastDerDer{i}{j}\SGfastsx{s}{x}\phi^x(y),\fastDer{i}\slowDer{j}\SGfastsx{s}{x}\phi^x(y)$ and $\fastDerDer{i}{j}\slowDer{k}\SGfastsx{s}{x}\phi^x(y)$. Hence, we have continuity. 
		
		Step $3$: To let $t \rightarrow \infty$ in \eqref{eqn:repSGderder}, we must justify the use of the DCT as we had to in the proof of Proposition \ref{lemma:IntAvgDerRepresentation}. Letting $t \rightarrow \infty$ in the first term of \eqref{eqn:repSGderder} is straightforward from Proposition \ref{lemma:IntAvg}. With respect to the second addend, it is very similar to the same limit for the second addend of \eqref{eqn:repSG} in the proof of Proposition \ref{lemma:IntAvgDerRepresentation}. This time, we use \eqref{eqn:SGdecay3} to justify the use of the DCT to pass the limit inside the time integral. Therefore, we have proved that the RHS of \eqref{eqn:repSG} tends to the RHS of \eqref{eqn:repInvMeas} as $t \rightarrow \infty$. As in Step $3$ in the proof of Proposition \ref{lemma:IntAvgDerRepresentation}, we now wish to do the same for the LHS which involves exchanging the limit $t\rightarrow\infty$ with the derivative $\slowDerDer{i}{j}$. We have
		\begin{align*}
			&\lim_{t\rightarrow\infty}[\slowDerDer{i}{j}\left(\SGfastsx{t}{x}\phi^x\right)(y) ]-\slowDerDer{i}{j}{ \left(\SGfastsx{t}{x}\phi^x\right)(y)}   \\
			&= \int_{\mathbb{R}^d}\slowDerDer{i}{j}\phi^x(y)\mu^x(dy)  -(\SGfastsx{t}{x}\slowDerDer{i}{j}\phi^x)(y) \\
			&+ \int_{0}^{\infty}\!\! \int_{\mathbb{R}^d}\!\!\left[\LfastxDeriDerj \SGfastsx{s}{x}\phi^{x}(y) + \LfastDerj{x} \partial_{x_j} \SGfastsx{s}{x} \phi^{x}(y)  + \LfastDeri{x} \partial_{x_i} \SGfastsx{s}{x} \phi^{x}(y) \right]\mu^x(dy) ds\\
			&- \int_{0}^{t} \SGfastsx{t-s}{x}\left[\LfastxDeriDerj \SGfastsx{s}{x}\phi^{x}(y) + \LfastDerj{x} \partial_{x_j} \SGfastsx{s}{x} \phi^{x}(y)  + \LfastDeri{x} \partial_{x_i} \SGfastsx{s}{x} \phi^{x}(y) \right]ds \\
			&= \fastExp{y} \left[  \mu^x\left(\slowDerDer{i}{j}\phi^x\right) -\slowDerDer{i}{j}\phi^x(y) \right] \\
			&+ \underbrace{\int_0^{t} \left(\mu^x - \SGfastsx{t-s}{x}\right)\left[\LfastxDeriDerj \SGfastsx{s}{x}\phi^{x} + \LfastDerj{x} \partial_{x_j} \SGfastsx{s}{x} \phi^{x}  + \LfastDeri{x} \partial_{x_i} \SGfastsx{s}{x} \phi^{x} \right] ds}_{\eqqcolon \RNum{1}} \\
			&+\underbrace{\int_{t}^{\infty} \mu^x\left(\LfastxDeriDerj \SGfastsx{s}{x}\phi^{x} + \LfastDerj{x} \partial_{x_j} \SGfastsx{s}{x} \phi^{x}  + \LfastDeri{x} \partial_{x_i} \SGfastsx{s}{x} \phi^{x} \right) ds.}_{\eqqcolon \RNum{2}}
		\end{align*}
		We now show that each of the addends in the above converge to zero locally uniformly in $x$ and $y$. Similarly to what we have done in the analogous stage of the proof of Proposition \ref{lemma:IntAvgDerRepresentation}, the claim is trivial for the first term, since by Proposition \ref{lemma:IntAvg},
		\begin{equation}\label{eqn:unifConvDerDer}
			\left(\SGfastsx{s}{x} \slowDerDer{i}{j}\phi^x\right) (y) - \mu^x(\slowDerDer{i}{j}\phi^x) \leq C\Vnorm{\phi}{4,\pow{}{x},\pow{}{y}}e^{-cs}(1+ |y|^{\pow{}{y}}+|x|^{\pow{}{x}})
		\end{equation}
		If we show the following bounds, the proof is concluded by integrating in $t$:
		\begin{align}\label{eqn:int3}
			|\RNum{1}| &\leq  C\Vnorm{\phi}{4,\pow{}{x},\pow{}{y}}te^{-ct} (1 + |y|^{M^g_y + \pow{g}{y}}+|x|^{4\pow{}{x}}) , \\
			\label{eqn:int4}
			|\RNum{2}| &\leq C\Vnorm{\phi}{4,\pow{}{x},\pow{}{y}}e^{-ct}(1+|x|^{4\pow{}{x}}),
		\end{align}
		
		Now we proceed with proving \eqref{eqn:int3}. With \eqref{eqn:SGdecay3} and Proposition \ref{lemma:IntAvg} we write (with $\tilde{c}$ denoting the constant in \eqref{lemma:IntAvg})
		\begin{align*}
			|\RNum{1}| 
			&\leq e^{-ct}\int_0^{t} Ce^{(c-\tilde{c})s}\Vnorm{\phi}{4,\pow{}{x},\pow{}{y}} (1 + |y|^{M^g_y + \pow{g}{y}}+|x|^{4\pow{}{x}}) ds \\
			&\leq Ce^{-ct}\Vnorm{\phi}{4,\pow{}{x},\pow{}{y}} (1 + |y|^{M^g_y + \pow{g}{y}}+|x|^{4\pow{}{x}}).
		\end{align*}
		and so \eqref{eqn:int3} is shown. Now we show \eqref{eqn:int4}.
		By (\ref{eqn:SGdecay3}) and subsequently (\ref{eqn:momBoundsYinv}) from Lemma \ref{prop:momentBounds}, we have
		\begin{align*}
			|\RNum{2}| &\leq \Vnorm{\phi}{4,\pow{}{x},\pow{}{y}}\int_{t}^{\infty} \int_{\mathbb{R}^d}\left[ Ce^{-cs} (1 + |y|^{M^g_y + \pow{g}{y}}+|x|^{4\pow{}{x}})\right]\mu^x(dy)ds \\
			&\leq \Vnorm{\phi}{4,\pow{}{x},\pow{}{y}}\int_{t}^{\infty} Ce^{-cs}(1+|x|^{4\pow{}{x}}) ds \\
			&\leq  C\Vnorm{\phi}{4,\pow{}{x},\pow{}{y}}e^{-ct}(1+|x|^{4\pow{}{x}}),
		\end{align*}
		so that \eqref{eqn:int4} is shown.
		Hence, with \eqref{eqn:unifConvDerDer} (\ref{eqn:int3}) and (\ref{eqn:int4}), and subsequently Proposition \ref{lemma:IntAvg} we have
		\begin{equation}
			\label{eqn:justificationderder}\begin{split}
				&\big| \slowDerDer{i}{j}{ \left(\SGfastsx{t}{x}\phi^x\right)(y)} -\lim_{t\rightarrow\infty}\slowDerDer{i}{j}\left(\SGfastsx{t}{x}\phi^x\right)(y) \big| \\ &\leq C\Vnorm{\phi}{4,\pow{}{x},\pow{}{y}}e^{-ct}(1+ |y|^{\pow{}{y}}+|x|^{\pow{}{x}}) \\
				&+ C\Vnorm{\phi}{4,\pow{}{x},\pow{}{y}}e^{-ct}(1 + |y|^{M^g_y + \pow{g}{y}}+|x|^{4\pow{}{x}}) \\
				&\leq C\Vnorm{\phi}{4,\pow{}{x},\pow{}{y}}e^{-ct}(1 + |y|^{M^g_y + \pow{g}{y}}+|x|^{4\pow{}{x}})
			\end{split}
		\end{equation}
		for some constants $C, c$. Hence, the convergence is locally uniform and we have that $\lim_{t\rightarrow\infty}\slowDerDer{i}{j}\left(\SGfastsx{t}{x}\phi^x\right)(y) = \slowDerDer{i}{j}\mu^x(\phi^x)$. Hence \eqref{eqn:invMeasDerDer} holds. Now we show \eqref{intAvgDerDerInv}. Using (\ref{eqn:momBoundsYinv}) of Lemma \ref{prop:momentBounds}, (\ref{eqn:invMeasDerDer}), (\ref{eqn:SGdecay3}), and the assumption that $\phi \in \funcSpace{m_1}{m_2}$ we have
		\begin{align*}
			|\slowDerDer{i}{j}\mu^x(\phi^x)| &\leq  \left|\int_{\mathbb{R}^d}\slowDerDer{i}{j}\phi^x(y)\mu^x(dy)\right| \\
			&+ \int_{0}^{\infty} \int_{\mathbb{R}^d}\left|\LfastxDeriDerj \SGfastsx{s}{x}\phi^{x}(y) +\LfastDerj{x} \partial_{x_j} \SGfastsx{s}{x} \phi^{x}(y)  + \LfastDeri{x} \partial_{x_i} \SGfastsx{s}{x} \phi^{x}(y) \right|\mu^x(dy) ds \\
			&\leq C\Vnorm{\phi}{4,\pow{}{x},\pow{}{y}}(1+|x|^{\pow{}{x}})\\
			&+ C\Vnorm{\phi}{4,\pow{}{x},\pow{}{y}}	\int_{0}^{\infty}
			\int_{\mathbb{R}^d}e^{-cs} (1+ |y|^{M^g_y + \pow{g}{y}}+|x|^{4\pow{}{x}})\mu^x(dy) ds
		\end{align*}
		Now we use \eqref{eqn:momBoundsYinv} from Lemma \ref{prop:momentBounds} again to obtain 
		\begin{align*}
			\int_0^{\infty} \int_{\mathbb{R}^d} Ce^{-cs} (1+ |y|^{M^g_y + \pow{g}{y}}+|x|^{4\pow{}{x}})\mu^x(dy) ds &\leq \int_0^{\infty} Ce^{-cs} (1+|x|^{4\pow{}{x}})ds \\
			&\leq C(1+|x|^{4\pow{}{x}})
		\end{align*}
		so that we have shown
		\begin{equation*}
			\left|\slowDerDer{i}{j}\int_{\mathbb{R}^d} \phi^{x}(y) \mu^x(dy)\right| \leq C\Vnorm{\phi}{4,\pow{}{x},\pow{}{y}} (1+|x|^{4\pow{}{x}}),
		\end{equation*}
		which proves \eqref{intAvgDerDerInv}.
		Since the convergence as $t \rightarrow \infty$ is locally uniform,  we also have continuity of $\slowDerDer{i}{j}\mu^x (\phi^x)$.
		
		Step $4$: We proceed in a similar vein to the proof of Proposition \ref{lemma:IntAvgDerRepresentation}.
		We note that the expression we wish to control is
		\begin{equation*}
			\left| \slowDerDer{i}{j}\int_0^\infty \fastExp{y} \left[ \mu^x(\phi^x) - \phi^{x}(\fastFixedI_{t})\right]dt \right| = \left|\int_0^\infty \slowDerDer{i}{j}\mu^x(\phi^x) -\slowDerDer{i}{j}{ \left(\SGfastsx{t}{x}\phi^x\right)(y)}  dt\right|.
		\end{equation*}
		This means that we have, using \eqref{eqn:justificationderder},
		\begin{align*}
			\sum^n_{i,j=0}\bigg| \slowDerDer{i}{j}\int_0^\infty \fastExp{y} &\left[\mu^x(\phi^x_j) - \phi_j(x,\fastFixedI_{t})\right]dt \bigg| \\
			&\leq \sum^n_{i,j=0} \int_0^\infty |\slowDerDer{i}{j}\mu^x(\phi^x) -\slowDerDer{i}{j}{ \left(\SGfastsx{t}{x}\phi^x\right)(y)} | dt \\
			&\leq  C\Vnorm{\phi}{4,\pow{}{x},\pow{}{y}}(1 + |y|^{M^g_y + \pow{g}{y}}+|x|^{4\pow{}{x}}),
		\end{align*}
		and so the proof is complete.
	\end{proof}

	\subsection{Proofs of Section \ref{subsec:detailedproof}}\label{app:proofsaveraging}
	
	\begin{proof}[Proof of Lemma \ref{prop:fullMombound}]
		Let $V_1(x) = |x|^{4\pow{b}{x}}$ and $V_2(y) = |y|^{k}$ for some $k>0$. Using Assumption \ref{DriftAssump}, \ref{DriftAssumpS} and \eqref{decompLepsilon}, there exist $\tilde{r}', \tilde{C}', \tilde{r}_k',\tilde{C}'_k>0$ such that
		\begin{equation*}
			(\mathfrak{L}_{\epsilon} V_1)(x,y) \leq -\tilde{r}'V_1(x)+\tilde{C}', \quad (\mathfrak{L}_{\epsilon} V_2)(y) \leq -\frac{r_{k}'}{\epsilon}V_2(y)+\frac{C_k'}{\epsilon}.
		\end{equation*}
		By the same argument as in the proof of Lemma \ref{prop:momentBounds} this gives
		\begin{equation}\label{momBoundx}
			\cP_t^\epsilon V_1(x,y) \leq e^{-\tilde{r}' t}V_1(x) +\frac{\tilde{C}'}{\tilde{r}'}, \quad \cP_t^\epsilon V_2(y) \leq e^{-\frac{r_k'}{\epsilon} t}V_2(y) +\frac{C_k'}{r_k'}.
		\end{equation}
		Hence, taking $V(x,y)=V_1(x)+V_2(y)$, using \eqref{momBoundx} with $\epsilon\leq 1$ we can conclude \eqref{momBoundFull}.
	\end{proof}
	
	\subsection{Proofs of Section \ref{sec:boundsonft1}}\label{app:proofsstrongergodicity}
	
	\begin{proof}[Proof of Proposition \ref{prop:avgderivativeest}]
		Observe that Proposition \ref{prop:avgderivativeest} follows from Theorem \ref{thm:lorenziderest}, provided the conditions of Theorem \ref{thm:lorenziderest} hold. Therefore, it is sufficient to verify \eqref{eqn:lorenziderestAsumpHess}-\eqref{eqn:lorenziderestAsump2}. First, we will find a polynomial $\tilde{R}(x)$ that satisfies \eqref{eqn:lorenziderestAsump1}. Then we will find the polynomial $R(x)$ that satisfies \eqref{eqn:lorenziderestAsumpHess}. Finally, we will verify that these polynomials satisfy \eqref{eqn:lorenziderestAsump2}.
		
		
		We write the LHS of \eqref{eqn:lorenziderestAsump1}, using \eqref{eqn:repInvMeas} with $\phi = b_j$, as
		\begin{align}\label{eqn:firstDerivbarb}
			\sum_{i,j=1}^n \partial_{x_i}\overline{b}_j(x) \xi_i\xi_j =	\mu^x &\left(\sum_{i,j=1}^n \partial_{x_i}b^x_j(y) \xi_i\xi_j\right) + \underbrace{\sum_{i,j=1}^{n}\left(\int_0^{\infty} \mu^x\left( \LfastxDeri \left(\SGfastsx{s}{x}b^{x}_j\right)\right) ds\right) \xi_i \xi_j}_{\RNum{1}}.
		\end{align}
		Now we bound \RNum{1} from \eqref{eqn:firstDerivbarb}. We have that \begin{equation}\label{eqn:boundonI}
			|\RNum{1}| \leq \sum_{i,j=1}^{n}\left|\int_0^{\infty} \mu^x\left( \LfastxDeri \left(\SGfastsx{s}{x}b^{x}_j\right)\right) ds\right| |\xi_i| |\xi_j|
		\end{equation}
		and \begin{align*}
			\left|\int_0^{\infty} \mu^x \left( \LfastxDeri \left(\SGfastsx{s}{x}b^{x}_j\right)\right) ds\right| \leq \int_0^{\infty} \mu^x\left(\sum_{k = 1}^{d}  \left|\slowDer{i}g_k(x,y)\fastDer{k}\SGfastsx{s}{x}b^{x}_j(y)\right| +  \sum_{k,l = 1}^{d}\left| \slowDer{i} A(x)_{kl}\fastDerDer{k}{l}\SGfastsx{s}{x}b^{x}_k(y)\right|\right) ds
		\end{align*}
		Using Assumption \ref{ass:polGrowth} \ref{ass:polGrowthDriftFast} and \eqref{eqn:SGderfor6.2} from Proposition \ref{prop:derivativeest} (with $\psi=b_j$, since $b_j \in C^{0,2}(\R^n\times\R^d)$ with $\Vseminorm{b_j}{2,\pow{b}{x},\pow{b}{y}}< \infty$  by Assumption \ref{ass:polGrowth} \ref{ass:polGrowthDrift}) we write
		\begin{equation}\label{eqn:driftPart}
			\begin{split}
				\sum_{k = 1}^{d}  &\left|\slowDer{i}g_k(x,y)\fastDer{k}\SGfastsx{s}{x}b^{x}_j(y)\right| \\
				&\leq D_0e^{-\cb s/2} \Vseminorm{b}{2,\ytwodery,\ytwoderyy} K_g\left( 1+|y|^{\onederxg} \right)\left( 1+|x|^{\ytwodery} + |y|^{\ytwoderyy}+\sqrt{\frac{C_{2\ytwoderyy}'}{r_{2\ytwoderyy}'}}\right) \\
				&\leq D_0e^{-\cb s/2} \Vseminorm{b}{2,\ytwodery,\ytwoderyy} K_g\Bigg[ 1+|x|^{\ytwodery} + |y|^{\ytwoderyy}+\sqrt{\frac{C_{2\ytwoderyy}'}{r_{2\ytwoderyy}'}} \\
				&+ |y|^{\onederxg}+|x|^{\ytwodery}|y|^{\onederxg} + |y|^{\ytwoderyy+\onederxg}+\sqrt{\frac{C_{2\ytwoderyy}'}{r_{2\ytwoderyy}'}}|y|^{\onederxg}\Bigg]
			\end{split}
		\end{equation}
		and similarly
		\begin{equation}\label{eqn:diffPart}
			\begin{split}
				\sum_{k,l = 1}^{d}&\left| \slowDer{i} A_{kl}(x)\fastDerDer{k}{l}\SGfastsx{s}{x}b^{x}_k(y)\right| \leq D_0e^{-\cb s/2}K_{A}\left(1+|x|^{\ytwodery}+|y|^{\ytwoderyy} + \sqrt{\frac{C_{2\ytwoderyy}'}{r_{2\ytwoderyy}'}}\right).
			\end{split}
		\end{equation}
		Using \eqref{eqn:driftPart}, \eqref{eqn:diffPart} and Proposition \ref{prop:momentBounds},
		\begin{align*}
			\left|\int_0^{\infty} \mu^x \left( \LfastxDeri \left(\SGfastsx{s}{x}b^{x}_j\right)\right) ds\right| &\leq \frac{2D_0}{\cb} \Vseminorm{b}{2,\ytwodery,\ytwoderyy}  \Bigg[\left( K_g+K_{A}\right)\left(1+2\sqrt{\frac{C_{2\ytwoderyy}'}{r_{2\ytwoderyy}'}}\right) \\
			&+ K_g\left(\frac{C'_{\ytwoderyy+\onederxg}}{r'_{\ytwoderyy+\onederxg}}+\frac{C'_{\onederxg}}{r'_{\onederxg}}+ \sqrt{\frac{C_{2\ytwoderyy}'}{r_{2\ytwoderyy}'}}\frac{C'_{\onederxg}}{r'_{\onederxg}}\right) \\
			&+\left( K_g+K_{A} + K_g\frac{C'_{\onederxg}}{r'_{\onederxg}} \right)|x|^{\ytwodery}\Bigg]
		\end{align*}
		so that, using \eqref{eqn:boundonI},
		\begin{equation}\label{eqn:boundonIfinal}\begin{split}
				|\RNum{1}| \leq n|\xi|^2\frac{2D_0}{\cb} \Vseminorm{b}{2,\ytwodery,\ytwoderyy}  &\Bigg[\left( K_g+K_{A}\right)\left(1+2\sqrt{\frac{C_{2\ytwoderyy}'}{r_{2\ytwoderyy}'}}\right) \\
				&+ K_g\left(\frac{C'_{\ytwoderyy+\onederxg}}{r'_{\ytwoderyy+\onederxg}}+\frac{C'_{\onederxg}}{r'_{\onederxg}}+ \sqrt{\frac{C_{2\ytwoderyy}'}{r_{2\ytwoderyy}'}}\frac{C'_{\onederxg}}{r'_{\onederxg}}\right) \\
				&+\left( K_g+K_{A} + K_g\frac{C'_{\onederxg}}{r'_{\onederxg}} \right)|x|^{\ytwodery}\Bigg].
			\end{split}
		\end{equation}
		By \eqref{eq:avgdriftcondition}, \eqref{eqn:firstDerivbarb} and \eqref{eqn:boundonIfinal}, \eqref{eqn:lorenziderestAsump1} holds with \begin{equation}\label{eqn:tildeR}
			\tilde{R}(x) = -\left(\zeta(1+|x|^{\yfourder} +|x|^{ \ytwoderyonex} +|x|^{  \ytwoderx}) + \frac{K_\Sigma^2n^3}{4\lambda_-}\right).\end{equation}
		The cases where at least one of $\ytwodery$, $\ytwoderyy$, and $\onederxg$ are equal to $0$ follows with the same $\tilde{R}(x)$ from the subsequent simplified version of \eqref{eqn:boundonIfinal}, where we recall that we define $\frac{C'_0}{r'_0} = 0$.
		Now we move onto showing \eqref{eqn:lorenziderestAsumpHess}.
		From \eqref{eqn:invMeasDerDer}, we have that \begin{align*}
			&|\slowDerDer{i}{j}\bar b_k(x)| \\
			&\leq \left|\mu^x\left(\slowDer{i} \slowDer{j}b_k^x\right)\right| + \underbrace{\int_0^{\infty} \mu^x\left( \left|\LfastxDeriDerj \SGfastsx{s}{x}b^{x}_k\right| +  \left|\LfastxDeri \slowDer{j}\SGfastsx{s}{x}b^{x}_k\right| + \left|\LfastDerj{x} \slowDer{i}\SGfastsx{s}{x}b^{x}_k\right| \right) ds }_{\RNum{2}}
		\end{align*}
		We consider the first addend of the right hand side of the above, using the bounds on $b$ outlined in Assumption \ref{ass:polGrowth} (from which \eqref{starstarAss1} follows) and Lemma \ref{prop:momentBounds}
		\begin{equation}\label{addend1}
			\left|\mu^x\left(\slowDerDer{i}{j}b_k^x\right)\right| \leq \Vnorm{\slowDerDer{i}{j}b_k^x}{0,\ytwoderx,\ytwoderxy}\left( 1+|x|^{\ytwoderx} +\frac{C_{\ytwoderxy}'}{r_{\ytwoderxy}'}\right)\leq \hat{C} \left( 1+|x|^{\ytwoderx}\right).
		\end{equation}
		Here $\hat{C}$ is a generic constant which may change line by line and is independent of $x\in\R^n,y\in\R^d$ but may depend on norms of the coefficients. 
		Considering the first term of \RNum{2}, similarly to the term \RNum{1},
		\begin{equation}\label{addend2}
			\begin{split}
				\int_0^{\infty} \mu^x \left( \left|\LfastxDeriDerj \SGfastsx{s}{x}b^{x}_k \right|\right) ds  \leq \hat{C} \left(1+|x|^{\ytwodery}\right)
			\end{split}
		\end{equation}
		Now, we consider the second term of \RNum{2}. Making the observation that $\LfastDeri{x}$ is a second order differential operator in the $y$ variable, we need a bound on the terms $\fastDer{j}\slowDer{i}\SGfastsx{t}{x}b_k^x$ and $\fastDerDer{j}{k}\slowDer{i}\SGfastsx{t}{x}b_k^x$. For these term, we recall the formulae \eqref{eqn:term0}-\eqref{eqn:termdyy}, setting $\phi^x(y) = b_k(x,y)$. Notice first that by Assumption \ref{ass:polGrowth} \ref{ass:polGrowthDrift}, $\Vseminorm{\partial_{x_i} b_k}{2,\ytwoderyonex,\ytwoderyoney} < \infty$ for all $i\in \{1,...,d\}$, and hence applying the semigroup derivative estimates to this term is immediate. For what comes next, we wish to apply \eqref{eqn:SGdecayConc2} to $\LfastxDeri \SGfastsx{s}{x}b^{x}_k$. To this end, we write the following bound
		\begin{align*}
			\max\{\left|\partial_{y_j}\LfastxDeri \SGfastsx{s}{x}b^{x}_k \right|&, \left|\partial_{y_j y_k}\LfastxDeri \SGfastsx{s}{x}b^{x}_k \right|\} \\ &\leq \hat{C} e^{-\cb s}(1+|y|^{  \yfourdery+  \onederxg}|x|^{  \yfourder}+|y|^{\ytwodery+  \onederxg}+|x|^{  \yfourder}) \\
			&\leq \hat{C} e^{-\cb s}\left(1+|x|^{\yfourder}\right)\left(1+|y|^{\ytwodery+  \onederxg} + |y|^{\yfourdery+  \onederxg}\right)
		\end{align*}
		meaning we can use Proposition \ref{prop:derivativeest} with $\psi = \LfastxDeri \SGfastsx{s}{x} b^{x}_k  $ to obtain \begin{equation}\label{eqn:sgder}\begin{split}
				\max\{\left|\partial_{y_j}\SGfastsx{t-s}{x}\LfastxDeri \SGfastsx{s}{x}b^{x}_k \right|&, \left|\partial_{y_j y_k}\SGfastsx{t-s}{x}\LfastxDeri \SGfastsx{s}{x}b^{x}_k \right|\} \\
				&\leq \hat{C} e^{-\cb t}\left(1+|x|^{\yfourder}\right)\left(1+|y|^{\ytwodery+  \onederxg} + |y|^{\yfourdery+  \onederxg}\right)\end{split}
		\end{equation} By differentiating (\ref{eqn:repSG}) twice, using Proposition \ref{prop:derivativeest} with $\psi = \slowDerDer{i}{j}b^x_k$ for the first addend and integrating \eqref{eqn:sgder} for the second, we have
		\begin{align*}
			\begin{split}
				\max\{\left|\partial_{y_i}\slowDer{i} \SGfastsx{s}{x}b^{x}_k\right|, \left|\partial_{y_j y_l}\slowDer{i} \SGfastsx{s}{x}b^{x}_k\right| \}
				&\leq Ke^{-\cb s}\left(1+|y|^{\ytwoderyoney}+|x|^{\ytwoderyonex}\right) \\&+ \hat{C} se^{-\cb s}\left(1+|x|^{\yfourder}\right)\left(1+|y|^{\ytwodery+  \onederxg} + |y|^{\yfourdery+  \onederxg}\right)
			\end{split}
		\end{align*}
		so, using Lemma \ref{prop:momentBounds},
		
		\begin{align}\label{addend3}
			\int_0^{\infty} \mu^x \left( \left|\LfastxDeri \slowDer{j} \SGfastsx{s}{x}b^{x}_k \right|\right) ds \leq \hat{C} (1+|x|^{  \yfourder}+|x|^{\ytwoderyonex}).
		\end{align}
		Now we can use (\ref{addend1}),(\ref{addend2}) and \eqref{addend3} to write
		\begin{align*}
			|\slowDerDer{i}{j}\bar b_k(x)| \leq \hat{C}(1+|x|^{  \yfourder}+|x|^{  \ytwoderyonex} +|x|^{  \ytwoderx} )
		\end{align*}
		so that \eqref{eqn:lorenziderestAsumpHess} holds, with
		\begin{equation}\label{eqn:hessbarb1}
			R(x) = \hat{C}(1+|x|^{  \yfourder}+|x|^{  \ytwoderyonex} +|x|^{  \ytwoderx} ).
		\end{equation} 
		We conclude \eqref{eqn:lorenziderestAsump2} with $L = \zeta /\hat{C}$ by using \eqref{eqn:tildeR}, \eqref{eqn:hessbarb1} and Assumption \ref{ass:avgSGcond}.
	\end{proof}

\end{appendix}

\bibliographystyle{plain} 
\bibliography{references.bib}       


\end{document}